\documentclass[reqno]{amsart}
\usepackage{amsmath}
\usepackage{amsthm}
\usepackage{amsfonts}
\usepackage{amssymb}
\usepackage{xcolor}
\usepackage{array}
\usepackage{comment}
\usepackage{upgreek}
\usepackage{mathrsfs}
\usepackage{graphicx}
\usepackage{hyperref}
\usepackage{enumerate}
\usepackage{enumitem}
\usepackage{tikz-cd}
\usepackage{array}
\usepackage{subcaption}
\usepackage[fontsize=11pt]{scrextend}
\usepackage{setspace}
\usepackage[utf8]{inputenc}
\usepackage[left=2.48cm, right=2.48cm, top=2.3cm, bottom=2.3cm]{geometry}
\theoremstyle{definition}
\newtheorem{thm}{Theorem}[section]
\newtheorem{defn}[thm]{Definition}
\newtheorem{defn-prop}[thm]{Definition-Proposition}
\newtheorem{rem}[thm]{Remark}
\newtheorem{lem}[thm]{Lemma}

\newtheorem{exmp}[thm]{Example}
\newtheorem{cor}[thm]{Corollary}
\newtheorem{prop}[thm]{Proposition}
\newtheorem{notat}[thm]{Notation}
\newtheorem{notats}[thm]{Notations}
\newtheorem{conj}[thm]{Conjecture}
\theoremstyle{plain}
\newtheorem{theorem}[thm]{Theorem}
\DeclareMathOperator{\hmr}{\textit{HMR}}
\DeclareMathOperator{\thmr}{\text{\smaller{$\widetilde{\textit{HMR}}$}}}
\DeclareMathOperator{\hhfr}{\text{\smaller{$\widehat{\textit{HFR}}$}}}

\DeclareMathOperator{\tcmr}{\text{\smaller{$\widetilde{\textit{CMR}}$}}}
\DeclareMathOperator{\trsw}{\text{\smaller{$\widetilde{\textit{SWR}}$}}}

\DeclareMathOperator{\sfT}{\text{\smaller{\textsf{T}}}}
\DeclareMathOperator{\fix}{\text{Fix}}
\DeclareMathOperator{\re}{\text{Re}}
\DeclareMathOperator{\grad}{\text{grad}}

\DeclareMathOperator{\Hess}{\text{Hess}}
\DeclareMathOperator{\gr}{\textsf{gr}}
\DeclareMathOperator{\nbhd}{\textsf{nbhd}}
\DeclareMathOperator{\id}{\mathsf{id}}

\DeclareMathOperator{\dbcv}{\mathsf{\Sigma}}
\DeclareMathOperator{\ind}{\mathsf{ind}}
\DeclareMathOperator{\spinc}{{\text{spin\textsuperscript{c}}}}
\DeclareMathOperator{\rspinc}{{\textsf{RSpin\textsuperscript{c}}}}
\newcommand{\del}{\ensuremath{\partial}}

\newcommand{\rrsc}{\textsc{rrs}\textsuperscript{c}}
\newcommand{\rrscs}{{\textsc{rrs}\textsuperscript{c} structure}}

\DeclareMathOperator{\tfrr}{\widetilde{\text{\textsc{r}}}}
\DeclareMathOperator{\R}{\text{\textsc{r}}}
\DeclareMathOperator{\mR}{\text{\textsc{r}}}
\newcommand{\frr}{{\text{\textsc{r}}}}
\newcommand{\tcalB}{\tilde{\mathcal{B}}}

\newcommand{\frakb}{\mathfrak{b}}

\newcommand{\frakq}{\mathfrak{q}}

\newcommand{\irr}{\mathrm{irr}}
\newcommand{\red}{\mathrm{red}}

\newcommand{\con}{\textsf{conj}}
\makeatletter
\DeclareRobustCommand\widecheck[1]{{\mathpalette\@widecheck{#1}}}
\def\@widecheck#1#2{%
    \setbox\z@\hbox{\m@th$#1#2$}%
    \setbox\tw@\hbox{\m@th$#1%
       \widehat{%
          \vrule\@width\z@\@height\ht\z@
          \vrule\@height\z@\@width\wd\z@}$}%
    \dp\tw@-\ht\z@
    \@tempdima\ht\z@ \advance\@tempdima2\ht\tw@ \divide\@tempdima\thr@@
    \setbox\tw@\hbox{%
       \raise\@tempdima\hbox{\scalebox{1}[-1]{\lower\@tempdima\box
\tw@}}}%
    {\ooalign{\box\tw@ \cr \box\z@}}}
\makeatother
\newcommand{\wtilde}{\widetilde}

\newcommand\hcancel[2][black]{\setbox0=\hbox{$#2$}%
    \rlap{\raisebox{.45\ht0}{\textcolor{#1}{\rule{\wd0}{1pt}}}}#2} 
\DeclareMathOperator{\pertL}{\hcancel{\mathcal{L}}}
\newcommand\blfootnote[1]{%
	\begingroup
	\renewcommand\thefootnote{}\footnote{#1}%
	\addtocounter{footnote}{-1}%
	\endgroup
}
\begin{document}
\setcounter{secnumdepth}{2}
\setcounter{tocdepth}{1}
\author{Jiakai Li}
\title{Multi-framed real monopole Floer theory}
\begin{abstract}
    This paper constructs a framed real monopole Floer homology for three-manifolds with involutions, marked with multiple basepoints.
    The relative gradings of these Floer homologies depend on the framing information and the paper gives a sufficient condition for the existence of relative mod two gradings.
    Assuming orientability and choices of orientations, this paper also proposes a definition of $\mathbf{Z}$-valued framed real Seiberg--Witten invariants for 4-manifolds with involutions, marked with circles. 
\end{abstract}
\maketitle
\tableofcontents
\section{Introduction}
\blfootnote{This work was partially supported by a Simons Foundation Award \#994330 (Simons Collaboration on New Structures in Low-Dimensional Topology).}Recent years have seen the growing development of real Seiberg--Witten theory~\cite{TianWang2009, NNakamura2015,Kato2022,MR5002153,ljk2022,MR4821360,miyazawa2025satelliteformularealseibergwitten,baraglia2026exoticembeddedsurfacesinvolutions}. 
This theory has proven to be surprisingly effective in the study of knots in 3-manifolds and surfaces in 4-manifolds.
Some notable applications include Miyazawa's proof of the existence of exotic $P^2$-knots in $S^4$~\cite{Miyazawa2023}, and Kang--Park--Taniguchi's proof of the non-sliceness of cables of the figure-eight~\cite{KPT2024,kang2025smoothconcordancecablesfigureeight}.

Unlike its ordinary counterpart, the framed\footnote{``Framed'' means counting without dividing by the constant gauge group.}
enumerative invariants from real Seiberg--Witten theory detect interesting information about manifolds with involution.
For example, Miyazawa \cite{Miyazawa2023} showed that the integral count of the framed real Seiberg--Witten invariant is nontrivial for certain branched double covers of $P^2$-knots.
By contrast, the ordinary framed Seiberg--Witten invariant, while well-defined, is always the integer $1$ regardless of the underlying 4-manifold.

Guth and Manolescu developed the real Heegaard Floer homology (\textit{HFR})~\cite{HFR2} for 3-manifold $(Y,\tau)$ with involution.
The \textit{HFR} package produces three $\mathbb{Z}/2$-equivariant Floer homology groups and, in addition, a hat-version $\hhfr$ which comes with a mod two grading.
The Euler characteristic of $\hhfr(Y,\tau)$ can be thought of as the Heegaard Floer analogue of Miyazawa's degree invariant when $Y$ is the double branched $\dbcv_2(S^3,L)$ of some link $L$, equipped with the deck transformation $\tau_{\text{deck}}$. 
This Euler characteristic can also be interpreted as the Euler characteristic of $\thmr(Y,\tau)$.
More recently, Srivastava~\cite{srivastava2026absolutemathbbz2gradingsreal} showed there exist absolute mod two gradings $\hat{\chi}$ in $\hhfr(\dbcv_2(S^3,L),\tau_{\text{deck}})$ and gave a combinatorial interpretation of $\hat{\chi}(\hhfr(\dbcv_2(S^3,L),\tau_{\text{deck}}))$ in terms of the Alexander polynomial evaluated at $\sqrt{-1}$.

The analogue of Srivastava's calculation in Seiberg--Witten theory would lead to interesting calculations of invariants of surfaces.
Nevertheless, the Seiberg--Witten story is less straightforward than the Heegaard Floer story for the following reasons.

First, despite the formal similarities between real Heegaard Floer and real monopole Floer theories, the current versions of these groups are not expected to be directly isomorphic.
The Guth--Manolescu construction uses multiple basepoints, and the absolute mod two grading in $\hhfr$ is well-defined only when every component of the link contains a basepoint.
On the other hand, all of the real monopole groups in \cite{ljk2022,ljk2024SSKh} are single-pointed.
For instance, the framed $\thmr$ is the mapping cone of the $\upsilon$-map based at a single point.

Second, the  monopole theory does not have a natural Lagrangian intersection interpretation in the same way as the Heegaard Floer theory.
This creates difficulties for sign assignments across different real $\spinc$ structures.
Indeed, the configuration spaces associated to different $\spinc$ structures are unrelated without further geometric choices.
In fact, what plays the role of a real $\spinc$ structure in the multi-framed setup is the notion of a relative real $\spinc$ structure, relative to codimension-3 data.

This paper resolves the first point by setting up the framework for multi-framed real monopole Floer homology denoted as $\thmr_*(Y,\tau,\mathbf{p})$ and the associated enumerative framed invariant for closed real 4-manifolds $\trsw(X,\tau_X,\mathbf{c})$.
As for the second point, this paper gives a sufficient condition for relative mod two gradings, similar to the Heegaard Floer case~\cite{HFR2}.

\medskip
Let $\mathbf{F}$ be the field of two elements.
The triple $(Y,\tau,\mathbf{p})$, where $\mathbf{p}$ is a finite set of basepoints, will be referred to as a real marked 3-manifold.
Cobordisms are 4-manifolds with involutions marked with 1-dimensional basepoints, and the closed cases are 4-manifolds $(X,\tau_X,\mathbf{c})$ with orientation-preserving involutions marked with a set of circles $\mathbf{c}$.
In summary:
\begin{theorem}
    Let $\textsc{rcob}_*$ denote the category of marked real cobordisms and $\textsc{vect}_{\mathbf{F}}$ denote the category of finite-dimensional $\mathbf{F}$-vector spaces.
    The framed real monopole Floer homology defines a functor:
    \[\thmr_* \colon \textsc{rcob}_* \to \textsc{vect}_{\mathbf{F}}.\]
\end{theorem}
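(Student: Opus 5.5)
The plan is to build $\thmr_*$ as the homology of a chain complex obtained from real monopole Floer theory on a blow-up that remembers the basepoints. Everything is taken with $\mathbf{F}$ coefficients so that no orientations are needed. I take the real Seiberg--Witten framework of \cite{ljk2022,ljk2024SSKh} as given.

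\textbf{Construction of the chain complex.} Fix a real marked 3-manifold $(Y,\tau,\mathbf{p})$ with $\mathbf{p}$ pointwise fixed by $\tau$ or permuted by it. Consider the real configuration space of pairs $(B,\Psi)$ with $\tau$-invariant data, over all relative real $\spinc$ structures relative to $\mathbf{p}$. Divide only by the based real gauge group $\{u : u(p)=1 \text{ for all } p\in\mathbf{p}\}$. The quotient carries a residual action of the group $\{\pm1\}^{\mathbf{p}/\tau}$ of real constant gauge transformations at the basepoints.

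In the single-pointed case this framing turns $\thmr$ into the mapping cone of the $\upsilon$-map. Accordingly, I define the multi-framed complex $\tcmr(Y,\tau,\mathbf{p})$ in the same way, using Morse--Bott or equivariant perturbation data $\frakq$ on the framed configuration space. Concretely, it is an iterated mapping cone of the maps $\upsilon_p$, $p\in\mathbf{p}$. Equivalently, it is the Morse complex of the Chern--Simons--Dirac functional on the framed blown-up configuration space, with generators the lifts of critical points. For a generic real perturbation:
\begin{itemize}
\item transversality for real moduli spaces follows as in the cited works;
\item compactness of the real moduli spaces of trajectories is inherited from the ordinary theory by taking fixed loci;
\item the count of boundary points of one-dimensional moduli spaces gives $\partial^2=0$ over $\mathbf{F}$.
\end{itemize}
Finite-dimensionality follows from compactness, with finitely many critical points after perturbation and finitely many relative real $\spinc$ classes carrying solutions. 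Here I use the framed (tilde) version, whose reducibles contribute only finitely many generators.

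\textbf{Invariance.} Independence of metric and perturbation is proved by continuation maps over cylinders $\mathbf{R}\times Y$ with the marking $\mathbf{R}\times\mathbf{p}$. A chain homotopy is built from one-parameter families, as usual.

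\textbf{Cobordism maps and functoriality.} A marked real cobordism $(W,\tau_W,\gamma)\colon (Y_0,\mathbf{p}_0)\to(Y_1,\mathbf{p}_1)$ comes with arcs $\gamma$ joining the basepoints. I glue on cylindrical ends and count zero-dimensional real moduli spaces, framed along $\gamma$. This means using the gauge group restricted to be trivial along $\gamma$, which is compatible with the framings at both ends. This gives a chain map $\tcmr(W)$. Its independence of auxiliary choices again comes from counting ends of one-parameter families.

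For the composition law $\thmr(W'\circ W)=\thmr(W')\circ\thmr(W)$, I stretch the neck along $Y_1$. Gluing is compatible with the framings because the arcs concatenate, so each based gauge group restricts correctly on the neck. Identity cobordisms $[0,1]\times Y$ induce the identity by translation invariance.

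\textbf{Main obstacle.} The hardest step is the analysis near reducibles in the multi-framed setting. The residual group is $\{\pm1\}^{\mathbf{p}}$ rather than a single $\{\pm1\}$, so the blow-up and the boundary-obstructed trajectories must be organized compatibly across the iterated cone structure. The gluing statement needed for $\partial^2=0$ and for the composition law must then be checked in the presence of several independent framings.

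My first attempt would be to show that the framed configuration space is a principal $\{\pm1\}^{|\mathbf{p}|-1}$-bundle over the single-framed one. Over $\mathbf{F}$, with this $2$-group acting, the multi-framed complex would then be an iterated mapping cone. All compactness and gluing statements would be deduced from the single-pointed case \cite{ljk2022}. The remaining step is to check that the action commutes with continuation and cobordism maps.
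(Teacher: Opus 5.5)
Your proposal does not yet pin down a construction, and it stops exactly at the step you call hardest.

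You give two definitions of the complex: an iterated mapping cone of maps $\upsilon_p$, and a Morse complex on a ``framed blown-up'' configuration space. You declare them equivalent, but the paper leaves that equivalence open. Even for one basepoint it only \emph{expects} the analytic framed group to agree with the mapping cone $\thmr^{mc}$ of \cite{ljk2024SSKh}, and it does not construct the multi-based $\{\pm1\}$-equivariant theory whose iterated cones should recover $\thmr$. For the same reason, lifting everything from the single-pointed case presupposes the construction in question. That case exists only as the blown-up theory of \cite{ljk2022} and the algebraic cone of \cite{ljk2024SSKh}, not as Morse theory on $\mathcal{B}^o$.

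The covering claim also needs care. For a single relative real $\spinc$ structure, the deck group of $\widetilde{\mathcal{B}}_{\mathbf{p}}\to\mathcal{B}^o$ is $\mathcal{G}^o/\mathcal{G}_{\mathbf{p}}$. This is smaller than $\{\pm1\}^{|\mathbf{p}|-1}$ whenever $\mathsf{ev}_{\mathbf{p}}$ is not surjective, e.g.\ two basepoints on one component of $\fix(\tau)$. The full $\{\pm1\}^{|\mathbf{p}|-1}$ appears only after taking the union over all relative structures above a fixed absolute one.

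Functoriality of iterated cones also needs more than commutation up to homotopy. You need coherent homotopies, between $\upsilon_p\upsilon_q$ and $\upsilon_q\upsilon_p$ and between each $\upsilon_p$ and the cobordism maps, that are compatible under composition. You also need maps between cones of different shapes when the marking $\mathbf{a}$ joins two incoming basepoints, changes their number, or has closed components where the $\con_1$ model can occur.

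The idea you are missing, on which the paper's proof rests, is that no blow-up is needed at all. A real gauge transformation takes values $\pm1$ on $\fix(\tau)$, so imposing $g|_{\mathbf{p}}\equiv1$ excludes the constant $-1$. Hence $\mathcal{G}(Y,\tau,\mathbf{p})$ acts freely on all of $\mathcal{C}(Y,\tfrr)$, reducibles included, and $\widetilde{\mathcal{B}}_{\mathbf{p}}$ is a Hilbert manifold. Coulomb slices remain valid at reducibles because there are no nonzero $\tau$-anti-invariant constants.

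The paper then runs the downstairs analysis of \cite{KMbook2007} directly on this space:
\begin{itemize}
\item every critical point, reducible or not, is a generator;
\item $\tilde\partial^2=0$ comes from the ends of one-dimensional unparametrized moduli spaces;
\item cobordism maps count moduli spaces on $W^*$ framed along $\mathbf{a}$, summed over relative real $\spinc$ structures on $W$;
\item only the two codimension-one strata $\breve N_-\times N_0$ and $N_0\times\breve N_+$ occur, with no $\delta$-structures, so the chain-map, invariance and composition identities are the simple ones.
\end{itemize}
Your cobordism paragraph is essentially this part of the argument.

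One caution if you adopt this route. With $\{\pm1\}$-invariant perturbations, the linearization along a trajectory lying entirely in the reducible locus splits off the real Dirac block $\frac{d}{dt}+D_{B(t)}$. There $\delta\frakq$ has zero spinor component, so varying the perturbation does not by itself make that block surjective, and its index can be negative. The paper asserts that transversality holds uniformly for such trajectories. This is what remains of the boundary-obstructed phenomenon you were worried about, and it deserves a direct check.
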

With this understood, the sufficient condition for relative mod two grading can be stated as follows (see Theorem~\ref{lem:gamma_in_2H}).
\begin{theorem}
    \label{thm:mod-2_grading_intro}
    Suppose the fixed-point set $C = \textsf{Fix}(\tau) \subset Y$ is null-homologous in $Y/\tau$ and every component of $C$ is marked with at least one basepoint.
    Then $\thmr(Y,\tau,\mathbf{p})$ admits a relative mod two grading.
\end{theorem}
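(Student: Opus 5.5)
The relative mod two grading between two generators is defined by the parity of the spectral flow (index) of a path of linearized real Seiberg--Witten operators connecting them in the blown-up configuration space of $(Y,\tau,\mathbf{p})$. My plan is to reduce well-definedness to a statement about loops. Any two paths between the same generators differ by a loop, so it suffices to show that the spectral flow around every loop in the framed real configuration space $\tilde{\mathcal{B}}$ is even. I will also have to handle paths between generators lying in different relative real $\spinc$ structures. For that I plan to use the relative real $\spinc$ structures relative to the basepoint data, which form a torsor over a suitable cohomology group. The aim is to show that the index change along a path across relative $\spinc$ structures is given by a characteristic number, in the same way as in the Heegaard Floer setting of~\cite{HFR2}.

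\textbf{Step 1: classify loops.} Loops in the framed real configuration space come from $\pi_0$ of the real gauge group modulo the basepoint framing. That is, they come from $\tau$-equivariant maps $u\colon Y\to S^1$ with $u\circ\tau=\bar u$ and $u(p)=1$ for $p\in\mathbf{p}$. Such a $u$ satisfies $u=\pm1$ on $C=\textsf{Fix}(\tau)$. Its homotopy class is therefore recorded by two pieces of data:
\begin{enumerate}[label=(\roman*)]
\item a class in $H^1(Y;\mathbf{Z})^{-\tau^*}$, essentially a class on $Y/\tau$;
\item a sign on each component of $C$.
\end{enumerate}
Since every component of $C$ carries a basepoint, the framing forces $u=+1$ on all of $C$, which kills the sign data in (ii). This is where the basepoint hypothesis enters.

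\textbf{Step 2: compute the spectral flow of a gauge loop.} I plan to compute the spectral flow of a gauge loop $u$ as the index of the real Dirac-type operator on the mapping torus $S^1\times Y$ twisted by $u$. Using the equivariant index theorem, or equivalently passing to the quotient, this index equals a term $\langle c_1(\mathfrak{s}) \cup [u],[Y]\rangle/2$ plus a correction localized along $C$. I expect the correction to be the pairing of $[u]$ with the class of $C$ in $H_1(Y/\tau)$, or with its lift.

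\textbf{Step 3: show the index is even.} The hypothesis that $C$ is null-homologous in $Y/\tau$ should make the localized term vanish, or make it even. Real $\spinc$ structures have $c_1$ anti-invariant, and the associated real index is computed by half the ordinary index. I therefore expect the remaining term to be even, which would match the theorem's reference to ``$\gamma\in 2H$'' (Theorem~\ref{lem:gamma_in_2H}): the obstruction class $\gamma$ lies in $2H$ exactly when $[C]=0$ in $H_1(Y/\tau)$, up to the basepoint contributions.

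\textbf{Main obstacle.} The hardest step is the index computation in Step 2. Concretely, I need to identify the mod two spectral flow of a real gauge loop with a homological pairing that involves $[C]$. I would first attempt it by excision: split the loop's contribution into a part supported near $C$, which is a model computation on a tubular neighborhood $S^1\times D^2$ with the involution acting by conjugation on $D^2$, and a part on the free region, which descends to the quotient and has even index. The model computation should show that each component of $C$ contributes the intersection number of $[u]$ with that component mod $2$. Summing over components then gives $\langle [u],[C]\rangle$ mod $2$, and this vanishes because $[C]=0$.
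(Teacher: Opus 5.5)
There is a genuine gap, and it sits in the step you treat as routine. The spectral flow around a loop $[u]\in\Upgamma(Y,\tau,\mathbf{p})$ is \emph{exactly} $\tfrac12\langle c_1(\mathfrak{s})\cup[u],[Y]\rangle$ (Lemma~\ref{lem:loop_and_12}, Lemma~\ref{lem:stab_of_J}). There is no correction localized along $C$. The real structure is antilinear and commutes with the Dirac operator, so the real kernel and cokernel have real dimension equal to the complex dimension of the ordinary ones. The real index on the mapping torus is therefore just the ordinary complex index, and no equivariant index theorem or fixed-point contribution enters.

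Your candidate correction would vanish anyway, with no hypothesis at all: $u$ is constant on each $C_i$, so $\langle [u],[C_i]\rangle=\deg(u|_{C_i})=0$. In your scheme hypothesis~(i) therefore does no work. What remains is the entire content of the theorem: showing that $\tfrac12\langle c_1(\mathfrak{s})\cup[u],[Y]\rangle$ is even. Your Step~3 only \emph{expects} this because the real index is half the ordinary one, but half of an even number need not be even. In the unframed theory this term is odd for suitable non-torsion $\mathfrak{s}$ (see the remark after Lemma~\ref{lem:stab_of_J}). So evenness must come from $[u]$ lying in the framed sublattice $\Upgamma$, and your parity argument never uses that. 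Also, the label of Theorem~\ref{lem:gamma_in_2H} refers to this lattice, not to an obstruction class tied to $[C]$.

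Here is how the paper closes the gap. Write $c_1(\mathfrak{s})=2c$ modulo torsion. Since $\tau^*c_1(\mathfrak{s})=-c_1(\mathfrak{s})$ and $\tau$ preserves orientation, $\mathrm{PD}(c)$ is $\tau_*$-anti-invariant. It therefore suffices to show that every $[u]\in\Upgamma$ pairs evenly with $H_1(Y;\mathbf{Z})^{-\tau_*}$.
\begin{itemize}
\item Hypothesis~(i) is used only to produce a free real Heegaard surface $\Sigma$.
\item Lemma~\ref{lem:surj_group_cohom} lets you represent anti-invariant classes on $\Sigma$: the involution swaps the two handlebody summands of $H_2(Y,\Sigma)$, so the relevant group cohomology vanishes.
\item The Costa--Natanzon basis then reduces the check to $x_j-\tau_*x_j$, $y_j-\tau_*y_j$ and the $\tau$-invariant circles $w_i$ meeting $C_i$ and $C_r$. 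The first two pair evenly with $[u]$ by skew-invariance of $u$.
\item On $w_i$, $\deg(u|_{w_i})$ is odd exactly when $u$ takes different signs on $C_i$ and $C_r$. Hypothesis~(ii) together with the definition of $\Upgamma$ forces equal signs.
\end{itemize}
So (ii) is what controls the parity, not merely a device to ``kill sign data.'' That sign data is not independent in any case: the relative signs are determined by the class in $H^1(Y;\mathbf{Z})^{-\tau^*}$, and the framing cuts out the sublattice $\Upgamma$.

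Finally, two parts of your setup are unnecessary. You need not compare gradings across different relative real spin\textsuperscript{c} structures: the grading set $\mathbf{J}(\mathfrak{s},\tfrr)$ is defined per structure, and the paper does not relate them. There is also no blow-up in the framed setting.
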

As observed in the single-framed case \cite{ljk2022,ljk2024SSKh}, for certain non-torsion $\spinc$ structures even a relative mod two grading is absent due to the $1/2$ factors in the dimension formulae involving real Dirac operators.

Theorem~\ref{thm:mod-2_grading_intro} alone does not pin down a total Euler characteristic $\tilde{\chi}$ like $\hat{\chi}$ in $\widehat{\textit{HFR}}$, as such a $\tilde{\chi}$ requires a choice of absolute mod two grading for each relative real $\spinc$ structure.
The problem of \emph{canonical} absolute mod two grading is closely related to orientations in the 4-dimensional enumarative invariants.
We expect the framing to play an essential role in questions of orientation, but defer this issue to future work.
\subsection{Relations with other invariants}
The construction of framed real monopole Floer homology can be seen as the direct analytic counterpart of the algebraic construction in \cite{ljk2024SSKh}.
Denote by $\thmr^{mc}(Y,\tau,p)$ the definition in~\cite{ljk2024SSKh} (``mc'' for mapping cone).
These two definitions are expected to coincide:
   $\thmr^{mc}(Y,\tau,p,\mathfrak{s},\R) \cong \thmr(Y,\tau,\{p\},\mathfrak{s},\tfrr)$.
\begin{rem}
    There should exist a $\mathbb{Z}/2$-equivariant construction of $\hmr^{\circ}(Y,\tau,\mathbf{p})$ in the spirit of \cite{ljk2022}, adapted to the multi-based setting.
    Taking iterated mapping cones recovers the framed definition $\thmr(Y,\mathbf{p})$.
\end{rem}
The following conjecture is the counterpart of Srivastava's theorem~\cite[Prop.~1.6]{srivastava2026absolutemathbbz2gradingsreal}.
\begin{conj}
	\label{conj:grading}
    Let $L$ be a link and $\mathbf{p}$ be a set of basepoints containing exactly one basepoint in each component of $L$. 
    \emph{There exists a} mod two grading $\tilde{\chi}$ for which
    \begin{equation*}
        \wtilde{\chi}\left(\thmr(\mathsf{\Sigma}_2(S^3,L),\tau,\mathbf{p})\right) = 2^{|L|-1}\Delta_L(\sqrt{-1},\dots,\sqrt{-1}),
    \end{equation*}
    where $\Delta_L$ is the multivariate Alexander polynomial.
\end{conj}

A categorified version of the above conjecture is the following.
\begin{conj}
    $\thmr(L,\mathbf{p})$ satisfies an \emph{oriented} exact triangle: given an oriented skein triple $(L_+,L_-,L_0)$,
    \[\begin{tikzcd}
        {\thmr_*(L_+,\mathbf{p})} && {\thmr_*(L_-,\mathbf{p})} \\
        & {\thmr_*(L_0,\mathbf{p})}
        \arrow[from=1-1, to=1-3]
        \arrow[from=1-3, to=2-2]
        \arrow[from=2-2, to=1-1]
    \end{tikzcd},\]
    where the basepoints $\mathbf{p}$ are assumed to be away from the resolutions.
    (Compare with the \emph{unoriented} exact triangle~\cite{ljk2023triangle}.)
\end{conj}
Finally, it is reasonable to conjecture that the framed version of $\hmr$ matches the hat version of real Heegaard Floer homology, following the isomorphism in the ordinary cases (see e.g.~\cite{Taubes2010ECHSWF1,KutluhanLeeTaubes2010HFHM1,ColinGhigginiHonda2012HFECH1}).
\begin{conj}
    There is an isomorphism
    $\thmr_*(Y,\tau,\mathbf{p}) \cong \widehat{\textit{HFR}}_*(Y,\tau,\mathbf{p})$.
    This isomorphism is grading-preserving with the internal homological grading in $\thmr$ and the Maslov grading in $\widehat{\textit{HFR}}$.
    This isomorphism is grading-preserving with respect to the hat grading \textit{HFR} and the conjectural tilde-trading $\tilde{\chi}$ (Conj.~\ref{conj:grading}) in $\thmr$.
\end{conj}
\begin{rem}
    It is unclear how the $\{\wedge,\vee,-\}$ flavours of $\hmr$ and the $\{+,-,\infty\}$ flavours of \textit{HFR} are related.
    For instance, the multi-pointed equivariant $\{\pm 1\}$-equivariant $\hmr$ should not involve any curved chain complex, unlike the construction of Guth--Manolescu, where $\del^2 \ne 0$ and the chain complexes are curved.
\end{rem}
\subsection{Organization}
Section~\ref{sec:conceptual_overview} gives a conceptual picture of the framed homology and, in particular, describes the finite-dimensional Morse theory counterpart.
Section~\ref{sec:pointed_real_mflds} introduces the notion of multi-pointed real 3-manifolds.
Section~\ref{sec:framed_config_space} defines the framed versions of Seiberg--Witten configuration spaces, and Section~\ref{sec:analysis_framed} adapts the analysis of non-blown-up trajectories to the framed setting.
Section~\ref{sec:Floer_homology} and~\ref{sec:functor} define  framed real monopole Floer homology and establishes its functoriality. 
Section~\ref{sec:grading} discusses some aspects of the grading in the framed theory.
\subsubsection{Acknowledgement}
The author would like to thank Deeparaj Bhat, Gary Guth, Ciprian Manolescu, Eha Srivastava, and Hongjian Yang for many helpful discussions.
\section{The conceptual overview}
\label{sec:conceptual_overview}
\subsection{The finite-dimensional model}
\label{sec:fd_model}
In short, \emph{the framed real monopole Floer homology is the infinite-dimensional analogue of the Morse homology of a $(\mathbf{Z}/2)^k$-branched covering of the real monopole configuration space}.
For the reader's convenience, Figure~\ref{fig:tab_morse} compares the notations in the Morse theory setup with its Seiberg--Witten counterpart.

Let $H = (\mathbf{Z}/2)^k$ and $C_2 \le H$ be a distinguished subgroup of order-2.
Suppose $\tilde{B}$ is a smooth finite-dimensional  manifold on which $H$ acts smoothly.
Assume the $H$-action has exactly one type of stabilizer $\{\pm 1\} \cong C_2 \le H$, that fixes a connected submanifold $\tilde{B}^{\red} \subset B$.
Let $H' = H/C_2 \cong (\mathbf{Z}/2)^{k-1}$ be the quotient subgroup.

Write $B = \tilde{B}/H$ and $B^{\red} = \tilde{B}^{\red}/H = \tilde{B}^{\red}/H'$.
In particular, $\tilde{B}^{\red} \to B^{\red}$ is a (unbranched) covering having deck transformation $H'$.
Note the $C_2$ subgroup of $H$ gives rise to an intermediate double branched cover $B^o$ of $B$, having $H'$ as its deck transformation group.
It fits in the sequence of coverings
\begin{equation*}
    \wtilde{B} \to B^{o} \to B
\end{equation*}
where the first arrow is an unbranched $(\mathbf{Z}/2)^{k-1}$ covering and the second arrow is a branched double covering. 

Suppose $f \colon \tilde{B} \to \mathbf{R}$ is an $H$-invariant Morse--Smale function so that it makes sense to consider the Morse chain complex $(\tilde{C}_*,\tilde{\del})$.
Let $\mathfrak{C}$  be set of critical points of $f$, and $\mathfrak{C}^{\red}, \mathfrak{C}^{\irr}$ be the subsets of $\mathfrak{C}$ consisting of critical points in $\tilde{B}^{\red}$ and $\tilde{B} \setminus \tilde{B}^{\red}$, respectively.
Then the critical points come in discrete $H$- or $H'$-orbits:
\[
    \mathfrak{C}^{\irr} = \bigcup_{\mathfrak{a}} H\mathfrak{a},\quad
    \mathfrak{C}^{\red} = \bigcup_{\mathfrak{a}'} H'\mathfrak{a}',
\]
where the $\mathfrak{a}$ and $\mathfrak{a}'$ are representatives of the orbits.
Furthermore, $H$ acts naturally on the  Morse chain complex $(\tilde{C}_*,\tilde{\del})$, as it acts on the trajectory spaces $\tilde{N}(\mathfrak{a}, \mathfrak{b})$; in other words, there are homeomorphisms: for all $h \in H$,
\[
    h \colon \wtilde N(\mathfrak{a}, \mathfrak{b}) \to \wtilde N(h\mathfrak{a}, h\mathfrak{b}).
\]
This implies that if $\mathfrak{a}$ is reducible, then $\tilde{\del}\mathfrak{a}$ is the sum of the $C_2$-orbits of some irreducible points together with a sum of reducible points.
\begin{exmp}[$S^1$]
    \label{exmp:morse_S1}
    This example mimics the calculation of the framed Floer homology of $S^1 \times S^2$ as the branched cover of the 2-component unlink.
    Let $B = B^{\red} = S^1$ and $H = (\mathbf{Z}/2)^2$.
    Let $f' \colon B \to \mathbf{R}$ be a Morse--Smale function having one index-1 critical point $\mathfrak{a}_1$ and one index-0 critical point $\mathfrak{a}_0$.
    The differential downstairs $\del \equiv 0$, as there are two trajectories going from $\mathfrak{a}_1$ to $\mathfrak{a}_0$.
    Clearly, $H_*^{\text{Morse}}(B;\mathbf{F}) \cong \mathbf{F}\mathfrak{a}_1\oplus \mathbf{F}\mathfrak{a}_0.$
      
    The double cover $\pi \colon \tilde{B} \to B$ is again a circle, and the pullback $f = \pi^* f'$ is a Morse--Smale, having four critical points $\{\mathfrak{a}_1^+, \mathfrak{a}_1^-, \mathfrak{a}_0^+,\mathfrak{a}_0^-\}$, corresponding to $\pi^{-1}(\{\mathfrak{a}_1,\mathfrak{a}_0\})$.
    And the differential $\tilde{\del}$ is only nonzero when
    \[\tilde{\del} \mathfrak{a}_1^{\pm} = \mathfrak{a}_0^+ + \mathfrak{a}_0^-.\]
    It follows that
    \[H_*^{\text{Morse}}(\tilde{B};\mathbf{F}) = \mathbf{F}(\mathfrak{a}_1^+ + \mathfrak{a}_1^-)\oplus \mathbf{F}(\mathfrak{a}_0^+ + \mathfrak{a}_0^-).\]
    There are no preferred $\pm$ lifts.
\end{exmp}
\begin{exmp}[$\mathbf{T}^t$]
    Suppose $0 \le r \le t$.
    This example models the case of a pointed real $3$-manifold that admits an invariant metric of positive scalar curvature.
    The prototype is the double branched cover $\#^t(S^1 \times S^2)$ of the $(t+1)$-component unlink $U_{t+1}$, and there are $(r+1)$ basepoints on $(r+1)$ components of $U_{t+1}$, cf~\S~\ref{sec:pointed_real_mflds}.
    
    Let $B = B^{\red} = \mathbf{T}^t \cong (\mathbf{R}/\mathbf{Z})^t$;
    the covering space $\pi \colon \wtilde{B} \to B$ is given by doubling the first $r$ coordinates:
    \[\wtilde{B} = (\mathbf{R}/2\mathbf{Z})^r \times (\mathbf{R}/\mathbf{Z})^{t-r}.\]
    Let $f$ be a Morse--Smale function on $B$ and $\tilde{f} = \pi^*f$.
    Since $\tilde{B}$ and $B$ are tori of the same rank, equivalence of Morse and singular homology implies
    \[
    H_*^{\text{Morse}}(\tilde{B};\mathbf{F}) \cong H_*^{\text{Morse}}(B;\mathbf{F}).\]
    Similar to the $1$-dimensional case Example~\ref{exmp:morse_S1}, this isomorphism can be more concretely realized by mapping a generator of $H_*^{\text{Morse}}(B,f)$, represented as a sum of critical points, to their lifts in $\tilde{B}$.
\end{exmp}
\begin{exmp}[$S^1 \sqcup \tilde{S}^1 \sqcup S^1$]
    This is a hypothetical example of one reducible circle and two irreducible circles.
    Let $H = (\mathbf{Z}/2)^2$ and $H' = \mathbf{Z}/2$.
    Let $B = S^1_a \sqcup S^1_b$ and $\tilde{B} = \tilde{S}^1_a \sqcup S^1_{b+} \sqcup S^1_{b-}$, where $\pi \colon \tilde{B} \to B$ is the nontrivial double cover of the $a$-circle, and trivial double cover on the $b$ circle.
    Suppose $f$ is the same Morse function in Example~\ref{exmp:morse_S1} on each $S_1$, and label the critical points $\{\mathfrak{a}_1,\mathfrak{a}_0\},\{\mathfrak{b}_1,\mathfrak{b}_0\}$ on the $a$- and $b$-circles, respectively.
    The lift $\tilde{f} = \pi^*f$ over $\tilde{B}$ has $8$ critical points, labelled suggestively as
    \[\mathfrak{a}_1^+,\mathfrak{a}_1^-,
    \mathfrak{a}_0^+,\mathfrak{a}_0^-,
    \mathfrak{b}_1^+,\mathfrak{b}_0^+,
    \mathfrak{b}_1^-,\mathfrak{b}_0^-.\]
    The $\mathfrak{a}_i^{\pm}$'s are similar as those in Example~\ref{exmp:morse_S1}, whereas
    $\tilde{\del} \mathfrak{b}_1^{\pm} = \mathfrak{b}_0^{\pm}$.
    Thus $H_*^{Morse}(\tilde{B};\mathbf{F})$ has rank six,
    reflecting the fact that the homology of three circles is not the same as that of two circles.
\end{exmp}
\begin{figure}
\begin{center}
    \renewcommand{\arraystretch}{2.5}
    \begin{tabular}{|c||c|c|c|c|c|c|c|c|}   
        \hline
        Morse & $B$ & $B^o$ & $\wtilde{B}$ & $H$ & $H'$ & $C_2$ & $f$ \\ 
        \hline
        Floer & $\mathcal{B}$ & $\mathcal{B}^o $ & $\displaystyle  \wtilde{\mathcal{B}}_{\mathbf{c}}$ & $\dfrac{\mathcal{G}}{\mathcal{G}_{\mathbf{c}}}$ & $\dfrac{\mathcal{G}}{\mathcal{G}^o}$ & $\{\pm1\}$ & $\pertL$ \\
        \hline
    \end{tabular}
    \renewcommand{\arraystretch}{1.0}
\end{center}
\caption{
    \label{fig:tab_morse}
    Table of notations in Morse and monopole theories}
\end{figure}
\subsection{Framed configuration spaces as covering spaces}
This non-rigorous subsection uses the notions introduced in Sections~\ref{sec:pointed_real_mflds} and~\ref{sec:framed_config_space} and may be skipped on a first reading.
In particular, the following discussion conflates $2^{k-1}$-to-1 coverings with $2^{n-k}$-choices of relative real structures, where the latter can be thought of as trivial non-branched coverings.
The main takeaway is that the framed setting with $n$ basepoints uses $2^{n-1}$ times as many generators as the single-framed case.

Let $(Y,\tau)$ be a real 3-manifold  with basepoints $\mathbf{p} = \{p_1,\dots,p_n\}$. 
Let $\mathcal{B}$ be a real configuration space for an absolute real $\spinc$ structure.
The codimension-1 subsets $S_i \subset \mathcal{B}$ of configurations $(B,\Psi)$ for which $\Psi(p_i) = 1$ should be thought a system of branch cuts, for the branch locus $\mathcal{B}^{\red}$ in which the spinor vanishes identically over $Y$.
One then takes the $2^{n}$-to-1 cover $\widetilde{\mathcal{B}}_{\mathbf{p}} \to \mathcal{B}$ given by the quotient of a smaller gauge group $\mathcal{G}_{\mathbf{p}}$ of index $2^n$, consisting of real gauge transformations for which
\[g(p_1) = \dots = g(p_n) = 1.\]

The framing assumption ensures that the values
$
    (\Psi(p_1),\dots,\Psi(p_n))
$
are invariant under $\mathcal{G}_{\mathbf{p}}$. 
Over the subset of the real sections $\Gamma(S)^{\tfrr} \setminus ( \cup_i S_i)$, the signs of $\Psi(p_i)$'s
are constant since $\Psi(p_i)$ is $\mathbf{R}$-valued.
To this end, set
\[\text{sign}(\Psi) = (\text{sign}\Psi(p_1),\dots,\text{sign}\Psi(p_n)).\]
In other words, $\text{sign}(\Psi)$ trivializes the cover away from $S_i$.
However,
    $\text{sign}(\Psi)$ depends on an initial choice of the framing as in Remark~\ref{rem:choice_of_trivilization_T}.
    For a pair $\Psi,\Psi'$, the difference $\text{sign}(\Psi)-\text{sign}(\Psi')$ is independent of the framing of $S|_{\mathbf{p}}$.
\section{Marked real manifolds}
\label{sec:pointed_real_mflds}
\subsection{The category of marked real manifolds}
A \emph{real manifold} is an oriented 3- or 4-manifold with an orientation-preserving involution having a nonempty codimension-$2$ fixed point set.
The notion of ``marked'' manifolds keeps track of codimension-$3$ submanifolds of the fixed point sets as follows:
\begin{itemize}[leftmargin=*]
    \item A \emph{marked real 3-manifold} $\mathbb{Y}$ is a triple $(Y,\tau,\mathbf{p})$ where $(Y,\tau)$ is a real 3-manifold $(Y,\tau)$ and $\mathbf{p}$ is a nonempty collection of $\tau$-fixed points.
    
    \item A \emph{marked real 4-manifold} $\mathbb{X}$ is a triple $(X,\tau_X,\mathbf{c})$ where $(X,\tau_X)$ is a real 4-manifold $(X,\tau_X)$, possibly with boundary, and $\mathbf{c}$ is a nonempty 1-manifold that is fixed pointwise by $\tau_X$.
     \footnote{
         A 4-manifold ``marked with a 1-dimensional basepoint set'' is a slight misnomer.
         However, the adjective ``framed'' is typical reserved for a manifold with a trivialization of its tangent bundle.}
    % TODO remember to uncomment footnotes in final production.
    
    \item A \emph{marked real cobordism} $\mathbb{W} \colon \mathbb{Y}_- \to \mathbb{Y}_+$ is a triple $(W,\tau_W,\mathbf{a})\colon (Y_-,\tau_-,\mathbf{p}_-) \to (Y_+,\tau_+,\mathbf{p}_+)$  where $(W,\tau_W)$ is a marked real 4-manifold whose boundary is $-(Y_-,\tau_-) \sqcup (Y_+,\tau_+)$, and $\mathbf{a}$ is a 1-manifold whose boundary is precisely $-\mathbf{p}_- \cup \mathbf{p}_+$.
    
    The boundary manifolds $Y_{\pm}$ need not be connected.
    
    \item A \emph{marked real manifold} $(M,\tau,\mathbf{c})$ is either a real 3-manifold or 4-manifold.
\end{itemize}

\begin{notats}
    As in the above definitions, a triple of the form $(M,\tau,\mathbf{c})$ denoting a marked real 3- or 4-manifold maybe be denoted as the blackboard-bold $\mathbb{M}$.
    This convention will be heavily used in Section~\ref{sec:functor}.
\end{notats}

\begin{defn}
    The \emph{category of marked real 3-manifolds} $\textsc{rcob}_{*}$ has objects marked real 3-manifolds and morphisms given by marked real cobordisms.
\end{defn}
Over a 0-dimensional basepoint set $\mathbf{c}$, let $\underline{\mathbf{C}}$ be the trivial line bundle and $\con\colon \underline{\mathbf{C}} \to \underline{\mathbf{C}}$ be the fibre-wise complex conjugation.
If the basepoint set is 1-dimensional, denote the same bundle map as $\con_0\colon \underline{\mathbf{C}} \to \underline{\mathbf{C}}$;
the real subbundle is the trivial line bundle $\underline{\mathbf{R}}$. 
Over a circle, fix a choice $\con_1 \colon \underline{\mathbf{C}} \to \underline{\mathbf{C}}$ of a real structure having the M\"{o}bius line bundle $\gamma$ as its real subbundle.
\begin{defn}
    \label{defn:rrs_on_line_bundles}
    A \emph{relative real hermitian line bundle} over a marked real manifold $(M,\tau,\mathbf{c})$ is a triple $(L,\sfT,\frr)$ consisting of a complex hermitian line bundle $L$ over $M$, an trivialization
    $\sfT\colon L|_{\mathbf{c}} \to \underline{\mathbf{C}}$, and a conjugate-linear involution $\R \colon L \to L$ that lifts $\tau: M \to M$ and $\sfT \circ \R = \con \circ \sfT$ when $M$ is a 3-manifold, or  $\sfT \circ \R = \con_i \circ \sfT$, for $i = 0$ or $1$ when $M$ is a 4-manifold.
\end{defn}
\begin{rem}
    While every line bundle on a 1-manifold is trivializable, there are two inequivalent real structures, depending on whether the real subbundle is the trivial $\mathbf{R}$-bundle or the Mobius line bundle.
    Definition~\ref{defn:rrs_on_line_bundles} does not exclude the latter case.
    In particular, every line bundle that has real structure also admits a relative real structure.
\end{rem}

\begin{lem}
    \label{lem:rrsc_on_line_bundles}
The set of relative real structures of a given hermitian line bundle $L \to M$ is a torsor
over the following group
\begin{equation*}
    \frac{H^1(M,\mathbf{c};\mathbf{Z})^{\tau^*}}{\text{im}(1+\tau^*)},
\end{equation*}
where the denominator is the image of $(1+\tau^*) \colon H^1(M,\mathbf{c}) \to H^1(M,\mathbf{c})^{\tau^*}$ mapping $\omega$ to $\omega + \tau^*\omega$.
Moreover, the set of relative real structures equivalent as absolute real structure is a torsor over the kernel of restriction map:
\begin{equation*}
    \text{ker} \left(\frac{H^1(M,\mathbf{c};\mathbf{Z})^{\tau^*}}{\text{im}(1+\tau^*)} \to \frac{H^1(M;\mathbf{Z})^{\tau^*}}{\text{im}(1+\tau^*)} \right).
\end{equation*}
\end{lem}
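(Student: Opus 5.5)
The plan is to compare two relative real structures $(\sfT,\R)$ and $(\sfT',\R')$ on the same hermitian line bundle $L$ by the automorphism that relates them. The comparison proceeds in three steps.

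\emph{Step 1 (the difference map).} The composite $\R'\circ\R$ is complex linear and covers the identity, so $\R'\circ\R = u$ for a map $u\colon M\to S^1$. Similarly $\sfT'\circ\sfT^{-1}$ is a function $v\colon\mathbf{c}\to S^1$. The compatibility conditions $\sfT\circ\R=\con\circ\sfT$ and $\sfT'\circ\R'=\con\circ\sfT'$ (or $\con_i$ in dimension 4) force, on $\mathbf{c}$ (which is pointwise fixed), the relation $u|_{\mathbf{c}} = v^{-2}$ or $v^{2}$, depending on convention. The involution condition $\R'^2=1$ gives $u\cdot\overline{\tau^*u}=1$, that is, $\tau^*u = u$.

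\emph{Step 2 (normalizing by the relative gauge group).} Relative real structures are regarded up to the action of gauge transformations $g\colon M\to S^1$, which act by $\R\mapsto g\R g^{-1} = g\,\overline{\tau^*g}^{\,-1}\R$, that is, $u\mapsto u\cdot g\,\tau^*g$, and by $\sfT\mapsto g|_{\mathbf{c}}\sfT$. First use $g|_{\mathbf{c}}$ to reduce to $v\equiv 1$; this is possible because any $v\colon\mathbf{c}\to S^1$ extends over $M$ (for 0-dimensional $\mathbf{c}$ trivially, and for circles after using the $\con_i$ freedom). With $v\equiv1$ we have $u|_{\mathbf{c}}=1$, so $u$ defines a class $[u]\in H^1(M,\mathbf{c};\mathbf{Z})$, which is $\tau^*$-invariant by Step 1. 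The residual gauge transformations with $g|_{\mathbf{c}}=1$ change $[u]$ by $[g]+\tau^*[g]$, which is exactly $\mathrm{im}(1+\tau^*)$.

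\emph{Step 3 (realization and freeness).} Surjectivity: given a $\tau^*$-invariant class $\omega$, represent it by $u_0$ with $u_0|_{\mathbf{c}}=1$. Invariance gives $\tau^*u_0 = u_0 e^{if}$ with $f|_{\mathbf{c}}=0$. Average to obtain a representative with $\tau^*u=u$ exactly, namely $u=u_0e^{if/2}$; this works after checking that $f$ is anti-invariant. Then set $\R'=u\R$, $\sfT'=\sfT$. Injectivity: if $[u]=[g]+\tau^*[g]$, then $u\,(g\tau^*g)^{-1}$ is null-homotopic rel $\mathbf{c}$ and is $\tau$-invariant. Write it as $e^{ih}$ with $h$ invariant and $h|_{\mathbf{c}}\in 2\pi\mathbf{Z}$, and absorb it via the gauge transformation $e^{ih/2}$ (which is real-compatible since $h$ is invariant). \textbf{The main obstacle} is this square-root and averaging issue: one must make sure that the obstruction in $H^1$ is genuinely integral, and that the sign ambiguity of $e^{ih/2}$ on $\mathbf{c}$ is absorbed, using constant $\pm1$ gauge on components of $\mathbf{c}$ relative to framing.

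\emph{The second statement.} This follows from naturality of the long exact sequence of the pair $(M,\mathbf{c})$. Forgetting $\sfT$ sends $[u]$ to its image in $H^1(M)^{\tau^*}/\mathrm{im}(1+\tau^*)$, which classifies absolute real structures by the same argument with $\mathbf{c}=\emptyset$. Two relative structures are absolutely equivalent exactly when their difference lies in the kernel of this induced map.
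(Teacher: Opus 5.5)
Your argument follows the paper's route. The paper keeps the framing $\sfT$ fixed (Remark~\ref{rem:choice_of_trivilization_T}), notes that the difference $\R'\R$ is then a $\tau$-invariant map $M\to S^1$ that is $\equiv 1$ on $\mathbf{c}$, and imports the absolute argument of \cite[\S 3.1]{ljk2022}; your Steps~1 and~3 spell that argument out.

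The step that fails is the normalization $v\equiv 1$ in Step~2 when $\mathbf{c}$ has circle components. Step~1 only tells you that $[u|_{\mathbf{c}}]=-2[v]$ lies in the image of $H^1(M)\to H^1(\mathbf{c})$, not that $[v]$ does. The $\con_0/\con_1$ freedom cannot help either, since two structures with different models on a circle differ by a $u$ of odd degree there.

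Concretely, take $M=S^1\times S^3$ with $\tau=\id\times\tau_{S^3}$, where $\tau_{S^3}$ rotates about an unknot $K$, so the fixed set is the torus $S^1\times K$. Let $c$ be a $(2,1)$-curve on this torus. On the trivial bundle take the standard structure: $\R$ is fibrewise conjugation covering $\tau$ and $\sfT=\id$. The projection $u=e^{i\theta}$ is $\tau$-invariant with $\deg(u|_c)=2$. Choose $v$ on $c$ with $v^{-2}=u|_c$, so $\deg v=-1$. Then $(v\sfT,u\R)$ is a relative real structure with the same model $\con_0$. It is not equivalent to $(\sfT,\R)$, because every $g\colon M\to S^1$ has even degree on $c$. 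Yet $H^1(M,c;\mathbf{Z})=0$.

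So the statement cannot be proved with the framing allowed to vary; it has to be read with $\sfT$ fixed, as the paper does. With that reading Step~2 is unnecessary. For points and arcs your extension argument is fine in any case.

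The second problem is the one you flag yourself, in the injectivity step. Use the relative nullhomotopy in full. If $w=u\,(g\,\tau^*g)^{-1}$ vanishes in $H^1(M,\mathbf{c})$, then (on connected $M$) $w=e^{ih}$ with $h$ equal to one and the same element of $2\pi\mathbf{Z}$ on all of $\mathbf{c}$. After a shift, $h|_{\mathbf{c}}\equiv 0$. Then $h$ is automatically $\tau$-invariant, since $\tau^*h-h$ is a constant vanishing on $\mathbf{c}$. Also $e^{-ih/2}\equiv 1$ on $\mathbf{c}$, so this gauge transformation preserves $\sfT$ and no sign correction arises.

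The repair you propose, componentwise $\pm1$ gauge transformations, is not available in general. A real gauge transformation with prescribed signs on the components of $\mathbf{c}$ exists only if those signs lie in the image of $\mathsf{ev}_{\mathbf{c}}$. Such sign discrepancies are exactly what the image of $H^0(\mathbf{c})$ in the group detects, for example the $2^{n-1}$ structures on the dotted unknot.

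With these two corrections your proof is complete. That includes your deduction of the second statement via the forgetful map, where the absolute case uses a fixed point of $\tau$ in place of $\mathbf{c}$ in the averaging step.
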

\begin{proof}
    This can be deduced from \cite[\S 3.1]{ljk2022} by observing that the difference between two relative real structures is given by a unitary bundle isomorphism $g \colon M \to S^1$ for which $g \equiv 1$ over $\mathbf{c}$; hence $g$ represents an element of $H^1(M,\mathbf{c};\mathbf{Z})$. 
\end{proof}
\subsection{Relative real spin\textsuperscript{c} structures}
\label{sec:rrsc}
Let $(M,\tau)$ be a real 3- or 4-manifold.
Recall~\cite[Defn.~3.7]{ljk2022}:
\begin{defn}
    An \emph{(absolute) real $\spinc$ structure} over $(M,\tau)$ is a pair $(\mathfrak{s},\R)$ of a $\spinc$ structure $\mathfrak{s}$ given by $(S,\rho)$ and a real structure $\R$ on the spinor bundle $S$ covering $\tau$, compatible with the Clifford multiplication, that is, 
    \[
    \R(\rho(\xi) \Phi) = \rho(\tau_*\xi)\R(\Phi),\]
    for every $\xi \in \Gamma(TM)$ and $\Phi \in \Gamma(S)$.
    Two real structures $\R_1$ and $\R_2$ on a common underlying spin\textsuperscript{c} structure $(S,\rho)$ are \emph{equivalent} if there exists a $\spinc$ automorphism $u$ of $S$ for which $u \circ \R_1 =\R_2 \circ u$.
\end{defn}

A relative real $\spinc$ structure (to be defined below soon) records an additional choice of a trivialization of the spin\textsuperscript{c} vector bundle on the set of basepoints, together with a real structure that are of standard forms in the trivializations, described as follows.

Let $(\mathbf{C}^2,\rho_3)$ be the Clifford module of $(\mathbf{R}^3,\langle \cdot,\cdot\rangle)$ such that the standard basis $\{e_1,e_2,e_3\}$ acts by
Pauli matrices:
\[
\rho_3(e_1) = \begin{bmatrix}+i & 0\\0 & -i\end{bmatrix}, \quad
\rho_3(e_2)=\begin{bmatrix}0 & -1\\+1 & 0\end{bmatrix}, \quad
\rho_3(e_3) = \begin{bmatrix} 0 & +i \\ +i & 0\end{bmatrix}.
\]
Consider the involution $\tau_3(x_1,x_2,x_3)= (-x_1,+x_2,-x_3)$ on $\mathbf{R}^3$.
The coordinate-wise complex conjugation $\textsf{conj}$ on $\mathbf{C}^2$ is compatible with the Clifford module structure.
Indeed, for all $x \in \mathbf{R}^3$ and $\Phi \in \mathbf{C}^2$:
\[ \mathsf{conj}(\rho_3(x)\Phi) = \rho_3(\tau_3(x))\mathsf{conj}(\Phi).\] 
As for the models over basepoints in 4-manifolds, let $(\mathbf{C}^4,\rho_{4})$ be the Clifford module of $(\mathbf{R}^4,\langle \cdot,\cdot \rangle)$ for which
\begin{equation*}
    \rho_{4}(e_0) = \begin{bmatrix}
        0 & -I_2\\
        I_2 & 0\\
    \end{bmatrix},
    \quad
    \rho_{4}(e_i) = \begin{bmatrix}
        0 & -\rho_3(e_i)^*\\
        \rho_3(e_i) & 0
    \end{bmatrix},
\end{equation*}
where $\{e_0,e_1,e_2,e_3\}$ is the standard basis,
$\mathbf{C}^4=\mathbf{C}^2_+ \oplus \mathbf{C}^2_-$, 
and $\rho_3$ in the off-diagonal blocks is the 3-dimensional Clifford multiplication on $\mathbf{C}^2_{\pm}$ as above.
(The involution in $\mathbf{R}^4$ acts like $(x_0,x_1,x_2,x_3) \mapsto (+x_0,-x_1,+x_2,-x_3)$.)

Denote by $\underline{\mathbf{C}}^4$ be the trivial bundle with fibre $\mathbf{C}^4$ over a basepoint 1-manifold.
Let $\mathsf{conj}_0\colon \underline{\mathbf{C}}^4 \to \underline{\mathbf{C}}^4$ be the fibre-wise complex conjugation, for which the real subbundle is the trivial $\mathbf{R}^4$-bundle, decomposed as two trivial $\mathbf{R}^2$-bundles:
$\underline{\mathbf{R}}^2_+ \oplus \underline{\mathbf{R}}^2_-$.

Consider a closed component of the basepoint 1-manifold, conveniently viewed the second local model as over
\[\{[x:y:0] \ | \ (x,y) \in \mathbf{R}^2 \setminus \{0\}\} = \mathbf{RP}^1 \subset \mathbf{RP}^2 \subset \mathbf{CP}^2.\]
Consider the canonical spinor bundle 
\[\Lambda^{0,*} = (\Lambda^{0,0} \oplus \Lambda^{0,2}) \oplus \Lambda^{0,1}\] 
consisting of $(0,*)$-forms on $\mathbf{CP}^2$, restricted over $\mathbf{RP}^1$.
The tangent bundle $T\mathbf{CP}^2$ restricts to $\mathbf{RP}^1$ as a trivializable $\mathbf{R}^4$-bundle over $\mathbf{RP}^1$ and acts on $\Lambda^{0,*}$ by the standard Clifford multiplication $\rho_{can}$.
The complex conjugation on $\mathbf{CP}^2$ acts naturally on the forms $\Lambda^{0,*}$. 
Denote the corresponding real structure on $\Lambda^{0,*}$ by $\textsf{conj}_1 \colon \Lambda^{0,*}  \to \Lambda^{0,*} $.
The positive spinor bundle $\Lambda^{0,0} \oplus \Lambda^{0,2}$ trivializable as a $\mathbf{C}$-vector bundle, but
\begin{equation*}
    (\Lambda^{0,2})^{\con_1}|_{\mathbf{RP}^1} = (\mathcal{O}(3)^{\con_1}|_{\mathbf{CP}^1})|_{\mathbf{RP}^1} \cong \gamma^3,
\end{equation*}
where $\gamma$ is the M\"{o}bius line bundle.
Therefore, $\langle w_1(\Lambda^{0,0} \oplus \Lambda^{0,2})^{\con_1},\mathbf{RP}^1\rangle \ne 0$.
The real subbundle of $\Lambda^{0,*}$ is isomorphic to $\underline{\mathbf{R}}_+ \oplus \gamma_+ \oplus \underline{\mathbf{R}}_- \oplus \gamma_-$, 
where the subscripts indicate positive/negative spinors.
\begin{defn}
\label{defn:3d-rrsc}
Let $(Y,\tau,\mathbf{p})$ be a marked real 3-manifold.
A \emph{relative real $\spinc$ (\rrsc) structure} on $(Y,\tau,\mathbf{p})$ is a triple $(\mathfrak{s},\sfT,\mR)$ consisting of a $\spinc$ structure $\mathfrak{s}=(S,\rho)$ over $Y$, a trivialization $\sfT\colon S|_{\mathbf{p}} \to \underline{\mathbf{C}}^2$ over $\textbf{p}$ as a bundle of Clifford modules of $TY|_{\mathbf{p}}$, and a $\spinc$ compatible real structure $\frr$ over $(Y,\tau)$ for which
\[\sfT \circ \mR = \textsf{conj} \circ \sfT.\]

\end{defn}
\begin{defn}
\label{defn:4d-rrsc}
    Let $(X,\tau_X,\mathbf{c})$ be a marked real 4-manifold.
  A \emph{relative real $\spinc$ (\rrsc)  structure} on $(X,\tau_X,\mathbf{c})$ is a triple $(\mathfrak{s}_X,\sfT_X,\R_X)$ consisting of a $\spinc$ structure $\mathfrak{s}_X$, an isomorphism $\sfT_X \colon S|_{\mathbf{c}}\to \underline{\mathbf{C}}^4$ or $\Lambda^{0,*}$, as Clifford modules of $TX|_{\mathbf{c}}$, and a $\spinc$-compatible real structure $\frr$ such that over each component of $\mathbf{c}$ and :
    \[\sfT_X \circ \mR_X = \textsf{conj}_i \circ \sfT_X.	\]

\end{defn}
\begin{notats}
    Denote the set of real $\spinc$ structures by $\text{RSpin}^c(M,\tau)$, and, 
    following~\cite{HFR2} and~\cite{OzSz2008multi}, denote the set of $\rrsc$ structures by $\underline{\text{RSpin}^c}(M,\tau,\mathbf{c})$.
\end{notats}
\begin{notats}
    A {\rrscs} will often be written as $(\mathfrak{s},\tilde{\mR})$ where $\tilde{\mR}$ denotes its underlying real structure but emphasizes that fact the notion of equivalence is in the framed sense.
    To further simplify notations, the underlying $\spinc$ structure $\mathfrak{s}$ may be suppressed and a $\rrsc$ structure will be written as $\tilde{\mR}$. 
\end{notats}
\begin{rem}
    The data of the pairs $(\mathfrak{s},\sfT)$ and $(\mathfrak{s}_X,\sfT_X)$ in Definition~\ref{defn:3d-rrsc} and~\ref{defn:4d-rrsc} are \emph{relative $\spinc$} structures on $(Y,\mathbf{p})$ and $(X,\mathbf{c})$, respectively.
\end{rem}
\begin{rem}
    \label{rem:choice_of_trivilization_T}
    The definition of {\rrsc} structures has left some apparent ambiguity in the choices of trivializations.
    Indeed, post-composition with a locally constant function $\mathbf{c} \to \{\pm 1\}$ over  yields inequivalent $\sfT$'s. 
    This ambiguity is immaterial and a framing $\sfT$ will be always chosen once and for all.
\end{rem}
\subsection{First Stiefel-Whitney classes of real spin\textsuperscript{c} structures}
\subsubsection{Complex structure from a tangent framing}
\begin{lem}
    Let $(Y,\tau)$ be a real 3-manifold equipped with a real spin\textsuperscript{c} structure $(\mathfrak{s},\mR)$. The $w_1$ of the real subbundle $S^{\mR}$ is zero over every component $C_i$ of $C = \fix{\tau}$.
    In fact, an orientation of $C_i$ determines a complex structure on the $\mathbf{R}^2$-bundle $S^{\mR}|_{C_i}$.
\end{lem}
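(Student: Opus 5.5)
The plan is to produce, from an orientation of $C_i$, an endomorphism $J$ of $S|_{C_i}$ that commutes with $\R$ and squares to $-1$. Such a $J$ restricts to a complex structure on the real rank-2 bundle $S^{\R}|_{C_i}$. This immediately gives $w_1(S^{\R}|_{C_i})=0$, since a complex line bundle is orientable.

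\emph{Step 1: the candidate $J$.} Since $C_i$ is a component of the fixed set of an orientation-preserving involution on a 3-manifold, $d\tau$ acts on $TY|_{C_i}$ as $+1$ on $TC_i$ and as $-1$ on the normal 2-plane bundle $N$. An orientation of $C_i$ gives a unit tangent vector field $v$ along $C_i$, with $\tau_*v=v$. Put $J=\rho(v)$. In dimension 3, Clifford multiplication by a unit vector satisfies $\rho(v)^2=-1$. Moreover $\rho(v)$ is complex linear and skew-adjoint.

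\emph{Step 2: $J$ preserves $S^{\R}$.} The compatibility condition $\R(\rho(\xi)\Phi)=\rho(\tau_*\xi)\R(\Phi)$, applied at points of $C_i$ with $\xi=v$, gives $\R\circ J=J\circ\R$. Hence $J$ maps the fixed subbundle $S^{\R}|_{C_i}$ to itself. Since $J^2=-1$ there, this defines a complex structure on $S^{\R}|_{C_i}$.

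\emph{Step 3: ranks and the consequence.} The real subbundle of a conjugate-linear involution on a rank-2 complex bundle is a real rank-2 bundle, and $S|_{C_i}=S^{\R}\otimes\mathbf{C}$. So $(S^{\R}|_{C_i},J)$ is a complex line bundle. Complex line bundles carry canonical orientations, so $w_1=0$. As a sanity check, in the local model $(\mathbf{C}^2,\rho_3,\textsf{conj})$ with $\tau_3$ fixing $e_2$, the matrix $\rho_3(e_2)$ is real and squares to $-1$, acting on $\mathbf{R}^2$ as rotation by $90^\circ$. This matches the construction.

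Reversing the orientation replaces $J$ by $-J$, i.e.\ the conjugate complex structure. This is consistent with the statement that the orientation \emph{determines} the complex structure.

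The only step needing care is Step 2. There one must check that the sign conventions make $\rho(v)$ commute with $\R$, rather than anticommute. This depends on $\tau_*v=+v$, which holds precisely because $v$ is tangent to the fixed set. Normal vectors $n$ would instead give $\R\rho(n)=-\rho(n)\R$, so one must use the tangent direction and not a normal one.
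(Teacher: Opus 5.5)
Your proposal is correct and is essentially the paper's proof: the paper also takes Clifford multiplication $\rho(\hat{t})$ by a $\tau_*$-invariant unit tangent field along $C_i$. It observes that $\rho(\hat{t})^2=-1$ and that $\rho(\hat{t})$ commutes with $\R$, which makes it a complex structure on $S^{\R}|_{C_i}$ and hence forces $w_1=0$; your additional checks (the local model via $\rho_3(e_2)$, and $J\mapsto -J$ under orientation reversal) are consistent with this.
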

\begin{proof}
    Let $\hat{\text{t}}$ be a framing of the tangent bundle of $TC_i$; clearly $\tau_*(\hat{\text{t}}) = \hat{\text{t}}$.
    Then $\rho(\hat{\text{t}})^2=-1$ over $S|_{C_i}$ and commutes with $\mR$.
    This means precisely that $\rho(\hat{\text{t}})$ defines a complex structure of $S^{\mR}|_{C_i}$.
\end{proof}
\subsubsection{The first Stiefel--Whitney class from a normal framing}
Let $(Y,\tau)$ be a marked real 3-manifold, let $C$ be the fixed point set, and let $(\mathfrak{s},\mR)$ be a real spin\textsuperscript{c} structure.
Suppose $\hat{n}$ is a nonzero section of the unit normal bundle of $C$.
Then
\[S|_C \cong W^+ \oplus W^-\]
where $W^{\pm}$ is the $(\pm 1)$-eigenspace of $i\rho(\hat{n})$.
The involution $\mR$ commutes with $i\rho(\hat{n})$ and
$\rho(\hat{t})$ anti-commutes with  $i\rho(\hat{n})$.
Hence $\mR$ acts on each $W^{\pm}$ and $\rho(\hat{t})$ defines an conjugate-linear isomorphism $\rho(\hat{t})\colon W^{\pm} \to W^{\mp}$.
For every component $C_i$, the $\mR$-fixed subbundle $(W^+)^{\mR}$ is a $\mathbf{R}$-line bundle whose first Stiefel--Whitney class
\[w_1((W^+|_{C_i})^{\mR})=w_1((W^-|_{C_i})^{\mR})\]
gives rise to a $(\mathbf{Z}/2)^{r}$-valued invariant of $(\mathfrak{s},\mR)$ by evaluation over $[C_i]$, $1 \le i \le r$:
\[
w_1(\mR,\hat{n}) = (\langle w_1(W^+|_{C_1})^{\mR},C_1 \rangle, \dots, 
\langle w_1(W^+|_{C_r})^{\mR} ,C_r\rangle ).
\]
Let $W^+_t$ be the eigen-line bundle obtained from $W^+$ by introducing a $(\pm 1)$-twist to $\hat{n}$ along $C_i$. 
Then
\[
    \langle w_1(W^+_t|_{C_i})^{\mR}, C_i\rangle=
    \langle w_1(W^+|_{C_i})^{\mR} ,C_i\rangle + 1.
\]
\begin{lem}
    If $\hat{n}$ is given as a framing of the normal bundle to a real Heegaard surface $\Sigma$ as in Section~\ref{sec:real_heegaard_surface_pointed}, then 
    \[\sum_{i=1}^r \langle w_1(W^+|_{C_i})^{\mR},C_i \rangle = r \ (\text{mod 2}).\]
\end{lem}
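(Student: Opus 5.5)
The plan is to reduce the sum to a single global count on the real Heegaard surface $\Sigma$. Such a surface is $\tau$-invariant, and the fixed set $C$ lies in $\Sigma$, so $C$ is a collection of circles on $\Sigma$. The unit normal $\hat n$ to $\Sigma$ restricts to a normal framing of each $C_i$. Along $C_i$ we have the orthonormal frame $(\hat t,\hat m,\hat n)$ of $TY|_{C_i}$, where $\hat m$ is the normal to $C_i$ inside $\Sigma$. The involution $\tau_*$ fixes $\hat t$ and negates $\hat m$ and $\hat n$. Locally this is exactly the model $\tau_3$ on $\mathbf{R}^3$, with $e_2=\hat t$ and $\{e_1,e_3\}=\{\hat m,\hat n\}$.

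First, I would use this local model to identify $(W^+)^{\R}$ concretely. In the Pauli model, $i\rho_3(e_1)=\mathrm{diag}(-1,+1)$ is real, so its eigenlines are spanned by real vectors. With the frame above, $(W^+)^{\R}|_{C_i}$ therefore becomes a real line inside the rank-two real bundle $S^{\R}|_{C_i}$. By the preceding lemma, $S^{\R}|_{C_i}$ is a complex line bundle via $\rho(\hat t)$, hence trivial as a real plane bundle over the circle.

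Second, I would compare $\hat n$ to a reference normal framing. Since $S^{\R}|_{C_i}$ is trivial, $w_1$ of the eigenline $(W^+)^{\R}$ records how many half-turns the eigenline makes relative to a trivialization. This is governed by the rotation of the frame $(\hat m,\hat n)$ relative to a framing that extends over a real spin structure. The twisting formula stated just before the lemma shows that each full twist of $\hat n$ flips the parity. So $\langle w_1(W^+|_{C_i})^{\R},C_i\rangle$ equals $1$ plus the parity of the framing $\hat n$ relative to a reference normal framing of $C_i$ in $Y$. Here the reference framing is the one induced by the spin lift, mirroring the Möbius phenomenon $\gamma^3$ already seen over $\mathbf{RP}^1$.

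Third, I would sum over $i$. The framing induced by $\Sigma$ is the surface framing of the link $C\subset\Sigma$. Its total parity relative to the reference framing should vanish, because $\Sigma$ is a closed surface containing $C$ and splitting $Y$ into two handlebodies swapped by $\tau$. Concretely, $\hat n$ extends to a nonvanishing section over all of $\Sigma$, so the total obstruction is $\langle w_2\rangle$ or an Euler-characteristic-type number that is even. This yields $\sum_i \langle w_1\rangle = r + 0 \pmod 2$.

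I expect the main obstacle to be the third step. One has to pin down the reference framing and justify that the global parity of the surface framing is zero. I would first attempt a direct argument: extend $(W^+)^{\R}$ as a real line subbundle of $S^{\R}|_{\Sigma^\tau}$, or of a suitable bundle over the quotient surface $\Sigma/\tau$, whose boundary circles are the $C_i$. Then I would apply the fact that the sum of $w_1$ over the boundary of a compact surface is zero mod 2, correcting by one per component for the boundary rotation of the normal.
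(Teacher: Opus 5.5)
Your local analysis along a single $C_i$ is sound: the frame $(\hat t,\hat m,\hat n)$ matches $\tau_3$, $S^{\mR}|_{C_i}$ is trivial, and a full twist of $\hat n$ flips $w_1$. But the proposal does not prove the lemma. All the global content sits in Steps~2--3, and there it rests on an undefined ``reference framing induced by the spin lift,'' plus the unsupported claim that the total parity of $\hat n$ relative to it vanishes.

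No reference depending only on $(Y,\tau)$ can make every component contribute exactly $1$, because the individual values are not determined by the framing alone. Two real structures on the same spin\textsuperscript{c} structure differ by $\mR'=u\mR$ with $u\colon Y\to S^1$ $\tau$-invariant. This replaces $(W^+)^{\mR}$ by $u^{1/2}(W^+)^{\mR}$, so it flips $w_1$ on every $C_i$ where $\deg(u|_{C_i})$ is odd. For example, take $S^1\times S^2$ with $\mathrm{id}\times\mathrm{rot}$ and the real Heegaard torus $S^1\times(\text{great circle through the poles})$; then $u=e^{i\theta}$ flips both components. So only the sum is meaningful, and asserting that it has the predicted parity is the lemma itself. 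Moreover, the ``Euler-characteristic-type number'' that actually governs the sum is not even: it is $g-1=-\chi(\Sigma_+)$, whose parity is $r$.

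The missing idea, which is the paper's argument, is to treat $W^+$ as a Real line bundle over the whole closed surface $\Sigma$, not just over $C$. Since $\tau_*\hat n=-\hat n$ along $\Sigma$ and $\mR$ is antilinear, $\mR$ commutes with $i\rho(\hat n)$ on all of $S|_\Sigma$. For a Real line bundle $L$ over the Klein surface one has $\sum_i\langle w_1(L^{\mR}),C_i\rangle\equiv\langle c_1(L),[\Sigma]\rangle \pmod 2$; this is the Okonek--Teleman identity the paper cites. Your last paragraph is reaching for this. Trivialize $L$ over the half-surface $\Sigma_+$; then the boundary real lines have total Maslov index $c_1(L)[\Sigma]$. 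Hence $(W^+)^{\mR}$ does \emph{not} in general extend over $\Sigma/\tau$, and the ``correction'' you posit is exactly this Chern number, which must be computed.

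The computation has three steps. First, $c_1(W^+)+c_1(W^-)=c_1(S)|_\Sigma$ pairs to zero with $[\Sigma]$, because a Heegaard surface is nullhomologous. Second, Clifford multiplication identifies $\mathrm{Hom}(W^+,W^-)$ with $T\Sigma$ up to conjugation, so $c_1(W^+)-c_1(W^-)=\pm\chi(\Sigma)$ and $\langle c_1(W^+),[\Sigma]\rangle=\pm(g-1)$. Third, $g=2h+r-1$ for a dividing Klein surface, so $g-1\equiv r \pmod 2$. Your heuristic ``one per component plus an even remainder'' is the decomposition $-\chi(\Sigma_+)=r-(2-2h)$, but it only becomes a proof once these facts are in place.
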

\begin{proof}
    Since $\Sigma$ is nullhomologous, $\langle c_1(S), [\Sigma]\rangle = 0$ and thus $\langle c_1(W^{\pm}), [\Sigma]\rangle = \pm(g-1)$, where $g$ is the genus of $\Sigma$.
    If $h$ is the genus of $\Sigma/\tau$, then $g = 2h + r -1$. 
    The equality $\langle w_1(L^{\mR}),C \rangle = \langle c_1(L), [\Sigma] \rangle $, for every real line bundle $L$, established in \cite[Appendix]{OkonekTeleman2013} gives:
    \[\sum_{i=1}^r \langle w_1(W^+|_{C_1})^{\mR},C_1 \rangle = \langle c_1(W^+), [\Sigma]\rangle = g-1 = (2h + r - 1) -1 = r \ (\text{mod 2}). \qedhere\]
\end{proof}
\subsubsection{The first Stiefel--Whitney class of a {\rrscs} on a closed 4-manifold} \medskip
Given a marked real 4-manifold $(X,\tau_X,\mathbf{c})$ and a real $\spinc$ structure $(\mathfrak{s},\mR)$.
The restriction
$S^{\pm}|_{\mathbf{c}}$ is a \textsc{r}eal rank-$2$ vector bundle,
not necessarily trivial --- the invariant $\mathbf{R}^2$-subbundle over the basepoint circles is classified by its first Stiefel--Whitney class:
\[
 w_1((S^{\pm}|_{\mathbf{c}})^{\mR}).
\]
In particular, the model $(\mathbf{C}^4,\con_0)$ in Section~\ref{sec:rrsc} has trivial $w_1$ whereas the model $(\Lambda^{0,*},\con_1)$ has nontrivial $w_1$.

Enumerate the components of $\mathbf{c}$ as $c_1,\dots,c_n$; let $w_1(\mR,\mathbf{c})$ be the following algebraic-topological invariant:
\[
w_1(\mR,\mathbf{c}) = (\langle w_1(S^+|_{\mathbf{c}_1})^{\mR},\mathbf{c}_1 \rangle, \dots, 
\langle w_1(S^+|_{\mathbf{c}_n})^{\mR} ,\mathbf{c}_n\rangle ).
\]
\subsection{The sets of relative real spin\textsuperscript{c} structures}
\begin{prop}
    Given a spin\textsuperscript{c} structure $\mathfrak{s}$ on a marked real manifold $(M,\tau,\mathbf{c})$ for which there exists at least one {\rrscs} over $\mathfrak{s}$.
    The set of relative real structures supported on $\mathfrak{s}$ is a torsor over
    \begin{equation*}
        \frac{H^1(M,\mathbf{c};\mathbf{Z})^{\tau^*}}{\text{im}(1+\tau^*)},
    \end{equation*}
    Moreover, fixing the underlying absolute real structures $(\mathfrak{s},\mR)$, the set of {\rrscs} is a torsor over:
    \begin{equation}
        \label{eq:ker_H1_rel_mod_im}
        \text{ker} \left(\frac{H^1(M,\mathbf{c};\mathbf{Z})^{\tau^*}}{\text{im}(1+\tau^*)} \to \frac{H^1(M;\mathbf{Z})^{\tau^*}}{\text{im}(1+\tau^*)} \right).
    \end{equation}
\end{prop}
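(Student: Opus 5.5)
The plan is to reduce to Lemma~\ref{lem:rrsc_on_line_bundles}, in the same way that \cite[\S 3]{ljk2022} reduces real $\spinc$ structures to real line bundles. Fix a reference {\rrscs} $(\mathfrak{s},\sfT_0,\R_0)$ on $\mathfrak{s}=(S,\rho)$. Any other relative real structure on $\mathfrak{s}$ is a pair $(\sfT,\R)$. Two such pairs are identified when a $\spinc$ automorphism $u$ of $S$ intertwines them: $u\circ\R_1=\R_2\circ u$ and $\sfT_2\circ u=\sfT_1$ over $\mathbf{c}$, with $\sfT$ fixed once and for all up to the immaterial sign ambiguity of Remark~\ref{rem:choice_of_trivilization_T}.

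The first step is to put $\sfT$ into normal form. Any two Clifford-module trivializations of $S|_{\mathbf{c}}$ differ by a map $\mathbf{c}\to S^1$, because the Clifford module is irreducible in dimension 3, and in dimension 4 this holds for $S^+$ and $S^-$ together. The map $\mathbf{c}\to S^1$ is null-homotopic on points. On circles of $\mathbf{c}$, the compatibility $\sfT\circ\R=\con_i\circ\sfT$ forces the winding to be compatible with the fixed model $\con_i$. Extending such a map over a collar, one may therefore assume $\sfT=\sfT_0$.

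Next, compare real structures. The composite $\R\circ\R_0^{-1}$ is complex linear, commutes with Clifford multiplication, and covers the identity. Since $\mathrm{End}_{Cl}(S)=\mathbf{C}$, it is multiplication by a function $g\colon M\to S^1$. The involution conditions on $\R$ and $\R_0$ give $g\cdot(\tau^*\bar g)^{-1}$-type relations, namely $\overline{g\circ\tau}=g^{-1}$ up to the convention. The framing condition $\sfT_0\circ\R=\con\circ\sfT_0$ forces $g\equiv1$ on $\mathbf{c}$. This is exactly the data analysed in Lemma~\ref{lem:rrsc_on_line_bundles}: replacing $\R_0$ by $g\R_0$ with $g|_{\mathbf{c}}=1$ is the same as twisting by the relative real line bundle data, and a gauge change $u\colon M\to S^1$ with $u|_{\mathbf{c}}=1$ changes $g$ by $u\cdot\overline{u\circ\tau}^{\,-1}$.

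I would then argue, as in \cite[\S3.1]{ljk2022}, that the resulting classes $[g]\in H^1(M,\mathbf{c};\mathbf{Z})$ satisfy $\tau^*[g]=[g]$. Gauge changes alter $[g]$ by $[u]+\tau^*[u]$, so they contribute exactly $\mathrm{im}(1+\tau^*)$. Realizability of every invariant class follows by averaging a representative, since $S^1$-valued maps with prescribed homotopy class and the symmetric condition can be constructed on the quotient. This gives a free and transitive action of $H^1(M,\mathbf{c};\mathbf{Z})^{\tau^*}/\mathrm{im}(1+\tau^*)$, which I would simply quote from Lemma~\ref{lem:rrsc_on_line_bundles} rather than redo.

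For the second statement, the forgetful map from relative to absolute structures is equivariant with respect to the map $H^1(M,\mathbf{c})\to H^1(M)$ on the torsor groups. The absolute statement is \cite[\S3.1]{ljk2022}. Two relative structures give the same absolute structure exactly when their difference lies in the kernel of the induced map on quotients, which is \eqref{eq:ker_H1_rel_mod_im}. The main obstacle is the first step, normalizing $\sfT$ on circle components in dimension 4. There one must check that the $\con_1$ model, with its nontrivial $w_1$, still permits reducing to $g|_{\mathbf{c}}\equiv 1$ without losing or double-counting classes. I would handle this by verifying that the stabilizer of $(\sfT_0,\con_i)$ in maps $\mathbf{c}\to S^1$ is only $\{\pm1\}$, which is absorbed by the convention of Remark~\ref{rem:choice_of_trivilization_T}.
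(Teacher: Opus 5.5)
Your core computation is the paper's argument. The paper proves this by citing Lemma~\ref{lem:rrsc_on_line_bundles} (two relative real structures differ by a map $g\colon M\to S^1$ with $g\equiv 1$ on $\mathbf c$, hence a class in $H^1(M,\mathbf c;\mathbf Z)$) together with \cite[Lem.~3.12]{ljk2022} for the comparison with absolute structures. Your steps reproduce exactly this:
\begin{itemize}
\item identifying $\R\circ\R_0^{-1}$ with a $\tau$-invariant scalar $g$ that equals $1$ on $\mathbf c$;
\item the gauge formula $g\mapsto g\,u\,(u\circ\tau)$, which produces $\mathrm{im}(1+\tau^*)$;
\item the equivariance of the forgetful map, which gives the second claim.
\end{itemize}
For realizability, ``averaging'' should be made precise as a square root. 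If $\tau^*[g]=[g]$ in $H^1(M,\mathbf c)$, write $g/(g\circ\tau)=e^{i\phi}$ with $\phi|_{\mathbf c}=0$. Then $\phi\circ\tau=-\phi$, and $g\,e^{-i\phi/2}$ is a $\tau$-invariant representative that is still $1$ on $\mathbf c$.

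The step that fails is your first one, normalizing $\sfT$. The proposition, like Lemma~\ref{lem:rrsc_on_line_bundles}, is to be read with $\sfT$ fixed once and for all and equivalences $u$ satisfying $u|_{\mathbf c}\equiv 1$. Read that way, there is nothing to normalize.

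If you instead let $\sfT$ vary, the claim ``one may assume $\sfT=\sfT_0$'' is false on circle components; for points and arcs it is harmless. Write $\sfT=h\sfT_0$ and $\R=g\R_0$. Compatibility then gives only $h^2=\bar g|_{\mathbf c}$. This forces $2[h]$, not $[h]$, into the image of $H^1(M;\mathbf Z)\to H^1(\mathbf c;\mathbf Z)$, and extending $h$ over a collar does not give a global automorphism.

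Here is an example. Take $(S^1\times S^3,\mathrm{id}\times\tau_{S^3})$ with $\mathbf c$ a curve of slope $(2,1)$ on the fixed torus $S^1\times\mathrm{Fix}(\tau_{S^3})$.
\begin{itemize}
\item The projection $g$ to $S^1$ is $\tau$-invariant and has degree $2$ on $\mathbf c$.
\item The restriction map $H^1(M)\to H^1(\mathbf c)$ has image $2\mathbf Z$.
\item Any $h$ with $h^2=\bar g|_{\mathbf c}$ has odd degree.
\end{itemize}
So $(h\sfT_0,g\R_0)$ is a relative real structure equivalent to nothing framed by $\pm\sfT_0$. With varying $\sfT$, the set is therefore not a single torsor.

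Your proposed check that the stabilizer is $\{\pm1\}$ does not address this extension obstruction. Note also that the sign in Remark~\ref{rem:choice_of_trivilization_T} cannot be absorbed into the equivalence. Allowing $u|_{\mathbf c}=\pm1$ would quotient by further, possibly odd, classes $[u\,(u\circ\tau)]$; for example, it would collapse the two structures on $(S^3,\tau_{S^3},\{q_-,q_+\})$ into one.

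Delete the first step and fix $\sfT_0$. The rest of your argument is then a complete proof.
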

\begin{proof}
    Apply Lemma~\ref{lem:rrsc_on_line_bundles} and \cite[Lem.~3.12]{ljk2022}.
    See also \cite[Cor.~3.27]{HFR2}.
\end{proof}
\begin{rem}
    The above kernel is highly dependent on the topology of $(M,\tau)$.
    In particular, while the quotient group
    \begin{equation*}
        \frac{H^0(\mathbf{c};\mathbf{Z})^{\tau^*}}{\text{im}(1+\tau^*)},
    \end{equation*}
    which is isomorphic to $(\mathbf{Z}/2)^{|\mathbf{c}|-1}$, maps into the group \eqref{eq:ker_H1_rel_mod_im}, this map may be neither injective nor surjective.
\end{rem}
\begin{lem}
For a marked real 3-manifold $(Y,\tau,\mathbf{p})$, the set $\underline{\text{RSpin}^c}(Y,\tau,\mathbf{p})$ of {\rrscs}s  surjects onto the set $\text{RSpin}^c(Y,\tau)$ of real spin\textsuperscript{c} structures.
\end{lem}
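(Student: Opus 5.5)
Given a real $\spinc$ structure $(\mathfrak{s},\mR)$ on $(Y,\tau)$, we need to produce a trivialization $\sfT\colon S|_{\mathbf{p}}\to\underline{\mathbf{C}}^2$ of Clifford modules of $TY|_{\mathbf{p}}$ satisfying $\sfT\circ\mR=\con\circ\sfT$. The forgetful map $(\mathfrak{s},\sfT,\mR)\mapsto(\mathfrak{s},\mR)$ is well defined on equivalence classes, because an equivalence of relative structures is in particular a $\spinc$ automorphism intertwining the real structures. Surjectivity is therefore a pointwise statement: since $\mathbf{p}$ is a finite set of $\tau$-fixed points, it suffices to construct $\sfT_p$ at each $p\in\mathbf{p}$ independently.

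Fix $p\in\mathbf{p}$. Since $p$ lies on the fixed curve $C$, the differential $d\tau_p$ is an involution of $T_pY$ with a $1$-dimensional $(+1)$-eigenspace (tangent to $C$) and a $2$-dimensional $(-1)$-eigenspace (normal to $C$). Choose an oriented orthonormal frame $(e_1,e_2,e_3)$ with $e_2$ tangent to $C$ and $e_1,e_3$ normal, so that $d\tau_p$ acts as $\tau_3$; orientation-preservation of $\tau$ makes this consistent. Using this frame, $S_p$ is an irreducible Clifford module of $\mathbf{R}^3$ with the correct orientation convention. Hence there is a $\mathbf{C}$-linear Clifford-module isomorphism $\phi\colon S_p\to\mathbf{C}^2$ with $\phi\rho(e_j)=\rho_3(e_j)\phi$, and it is unique up to $U(1)$.

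Next, transport $\mR_p$ to the map $R'=\phi\,\mR_p\,\phi^{-1}$. This is an antilinear involution of $\mathbf{C}^2$ satisfying $R'\rho_3(x)=\rho_3(\tau_3 x)R'$, and $\con$ satisfies the same relation. So $\con\circ R'$ is a complex-linear map commuting with all of $\rho_3$. By Schur's lemma, $R'=\con\circ\lambda$ for some scalar $\lambda\in\mathbf{C}^*$. Then $R'^2=1$ gives $\bar\lambda\lambda=1$, and unitarity of $\mR$ also gives $|\lambda|=1$.

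It remains to absorb $\lambda$. Replace $\phi$ by $\mu\phi$ with $\mu\in U(1)$; this changes $\lambda$ to $\mu^{-1}\lambda\bar\mu^{-1}\cdot$ (up to conjugation conventions), effectively $\lambda\mapsto\bar\mu^{2}\lambda$ or $\mu^{-2}\lambda$. Since every unit complex number has a square root, we can choose $\mu$ to make $\lambda=1$. Setting $\sfT_p=\mu\phi$ gives $\sfT\circ\mR=\con\circ\sfT$ at $p$, and doing this at every basepoint produces the required \rrscs{} lifting $(\mathfrak{s},\mR)$.

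The main point requiring care is that the frame must be chosen compatibly with $\tau$, so that $d\tau_p$ is literally $\tau_3$ and $S_p$ matches the orientation convention of $\rho_3$. The Schur-lemma step and the square-root step are then routine.
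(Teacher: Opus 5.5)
Your proof is correct, but it follows a different route from the paper. The paper gives no argument of its own. It cites Guth--Manolescu \cite[Lem.~3.31]{HFR2}, whose relative real $\spinc$ structures are defined relative to the boundary of punctured neighbourhoods of the basepoints, and it asserts without proof that this notion is equivalent to Definition~\ref{defn:3d-rrsc}.

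You instead work directly with the pointwise definition. Because the forgetful map leaves $(\mathfrak{s},\mR)$ untouched, surjectivity reduces to finding, separately at each $p\in\mathbf{p}$, a Clifford-module isomorphism $S_p\to\mathbf{C}^2$ that intertwines $\mR_p$ with $\mathsf{conj}$. Your steps are all sound:
\begin{itemize}
\item The adapted oriented frame realizes $d\tau_p$ as $\tau_3$.
\item The orientation issue you flag is harmless: one checks $\rho_3(e_1)\rho_3(e_2)\rho_3(e_3)=1$, so $S_p\cong(\mathbf{C}^2,\rho_3)$.
\item Schur's lemma reduces $\mR_p$ to $\mathsf{conj}$ composed with a unit scalar $\lambda$.
\item Rescaling the isomorphism by $\mu$ acts by exactly $\lambda\mapsto\mu^{-2}\lambda$; your two candidate formulas agree for $|\mu|=1$. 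Taking $\mu^2=\lambda$ finishes the argument.
\end{itemize}

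What your route buys is a self-contained, elementary proof that bypasses the unproved comparison between the two definitions. As a by-product, for a fixed adapted frame the trivialization at each basepoint is unique up to $\pm1$, which is consistent with Remark~\ref{rem:choice_of_trivilization_T}.

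What the citation buys is compatibility with the Guth--Manolescu framework. In particular, the paper later reuses the transversality argument from that proof when comparing two absolute real structures with equal $w_1(\mR,\mathbf{c})$, and that argument is not supplied by a purely pointwise construction.
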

\begin{proof}
    This is \cite[Lem.~3.31]{HFR2}.
    (The definition in \cite{HFR2} are relative to the boundary of punctures at the basepoints, which is equivalent to Definition~\ref{defn:3d-rrsc}.)
\end{proof}
Back to dimension four, recall the long exact sequence of cohomologies of the pair $(X,\mathbf{c})$:
\begin{equation*}
    \begin{tikzcd}
        {H^0(X)} & {H^0(\mathbf{c})} & {H^1(X,\mathbf{c})} & {H^1(X)} & {H^1(\mathbf{c})} & {H^2(X,\mathbf{c})} & \dots.
        \arrow[from=1-1, to=1-2]
        \arrow[from=1-2, to=1-3]
        \arrow[from=1-3, to=1-4]
        \arrow[from=1-4, to=1-5]
        \arrow[from=1-5, to=1-6]
        \arrow[from=1-6, to=1-7]
    \end{tikzcd}
\end{equation*}
It is no longer the case that $H^1(\mathbf{c}) = 0$, as $\mathbf{c}$ is a closed 1-manifold.
Hence the map $H^1(X,\mathbf{c}) \to H^1(X)$ needs not be surjective, and may remain so after taking $(\cdot)^{\tau^*}/\text{im}(1+\tau^*)$.

This suggests that
not every two absolute real structures can be related via an automorphism $u$ such that $u|_{\mathbf{c}} \equiv 1$.
For example, those with local model $\underline{\mathbf{C}}^4$ and with $\Lambda^{0,*}$.
Given two absolute real structure $(\mathfrak{s},\mR_1)$ and  $(\mathfrak{s},\mR_1)$ such that
\[w_1(\mR_1,\mathbf{c}) = w_1(\mR_2,\mathbf{c}),\]
the difference between $\mR_1$ and $\mR_1$
is an element of $H^1(M)^{\tau^*}$, 
by the transversality argument in the proof of \cite[Lem.~3.31]{HFR2}. 

With $u \in H^1(M)^{\tau^*}$ interpreted as a $\tau$-invariant $u \colon M \to S^1$, 
the cokernel of restriction
\[\frac{H^1(X,\mathbf{c})^{\tau^*}}{\text{im}(1+\tau^*)} \to \frac{H^1(X)^{\tau^*}}{\text{im}(1+\tau^*)} \]
is precisely those corresponding to $u$ for which $\deg(u|_{\mathbf{c_i}})$ is odd for some $i$.
\begin{comment}

\end{comment}
%
%
\subsection{Examples}
\begin{exmp}[$S^1 \times S^2$, $\id \times \text{rot}, \{p_1,p_2\}$]
	Let the involution act on $S^1 \times S^2$ by rotating the $S^2$ around two points.
	There are in total four {\rrscs}s and two absolute real structures, all on the unique torsion $\spinc$ structure.
    Moreover, there are two inequivalent {\rrscs}s underlying the each absolute real structure.
\end{exmp}
\begin{exmp}[$\mathbf{RP}^3$, Hopf, $\{p_1,p_2\}$]
	View $\mathbf{RP}^3$ as the branched double cover of the Hopf link.
	There are four {\rrscs}s.
	Over $\mathbf{RP}^3$, there are two $\spinc$ structures $\mathfrak{s}_1, \mathfrak{s}_2$ arising from the two spin structures.
	Each $\mathfrak{s}_i$ admits two {\rrscs}s that are equivalent as absolute real structures.
\end{exmp}
\begin{exmp}[$S^1 \times S^2, U_2, \{p_1,p_2\}$]
	View $S^1 \times S^2$ as the branched double cover of the  2-component unlink.
	Consider the unique torsion $\spinc$ structure $\mathfrak{s}_0$.
	If $p_1$ and $p_2$ lie on different components, a cohomology calculation shows that there is a unique {\rrscs}.
\end{exmp}
\begin{exmp}[$\#^{n-1}(S^1 \times S^2), U_n, \{p_1,\dots,p_n\}$]
	Consider the branched double cover of the $n$-component unlink.
	Assume every component of the unlink receives precisely one basepoint.
    For the torsion $\spinc$ structure, there exists a unique {\rrsc} structure.
    The number of {\rrsc} structures changes if a component of $U_n$ is marked with multiple basepoints.
\end{exmp}
\begin{exmp}[Standard 4-ball]
	Let $B^4$ be the unit 4-ball in $\mathbf{R}^4 \cong \mathbf{C}^2$.
	The complex conjugation $(z_1,z_2) \mapsto (\overline{z}_1,\overline{z}_2)$ on $\mathbf{C}^2$ induces an involution on $B^4$, fixing a 2-disc $D^2$ in the plane of real coordinates.
	This involution will be referred to as the \emph{standard involution on $B^4$}, denoted as $\tau_{B^4}$.
	To mark $B^4$, let
	\[\mathbf{a}_{B^4}=\{(x_1,0):-1 \le x_1 \le +1\}.\]
	There is a unique $\rrsc$ structure $(\mathfrak{s}_{B^4},\tilde{\mR}_{B^4})$ on $(B^4,\tau_{B^4},\mathbf{a}_{B^4})$.
\end{exmp}
\begin{exmp}[Standard 3-sphere]
	The sphere $(S^3,\tau_{S^3})$ is viewed as the boundary of $(B^4,\tau_{B^4})$ as above. 

	Let $q_{\pm} = (\pm 1,0)$.
	The marked real 3-manifold with a single basepoint $(S^3,\tau_{S^3},q_{\pm})$ has a unique $\rrsc$.
	However, $(S^3,\tau_{S^3},\{q_-,q_+\})$ admits two $\rrsc$'s, denoted as $(\mathfrak{s}_{S^3},\tilde{\mR}_+)$ and $(\mathfrak{s}_{S^3},\tilde{\mR}_-)$.
	As a convention, $(\mathfrak{s}_{S^3},\tilde{\mR}_-)$ is the boundary restriction of the $\rrsc$ on $(B^4,\tau_{B^4},\mathbf{a}_{B^4})$.
\end{exmp}
\begin{exmp}[The punctured cylinder]
	Let $(Y,\tau,\mathbf{p})$ be a marked real 3-manifold.
	Let $I \subset \mathbf{R}$ be an interval and 	$(Z,\tau_Z,\mathbf{p}_I)=(I \times Y,\id_I \times \tau,\mathbf{p}_I)$ where $\mathbf{p}_I=I \times \mathbf{p}$.
	Suppose $I = [-1,+1]$. 
	Given any sub-collection of points $\mathbf{q} \subset \mathbf{p}$, remove from the cylinder $Z$ a $\tau$-invariant neighbourhood at $\{0\} \times \mathbf{q}$ to obtain the cobordism
	\[
	    Z_{\mathbf{q}} = (I \times Y) \setminus \bigcup_{q \in \mathbf{q}}(\nbhd(\{0\} \times q)).
	\]
	where each $\nbhd(\{0\} \times q)$ is identified with $(B^4,\tau_{B^4})$.
	Set $\mathbf{a}_{\mathbf{q}} = \mathbf{a} \setminus \nbhd(\{0\} \times q)$.
	Thus the boundary $\del( Z_{\mathbf{q}},\tau_Z, 	\mathbf{a}^{\mathbf{q}})$ comprises
	\[
    (-Y,\tau, \mathbf{p}) \sqcup \bigsqcup_{q \in \mathbf{q}}(-S^3,\tau_{S^3}, \{q_-,q_+\}) \sqcup (+Y,\tau, \mathbf{p})
	\]
	where $q_-,q_+$ are the boundary points of $\nbhd(\{0\} \times q) \cap \mathbf{a}$.
In other words, $Z_{\mathbf{q}}$ can be viewed as a cobordism with $|\mathbf{q}|$ additional incoming ends.
\end{exmp}
\begin{exmp}[Punctured cylinder with $|\mathbf{p}|=|\mathbf{q}|=1$]
    Assume there is precisely one basepoint $p$.
	Over the 3-manifold $Y$ there are as many $\rrsc$ structures as absolute real $\spinc$ structures, and such is true for $Z$.
	However, there are two $\rrsc$'s for each absolute real $\spinc$ structure over the punctured cobordism $Z^p$.
	Exactly one of the relative real structures extend over $Z$.
\end{exmp}
\begin{exmp}[Cobordism from homology actions]
    Let $\gamma \subset Y$ be an arc in $Y$ joining $p,q \in \mathbf{p}$ and $\tilde{\gamma} = \gamma \cup \tau(\gamma)$.
    Suppose $\gamma$ is chosen so that $\gamma \cap \tau(\gamma) = \{p,q\}$, and thus $\tilde{\gamma}$ is an embedded circle.
    Let
    \begin{equation*}
        Z_{\gamma} = (I \times Y) \setminus \left(\nbhd{\{0\} \times \tilde{\gamma}}\right)
    \end{equation*}
    $\mathbf{a}_{\gamma} = \mathbf{p}_I \cap Z_{\gamma}$.
    Note $\left(\nbhd{\{0\} \times \tilde{\gamma}}\right)$ is diffeomorphic to $S^1 \times D^3$ where $\tau$ acts as $\text{conj}_{S^1} \times \text{conj}_{D^2} \times \text{Id}$ where $S^1 \times D^2$ is viewed as the product of the unit circle and the unit disc in $\mathbf{C}^2$.
    The fixed point set is the disjoint union of two discs, and the basepoint set is the disjoint union of two arcs.
    Moreover,
    \begin{equation*}
        \del (Z_{\gamma}, \tau_{Z_{\gamma}}, \mathbf{a}_{\gamma}) \cong (S^1 \times S^2, \tau_{U_2}, \{p_-,p_+,q_-,q_+\})
    \end{equation*}
    where $\tau_{U_2}$ is the covering of transformation as the branched double cover of the 2-component unlink, with each component marked with two points.
    The pairs of points $p_{\pm},q_{\pm}$ can be thought of as the boundaries of a small neighbourhoods of $p,q$ on $\fix(\tau) \subset Y$.
\end{exmp}
\subsection{Marked real Heegaard surfaces}
\label{sec:real_heegaard_surface_pointed}
\begin{defn}
    A \emph{marked real surface}, or equivalently, a \emph{marked Klein surface} is a triple $(\Sigma,\tau,\mathbf{p})$ where $\Sigma$ is a closed oriented 2-manifold, $\tau$ is an orientation-reversing involution having nonempty fixed point set $C$ (necessarily a 1-manifold), and $\mathbf{p}$ is a finite set of points in $C$.
\end{defn}
Assume $\Sigma/\tau$ is orientable, in which case $\Sigma \setminus  C$ is the union of two orientable half-surfaces, each identified as $\Sigma/\tau$.
In this case, the fixed point set $C$ is \emph{dividing}, or \emph{separating}.
The \emph{Commesatti characteristic} $s$ is 
\[s= g-r+1.\]
Then $k = s/2$ is the genus of the half surface.
Let $r$ be the number of the fixed circles of $\tau$.
Then the pair $(r,s)$ determines the topological type of the Klein surface.
The following lemma, proved by Costa and Natanzon~\cite[Lem.~5 \& \S 6]{CostaNatanzon2009}, provides a symplectic basis adapted to the real involution on $\Sigma$.
\begin{lem}
    \label{lem:symplectic_basis}
    Let $(\Sigma,\tau)$ be a Klein surface of genus $g$ for which $\Sigma/\tau$ is orientable.
    Enumerate and orient the $r$ fixed circles $C_1,\dots,C_r$.
    Let $s=g-r+1$ and $k=s/2$.
    Then the homology group $H_1(\Sigma,\mathbf{Z})$ admits a symplectic basis of the form
    \begin{align*}
        v_1,\dots,v_{r-1},x_1,&\dots,x_k,\tau_*x_1,\dots,\tau_*x_k,\\ w_1,\dots,w_{r-1},y_1,&\dots,y_k,-\tau_* y_1,\dots,-\tau_* y_k.
    \end{align*}
    This basis can be represented as a system of simple closed curves on $\Sigma$ of the following properties:
    \begin{itemize}[noitemsep,leftmargin=*]
        \item $v_i$ is the homology class of $[C_i]$;
        \item $w_i$ intersects geometrically $v_i$ exactly once, is disjoint from all other curves, and satisfies $\tau(w_i) = w_i$;
        \item $(x_i,y_i)$ is a pair of curves intersecting geometrically once, disjoint from all other curves, and contained completely in one of the half surfaces of $\Sigma \setminus C$. \hfill \qedsymbol
    \end{itemize}
\end{lem}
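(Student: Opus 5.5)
The plan is to follow the strategy of Costa--Natanzon and build the basis in stages, using the decomposition of $\Sigma$ along the fixed circles. Since $\Sigma/\tau$ is orientable, $\Sigma\setminus C$ has two components $\Sigma_+$ and $\Sigma_-=\tau(\Sigma_+)$. Each is a compact orientable surface of genus $k$ with $r$ boundary circles $C_1,\dots,C_r$. This is consistent with the genus count: $g = 2k + r - 1$, so $s = 2k$.

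\textbf{Step 1 (the curves $v_i$ and $w_i$).} Take $v_i=[C_i]$ for $1\le i\le r-1$, oriented as in the statement; the last circle $C_r$ is omitted because $\sum_i [C_i]=\pm\partial[\Sigma_+]=0$ in $H_1(\Sigma)$. For $w_i$, choose in $\Sigma_+$ a properly embedded arc $\alpha_i$ from $C_i$ to $C_r$. The arcs $\alpha_1,\dots,\alpha_{r-1}$ should be pairwise disjoint and should cut $\Sigma_+$ into a connected surface; such a system exists by the classification of surfaces. Set $w_i=\alpha_i\cup\tau(\alpha_i)$. This is a simple closed curve with $\tau(w_i)=w_i$. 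It meets $C_i$ transversally in exactly one point, and meets $C_r$ in one point as well, which does no harm because $C_r$ is not a basis element. Since the $\alpha_j$ are disjoint, $w_i\cap w_j=\emptyset$ for $i\ne j$, and $w_i$ is disjoint from $C_j$ for $j\ne i,r$.

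\textbf{Step 2 (the curves $x_i$ and $y_i$).} Cut $\Sigma_+$ along all the arcs $\alpha_i$. The result $\Sigma_+'$ is a genus-$k$ surface with a single boundary component. Choose a standard symplectic system $x_1,y_1,\dots,x_k,y_k$ in its interior, so that each pair $(x_i,y_i)$ meets once and different pairs are disjoint. These curves lie in $\Sigma_+$, so they avoid $C$ and all of the $w_j$. Their images $\tau(x_i),\tau(y_i)$ lie in $\Sigma_-$ and are therefore disjoint from everything in $\Sigma_+$. Because $\tau$ reverses orientation, $\tau_*x_i\cdot\tau_*y_i=-x_i\cdot y_i=-1$. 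Using $-\tau_*y_i$ in place of $\tau_*y_i$ restores the value $+1$, which explains the signs in the statement. After fixing orientations of $w_i$ so that $v_i\cdot w_i=1$, all remaining intersection numbers vanish by disjointness.

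\textbf{Step 3 (the curves form a basis).} There are $2(r-1)+4k=2g$ curves, and their intersection matrix is the standard symplectic matrix. That matrix is unimodular, so the curves are linearly independent and span a rank-$2g$ sublattice $L\subseteq H_1(\Sigma;\mathbf{Z})$. Unimodularity of the intersection form on $L$ then forces $L$ to be a direct summand, $H_1=L\oplus L^{\perp}$. Because $L^{\perp}$ has rank $0$, we get $L=H_1(\Sigma;\mathbf{Z})$.

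The main technical point is Step 1. One has to choose the arcs $\alpha_i$ so that the doubled curves $w_i$ are embedded and pairwise disjoint, and so that cutting $\Sigma_+$ along them leaves a connected surface of full genus $k$ in which Step 2 can be carried out. This is handled by an Euler characteristic count. Cutting along $r-1$ arcs joining distinct boundary circles to $C_r$ raises $\chi$ by $r-1$ and merges all $r$ boundary circles into one. The result is connected, with $\chi=2-2k-r+(r-1)=1-2k$ and a single boundary circle, so it has genus $k$, as Step 2 requires.
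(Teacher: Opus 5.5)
Your argument is correct. The paper gives no proof of its own here: it quotes the lemma from Costa--Natanzon (their Lem.~5 and \S6), so your proposal is a self-contained replacement for an external citation rather than a variant of an argument in the paper.

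Your route is elementary and specific to the dividing case. You cut $\Sigma$ along $C$ into two genus-$k$ halves, each with $r$ boundary circles. You double a disjoint arc system from each $C_i$ to $C_r$ to obtain the $\tau$-invariant curves $w_i$. You then place a standard symplectic system in the cut-open half and transport it by $\tau$, where the sign in $-\tau_*y_i$ comes from $\tau$ reversing orientation. Finally you get the basis property from unimodularity of the Gram matrix together with torsion-freeness of $H_1(\Sigma;\mathbf{Z})$. The cited source instead constructs adapted bases for all topological types of real Riemann surfaces, as part of a study of the real mapping class group.

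Your construction has one concrete advantage: it makes explicit a property the paper relies on later but does not put in the statement. In the proof of Theorem~\ref{lem:gamma_in_2H}, $w_i$ is taken to meet both $C_i$ and $C_r$ geometrically once, which is exactly what doubling your arc $\alpha_i$ produces.

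Two harmless imprecisions:
\begin{itemize}
\item Since the $C_i$ are oriented arbitrarily, the relation justifying the omission of $C_r$ should read $\sum_i \epsilon_i [C_i]=0$ with $\epsilon_i=\pm 1$. Step~3 does not need it anyway.
\item To make each $w_i$ a smooth embedded curve, choose $\alpha_i$ to meet $C$ orthogonally for a $\tau$-invariant metric.
\end{itemize}
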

The following Corollary is elementary. 
Compare with \cite[Cor.~4.9]{OkonekTeleman2013}.
\begin{cor}
    \label{cor:anti-inv_homology_basis_from_symp_basis}
    Let $(\Sigma,\tau)$ be as as in Lemma~\ref{lem:symplectic_basis}.
    The following generates $H_1(\Sigma;\mathbf{Z})^{-\tau_*}$:
    \[\big\{w_i, x_j - \tau_*x_j, y_j - \tau_*y : 1 \le i \le r-1, \quad 1 \le j \le k\big\}.\]
\end{cor}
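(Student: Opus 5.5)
The plan is to compute $\tau_*$ on the symplectic basis of Lemma~\ref{lem:symplectic_basis}, observe that it acts as a signed permutation of basis elements, and then solve the equation $\tau_* a = -a$ coefficientwise over $\mathbf{Z}$.

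First, determine the action of $\tau_*$ on each basis element.
\begin{itemize}
\item Since $\tau$ fixes $C_i$ pointwise, $\tau_* v_i = v_i$.
\item Since $\tau(w_i)=w_i$ as a set, $\tau_* w_i = \pm w_i$. The sign is forced by the fact that $\tau$ reverses the orientation of $\Sigma$, so $\tau_*a\cdot\tau_*b = -a\cdot b$ for the intersection form. Applying this to $v_i\cdot w_i = \pm 1 \neq 0$ gives $\tau_*v_i\cdot\tau_*w_i = -v_i\cdot w_i$. Combined with $\tau_*v_i=v_i$, this gives $\tau_* w_i = -w_i$.
\item Since $\tau$ is an involution, $\tau_*$ swaps $x_j \leftrightarrow \tau_* x_j$ and $y_j \leftrightarrow \tau_* y_j$. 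Equivalently, $\tau_*$ sends the basis element $y_j$ to $-(-\tau_*y_j)$ and sends $-\tau_*y_j$ to $-y_j$.
\end{itemize}

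Next, write a general class as
\[
a=\sum_i a_i v_i+\sum_i b_i w_i+\sum_j (c_j x_j + c_j'\tau_*x_j)+\sum_j (d_j y_j - d_j'\tau_* y_j),
\]
with integer coefficients, and impose $\tau_*a=-a$. Because the basis is free and $\tau_*$ permutes it up to sign, this condition splits into independent conditions on each $\tau_*$-orbit of basis elements.
\begin{itemize}
\item The $v_i$-coefficients satisfy $a_i=-a_i$, hence $a_i=0$ since $\mathbf{Z}$ is torsion-free.
\item The $b_i$ are unconstrained.
\item Comparing the coefficients of $x_j$ and $\tau_*x_j$ gives $c_j'=-c_j$. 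Likewise, comparing the coefficients of $y_j$ and $\tau_*y_j$ gives $d_j'=d_j$.
\end{itemize}
Hence
\[
a=\sum_i b_i w_i+\sum_j c_j(x_j-\tau_*x_j)+\sum_j d_j(y_j-\tau_*y_j),
\]
which shows that the listed elements generate $H_1(\Sigma;\mathbf{Z})^{-\tau_*}$.

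Conversely, each listed element is anti-invariant: $\tau_*w_i=-w_i$ was shown above, and $\tau_*(x_j-\tau_*x_j) = \tau_*x_j - x_j = -(x_j-\tau_*x_j)$, with the same computation for $y_j$. The listed elements are moreover linearly independent, since they involve disjoint sets of basis vectors, so they in fact form a basis.

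The only step requiring care is the sign $\tau_* w_i=-w_i$, which rests on the orientation-reversal argument above. An alternative check is the local model: $\tau$ restricts to $w_i$ as a reflection fixing the two points $w_i\cap C$. Everything else is bookkeeping.
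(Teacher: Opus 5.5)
Your proof is correct and supplies the elementary coefficient check that the paper omits; the paper only calls the corollary elementary and cites Okonek--Teleman. You show that $\tau_*$ acts as a signed permutation on the Costa--Natanzon basis, and the only non-bookkeeping input, $\tau_* w_i = -w_i$, is correctly forced by orientation reversal together with $\tau_* v_i = v_i$ and $v_i\cdot w_i = \pm 1$; your coefficient comparison moreover shows that the listed elements form a $\mathbf{Z}$-basis of the rank-$g$ lattice $H_1(\Sigma;\mathbf{Z})^{-\tau_*}$, slightly more than the statement claims.
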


\medskip

An \emph{orientable real Heegaard surface} $\Sigma \subset Y$ (see e.g.~\cite[\S 3.1]{HFR2}) in the sense that $\tau$ acts on $\Sigma$ orientation-reversingly for which $\fix(\tau|_{\Sigma}) = C$, and $Y$ decomposes as the union of two handlebodies $U_0$ and $U_1$ in a $\tau$-equivariant way:
\[
Y = U_0 \cup_{\Sigma} U_1.
\]
In particular, $U_1 = \tau(U_0)$, and the
the set of $\alpha$-curves in $\Sigma$ is precisely the $\tau$-image of the set of $\beta$-curves.
Following~\cite{HFR2}, $\Sigma$ is a \emph{free Heegaard surface} when $C$ is dividing.

Given a free Heegaard surface $\Sigma$, the homology of $Y$ can be computed using Mayer--Vietoris and the fact that $\Sigma$ is nullhomologous in $Y$:
\begin{equation}
    \label{eq:SES_homology_heegaard_split}
    0 \to H_2(Y,\Sigma;\mathbf{Z}) \to H_1(\Sigma;\mathbf{Z}) \to H_1(Y;\mathbf{Z}) \to 0
\end{equation}
Moreover, by excision of homology, $H_i(Y,\Sigma;\mathbf{Z}) \cong H_i(U_0,\Sigma;\mathbf{Z}) \oplus H_i(U_1,\Sigma;\mathbf{Z})$.
Via the identification $U_0 \cong U_1$ of handlebodies, $H_i(U_0,\Sigma;\mathbf{Z})$ can be canonically identified with $H_i(U_1,\Sigma;\mathbf{Z})$.
Hence the involution $\tau_*$ on $H_i(Y,\Sigma;\mathbf{Z})$ has the block form
\[\tau_* = \begin{bmatrix}
    0 & \text{id} \\
    \text{id} & 0
\end{bmatrix}.\]
Let $\delta_0 \colon H_2(U_0,\Sigma;\mathbf{Z}) \to H_1(\Sigma;\mathbf{Z})$ be the boundary map. 
Then $\delta \colon H_2(Y,\Sigma;\mathbf{Z}) \to H_1(\Sigma;\mathbf{Z})$ can be written as $\delta_0 \oplus \tau_* \delta_0$, where in the second summand $\tau_* \colon H_1(\Sigma) \to H_1(\Sigma)$. 
Clearly, $\delta \circ \tau_* = \tau_* \circ \delta$.
\begin{lem}
    \label{lem:surj_group_cohom}
    The homomorphism $H_1(\Sigma;\mathbf{Z})^{-\tau_*} \to H_1(Y;\mathbf{Z})^{-\tau_*}$ is surjective.
\end{lem}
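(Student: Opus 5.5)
The plan is to deduce the surjectivity from the short exact sequence \eqref{eq:SES_homology_heegaard_split},
\[
0 \to H_2(Y,\Sigma;\mathbf{Z}) \xrightarrow{\ \delta\ } H_1(\Sigma;\mathbf{Z}) \xrightarrow{\ \iota\ } H_1(Y;\mathbf{Z}) \to 0 .
\]
This is a sequence of $\mathbf{Z}[\mathbf{Z}/2]$-modules, because $\delta\circ\tau_*=\tau_*\circ\delta$ and the inclusion $\iota$ is equivariant. Taking $(-\tau_*)$-invariants is the functor $H^0(\mathbf{Z}/2;\,\cdot\otimes\mathbf{Z}_{-})$, where $\mathbf{Z}_-$ is the sign representation. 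By the associated long exact sequence, the obstruction to surjectivity lies in $H^1(\mathbf{Z}/2;\,H_2(Y,\Sigma)\otimes\mathbf{Z}_-)$. The block form of $\tau_*$ computed above identifies $H_2(Y,\Sigma)\cong M\oplus M$ with the swap action, where $M=H_2(U_0,\Sigma)$. This is an induced module, and twisting an induced module by the sign character leaves it induced. Hence this cohomology group vanishes, which proves the lemma.

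To keep the argument self-contained, I would also phrase it as a direct diagram chase.

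\emph{Lift.} Let $c\in H_1(Y)$ satisfy $\tau_*c=-c$, and choose any $b\in H_1(\Sigma)$ with $\iota(b)=c$.

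\emph{Locate the defect.} Then $\iota(b+\tau_*b)=c+\tau_*c=0$, so $b+\tau_*b=\delta(a)$ for a unique $a\in H_2(Y,\Sigma)$. By injectivity and equivariance of $\delta$, $\tau_*a=a$.

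\emph{Correct the lift.} In the decomposition $H_2(Y,\Sigma)=H_2(U_0,\Sigma)\oplus H_2(U_1,\Sigma)$ with the swap form of $\tau_*$, the invariance $\tau_*a=a$ forces $a=(m,m)=(m,0)+\tau_*(m,0)$. Set $b'=b-\delta(m,0)$. Then $\iota(b')=c$, and
\[
b'+\tau_*b'=\delta(a)-\delta\bigl((m,0)+\tau_*(m,0)\bigr)=0 .
\]
So $b'\in H_1(\Sigma)^{-\tau_*}$ maps to $c$.

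The only point that needs care is that $\tau_*$ on $H_2(Y,\Sigma)$ really is the swap with respect to the identification $U_0\cong U_1$ given by $\tau$ itself. This holds because $\tau$ maps $U_0$ to $U_1$ and $\tau^2=\mathrm{id}$, and it is exactly what the block form recorded above states. I would also double-check that $\delta$ here is the map $\delta_0\oplus\tau_*\delta_0$ whose equivariance was observed before the statement. Once these are confirmed, the argument is purely algebraic.
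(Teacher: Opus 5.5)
Your route is the paper's own: the group cohomology of \eqref{eq:SES_homology_heegaard_split}, plus the fact that $H_2(Y,\Sigma)$ is an induced $\mathbf{Z}[C_2]$-module. It has a genuine gap, which the paper's proof shares. The displayed sequence is not exact at $H_2(Y,\Sigma)$ unless $b_1(Y)=0$.

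\textbf{Why $\delta$ is not injective.} In the exact sequence of the pair, $H_2(\Sigma)\to H_2(Y)\to H_2(Y,\Sigma)\xrightarrow{\delta}H_1(\Sigma)$, the first map is zero precisely because $\Sigma$ is null-homologous. Hence $\ker\delta\cong H_2(Y)\cong\mathbf{Z}^{b_1(Y)}$. Concretely, take $S^1\times S^2$ as the branched double cover of the two-component unlink, with the free Heegaard torus given by the preimage of an annulus bounded by the unlink. A meridian disc $D_0$ of $U_0$ and its image $\tau(D_0)$ both have boundary representing $\pm[C_1]$. Together they give the class of the $S^2$ factor, a nonzero element of $\ker\delta$.

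\textbf{Consequences for your argument.} In your chase, $a$ is not unique, and you only get $\tau_*a-a\in\ker\delta$. Making $a$ invariant requires the class of $\tau_*a-a$ in $H_2(Y)^{-\tau_*}/(1-\tau_*)H_2(Y)$ to vanish. Cohomologically, the obstruction lives in $H^1(C_2;\operatorname{im}\delta\otimes\mathbf{Z}_-)$, not in $H^1(C_2;H_2(Y,\Sigma)\otimes\mathbf{Z}_-)$. Because $H_2(Y,\Sigma)$ is induced,
\[
H^1(C_2;\operatorname{im}\delta\otimes\mathbf{Z}_-)\cong H_2(Y)^{-\tau_*}/(1-\tau_*)H_2(Y).
\]
This group is typically nonzero. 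In the example above, $\operatorname{im}\delta=\mathbf{Z}[C_1]$ with $-\tau_*$ acting by $-1$, so the group is $\mathbf{Z}/2$. For branched double covers of links in $S^3$ one has $\tau_*=-1$ on $H_2(Y)$, so it is $(\mathbf{Z}/2)^{b_1(Y)}$. So the step ``this cohomology group vanishes'' is applied to the wrong group. The point that actually needed care is left-exactness, not the swap form of $\tau_*$.

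\textbf{What a correct proof must supply.} You need the connecting map $H_1(Y)^{-\tau_*}\to H^1(C_2;\operatorname{im}\delta\otimes\mathbf{Z}_-)$ to be zero even though its target is not. Equivalently, $H^1(C_2;\operatorname{im}\delta\otimes\mathbf{Z}_-)\to H^1(C_2;H_1(\Sigma)\otimes\mathbf{Z}_-)$ must be injective. By the basis of Lemma~\ref{lem:symplectic_basis}, the target is $(\mathbf{Z}/2)^{r-1}$, spanned by the fixed circles $[C_i]$ and detected by intersection with the invariant curves $w_i$. Unwinding the identifications, this amounts to the following statement: an anti-invariant $S\in H_2(Y)$ with $S\cdot w_i$ even for all $i$ lies in $(1-\tau_*)H_2(Y)$. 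This is a geometric statement about the fixed set. Neither the algebra of the sequence nor the freeness of $H_2(Y,\Sigma)$ provides it.

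In the $S^1\times S^2$ example the required injectivity holds, because $[C_1]$ is not of the form $h+\tau_*h$ in $H_1(\Sigma)$, so the lemma survives there. When $b_1(Y)=0$, your argument is complete as written. In general it does not prove the statement.
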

\begin{proof}
    Recall that taking $C_2$-invariant submodules needs not be right-exact.
    View \eqref{eq:SES_homology_heegaard_split} as a short exact sequence of $C_2$-modules where the $C_2$-action is by $(-\tau_*)$.
    The failure of surjectivity is detected by coboundary map
    \[
        \cdots \to H^0(C_2;H_1(Y)) \to H^1(C_2;H_2(Y,\Sigma)) \to \cdots 
    \]
    in the long exact sequence of group cohomologies.
    But $H^1(C_2;H_2(Y,\Sigma))$ is zero as the group cohomology $H^1(C_2;H_2(Y,\Sigma))$ is a free $\mathbf{Z}[C_2]$-module.
\end{proof}
\section{Framed Seiberg--Witten configuration spaces}
\label{sec:framed_config_space}
In this section, fix once and for all a marked real manifold $(M,\tau,\mathbf{c})$ and a {\rrsc} structure $(\mathfrak{s},\sfT, \frr)$, often written as $(\mathfrak{s},\tfrr)$ or simply $\tilde{\frr}$.
Let $W$ denote $S^+$ if $M$ is a 4-manifold, or $S$ if $M$ is a 3-manifold.
The first few definitions are concerned with the marked manifold $(M,\mathbf{c})$, without involutions.

Let $\underline{\mathcal{A}}(M,\mathfrak{s})$ be the space of $\spinc$ connections of $\mathfrak{s}$.
The \emph{ordinary framed Seiberg--Witten configuration space} is 
\begin{equation*}
    \underline{\mathcal{C}}(M,\mathfrak{s}) = 
    \underline{\mathcal{A}}(M,\mathfrak{s}) \times \Gamma(M,W).
\end{equation*}
The ``framing'' places no restriction on configurations, but limits the type of automorphisms allowed:
Define the \emph{ordinary framed gauge group} $\underline{\mathcal{G}}(M,\mathbf{c})$ as the group of maps $u$ from $M$ to $U(1)$ for which $u(\mathbf{c}) \equiv 1$.
Such an $u$ acts by
\begin{equation*}
    (A,\Phi) \mapsto (A - u^{-1}du, u\Phi).
\end{equation*}

A \emph{real spin\textsuperscript{c} connection} is a $\spinc$ connection that is invariant under pullback by $\frr$. 
The space of real $\spinc$ connection will be denoted as $\mathcal{A}(M,\mathbf{c},\frr)$.
A \emph{real spinor} $\Phi$ is a $\tilde{\frr}$-invariant section of the spinor bundle $W$.
The \emph{framed real configuration space} $\mathcal{C}(M,\tau,\mathbf{c},\tilde{\mR})$ is the $\tilde{\mR}$-invariant subspace of the ordinary configuration space:
\[
\mathcal{C}(M,\tau,\mathbf{c},\tilde{\mR})=
\underline{\mathcal{C}}(M,\mathfrak{s})^{\tilde{\mR}}.
\]
In this viewpoint, there is a \textbf{canonical} identification of $\mathcal C(M,\tau,\frr)$ with  $\mathcal C(M,\tau,\mathbf{c},\tfrr)$ as sets since they have the same underlying real structure.

The \emph{real gauge group} $\mathcal{G}(M,\tilde{\mR})$ consists of $\tau$-skew-invariant gauge transformations, that is, an element $g \in \mathcal{G}(M,\tilde{\mR})$ is a map $g\colon M \to S^1$ satisfying $\tau^* g=\overline{g}$, thought of as an $\spinc$-automorphism.
The \emph{framed real gauge group} $\mathcal{G}(M,\tau,\mathbf{c})$ consists of elements $g$ of the real gauge group $\mathcal{G}(M,\tau)$ for which
\[
g|_{\mathbf{c}} \equiv 1.
\]
The elements of $\mathcal{G}(M,\tau,\mathbf{c})$  are precisely the automorphisms of a {\rrsc} structure that preserves the trivializations at the basepoints.

The \emph{framed real configuration space of gauge equivalence classes} over $(M,\tau,\mathbf{c},\tilde{\mR})$ is the quotient space:
\[
\wtilde{\mathcal{B}}(M,\tau,\mathbf{c},\tilde{\mR})=
\mathcal{C}(M,\tau,\mathbf{c},\tilde{\mR})/\mathcal{G}(M,\tau,\mathbf{c})
\]
Alternatively, the marked configurations and framed gauge groups will be denoted as $\tilde{\mathcal{B}}_{\mathbf{c}}$ and $\mathcal{G}_{\mathbf{c}}$, respectively.
The following can be proved by the argument in \cite[Prop.~9.3.1]{KMbook2007}.
\begin{prop}
    Suppose $2(k+1) > \dim M$.
Then the framed quotient space $\tilde{\mathcal{B}}_{\mathbf{c}}$ is Hausdorff. \hfill \qedsymbol
\end{prop}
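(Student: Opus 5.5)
The plan is to follow the argument of \cite[Prop.~9.3.1]{KMbook2007}, working in the Sobolev completions $\mathcal{C}_k$ and $\mathcal{G}_{k+1}$. The hypothesis $2(k+1)>\dim M$ makes $L^2_{k+1}$ embed in $C^0$. Two consequences follow. First, the condition $g|_{\mathbf{c}}\equiv 1$ is a closed condition in $\mathcal{G}_{k+1}$. Second, $\mathcal{G}_{k+1}$ is a Banach Lie group acting smoothly on $\mathcal{C}_k$. The real (skew-invariance) condition $\tau^*g=\overline{g}$ is also closed, so the framed real gauge group $\mathcal{G}_{\mathbf{c}}$ is a closed subgroup of the ordinary gauge group $\mathcal{G}_{k+1}(M)$. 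Likewise the real configuration space $\mathcal{C}_k(M,\tau,\mathbf{c},\tilde{\mR})$ is a closed linear subspace of the ordinary one.

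Hausdorffness of the quotient then reduces to showing that the orbit relation
\[
\{(\gamma_1,\gamma_2) : \gamma_2 = g\gamma_1 \text{ for some } g\in\mathcal{G}_{\mathbf{c}}\}
\]
is closed in $\mathcal{C}_k\times\mathcal{C}_k$; the quotient map is open because it comes from a group action. So I would take sequences $\gamma_1^{(n)}\to\gamma_1$ and $\gamma_2^{(n)}=g_n\gamma_1^{(n)}\to\gamma_2$, with $g_n\in\mathcal{G}_{\mathbf{c}}$, and produce a limit $g\in\mathcal{G}_{\mathbf{c}}$ with $\gamma_2=g\gamma_1$.

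\textbf{Step 1.} From the connection components we get $g_n^{-1}dg_n = A_1^{(n)}-A_2^{(n)}$, which converges in $L^2_k$. Since $|g_n|=1$, this bounds $g_n$ in $L^2_{k+1}$; I would bootstrap inductively in the Sobolev index exactly as in Kronheimer--Mrowka, using the multiplication theorems valid in this range.

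\textbf{Step 2.} Weak $L^2_{k+1}$ compactness together with compact embedding into $C^0$ gives a subsequence $g_n\to g$ in $C^0$ and weakly in $L^2_{k+1}$.

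\textbf{Step 3.} Uniform convergence preserves both $g|_{\mathbf{c}}\equiv1$ and $\tau^*g=\overline{g}$, so $g\in\mathcal{G}_{\mathbf{c}}$.

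\textbf{Step 4.} Passing to the limit in $g_n\gamma_1^{(n)}=\gamma_2^{(n)}$ gives $g\gamma_1=\gamma_2$. Convergence of the spinor parts is automatic, since multiplication $L^2_{k+1}\times L^2_k\to L^2_k$ is continuous.

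The delicate point is Step 1 in the unframed setting, because there $g$ is only determined by $dg$ up to a constant. Compactness of $U(1)$ handles this, so no issue arises, and for the framed group the constant is even pinned down by $g|_{\mathbf{c}}=1$. The main thing to verify carefully is therefore the Sobolev bootstrap at the borderline index. I expect it to go through verbatim from \cite{KMbook2007}, since the real and framing constraints are closed and play no role beyond Step 3.
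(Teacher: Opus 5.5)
Your proposal is correct and takes essentially the same approach as the paper, which gives no separate proof but defers to \cite[Prop.~9.3.1]{KMbook2007}. That argument proves the orbit relation closed: it bounds $g_n$ in $L^2_{k+1}$ using $g_n^{-1}dg_n = A_1^{(n)}-A_2^{(n)}$ and $|g_n|=1$, then extracts a $C^0$-convergent subsequence; as you note, the only new input is that $\tau^*g=\overline{g}$ and $g|_{\mathbf{c}}\equiv 1$ survive uniform limits. One small precision: in Step~4 you only have weak $L^2_{k+1}$ convergence of $g_n$, so you get $g_n\gamma_1^{(n)}\to g\gamma_1$ weakly (or in $L^2$, using uniform convergence of $g_n$) rather than from norm-continuity of multiplication, but that still suffices to conclude $g\gamma_1=\gamma_2$.
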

The assumption that $\mathbf{c}$ is nonempty implies the framed real gauge group $\mathcal{G}(M,\tau,\mathbf{c})$ is a finite-index subgroup of $\mathcal{G}(M,\tau)$.
Indeed, $\mathcal{G}(M,\tau,\mathbf{c})$ is precisely the kernel of the homomorphism (not necessarily surjective)
\[\textsf{ev}_{\mathbf{c}}: \mathcal{G}(M,\tau) \to \{\pm 1\}^{|\mathbf{c}|}\]
where $\textsf{ev}_{\mathbf{c}}$ evaluates every $g \colon M \to S^1$ at each connected component of $\mathbf{c}$ (the values are necessarily $\{\pm 1\}$ by skew-invariance).
In particular, the constant function $(-1)$ lies outside the kernel.
Hence $\mathcal{G}(M,\tau,\mathbf{c})$ is a proper subgroup and acts freely on $\mathcal{C}(M,\tau,\mathfrak{s},\tilde{\mR})$.
As an example of non-epimorphism $\mathsf{ev}_{\mathbf{c}}$, if all the components of $\mathbf{c}$ lie on the same connected component of $\text{Fix}(\tau)$, then their evaluations coincide.

It follows that $\tilde{\mathcal{B}}_{\mathbf{c}}$ is a finite (branched) covering of its unframed counterpart $\mathcal{B}$, where the deck transformation is isomorphic to $(\mathbf{Z}/2)^k$ for some $k \ge 1$.
In particular, the reducible locus $\mathcal{B}^{red}$ has stabilizer isomorphic to $\mathbf{Z}/2$, generated by the constant gauge transformation $(-1)$.

Later in Section~\ref{sec:cylinder_functions}, it will be useful to consider a \emph{based gauge group} $\mathcal{G}^o = \mathcal{G}^o(M,\tau,c)$ which is a special case of the framed gauge group when $|\mathbf{c}|=1$.
This is always an index-$2$ subgroup of $\mathcal{G}(M,\tau)$ that excludes the $(-1)$-constant gauge transformation.
If $(M,\tau,\mathbf{c})$ is marked, $\mathcal{G}^o$ depends on a choice of some component $c$ of $\mathbf{c}$.

To summarize, there are nested subgroups
\[
\mathcal{G}(M,\tau,\mathbf{c}) \le \mathcal{G}^o(M,\tau,c) \le \mathcal{G}(M,\tau)
\]
corresponding to coverings
\begin{equation*}
    \wtilde{\mathcal{B}}_{\mathbf{c}}(M,\tfrr) \to \mathcal{B}^o(M,\tfrr) \to \mathcal{B}(M,\frr).
\end{equation*}
The roles of $\mathcal{G}^o$ and $\mathcal{B}^o$ in this article will be minimal. 
In particular, the choice of $p \in \mathbf{p}$ is immaterial for Section~\ref{sec:cylinder_functions}.
\subsection{Homotopy types of the configuration spaces}
In real monopole Floer theory, the subset of reducible configurations $\mathcal{B}^{\red}(M,\frr)$ has the homotopy type of 
\emph{the real Picard torus} of the real manifold $(M,\tau)$: \[\mathbf{T}_R=H^1(M;\mathbf{R})^{-\tau^*}/H^1(M;\mathbf{Z})^{-\tau^*}\] 
which parametrizes all real projectively flat {$\spinc$} connections on $(M,\tau)$.
This torus has the following framed analogue, as a torus of the same rank, but more naturally viewed as a covering space of the unframed Picard torus.
\begin{defn}
    \label{defn:framed_real_Picard_torus_Y}
The \emph{framed real Picard torus}  is
\[
\wtilde{\mathbf{T}}_{R,\mathbf{c}}=\frac{H^1(M;\mathbf{R})^{-\tau^*}}{\Upgamma(M,\tau,\mathbf{c})},
\]
where $\Upgamma(M,\tau,\mathbf{c})$ is a lattice of $H^1(M;\mathbf{R})^{-\tau^*}$ isomorphic to $\pi_0(\mathcal{G}(M,\tau,\mathbf{c})$), defined as follows. 
Enumerate the components of $\mathbf{c}$ as $c_0,c_1,\dots,c_n$.
Every $H^1(M;\mathbf{Z})^{-\tau^*}$ gives rise to an $\tau$-equivariant map $g\colon Y \to S^1$, up to an overall $\pm 1$ factor.
Let
\[
ev_{\mathbf{c}} \colon H^1(M;\mathbf{Z})^{-\tau^*} \to \{\pm 1\}^{n}, \quad ev_{\mathbf{c}}(g) = \big(g(c_0)^{-1}g(c_1),\dots,g(c_0)^{-1}g(c_n)\big)
\]
and
\begin{equation}
    \Upgamma(M,\tau,\mathbf{c}) = \ker(ev_{\mathbf{c}}).
\end{equation}
\end{defn}
\begin{lem}
    $\Upgamma(M,\tau,\mathbf{c})$ is well-defined.
\end{lem}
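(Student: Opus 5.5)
The plan is to check three things in turn: that each class in $H^1(M;\mathbf{Z})^{-\tau^*}$ has a skew-invariant circle-valued representative, that the ratios $g(c_0)^{-1}g(c_j)$ do not depend on the choice of representative, and that the resulting kernel is a lattice identified with $\pi_0(\mathcal{G}(M,\tau,\mathbf{c}))$. The first and third are routine; the second carries the content.

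\emph{Representatives.} Classes in $H^1(M;\mathbf{Z})$ are homotopy classes of maps $M\to S^1$. Given an anti-invariant class $h$, pick any representative $f$ and replace it by the normalized pointwise geometric mean of $f$ and $\overline{\tau^*f}$. This is well defined because $f/\overline{\tau^* f}$ is null-homotopic, since its class is $h+\tau^*h=0$, and so has a logarithm. The result $g$ satisfies $\tau^*g=\bar g$, i.e.\ it is an element of the real gauge group $\mathcal{G}(M,\tau)$. Skew-invariance forces $g(x)=\overline{g(x)}$ at every fixed point, so $g$ takes values in $\{\pm1\}$ on each component of $\mathbf{c}\subset\fix(\tau)$, and by continuity it is constant there. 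Hence each $g(c_j)\in\{\pm1\}$ is defined.

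\emph{Independence of the representative.} This is the step I expect to be the main obstacle. Suppose $g,g'$ are two skew-invariant maps in the same class $h$. Then $u=g'/g$ is null-homotopic and skew-invariant, so $u=e^{i\xi}$ with $\tau^*\xi=-\xi+2\pi m$ for some integer $m$ on each component of $M$. Replacing $\xi$ by $\xi-\pi m$ gives $\tau^*\xi=-\xi$ exactly, since $e^{-i\pi m}=\pm1$ and the constant $\pm1$ factor is the ambiguity allowed in the statement. The real point to check is that $\tau$ has a fixed point on each component of $M$, which holds because $\fix(\tau)$ is nonempty and meets every component in the setting considered. Then on a connected $M$, $\xi$ vanishes at every fixed point, so $u$ equals the same sign $\pm1$ at all components of $\mathbf{c}$. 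Consequently all ratios $g(c_0)^{-1}g(c_j)$ are unchanged. The same computation shows that the overall sign ambiguity $g\mapsto -g$ also cancels in these ratios. So $ev_{\mathbf{c}}$ is a well-defined function of $h$. If $M$ is disconnected, I would either apply the argument componentwise or note that the ratio convention should be taken within components.

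\emph{Homomorphism and lattice.} $ev_{\mathbf{c}}$ is a homomorphism, because pointwise products of skew-invariant representatives represent sums of classes. Its kernel $\Upgamma$ therefore has finite index in the lattice $H^1(M;\mathbf{Z})^{-\tau^*}$, and so is itself a lattice in $H^1(M;\mathbf{R})^{-\tau^*}$. Finally, the map $\mathcal{G}(M,\tau,\mathbf{c})\to H^1(M;\mathbf{Z})^{-\tau^*}$, $g\mapsto[g]$, has image in $\Upgamma$, and every element of $\Upgamma$ is hit after multiplying the representative by a constant sign so that $g(c_0)=1$. Its kernel consists of maps $e^{i\xi}$ with $\xi$ anti-invariant and vanishing on $\mathbf{c}$, which form a contractible set. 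This identifies $\pi_0(\mathcal{G}(M,\tau,\mathbf{c}))\cong\Upgamma$.
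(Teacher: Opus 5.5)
Your argument is correct and is the paper's argument: skew-invariance makes $g$ a constant sign on each component of $\mathbf{c}$, and the ratios $g(c_0)^{-1}g(c_j)$ cancel the leftover global sign, so $ev_{\mathbf{c}}$ is a well-defined homomorphism. The paper takes for granted that skew-invariant representatives of a class agree up to a global $\pm1$ on the fixed set, and your Step~2 proves this. The paper's remaining remark, independence of the reference component $c_0$, is immediate from your description of the kernel as the classes whose representative is constant on $\mathbf{c}$, i.e.\ the image of $\mathcal{G}(M,\tau,\mathbf{c})$.
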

\begin{proof} 
    By $\tau$-skew-invariance, the value of $g$ is constant on each $c_i$ (and on the component of fixed points containing $c_i$);
    the $g(c_0)^{-1}g(c_1) = g(c_0)g(c_1)$ resolves the $(\pm 1)$ ambiguity.
    Hence each component of $ev_{\mathbf{c}}$ is a well-defined homomorphism to $\{\pm 1\} \cong \mathbf{Z}/2$.
    While $ev_{\mathbf{c}}$ may depend on the ordering of $\mathbf{c}$, lying in its kernel means
    \[g(c_0) = g(c_1) = \dots = g(c_n).\qedhere\]
\end{proof}
\begin{rem}
Here is an alternative definition of $\Upgamma(M,\tau,\mathbf{c})$.
For each $i$, choose an arc $\gamma_i$ from $c_0$ to $c_i$ in $(M \setminus \mathbf{c})$ such that $\tau(\gamma_i) \cap \gamma_i$ consists of exact the two endpoints.
The circle $[\gamma_i \cup \tau(\gamma_i)]$ defines a $\tau_*$-skew-invariant 1-cycle in $H_1(M;\mathbf{Z})$.
Evaluations of each $H_1(M;\mathbf{Z})^{-\tau^*}$ over the $n$ 1-cycles $([\gamma_i \cup \tau(\gamma_i)])$ modulo-$2$ defines the homomorphism
\[ev_{\boldsymbol{\gamma}} \colon H_1(M;\mathbf{Z})^{-\tau^*} \to (\mathbf{Z}/2)^n,\]
As a consequence of the above lemma,
    the kernel of $ev_{\boldsymbol{\gamma}}$ is independent of the ordering of $c_i$ and the choices of $\gamma_i$'s.
    
To see this more geometrically, note $\gamma_i$ is an element of the chain group $C_1(M,c_0 \cup c_i)$ and any two $\gamma_i$'s differ by an element $\beta$ in $C_1(M)$.
    Evaluations of $H^1(M;\mathbf{Z})^{-\tau^*}$ over $[\beta + \tau_*(\beta)]$ are even, so the $ev_{\boldsymbol{\gamma}}$ is independent of the $\upgamma_i$.
    To see the kernel is independent of the reference component $c_0$, define a new system of $(\gamma_i)$ by concatenation from a new reference $c_0$.
\end{rem}
The torus $\tilde{\mathbf{T}}_{R,\mathbf{c}}$ is the framed version of the real Picard torus and $\tilde{\mathbf{T}}_{R,\mathbf{c}} \to \mathbf{T}_R $ is a finite covering of index $2^{k-1}$ if $\mathcal{G}(M,\tau,\mathbf{c}) \le \mathcal{G}(M,\tau)$ has index $2^k$ for some $k > 0$.
Denote the corresponding group of deck transformations as
\[
    \mathsf{D}_{\mathbf{c}} \cong  \frac{H^1(M;\mathbf{Z})^{-\tau^*}}{\Upgamma(M,\tau,\mathbf{c})}
    \cong \frac{\mathcal{G}(M,\tau)}{\pm \mathcal{G}(M,\tau,\mathbf{c})},
\]
where $\pm \mathcal{G}(M,\tau,\mathbf{c})$ is the subgroup generated by $\{\pm 1\}$ and $\mathcal{G}_{\mathbf{c}})$.
Thus $\mathsf{D}_{\mathbf{c}}$ fits in the exact sequence below:
\[
    0 \to \Upgamma(M,\tau,\mathbf{c}) \to H^1(M;\mathbf{Z})^{-\tau^*} 
    \to \mathsf{D}_{\mathbf{c}} \to 0,
\]
which splits but not canonically.
The proof of the following lemma is deferred to Section~\ref{sec:tangeng_space}.
\begin{lem}
\label{lem:config_homotopy_type}
    The group of connected components of $\mathcal{G}(M,\tau,\mathbf{c})$ is isomorphic to $\Upgamma(M,\tau,\mathbf{c})$.
    Thus there is a homotopy equivalence
    \begin{equation*}
        \tilde{\mathcal{B}}(M,\mathbf{c},\wtilde{\frr}) \simeq
        \tilde{\mathbf{T}}_{R,\mathbf{c}}.
    \end{equation*}
    The cohomology ring of $\tilde{\mathcal{B}}(M,\mathbf{c},\frr)$ is isomorphic to the exterior algebra $\Lambda^*(\Upgamma(M,\tau,\mathbf{c}))$ over $\mathbf{F}$.
\end{lem}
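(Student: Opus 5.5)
We compute $\pi_0$ of the framed real gauge group first, and then deduce the homotopy type of the quotient from the unframed picture in \cite{ljk2022}.

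\textbf{Components of the framed gauge group.} Every $g\in\mathcal{G}(M,\tau)$ is a map $g\colon M\to S^1$ with $\tau^*g=\bar g$. The map $g\mapsto [g^{-1}dg/2\pi i]$ gives a homomorphism to $H^1(M;\mathbf{Z})$, and skew-invariance puts the class in $H^1(M;\mathbf{Z})^{-\tau^*}$. By \cite{ljk2022}, this homomorphism is surjective onto $H^1(M;\mathbf{Z})^{-\tau^*}$. Its kernel consists of the maps $g=\pm e^{\xi}$ with $\xi\colon M\to i\mathbf{R}$ and $\tau^*\xi=-\xi$. The kernel therefore has exactly two components, $\{\pm1\}$ times the contractible group $\{e^{\xi}\}$, so $\pi_0\mathcal{G}(M,\tau)$ is an extension of $H^1(M;\mathbf{Z})^{-\tau^*}$ by $\{\pm1\}$.

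Now restrict to $g|_{\mathbf{c}}\equiv1$. Each exponential $e^{\xi}$ with $\xi$ skew-invariant has $\xi=0$ on $\mathbf{c}\subset\fix(\tau)$, so it lies in $\mathcal{G}_{\mathbf{c}}$. Hence $\mathcal{G}_{\mathbf{c}}\cap\ker$ is the identity component, since $-1\notin\mathcal{G}_{\mathbf{c}}$. A class $h\in H^1(M;\mathbf{Z})^{-\tau^*}$ lifts to $\mathcal{G}_{\mathbf{c}}$ exactly when a representative $g$, possibly after multiplying by $-1$, equals $1$ on every $c_i$. This happens exactly when $g(c_0)=\dots=g(c_n)$, i.e.\ when $h\in\ker(ev_{\mathbf{c}})=\Upgamma(M,\tau,\mathbf{c})$, as in the well-definedness lemma above. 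This gives $\pi_0\mathcal{G}(M,\tau,\mathbf{c})\cong\Upgamma(M,\tau,\mathbf{c})$.

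\textbf{Homotopy type of the quotient.} $\mathcal{C}$ is contractible, and $\mathcal{G}_{\mathbf{c}}$ acts freely on it; for 4-manifolds, freeness uses the framing on 1-dimensional $\mathbf{c}$. So $\tilde{\mathcal{B}}_{\mathbf{c}}\simeq B\mathcal{G}_{\mathbf{c}}$, provided the Sobolev completion has local slices. These slices are given by the Hausdorff proposition above together with the standard slice theorem of \cite{KMbook2007}.

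The identity component $\mathcal{G}_{\mathbf{c}}^{0}=\{e^{\xi}\}$ is contractible. It is a vector space, not a quotient by constants, because constant imaginary $\xi$ is not skew-invariant. Therefore $B\mathcal{G}_{\mathbf{c}}\simeq K(\Upgamma,1)$. Since $\Upgamma$ is a lattice of rank $\operatorname{rank}H^1(M)^{-\tau^*}$, this $K(\Upgamma,1)$ is the torus $\tilde{\mathbf{T}}_{R,\mathbf{c}}$.

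For a concrete equivalence we use the Hodge-type splitting as in the unframed case. Fix a reference real connection $A_0$. Each real connection is $A_0+a$ with $a$ a $\tau$-skew-invariant imaginary 1-form. The map $(A,\Phi)\mapsto$ (harmonic part of $a$) descends to
\[
\tilde{\mathcal{B}}_{\mathbf{c}}\to H^1(M;\mathbf{R})^{-\tau^*}/\Upgamma .
\]
It is a fibration with contractible fibre: the coexact part and the spinor are a vector space after the identity-component gauge fixing $d^*a=0$ together with boundary conditions if $\partial M\neq\emptyset$. The cohomology statement then follows because $H^*(T^n;\mathbf{F})\cong\Lambda^*(\mathbf{F}^n)$, written intrinsically as $\Lambda^*(\Upgamma)$ with $\mathbf{F}$ coefficients.

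\textbf{Main obstacle.} The delicate step is the surjectivity and lifting claim in the first step: we need every class in $\Upgamma$ to be realized by a skew-invariant $g$ that is genuinely $1$ on all of $\mathbf{c}$, not just $\pm1$. We would handle it by taking any skew-invariant representative $g$, which is automatically $\pm1$ on $\fix(\tau)$, and using $h\in\ker ev_{\mathbf{c}}$ to make all the signs agree. Multiplying by a global $-1$ if necessary then gives $1$ on all of $\mathbf{c}$.

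When $\mathbf{c}$ is 1-dimensional, there is a second issue. One must check that $g$ restricted to a circle component is constant rather than winding. Skew-invariance with $\tau$ fixing $\mathbf{c}$ pointwise forces $g|_{\mathbf{c}}$ to be real-valued, hence locally constant, which resolves this.
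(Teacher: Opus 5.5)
Your proof is correct, but your main line of argument differs from the paper's.

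\textbf{The paper's route.} It writes $\mathcal{C}\cong\mathcal{G}^{\perp}\times\mathcal{K}$ using the global Coulomb--Neumann slice. It then splits $\mathcal{G}_{\mathbf{c}}=\mathcal{G}^h_{\mathbf{c}}\times\mathcal{G}^{\perp}$ with $\mathcal{G}^{\perp}=\{e^{\xi}\}$, and states that the harmonic factor is discrete, $\mathcal{G}^h_{\mathbf{c}}\cong\Upgamma$. Hence $\tilde{\mathcal{B}}_{\mathbf{c}}\cong\mathcal{K}/\Upgamma$, a vector bundle over the torus. Your secondary ``concrete'' argument (the harmonic-part map with contractible fibres after imposing $d^*a=0$) is exactly this.

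\textbf{How your route differs.}
\begin{itemize}
\item The paper asserts $\mathcal{G}^h_{\mathbf{c}}\cong\Upgamma$ without argument. Your degree-homomorphism computation actually proves it. Its kernel is $\{\pm e^{\xi}\}$, and $e^{\xi}\equiv1$ on $\fix(\tau)$, so a representative's values on $\mathbf{c}$ are well defined up to a global sign. A class then lifts to $\mathcal{G}_{\mathbf{c}}$ exactly when those values agree. This also makes explicit that $\{e^{\xi}\}$ is the whole identity component of $\mathcal{G}_{\mathbf{c}}$.
\item Your primary route to the homotopy type is $\tilde{\mathcal{B}}_{\mathbf{c}}\simeq B\mathcal{G}_{\mathbf{c}}\simeq K(\Upgamma,1)$. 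This needs only free action of $\mathcal{G}_{\mathbf{c}}$ and the local slice theorem, not a global Hodge/Coulomb decomposition. That avoids deciding which harmonic space represents $H^1(M)$ when $\partial M\neq\emptyset$, which the paper glosses over.
\item The paper's global-slice model, in return, gives an explicit torus. That explicit torus is what is later identified with $\mathbf{T}^t$ via the functions $r_{\omega_i}$; the abstract $K(\Upgamma,1)$ argument does not supply it.
\end{itemize}

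\textbf{Minor correction.} $H^1(\tilde{\mathbf{T}}_{R,\mathbf{c}};\mathbf{Z})$ is canonically $\text{Hom}(\Upgamma,\mathbf{Z})$. So the intrinsic description of the cohomology ring is $\Lambda^*(\Upgamma^*)\otimes\mathbf{F}$, as in the paper's later corollary, not ``$\Lambda^*(\Upgamma)$ written intrinsically''. The two rings are only abstractly isomorphic.
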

\subsection{Tangent spaces}
\label{sec:tangeng_space}
In what follows, let $k \ge 0$ be an integer or half-integer satisfying $2(k+1) > \dim M$.
Let $A_0$ be a smooth reference real $\spinc$ connection, let $\tfrr$ act on the forms by $(-\tau^*)$, and define Sobolev space:
\begin{equation*}
    \mathcal{C}_k(M,\tilde{\frr}) = (A_0,0) + L^2_k(M;iT^*M \oplus W)^{\tilde{\frr}}.
\end{equation*}
For $j \le k$, let
\begin{equation*}
    \mathcal{T}_j = L^2_j(M;iT^*M \oplus W)^{\tilde{\frr}} \times
    \mathcal{C}_k(M,\tilde{\frr})
\end{equation*}
be the trivial vector bundle over $\mathcal{C}_k(M,\tfrr)$. 
Given a configuration $\upgamma = (A_0,\Phi_0)$, the linearization of the gauge group action at $\upgamma$ defines an operator 
\begin{align}\label{eq:defn_d_gamma}
    \mathbf{d}_{\upgamma} \colon L^2_{j+1}(M,\mathbf{c};i\mathbf{R})^{-\tau^*} 
    &\to \mathcal{T}_{j,\upgamma}\\
    \mathbf{d}_{\upgamma}(\xi) 
    &= (-d\xi, \xi \Phi_0),
\end{align}
where, by definition, a function in $L^2_{j+1}(M,\mathbf{c};i\mathbf{R})^{-\tau^*}$ vanishes over $\mathbf{c}$.
Denote the corresponding formal adjoint by
\begin{align}\label{eq:defn_d_star_gamma}
    \mathbf{d}^*_{\upgamma}:
    \mathcal{T}_{j,\upgamma} &\to L^2_{j-1}(M,\mathbf{c};i\mathbf{R})^{-\tau^*}\\
    (a,\phi) &\mapsto
    -d^*a + i \re\langle i\Phi_0,\phi\rangle.
\end{align}
Let $\mathcal{J}_{j,\upgamma} \subset \mathcal{T}_{j,\upgamma}$ be the image of $\mathbf{d}_{\upgamma}$, and $\mathcal{K}_{j,\upgamma} \subset \mathcal{T}_{j,\upgamma}$ be the $L^2$-orthogonal complement of $\mathcal{J}$, that is,
\begin{equation*}
    \mathcal{K}_{j,\upgamma} = \left\{  (a,\phi) \ | \ \mathbf{d}^*_{\upgamma}(a,\phi) = 0 \text{ and } \langle a, \overrightarrow{n}\rangle = 0 \text{ at } \del M \right\}.
\end{equation*}
\begin{prop}
    \label{prop:tangent_K_J_decomp}
  Over the entire $\mathcal{C}_k(M,\wtilde{\frr})$, the subspaces $\mathcal{J}_{j,\upgamma}$ and $\mathcal{K}_{j,\upgamma}$ define a smooth decomposition (of \emph{closed} subbundles)
    \begin{equation*}
        \mathcal{T}_j
        = \mathcal{J}_j \oplus \mathcal{K}_j.
    \end{equation*}
\end{prop}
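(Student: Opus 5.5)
This is the framed analogue of \cite[Prop.~9.3.4]{KMbook2007}, and I would follow that argument, keeping track of two extra constraints: the skew-invariance under $\tfrr$ and the vanishing of gauge parameters on $\mathbf{c}$.

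\textbf{Step 1: reduce to one elliptic problem.} At each $\upgamma = (A_0,\Phi_0)$, the decomposition $\mathcal{T}_{j,\upgamma} = \mathcal{J}_{j,\upgamma}\oplus\mathcal{K}_{j,\upgamma}$ follows once every $(a,\phi)\in\mathcal{T}_{j,\upgamma}$ can be written uniquely as
\[
(a,\phi) = \mathbf{d}_{\upgamma}\xi + (a',\phi'), \qquad (a',\phi')\in\mathcal{K}_{j,\upgamma}.
\]
Given the definition of $\mathcal{K}$, this means solving the boundary value problem
\[
\mathbf{d}^*_{\upgamma}\mathbf{d}_{\upgamma}\xi = \mathbf{d}^*_{\upgamma}(a,\phi), \qquad \langle d\xi,\vec n\rangle = \langle a,\vec n\rangle \text{ on } \partial M,
\]
for $\xi\in L^2_{j+1}(M,\mathbf{c};i\mathbf{R})^{-\tau^*}$. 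The operator here is
\[
\Delta_{\upgamma}\xi = d^*d\xi + |\Phi_0|^2\xi,
\]
a Laplacian plus a nonnegative zeroth-order term, with Neumann conditions. Both $\Delta_{\upgamma}$ and the boundary condition commute with $-\tau^*$, because $|\Phi_0|^2$ is $\tau$-invariant. Averaging over $\tau$ therefore lets me work in the invariant subspace throughout.

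\textbf{Step 2: the constraint $\xi|_{\mathbf{c}}=0$.} This is the main obstacle. The set $\mathbf{c}$ has codimension at least $3$, and $L^2_{j+1}$ functions need not restrict to $\mathbf{c}$ at low regularity. My first choice is to define the framed gauge Lie algebra as the kernel of the bounded evaluation map
\[
L^2_{k+1}\to C^0(\mathbf{c}).
\]
This map is well defined because $2(k+1)>\dim M$ gives the Sobolev embedding into $C^0$, and its kernel has finite codimension when $\mathbf{c}$ consists of points. For a 1-dimensional $\mathbf{c}$ the codimension is infinite, and I would instead use the trace theorem onto $L^2_{k+1-3/2}(\mathbf{c})$, which is surjective.

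Skew-invariance forces any $\xi$ to vanish automatically on $\mathrm{Fix}(\tau)\supset\mathbf{c}$. Indeed, $\tau^*\xi = -\xi$ for imaginary-valued $\xi$ (the linearisation of $\tau^*g=\bar g$), so $\xi=-\xi$ on fixed points. Hence
\[
L^2_{j+1}(M,\mathbf{c};i\mathbf{R})^{-\tau^*} = L^2_{j+1}(M;i\mathbf{R})^{-\tau^*},
\]
and the framing imposes no constraint at the Lie algebra level. I would verify this first. If it holds, the framed problem is literally the unframed real one, and the only difference between framed and unframed lies in $\pi_0$ of the gauge group, which is handled in Lemma~\ref{lem:config_homotopy_type}.

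\textbf{Step 3: injectivity and surjectivity.} With the domain identified, I show that $\Delta_{\upgamma}$ with Neumann conditions is Fredholm of index zero on the skew-invariant functions, by restricting the standard elliptic theory to the invariant subspace. For injectivity, pairing $\Delta_{\upgamma}\xi = 0$ with $\xi$ gives $d\xi=0$ and $\Phi_0\xi=0$, so $\xi$ is locally constant. Skew-invariance plus the nonempty fixed set then forces $\xi$ to vanish on every component of $M$ meeting $\mathrm{Fix}(\tau)$. I expect to need that each component contains a fixed point, or else to argue componentwise. Injectivity together with index zero gives surjectivity.

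\textbf{Step 4: smoothness and closedness.} Since $\Delta_{\upgamma}^{-1}$ depends smoothly on $\upgamma\in\mathcal{C}_k$, the projection
\[
\Pi_{\upgamma} = \mathbf{d}_{\upgamma}\,\Delta_{\upgamma}^{-1}\,\mathbf{d}^*_{\upgamma}
\]
(with boundary data included) is a smooth family of bounded projections. Its image $\mathcal{J}_j$ and kernel $\mathcal{K}_j$ are therefore closed complementary subbundles of $\mathcal{T}_j$. The multiplication $\xi\Phi_0$ is continuous $L^2_{j+1}\times L^2_k\to L^2_j$ for $j\le k$, by the Sobolev multiplication theorems.
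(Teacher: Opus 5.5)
Your proposal is correct and follows essentially the paper's route. The paper cites \cite[Prop.~9.3.4]{KMbook2007} for irreducibles and, at reducibles, solves the Neumann problem for $\Delta\xi$, noting that the usual mean-zero normalization is vacuous because $\tau$-anti-invariant functions integrate to zero. That is the same fact as your injectivity step (anti-invariant locally constant functions vanish), which you apply uniformly to all $\upgamma$. Your side remarks are also right:
\begin{itemize}
\item the constraint $\xi|_{\mathbf{c}}=0$ is automatic, since $\mathbf{c}\subset\fix(\tau)$;
\item both your argument and the paper's implicitly need every component of $M$ to be $\tau$-invariant. Otherwise a pair of components exchanged by $\tau$ carries the nonzero anti-invariant locally constant function equal to $c$ on one component and $-c$ on the other, and this function lies in $\ker\mathbf{d}_{\upgamma}$ whenever $\Phi_0$ vanishes there.
\end{itemize}
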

\begin{proof}
    If $\upgamma$ is irreducible, this follows from \cite[Prop.~9.3.4]{KMbook2007}.
    If $\upgamma$ is reducible, the conditions become (cf.~\cite[p.~150]{KMbook2007}):  
    \begin{equation*}
        \begin{cases}
            \Delta \xi = d^*a\\
            \langle d\xi, \overrightarrow{n} \rangle = 0 \quad \text{at }\del M
        \end{cases}
    \end{equation*}
    which has an unique solution $\xi \in L^2_{j+1}(M,\mathbf{c};i\mathbf{R})$ subject to $\int_M \xi = 0$. 
    But this integral constraint is vacuous in the the real case, since every $\tau$-anti-invariant function integrates to zero.
\end{proof}
\begin{proof}[Proof of Lemma~\ref{lem:config_homotopy_type}]
    The proof of Proposition~\ref{prop:tangent_K_J_decomp} suggests that the gauge group action provides a diffeomorphism 
    \begin{align*}
        \mathcal{G}^{\perp}_{j+1}(M,\tau,\mathbf{c}) \times \mathcal{K}_{j,\upgamma} 
        &\to \mathcal{C}_j \\
        (e^{\xi}, (a,\phi)) &\mapsto
        (A_0 + (a-d\xi)\otimes 1, e^{\xi}\phi),
    \end{align*}
    where 
    \[\mathcal{G}^{\perp}_{j+1}(M,\tau,\mathbf{c}) = \{e^{\xi}:\xi \in L^2_{j+1}(M,\mathbf{c};i\mathbf{R})^{-\tau^*}\] 
    comprises the nullhomotopic gauge transformations.
    Moreover, $\mathcal{G}_{j+1}(M,\tau,\mathbf{c}) = \mathcal{G}^h_{\mathbf{c}} \times \mathcal{G}^{\perp}_{j+1}$ such that $\mathcal{G}^h$ consists of harmonic gauge transformations.
    But the harmonic subgroup is discrete in the real case --- there is an isomorphism
    \begin{equation*}
         \mathcal{G}^h_{\mathbf{c}} \cong \Upgamma(M,\tau,\mathbf{c}).
    \end{equation*}
    Compare this with \cite[\S 5.7]{ljk2022}, where the harmonic group contains a $\{\pm 1\}$ subgroup of constant gauge transformations, and with \cite[\S 9.7]{KMbook2007} where the harmonic group contains an $S^1$.
\end{proof}
The \emph{Coulomb-Neumann slice} is the affine subspace  $\mathcal{S}_{k,\upgamma} \subset \mathcal{C}_k(M,\tilde{\frr})$ with tangent space $\mathcal{K}_{j,\upgamma}$.
Indeed, every $\upgamma \in \tilde{\mathcal{B}}_{k}(M,\tilde{\frr})$ there is an open neighbourhood $U \subset \mathcal{S}_{k,\upgamma}$ such that the composition of inclusion and quotient maps
\[
\mathcal{U} \to \mathcal{C}_k(M,\tilde{\frr}) \to \tilde{\mathcal{B}}_{k}(M,\wtilde{\frr})
\]
is a diffeomorphism onto its image.
Compare with \cite[Cor.~9.3.8]{KMbook2007}.
Since $\mathcal{G}_{\mathbf{c}}$ acts freely and the linearization of its action has closed range:
\begin{cor}
    When $2(k+1) > \dim M$, the quotient space $\tilde{\mathcal{B}}_k(M,\tfrr)$ is a Hilbert manifold without boundary.
\end{cor}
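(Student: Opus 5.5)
The plan is to follow the unframed argument of \cite[Cor.~9.3.8]{KMbook2007} and \cite[\S 5]{ljk2022}, using the slice theorem together with freeness of the action. There are three inputs to establish, in this order.

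\textbf{Step 1: the quotient is Hausdorff and $\mathcal{G}_{k+1}(M,\tau,\mathbf{c})$ is a Hilbert Lie group acting smoothly.} Since $2(k+1)>\dim M$, $L^2_{k+1}$ functions are continuous. Hence the conditions $g|_{\mathbf{c}}\equiv 1$ and $\tau^*g=\overline{g}$ cut out a closed subgroup of the ordinary $L^2_{k+1}$ gauge group. Its Lie algebra is $L^2_{k+1}(M,\mathbf{c};i\mathbf{R})^{-\tau^*}$, and smoothness of the action follows from the Sobolev multiplication theorems. Hausdorffness is the preceding Proposition, proved as in \cite[Prop.~9.3.1]{KMbook2007}.

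\textbf{Step 2: the action is free.} Suppose $g\cdot(A,\Phi)=(A,\Phi)$. Then $g^{-1}dg=0$, so $g$ is locally constant. Skew-invariance forces its values to be $\pm1$ on each component meeting $\mathrm{Fix}(\tau)$. It remains to ensure every component of $M$ contains a point of $\mathbf{c}$; I would either assume this or reduce to it componentwise, as the definitions implicitly do. Then $g|_{\mathbf{c}}\equiv1$ gives $g\equiv 1$. This holds irrespective of whether $\Phi$ vanishes, so reducibles cause no stabilizer. This is exactly the point where the framed theory differs from \cite{ljk2022}, where $-1$ fixes reducibles.

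\textbf{Step 3: local charts from the Coulomb--Neumann slice.} Proposition~\ref{prop:tangent_K_J_decomp} gives the closed smooth splitting $\mathcal{T}_j=\mathcal{J}_j\oplus\mathcal{K}_j$. The linearized action $\mathbf{d}_\upgamma$ is injective with closed range at every $\upgamma$, including reducibles. Injectivity holds because $d\xi=0$ with $\xi|_{\mathbf{c}}=0$ and $\xi$ anti-invariant forces $\xi=0$; closed range comes from the Neumann Laplacian solution in that proposition. Consider the map $\mathcal{G}_{k+1}\times\mathcal{S}_{k,\upgamma}\to\mathcal{C}_k$, $(g,s)\mapsto g\cdot s$. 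At $(1,\upgamma)$ its derivative is $(\xi,v)\mapsto \mathbf{d}_\upgamma\xi+v$, which is an isomorphism onto $\mathcal{J}_k\oplus\mathcal{K}_k$. The inverse function theorem then gives a local diffeomorphism.

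Combined with freeness and a properness or Hausdorff argument, this shows that a small ball $\mathcal{U}\subset\mathcal{S}_{k,\upgamma}$ meets each orbit at most once. The expected obstacle is this global injectivity. I would argue by contradiction: suppose sequences $s_n,s_n'\to\upgamma$ with $g_n s_n=s_n'$. Elliptic bootstrapping on $g_n^{-1}dg_n = A_n - A_n'$ bounds $g_n$ in $L^2_{k+1}$. A subsequence then converges to a stabilizer of $\upgamma$, which is trivial by Step 2. So $g_n\to1$, contradicting local injectivity.

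The charts $\mathcal{U}\to\tilde{\mathcal{B}}_k$ are therefore homeomorphisms onto open sets. Transition maps are smooth because they are compositions of the smooth action with slice projections. There is no boundary, since we never blow up.
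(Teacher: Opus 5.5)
Your proposal is correct and follows essentially the same route as the paper. The paper derives the corollary from three ingredients, exactly as in \cite[Cor.~9.3.8]{KMbook2007}: the Coulomb--Neumann slice coming from the splitting $\mathcal{T}_j=\mathcal{J}_j\oplus\mathcal{K}_j$, freeness of the $\mathcal{G}_{\mathbf{c}}$-action (including at reducibles), and closed range of the linearized action. You additionally spell out the Hausdorff/properness step that the paper leaves to that citation, and your caveat that every component of $M$ must meet $\mathbf{c}$ for freeness is a hypothesis the paper uses implicitly when it asserts that $\mathcal{G}(M,\tau,\mathbf{c})$ acts freely.
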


Let $\mathcal{V}_j \to \mathcal{C}_k$ be the trivial vector bundle with fibre $L^2_j(M;i\mathfrak{su}(S^+)\oplus S^+)^{\tfrr}$ if $M$ is a 4-manifold, and with fibre $L^2_j(M;i\mathfrak{su}(S)\oplus S)^{\tfrr}$ if $M$ is a 3-manifold.
Let $\mathfrak{F}$ denote either the \emph{4-dimensional Seiberg--Witten operator}
\[
    (A,\Phi) \mapsto \left(\frac{1}{2}\rho(F^+_A) - (\Phi\Phi^*)_0, D_A^+\Phi\right),
\]
or the \emph{3-dimensional Seiberg--Witten operator}
\[
    (B,\Psi) \mapsto \left(\frac{1}{2}\rho(F_B) - (\Psi\Psi^*)_0, D_B\Psi\right).
\]
Such an operators is $\tfrr$-equivariant, and extends to a section of 
$\mathcal{V}_j \to \mathcal{C}_k(M,\mathbf{c},\tfrr)$ as a ($\mathcal{G}_{\mathbf{c},k-1}$-equivariant) section of $\mathcal{V}_{k-1}$.
In general, the framed gauge group $\mathcal{G}_{\mathbf{c},k+1}$ acts smoothly on $\mathcal{V}_j$ whenever $j \le k$. 
\subsection{Configurations on cylinders}
\label{sec:config_on_cylinders}
Assume $I \subset \mathbf{R}$ is an interval.
Denote by $(Z,\tau,\mathbf{p}_I)$ the product $I \times (Y,\tau,\mathbf{p})$.
In the framed setting, there is no need to blow up on $Y$ or $Z$.
A 4-dimensional configuration $\upgamma = (A,\Phi)$ on $Z$ gives rise to a path of  3-dimensional configurations $(\breve A(t),\breve \Phi(t))$ for which
\[
(A,\Phi) = (\check A + c \otimes 1_{S_Z} dt,\breve \Phi),
\]
where $t \in I$ is the time direction and $c$ is a time-dependent imaginary-valued 1-form. 
If $c = 0$, then $A$ is in \emph{temporal gauge}.
In the path notation, the 4-dimensional Seiberg--Witten equations takes the form of a flow equation:
\begin{align}\label{eq:grad_flow_1}
    \frac{d}{dt}\breve{A}- dc &= -\left(\frac{1}{2} *F_{\check{A}^t} + \rho^{-1}(\check \Phi\check \Phi^*)_0,
    \right) 1_S, \\
     \label{eq:grad_flow_2} \frac{d}{dt}\breve{\Phi} + c\breve{\Phi} &= -D_{\breve{A}}(\breve \Phi).
\end{align}

The Clifford multiplication $\rho$ identifies $\mathfrak{su}(S)$ with $T^*Y$, and consequently identifies $\mathcal{T}_j(Y)$ with $\mathcal{V}_j(Y)$.
It follows that, in temporal gauge, equations~\eqref{eq:grad_flow_1} and~\eqref{eq:grad_flow_2} are precisely the downward gradient flow equations of the \emph{Chern--Simons--Dirac functional} $\mathcal{L} \colon \underline{\mathcal{C}}(Y,\mathfrak{s}) \to \mathbf{R}$, defined over all ordinary configurations:
\[
\mathcal{L}(B,\Psi) = -\frac{1}{8}\int_Y(B^t-B_0^t) \wedge (F_{B^t} + F_{B_0^t}) + \frac{1}{2} \int_Y \langle D_B \Psi, \Psi \rangle d\text{vol}.
\]
The formal $L^2$ gradient $\grad \mathcal{L}$ of $\mathcal{L}$ is precisely the 3-dimensional Seiberg--Witten operator $\mathfrak{F}$. 
By compatibility of a {\rrscs}, $\grad \mathcal{L}$ is $\tfrr$-equivariant and thus a well-defined vector field on the real subspace $\mathcal{C}(Y,\tfrr) \subset \underline{\mathcal{C}}(Y,\mathfrak{s})$.
In what follows, view the $\mathcal{L}$ as a functional over $ \mathcal{C}(Y,\tilde{\frr})$.

In Section~\ref{sec:perturbations}, the transversality and regularity of critical points and trajectories will be achieved for the \emph{perturbed CSD functional}
$\pertL \colon \mathcal{C}(Y,\tfrr) \to \mathbf{R}$
obtained by adding to $\mathcal{L}$ a $\mathcal{G}(Y,\tau)$-gauge-invariant continuous function $f: \mathcal{C}(Y,\tilde{\frr}) \to \mathbf{R}$.
Denoting the formal gradient of $f$ by $\mathfrak{q}$, let
the \emph{perturbed gradient} be
\begin{equation}
    \label{eq:grad_pertL}
    \grad \pertL = \grad \mathcal{L} + \mathfrak{q}.
\end{equation}
By the identification $\mathcal{T}_j(Y) \cong \mathcal{V}_j(Y)$, write $\mathfrak{q}$ as $(\mathfrak{q}^0,\mathfrak{q}^1)$ in $L^2(Y;iT^*Y) \oplus L^2(Y;S)$.
Using this, the operator~\eqref{eq:grad_pertL} becomes
\begin{equation*}
    \mathfrak{F}_{\frakq}(B,\Psi) = \left(-*F_{B^t} - 2\rho^{-1}(\Psi\Psi^*)_0 - 2\mathfrak{q}^0(B,\Psi),-D_B\Psi - \mathfrak{q}^1(B,\Psi)\right),
\end{equation*}
which will be referred to as \emph{perturbed 3-dimensional Seiberg--Witten operator}.
Since gauge-invariance implies $\hat{\mathfrak{q}}^1(u(B,\Psi))=u\hat{\mathfrak{q}}^0(B,\Psi)$,
the second component vanishes if $\Psi = 0$.
A critical point of $\pertL$ with zero spinorial component is a \emph{reducible}; otherwise, it is \emph{irreducible}.

Suppose $I$ is compact and consider the configuration space $\mathcal{C}_{k}(Z,\mathbf{p}_I,\tfrr_{Z})$.
Via the Clifford multiplication, identify the fibre $L^2_j(Z;i\mathfrak{su}(S^+)\oplus S^+)^{\tfrr_Z}$ of the trivial bundle $\mathcal{V}_j\to \mathcal{C}_k$ with $L^2_j(Z;iT^*Y\oplus S)^{\tfrr_Z}$.
The 3-dimensional perturbation $\frakq$, viewed as a section $\mathcal{C}_k \to \mathcal{T}_0$, defines a section $\frakq(\check{A},\check{\Phi})$ over a path $(\check A(t),\check \Phi(t))$, and thus a section
\[
    \hat{\frakq}\colon \mathcal{C}(Z,\tfrr_Z) \to \mathcal{V}_0(Z,\tfrr_Z),
\]
where $(\check A(t),\check \Phi(t))$ is interpreted as an element of  $\mathcal{C}(Z,\tfrr_Z)$.

Denote $\hat{\mathfrak{q}} = (\hat{\mathfrak{q}}^0,\hat{\mathfrak{q}}^1)$ where $\hat{\mathfrak{q}}^0 \in L^2(Z;i\mathfrak{su}(S^+))^{\tfrr_Z}$ and $\hat{\mathfrak{q}}^1 \in L^2(Z;S^-)^{\tfrr_Z}$.
The \emph{4-dimensional 
perturbed Seiberg--Witten operator} $\mathfrak{F}_{\hat{\frakq}} = \mathfrak{F}+ \hat{\frakq}$ is the left-hand side of the \emph{4-dimensional perturbed Seiberg--Witten equations}: 
\begin{align*}
    \rho_Z(F_{A^t}^+) - 2(\Phi\Phi^*)_0 + 2\hat{\mathfrak{q}}^0(A,\Phi)
    &= 0,\\
    D_A^+ \Phi+ \hat{\mathfrak{q}}^1(A,\Phi) &= 0.
\end{align*}
\subsection{Cylinder functions}
\label{sec:cylinder_functions}
The discussion in this subsection will be exclusively 3-dimensional.
Let $(Y,\tau,\mathbf{p})$ be marked real 3-manifold and fix a relative real $\spinc$ structure $(\mathfrak{s},\tilde{\frr})$.
In monopole Floer homologies, transversality is established by introducing ``tame perturbations'', which in turn, arise as ``cylinder functions''~\cite[\S11.1]{KMbook2007}. 
In short, the framed version of cylinder functions are built from
$p$ of the form
\[p \colon \tilde{\mathcal{B}}_{k}(Y,\mathbf{p},\tilde{\mR}) \to \mathbf{R}^n \times \mathbf{T}^t \times \mathbf{R}^m,\]
which in turn descend from $\mathcal{G}_{\mathbf{p}}$-invariant functions $\mathcal{C}_{k}(Y,\tfrr) \to \mathbf{R}$.
These functions arise in two sorts, described below.
\begin{itemize}[leftmargin=*]
    \item $r_c:\mathcal{C}_{k}(Y,\tilde{\frr}) \to \mathbf{R}$, where
    \[r_c(B_0+b\otimes 1,\Psi) = \langle b,c \rangle_Y,\]
    for a coclosed form $c \in \Omega^1(Y;i\mathbf{R})^{-\tau^*}$.
    Since
    \[r_c(u(B_0 + b\otimes 1),\Psi) = r_c(B_0+b \otimes 1, \Psi) + (h \cup [*\bar c])[Y],\]
    the function $r_c$ is fully gauge-invariant only if $c$ is co-exact, and has period in $2\pi i\mathbf{Z}$ when under non-null-homotopic gauge transformations,  identified with $H^1(Y;i\mathbf{Z})^{-\tau^*}$;
    \item $q_{\Upsilon}:\mathcal{C}_{k}(Y,\tilde{\frr}) \to \mathbf{C}$, where
    \[q_{\Upsilon}(B,\Psi) = \langle \Psi,\tilde{\Upsilon}^{\dag}\rangle_{Y},\]
    for a $\frr$-invariant section $\Upsilon$ of $\mathbb{S}$. 
    (The co-domain is in fact $\mathbf{R} \subset \mathbf{C}$.)
    The definitions of 
    $\Upsilon$ and $\tilde{\Upsilon}^{\dag}$
    are as follows:
    
    Consider the \emph{universal spinor bundle} $\mathbb{S} \to \tilde{\mathbf{T}}_{R,\mathbf{c}} \times Y$, obtained as the quotient of  $H^1(Y;i\mathbf{R})^{-\tau^*} \times Y$ by $\Upgamma(Y,\tau,\mathbf{p})$.
    A $\frr$-invariant section $\Upsilon$ of $\mathbb{S}$ can be lifted to
    a  quasi-periodic  $\frr$-invariant section of the bundle
    \[H^1(Y;i\mathbf{R})^{-\tau^*} \times S \to H^1(Y;i\mathbf{R})^{-\tau^*} \times Y,\]
    in the sense that 
    \[\tilde{\Upsilon}_{\alpha + \kappa}(y) = u_{\kappa}(y)\tilde{\Upsilon}_{\alpha}(y),\] for 
    every $\kappa \in \Upgamma(M,\tau,\mathbf{c})$.
    Here, $u_{\kappa}$ is the unique element of $\mathcal{G}_{\mathbf{c}}$ corresponding to $\kappa$.
    Hence every $\Upsilon$ defines a $\mathcal{G}(Y,\tau)$-equivariant map
    \[\Upsilon^{\dag}:\mathcal{C}(Y,\tilde{\mR}) \to C^{\infty}(S)^{\tilde{\mR}}\] 
    by letting $b_{harm}$ be the harmonic part of $b$, and
    \begin{equation}
        \label{eq:Upsilon-section-defn}
    \Upsilon^{\dag}(B_0+b\otimes 1, \Psi) = e^{-Gd^*b}\tilde{\Upsilon}_{b_{harm}}.
    \end{equation}
    It is worth emphasizing that, by Equation~\eqref{eq:Upsilon-section-defn}, $\Upsilon^{\dag}$ is equivariant under the full $\mathcal{G}(Y,\tau)$, instead of just its based subgroup $\mathcal{G}_{\mathbf{c}}$.
    Therefore, $q_{\Upsilon}$ is $\mathcal{G}^o(Y,\tau)$-invariant and $\{\pm 1\}$-equivariant, for a choice of based gauge group $\mathcal{G}^o$.
\end{itemize}
Recall that $\Upgamma(Y,\tau,\mathbf{p}) \le H^1(Y;i\mathbf{Z})^{-\tau^*}$ is a lattice of full rank in $H^1(Y;i\mathbf{R})^{-\tau^*}$.
Choose an integral basis $\omega_1,\dots,\omega_t$ of $H^1(Y;i\mathbf{R})^{-\tau^*}$ generating $H^1(Y;i\mathbf{Z})^{-\tau^*}$ (rather than generating $\Upgamma(Y,\tau,\mathbf{p})$).
To achieve gauge-invariance, let $P_{\mathbf{c}} \subset 2\pi \mathbf{Z}^t$ be the subgroup generated by the periods of $(r_{\omega_1},\dots,r_{\omega_t})$ induced by the lattice $\Upgamma(Y,\tau,\mathbf{p})$.
View the map
\[
(B,\Psi) \mapsto (r_{\omega_1}(B,\Psi),\dots,r_{\omega_t}(B,\Psi))
\]
as onto the torus
\[
\mathbf{T}^t =  \mathbf{R}^t/P_{\mathbf{c}}.
\]
This provides an identification $\tilde{\mathbf{T}}_{R,\mathbf{c}} \cong \mathbf{T}^t$. 
Under this identification, $\mathsf{D}_{\mathbf{c}}$ acts
on $\mathbf{T}^t$, giving rise to an identification with unframed real Picard torus
$\mathbf{T}^t/\mathsf{D}_{\mathbf{c}} \cong \mathbf{T}_{R}$. 
\begin{defn}
A function $f$ on $\mathcal{C}(Y,\mathbf{p},\tilde{\frr})$ is a \emph{cylinder function} if it arises as $h \circ p$, where:
\begin{itemize}[leftmargin=*]
    \item the function $h$ is a compactly supported smooth function on $\mathbf{R}^n \times \mathbf{T}^t \times \mathbf{R}^m$, that is invariant under $\{\pm 1\}$-scalar multiplications on the $\mathbf{R}^m$-factor, and under the deck transformation group $\mathsf{D}_{\mathbf{c}}$ on the $\mathbf{T}^t$-factor; 
    \item the map $p:\mathcal{C}(Y,\tilde{\frr}) \to \mathbf{R}^n \times \mathbf{T}^t \times \mathbf{C}^m$ is the form
    \[
    p(B,\Psi) = (r_{b_1}(B,\Psi),\dots,r_{b_n}(B,\Psi), r_{\omega_1}(B,\Psi),\dots,r_{\omega_t}(B,\Psi), q_{\Upsilon_1}(B,\Psi),\dots,
    q_{\Upsilon_m}(B,\Psi) ),
    \]
    where $b_1,\dots,b_n$ is any collection of $\tau$-skew-invariant coexact forms, and $\Upsilon_1,\dots,\Upsilon_m$ is any collection of $\frr$-invariant sections of $\mathbb{S}$.
\end{itemize}
\end{defn}
\begin{prop}
    \label{prop:enough_embeddings}
    Suppose $K$ is a compact subset of a finite-dimensional $C^1$-submanifold $M$ in $\tilde{\mathcal{B}}_k(Y,\mathbf{p},\tilde{\frr})$.
    Assume $K = \pi^{-1}(\pi(K))$ and $M = \pi^{-1}(\pi(M))$ for $\pi \colon\tilde{\mathcal{B}}_{\mathbf{p}} \to \mathcal{B}_k$.
    Then there exists a collection of $\tau^*$-skew-invariant coclosed forms $c_{\nu}$, a collection of $\tfrr$-invariant sections $\Upsilon_{\mu}$ of $\mathbb{S}$, and a neighbourhood $U$ of $K$ in $M$, for which the corresponding map
    \[
    p \colon \tilde{\mathcal{B}}_k(Y,\mathbf{p},\tilde{\frr}) \to \mathbf{R}^n \times \mathbf{T}^t \times \mathbf{C}^m
    \]
    induces an embedding of $U$.
    Furthermore, given any $[B,\Psi]$ in $\tilde{\mathcal{B}}_{\mathbf{c},k}$ and any nonzero vector $v$ to $\tilde{\mathcal{B}}_{\mathbf{p},k}$, there exists a cylinder function $f$ whose differential $\mathcal{D}_{[B,\Psi]}f(v)$ is nonzero.
\end{prop}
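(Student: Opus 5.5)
We adapt the argument of \cite[Prop.~11.2.1 and Cor.~11.2.2]{KMbook2007}, as already adapted to the unframed real setting in \cite{ljk2022}, and reduce to the unframed case via the covering $\pi\colon \tilde{\mathcal{B}}_{\mathbf{p}} \to \mathcal{B}_k$. The two hypotheses $K = \pi^{-1}(\pi(K))$ and $M = \pi^{-1}(\pi(M))$ are what make the reduction work: the admissible coordinate maps are built from $r_c$, $r_{\omega_i}$ and $q_{\Upsilon}$, and these are invariant only under $\mathcal{G}^o$ (equivariant under $\{\pm1\}$), not under the deck group $\mathsf{D}_{\mathbf{c}}$. Consequently we cannot hope to separate points in the same $\mathsf{D}_{\mathbf{c}}$-orbit by $\mathsf{D}_{\mathbf{c}}$-invariant data. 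We must instead use the $\mathbf{T}^t$-coordinate, which records the full framed holonomy via $\tilde{\mathbf{T}}_{R,\mathbf{c}} \cong \mathbf{T}^t$, together with the sections $\Upsilon$ of the universal bundle $\mathbb{S}$ over $\tilde{\mathbf{T}}_{R,\mathbf{c}}\times Y$, which are quasi-periodic only with respect to $\Upgamma(Y,\tau,\mathbf{p})$.

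\emph{Step 1: injectivity.} Take $[B_1,\Psi_1]\neq[B_2,\Psi_2]$ in $\tilde{\mathcal{B}}_{\mathbf{p}}$ and put both in Coulomb gauge relative to $B_0$, so that $B_i = B_0 + b_i$ with $d^*b_i = 0$.
\begin{itemize}
\item If the harmonic parts of $b_1,b_2$ differ modulo $\Upgamma(Y,\tau,\mathbf{p})$, then the $r_{\omega_j}$ separate the two points in $\mathbf{T}^t=\mathbf{R}^t/P_{\mathbf{c}}$.
\item Otherwise, apply a framed harmonic gauge transformation so that the harmonic parts agree. The residual ambiguity lies in $\mathcal{G}_{\mathbf{p}}$, so the coexact parts are now well defined, and if they differ, some $r_{b_\nu}$ separates.
\item If the connections agree, the spinors must differ (the framed gauge group acts freely). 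Choose $\tilde\Upsilon_{b_{harm}}$ pairing differently with $\Psi_1$ and $\Psi_2$, and extend it to a $\tfrr$-invariant section of $\mathbb{S}$ using a bump function on $\tilde{\mathbf{T}}_{R,\mathbf{c}}$.
\end{itemize}
The real structure only forces the pairings to be real; this is fine since $q_\Upsilon$ is $\mathbf{R}$-valued, and $\tfrr$-averaging preserves separation because both spinors are $\tfrr$-invariant. One subtlety remains: $q_\Upsilon$ is only $\{\pm1\}$-equivariant, so the pair of points $\Psi$ and $-\Psi$ over the same connection is separated only by sign. This is allowed because the target is $\mathbf{C}^m$ rather than $\mathbf{C}^m/\pm$; the $\pm$-invariance condition is imposed only on $h$, not on $p$.

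\emph{Step 2: immersion, giving the second claim.} Take a tangent vector $v=(b,\psi)\in\mathcal{K}_{k,\upgamma}$. If the harmonic part of $b$ is nonzero, some $r_{\omega_j}$ detects it. If instead the coexact part of $b$ is nonzero, some $r_{b_\nu}$ detects it. If $b=0$, then $\psi\neq0$, and some $q_\Upsilon$ with $\Upsilon$ approximating $\psi$ detects it. At reducible points we also have $\psi\neq 0$ whenever $b=0$, so the same argument applies. Composing with a compactly supported $h$ that is $\pm$- and $\mathsf{D}_{\mathbf{c}}$-invariant and has nonzero differential there yields the cylinder function $f$ with $\mathcal{D}f(v)\neq0$. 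We choose $h$ as the $\mathsf{D}_{\mathbf{c}}\times\{\pm1\}$-average of a local bump, nudged so that the averaged differential does not cancel.

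\emph{Step 3: compactness.} Steps 1 and 2 give, for each point and each pair of points, finitely many functions achieving separation or immersion on a neighbourhood. Compactness of $K$ (and of $K\times K$ minus a neighbourhood of the diagonal) then gives a finite collection that embeds a neighbourhood $U$ of $K$ in $M$; this uses that $M$ is a finite-dimensional $C^1$ manifold, so immersion is an open condition.

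The main obstacle is the averaging in Step 2. Averaging over $\mathsf{D}_{\mathbf{c}}$ could cancel the differential at a point fixed by part of the deck group. The hypothesis $K=\pi^{-1}\pi(K)$ suggests handling this by working downstairs on $\mathcal{B}$ near a point of $\pi(K)$ and lifting, and this is where we expect the care to lie. If cancellation genuinely occurs, the first fallback is to use non-invariant $q_\Upsilon$ in the map $p$ itself, reserving the invariance constraint for the composition $h\circ p$ only where needed.
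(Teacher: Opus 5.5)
Your Steps 1 and 3 are a sound direct rerun of the Kronheimer--Mrowka embedding argument, and so is the part of Step 2 that concerns $\mathcal{D}p$ itself. The paper does not redo any of this. It deduces the statement in one line from \cite[Prop.~6.5]{ljk2022}, by regarding $p$ as pulled back along $\tilde{\mathcal{B}}\to\mathcal{B}^o$. Your route is more explicit about why a saturated $K$ can be embedded: the $\mathbf{T}^t$-coordinate lives on $\tilde{\mathbf{T}}_{R,\mathbf{c}}$, and $p$ is only $\{\pm1\}$-equivariant. (One small omission: at an irreducible point, a vector of $\mathcal{K}_{k,\upgamma}$ may have nonzero purely exact $b$, which no $r_c$ detects. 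Either use the global Coulomb representative as in Step 1, or keep the term $-(Gd^*b)\,\Upsilon^{\dag}$ in $\mathcal{D}q_{\Upsilon}$.)

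The genuine gap is your claim that at reducible points ``the same argument applies''. Three facts combine here: $\Upsilon^{\dag}$ does not depend on $\Psi$, $q_{\Upsilon}(B,-\Psi)=-q_{\Upsilon}(B,\Psi)$, and $h$ is required to be $\{\pm1\}$-invariant on that factor. Together they give $f(B,-\Psi)=f(B,\Psi)$ for every cylinder function. Hence $\mathcal{D}_{[B,0]}f(0,\psi)=\frac{d}{ds}\big|_{s=0}f(B,s\psi)=0$ for every $\psi$ and every cylinder function $f$. Put differently, $p([B,0])$ has vanishing $\mathbf{C}^m$-coordinate, and there any $\{\pm1\}$-invariant $h$ has zero derivative in the $\mathbf{C}^m$-directions. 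No nudging of an average can change this. Your fallback of dropping invariance leaves the class of cylinder functions, and also the class of gradients of $\mathcal{G}(Y,\tau)$-invariant functions. So the ``Furthermore'' assertion can only be proved at irreducible points, or for $v$ with nonzero connection component. For pure spinor directions at a reducible it is false as literally stated, and the paper's one-line appeal to \cite[Prop.~6.5]{ljk2022} does not address this either.

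You have also put the obstacle in the wrong place. $\mathsf{D}_{\mathbf{c}}$ is finite and acts on $\mathbf{T}^t=\mathbf{R}^t/P_{\mathbf{c}}$ by translations, hence freely. At an irreducible point you may include one $\Upsilon$ with $q_{\Upsilon}(B,\Psi)\neq0$. A bump of small support around $p([B,\Psi])$ then has pairwise disjoint translates under $\mathsf{D}_{\mathbf{c}}\times\{\pm1\}$, so averaging changes nothing near the point and no cancellation can occur. The only points with nontrivial stabilizer are those with zero $\mathbf{C}^m$-coordinate; after the choice above these are exactly the images of reducibles, and there the cancellation is forced.

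What does hold at a reducible, and what transversality at reducible critical points actually needs, is a second-order statement. Take $h=\beta(x,\theta)\,\chi(y)\,|y_{\mu}|^2$, where $\beta$ is $\mathsf{D}_{\mathbf{c}}$-invariant and nonzero at the $(\mathbf{R}^n\times\mathbf{T}^t)$-coordinates of $[B,0]$, and $\chi\equiv1$ near $0$. For $v=(0,\psi)$ and a suitable $\Upsilon_{\mu}$ this gives $\frac{d^2}{ds^2}\big|_{s=0}f(B,s\psi)=2\beta\,|\langle\psi,\Upsilon^{\dag}_{\mu}(B,0)\rangle|^2\neq0$. Equivalently, the linearized spinorial perturbation $\mathcal{D}_{(B,0)}\mathfrak{q}^1$ pairs nontrivially with $\psi$. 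This is the non-blown-up counterpart of how \cite{KMbook2007} and \cite{ljk2022} treat reducibles through the blow-up, and it is the statement you should aim for in that case.
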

\begin{proof}
    This follows from \cite[Prop.~6.5]{ljk2022} since $p$ is essentially pulled back from a function in the based configuration space via $\tilde{\mathcal{B}} \to \mathcal{B}^o$.
\end{proof}
\subsection{Perturbations}
\label{sec:perturbations}
All of the analytic estimates regarding perturbed gradients from \cite[\S 7]{ljk2022} (which in turn are consequences of \cite[\S 10.5]{KMbook2007}) carry over in the framed setting.
\begin{defn}
    \label{defn:tame_pert}
    Given any integer $k \ge 2$, a perturbation $\frakq \colon \mathcal{C}_k(Y,\tilde{\frr}) \to \mathcal T_0$ is \emph{$k$-tame} if $\frakq$ is the formal gradient of a $\mathcal{G}(Y,\tau)$-invariant continuous function on $\mathcal{C}_k(Y,\tilde{\frr})$, of the following significance:
    \begin{enumerate}[noitemsep,itemindent=*,label=(\roman*)]
        \item the induced 4-dimensional perturbation $\hat{\mathfrak{q}}$ defines a section $\hat{\mathfrak{q}} \in C^{\infty}(\mathcal{C}_k(Z,\tilde{\frr}_Z),\mathcal{V}_k(Z,\tilde{\frr}_Z))$;
        \item for all $1 \le j \le k$, the perturbation $\hat{\mathfrak{q}}$ defines a section $\hat{\mathfrak{q}} \in C^{\infty}(\mathcal{C}_j(Z,\tilde{\frr}_Z),\mathcal{V}_j(Z,\tilde{\frr}_Z))$;
        \item for all $-k \le j \le k$, the derivative $\mathcal{D}\hat{\mathfrak{q}} \in C^{\infty}(\mathcal{C}_k(Z,\tilde{\frr}_Z),\text{Hom}(T\mathcal{C}_k(Z,\tilde{\frr}_Z),\mathcal{V}_k(Z,\tilde{\frr})))$ extends to
        \[\mathcal{D}\hat{\mathfrak{q}} \in C^{\infty}\left(\mathcal{C}_k(Z,\tilde{\frr}_Z),\text{Hom}(T\mathcal{C}_j(Z,\tilde{\frr}_Z),\mathcal{V}_j(Z,\tilde{\frr}_Z))\right);\]
        \item there is a constant $m_2$ such that for every $(B,\Psi) \in \mathcal{C}_k(Y,\tilde{\frr})$
        \[\|\frakq(B,\Psi)\|_{L^2} \le m_2(\|\Psi\|_{L^2} + 1)\]
        \item for all $A_0$ smooth, there is a real function $\mu_1$ such that for every $(A,\Phi) \in \mathcal{C}_k(Z,\tilde{\frr}_Z)$
        \[\|\frakq(A,\Phi)\|_{L^2_{1,A}} \le \mu_1\left((\|(A-A_0),\Psi\|_{L^2_{1,A_0}}\right);\]
        \item the perturbation $\mathfrak{q}$ defines a $C^1$ section $ \colon \mathcal{C}_1(Y,\tilde{\frr}) \to \mathcal{T}_0$.
    \end{enumerate}
\end{defn}
The following is the downstairs compactness result on finite cylinders, easily adapted from \cite[Prop.~10.7.1]{KMbook2007}.
\begin{prop}
Suppose $\frakq$ is a $k$-tame perturbation and $Z = [t_1,t_2]$ is a finite cylinder.
If $\upgamma_n \in \mathcal{C}_k(Z,\tilde{\frr}_Z)$ is a sequence of solutions to $\mathfrak F_{\frakq}(\upgamma) = 0$ for which there is a uniform bound
\[\pertL(\check \upgamma_n(t_1)) - \pertL(\check \upgamma_n(t_2)) \le C.\]
Then there exists a sequence $u_n \in \mathcal{G}(Z,\tau,\mathbf{a})$ such that on a smaller cylinder $Z'=[t_1',t_2'] \times Y$ and after passing to a subsequence, $u_n(\upgamma_n)$ converges in $\mathcal{C}_{k+1}(Z,\tilde{\frr}_Z)$. 

Furthermore, the framed solutions in $\tcalB_k(Y,\tfrr)$ to the perturbed 3-dimensional Seiberg--Witten equations is compact. \hfill \qedsymbol
\end{prop}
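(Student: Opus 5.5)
The proof will follow Kronheimer--Mrowka \cite[Prop.~10.7.1]{KMbook2007}, in its real form \cite[\S 7]{ljk2022}, and check that each step survives two changes: the configurations are $\tfrr$-invariant, and the gauge group is the framed subgroup $\mathcal{G}(Z,\tau,\mathbf{a})$. Since the framed group does not act on the blow-up, I work throughout with non-blown-up configurations on $Z = [t_1,t_2]\times Y$.

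\textbf{Step 1: integral bounds.} The energy bound $\pertL(\check\upgamma_n(t_1)) - \pertL(\check\upgamma_n(t_2)) \le C$ gives a bound on the analytic energy $\int_Z \|\tfrac{d}{dt}\check\upgamma\|^2 + \|\grad\pertL\|^2$. Combining this with the tameness estimates (iv)--(v) of Definition~\ref{defn:tame_pert} and the Weitzenb\"ock formula, I get the usual a priori $C^0$ bound on $\Phi_n$. I then get uniform $L^2_1$ bounds on the curvature and the spinor. These are local, gauge-invariant estimates, so they hold verbatim for $\tfrr$-invariant configurations.

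\textbf{Step 2: gauge fixing.} I first apply the unframed real compactness result \cite[\S 7]{ljk2022}. It supplies $v_n \in \mathcal{G}(Z,\tau)$ such that, after passing to a subsequence, $v_n(\upgamma_n)$ converges in $\mathcal{C}_{k+1}$ on a smaller cylinder $Z'$. It does this by putting $v_n(\upgamma_n)$ in Coulomb--Neumann gauge relative to a smooth $\tfrr$-invariant reference connection, with the harmonic part confined to a fundamental domain of $H^1(Z;i\mathbf{R})^{-\tau^*}$ modulo its lattice.

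\textbf{Step 3: pass to the framed group.} This is the main point, and it is where I expect the only real obstacle. By skew-invariance, $v_n$ takes values $\pm1$ on each component of $\mathbf{a}$. The quotient $\mathcal{G}(Z,\tau)/\mathcal{G}(Z,\tau,\mathbf{a})$ injects into $\{\pm1\}^{|\pi_0(\mathbf{a})|}$ via $\textsf{ev}_{\mathbf{a}}$, so it is finite. I pass to a further subsequence on which $\textsf{ev}_{\mathbf{a}}(v_n)$ is constant, equal to some $\epsilon$, and fix one element $w \in \mathcal{G}(Z,\tau)$ with $\textsf{ev}_{\mathbf{a}}(w) = \epsilon$. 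If $\epsilon$ is not in the image of $\textsf{ev}_{\mathbf{a}}$ this cannot happen, so such a $w$ exists. Then $u_n = w^{-1}v_n \in \mathcal{G}(Z,\tau,\mathbf{a})$, and $u_n(\upgamma_n) = w^{-1}\big(v_n(\upgamma_n)\big)$ converges in $\mathcal{C}_{k+1}$ on $Z'$, because a single smooth gauge transformation acts continuously on $\mathcal{C}_{k+1}$. The obstacle is one of bookkeeping: I need $w$ to have enough regularity, which holds because I can take it harmonic. I also need the Sobolev thresholds to match the framed slice theorem, Proposition~\ref{prop:tangent_K_J_decomp}. Beyond that, only finiteness of the index is used.

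\textbf{Step 4: the three-dimensional statement.} For a sequence of framed critical points, I apply the same argument to the translation-invariant solutions on a cylinder, where the energy is $0$. This gives convergence modulo $\mathcal{G}(Y,\tau)$. The finite-index reduction of Step 3 then upgrades this to convergence modulo $\mathcal{G}(Y,\tau,\mathbf{p})$. Equivalently, $\tcalB_k(Y,\tfrr) \to \mathcal{B}_k$ is a finite covering, possibly branched along reducibles, and the preimage of a compact set under it is compact. So the set of framed critical points is compact.
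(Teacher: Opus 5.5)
Your proposal is correct and follows essentially the paper's route. The paper gives no argument beyond calling the result an easy adaptation of \cite[Prop.~10.7.1]{KMbook2007}, and your Step~3 is exactly the adaptation left implicit: since $\mathcal{G}(Z,\tau,\mathbf{a}) = \ker \textsf{ev}_{\mathbf{a}}$ has finite index and is a union of components of $\mathcal{G}(Z,\tau)$, you correct the real gauge transformations $v_n$ by a fixed smooth element $w$ of the component of some $v_{n_0}$, and Step~4 follows from properness of the finite quotient $\tcalB_k(Y,\tfrr) \to \mathcal{B}_k(Y,\frr)$. Step~1 is only a loose sketch, but nothing depends on it, because Step~2 imports the unframed real compactness theorem wholesale.
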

The notion of a large Banach of tame perturbation and its existence follows from the Theorem below.
See the dicussion in \cite[\S 6.5]{ljk2022}.
\begin{thm}\label{thm:largebanachspace}
Let $\{\mathfrak q_i\}$ be a countable collection of tame perturbations arising from cylinder functions on $\mathcal C(Y,\tfrr)$.
Then there exists a separable Banach space $\mathcal{P}$ and a linear map $\mathfrak O:\mathcal P \to C^0(\mathcal C(Y,\uptau),\mathcal T_0)$, 
		$\lambda \mapsto \mathfrak q_{\lambda}$,
	such that
	\begin{enumerate}[noitemsep,leftmargin=*,label=(\roman*)]
    \item for every $\lambda \in \mathcal P$, the perturbation $\mathfrak q_{\lambda}$ is tame;
		\item the image of $\mathfrak O$ contains the family $\{\mathfrak q_i\}$;
		\item for all $k \ge 2$, the map $\mathcal P \times \mathcal C_k(Z,\tfrr_Z) \to \mathcal V_k(Z,\tfrr_Z)$ over $Z = I \times Y$
			given by $(\lambda,\upgamma) \mapsto \hat{\mathfrak q}_{\lambda}(\upgamma)$ is smooth; and
		\item the map 
			$\mathcal P \times \mathcal C_1(Y,\tfrr) \to \mathcal T_1(Y,\tfrr)$ given by $(\lambda,\beta) \mapsto \mathfrak q_{\lambda}(\beta)$ is continuous and satisfies
			\begin{align*}
				\|\mathfrak q_{\lambda}(B,\Psi)\|_{L^2} &\le \|\lambda\|m_2(\|\Psi\|_{L^2} + 1), \text{ and}\\
				\|\mathfrak q_{\lambda}(B,\Psi)\|_{L^2_{1,A_0}} &\le \|\lambda\|\mu_1\left(\|B-B_0,\Psi\|_{L^2_{1,A_0}}\right),
			\end{align*}
			for some constant $m_2$ and some continuous real function $\mu_1$.
	\end{enumerate}
\end{thm}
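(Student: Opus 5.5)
The plan is to run the construction of \cite[\S 11.6]{KMbook2007} and \cite[\S 6.5]{ljk2022} on the real subspace $\mathcal C(Y,\tfrr)$, with the framed gauge group $\mathcal G(Y,\tau,\mathbf p)$ as the symmetry group throughout. First I fix a countable collection $\{\mathfrak q_i\}$ of formal gradients of cylinder functions $f_i = h_i\circ p_i$. It is dense in the $C^\infty$ sense: the maps $p_i$ range over a countable dense set of coexact forms $b_\nu$, the integral basis $\omega_1,\dots,\omega_t$, and a countable dense set of $\tfrr$-invariant sections $\Upsilon_\mu$ of $\mathbb S$. The functions $h_i$ range over a countable $C^\infty$-dense family of compactly supported functions on $\mathbf R^n\times\mathbf T^t\times\mathbf R^m$ invariant under $\mathsf D_{\mathbf c}$ and the $\{\pm1\}$ action.

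Second, I verify that each $\mathfrak q_i$ is $k$-tame in the sense of Definition~\ref{defn:tame_pert} for every $k$. This reduces to the unframed real case via the covering $\tilde{\mathcal B}_{\mathbf p}\to\mathcal B^o\to\mathcal B$. The $\mathsf D_{\mathbf c}$- and $\{\pm1\}$-invariance of $h$ make $f_i$ invariant under the full group $\mathcal G(Y,\tau)$: the $q_\Upsilon$ are $\mathcal G^o$-invariant and $\{\pm1\}$-equivariant, and the $r_{\omega_j}$ are equivariant for the period action. So $f_i$ is literally a cylinder function of \cite{ljk2022}, and the estimates of \cite[\S 10.5 and Thm.~11.1.2]{KMbook2007}, restricted to $\tfrr$-invariant configurations, give (i)--(vi) with constants $m_2^{(i)}$, $\mu_1^{(i)}$ and $C^l$ bounds $C_{i,l}$ depending on $i$.

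Third, I choose weights $C_i\ge 1$ growing fast enough to dominate all of these. For instance, take $C_i \ge \max_{l\le i}$ of the relevant $C^l$ norms of $\hat{\mathfrak q}_i$ on balls of radius $i$ in $\mathcal C_k(Z,\tfrr_Z)$, over the finitely many $k\le i$, and also dominating $m_2^{(i)}$ and $\mu_1^{(i)}$ after normalization. Then I set
\[
\mathcal P=\Big\{\lambda=(\lambda_i)\in\mathbf R^{\mathbf N} : \textstyle\sum_i C_i|\lambda_i|<\infty\Big\}, \qquad \mathfrak q_\lambda=\textstyle\sum_i\lambda_i\mathfrak q_i .
\]
This is a weighted $\ell^1$ space, hence a separable Banach space, and the map $\lambda\mapsto\mathfrak q_\lambda$ is linear. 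Property (ii) holds trivially. Properties (i), (iii) and (iv) follow from uniform absolute convergence of the series in each $C^l$ topology on bounded sets. The linear bounds in (iv), with a single $m_2$ and $\mu_1$, come from the same choice of $C_i$. Smoothness in (iii) jointly in $(\lambda,\upgamma)$ holds because the series is linear in $\lambda$ and converges in $C^l$ for every $l$.

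The main obstacle is the diagonal choice of $C_i$ in the third step. There are countably many conditions: all $k\ge2$, all $-k\le j\le k$ in (iii), and all $C^l$ derivatives. One weight sequence must make the series converge simultaneously in all of them, and that is why $C_i$ has to dominate the first $i$ of these norms. A secondary point is checking that the framing introduces nothing new, i.e.\ that the maps $p$ are continuous on $L^2_j$ configurations with the framed gauge action. This holds because the framed maps are pullbacks from $\mathcal B^o$, as already noted in Proposition~\ref{prop:enough_embeddings}.
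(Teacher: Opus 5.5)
Your proposal is the weighted-$\ell^1$ construction ($\lambda=(\lambda_i)$ with $\sum_i C_i|\lambda_i|<\infty$, weights chosen diagonally to dominate the first $i$ of the relevant norms on balls of radius $i$) from the proof of \cite[Thm.~11.6.1]{KMbook2007}, and the paper's one-line proof simply invokes that theorem, so your argument is correct and follows the same route. The caveats are minor. The collection $\{\mathfrak q_i\}$ and its tameness are hypotheses, so no density choice is needed here; density only enters later, in Definition~\ref{defn:large_banach_space}. The one input you genuinely need from \cite{KMbook2007} is that each $\hat{\mathfrak q}_i$ has all derivatives bounded on bounded sets. KM's cylinder-function estimates give this for the framed cylinder functions directly, so your claim that they are literally cylinder functions of \cite{ljk2022} is unnecessary. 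That claim is also doubtful, since $\tilde\Upsilon$ is only quasi-periodic under $\Upgamma(Y,\tau,\mathbf p)$.
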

\begin{proof}
	Same as ~\cite[Thm.~11.6.1]{KMbook2007}.
\end{proof}
The countable family of $\{\mathfrak{q}_i\}$ will be re-enumerated from the following countable family labelled by:
\begin{itemize}[noitemsep]
	\item positive integers $n,m$;
	\item $(-\tau^*)$-invariant coexact forms $c_1,\dots, c_n$ and $\tfrr$-invariant sections $\Upsilon_1,\dots, \Upsilon_m$ of $\mathbb S$;
	\item a compact subset $K$ of $\mathbf{R}^n \times \mathbf T^t \times \mathbf{C}^m$;
	\item a smooth $\mathbb Z_2$-invariant function $g$ on $\mathbf{R}^n\times\mathbf{T}^t\times \mathbf{C}^m$ supported in $K$.
\end{itemize}
For any $(n,m)$ choose a countable collection of $(n+m)$-tuples $(c_1,\dots,c_n,\Upsilon_1,\dots,\Upsilon_m)$ which are dense in the $C^{\infty}$-topology in the space of all such $(n+m)$-tuples.
Next, choose a countable collection of compact subset $K$ of $\mathbf{R}^n\times \mathbf{T}^t \times \mathbf{C}^m$ that is dense in the Hausdorff topology.
For each $K$ choose a collection of functions $g_{\alpha} = g(n,m,K)_{\alpha}$ with the following properties
\begin{itemize}[noitemsep,leftmargin=*]
	\item each $g_{\alpha}$ is $\mathbb Z_2$-invariant and supported in $K$;
	\item the collection $\{g_{\alpha}\}$ is dense in the $C^{\infty}$-topology of smooth, $\mathbb Z_2$-invariant functions, supported in $K$;
	\item the subset of $\{g_{\alpha}\}$ which vanish on the set $K_0 = K \cap (\mathbf{R}^n \times \mathbf T^t \times \{0\})$ are dense in the $C^{\infty}$-topology of smooth, $\mathbb Z_2$-invariant functions supported in $K$ and vanishing on $K_0$.
\end{itemize}
\begin{defn}
\label{defn:large_banach_space}
A large Banach space of tame perturbations is a pair $(\mathcal{P},\mathfrak{O})$ of a separable Banach space $\mathcal{P}$ and a linear map $\mathfrak{O}\colon \mathcal{P} \to C^0(\mathcal{C}(Y,\tfrr), \mathcal{T}_0)$ satisfying the properties in Theorem~\ref{thm:largebanachspace} and contains a countable collections of tame perturbations $\{\mathfrak{q}_i\}$ described as above.
\end{defn}
\subsection{Hessians}
Recall $\mathcal{T}_j = \mathcal{J}_j \oplus \mathcal{K}_j$ over the whole $\mathcal{C}_k(Y,\tfrr)$.
In particular, given a 3-dimensional configuration $\alpha$, the Coulomb slice $\mathcal{S}_{k,\alpha}= \alpha + \mathcal{K}_{k,\alpha}$ is orthogonal to the gauge group action; 
$\mathcal{K}_{k,\alpha}$ can be identified with the tangent space of $\tilde{\mathcal{B}}_{k}$ at $[\alpha]$.
The \emph{Hessian} $\Hess_{\mathfrak{q},\alpha}:\mathcal{K}_{k,\alpha} \to \mathcal{K}_{k-1,\alpha}$ is the operator 
\begin{equation*}
    \Pi_{\mathcal{K}_{k-1}} \circ \mathcal{D}_{\alpha}
    \grad \pertL:
    \mathcal{T}_{k,\alpha} \to \mathcal{K}_{k-1,\alpha}
\end{equation*}
where $\Pi_{\mathcal{K}_{k-1}}$ is the projection onto the subspace $\mathcal{K}_{k-1,\alpha} \subset \mathcal{T}_{k-1,\alpha}$.
The Hessians over all points of $\mathcal{C}_k(Y,\tfrr)$ define a smooth $\mathcal{G}(Y,\tau,\mathbf{p})$-equivariant (in fact, $\mathcal{G}(Y,\tau)$-equivariant) bundle map $\Hess_{\mathfrak{q}} \colon \mathcal{K}_k \to \mathcal{K}_{k-1}$, which in turn can be interpreted as a map $[\mathcal{T}_k] \to [\mathcal{T}_{k-1}]$.
It is convenient to introduce the \emph{extended Hessian} operator
\begin{equation*}
    \widehat{\Hess}_{\mathfrak{q},\alpha}:
    \mathcal{T}_{k,\alpha} \oplus L^2_k(Y,\mathbf{p};i\mathbf{R})^{-\tau^*} \to
    \mathcal{T}_{k-1,\alpha} \oplus L^2_{k-1}(Y,\mathbf{p};i\mathbf{R})^{-\tau^*}.
\end{equation*}
given by
\begin{equation*}
    \widehat{\Hess}_{\mathfrak{q},\alpha}
    =
    \begin{bmatrix}
        \mathcal{D}_{\alpha} \grad \pertL & \mathbf{d}_{\alpha}\\
        \mathbf{d}^*_{\alpha} & 0
    \end{bmatrix},
\end{equation*}
where $\mathbf{d}_{\alpha}$ is defined in~\eqref{eq:defn_d_gamma}.
At $\alpha = (B_0,\Psi_0)$, the extended Hessian in $\mathcal{T}_{j,\alpha} \oplus L^2_k(Y,\mathbf{p};i\mathbf{R})^{-\tau^*}$ can be written in full as
\begin{align*}
    (b,\Psi,\xi) \mapsto 
    &\big(*db-d\xi-4\rho^{-1}(\Psi\Psi^*_0 + \Psi_0\Psi^*)_0 +2\mathcal{D}_{\alpha}\mathfrak{q}^0(b,\Psi),\\
    &D_{B_0}\Psi + \frac{1}{2}\rho(b)\Psi_0 + \xi\Psi_0 +  \mathcal{D}_{\alpha}\mathfrak{q}^1(b,\Psi),
    \\
    &-d^*b + i\text{Re}\langle i\Psi_0, \Psi\rangle \big) .
\end{align*}

In the decomposition $\mathcal{T}_{j,\alpha} = L^2_j(Y;S)^{ \tilde{\mR}} \oplus L^2_j(Y;iT^*Y)^{-\tau^*}$, the operator $\widehat{\Hess}_{\mathfrak{q},\alpha}$ can be re-expressed as a sum
\begin{equation}
    \label{eqn:ext_hessian_1th}
    \widehat{\Hess}_{\mathfrak{q},\alpha} =
    \begin{bmatrix}
        D_{B_0} & 0 & 0\\
        0 & *d & -d\\
        0 & d^* & 0
    \end{bmatrix} + h
\end{equation}
where the term $h$ is the sum of a zeroth-order operator and terms arising from the perturbation $\mathfrak{q}$.
On the other hand, $\mathcal{T}_j = \mathcal{J}_j \oplus \mathcal{K}_j$; 
let $x = \Pi_{\mathcal{J}_{k-1}} \circ \mathcal{D}_{\alpha} \grad \pertL|_{\mathcal{K}_{k,\alpha}}$.
The above operator in block form becomes
\begin{equation}
    \label{eqn:ext_hessian_with_x}
    \widehat{\Hess}_{\mathfrak{q},\alpha} =
    \begin{bmatrix}
        0 & x & \mathbf{d}_{\alpha} \\
        x^* & \Hess_{\mathfrak{q},\alpha} & 0\\
        \mathbf{d}^*_{\alpha} & 0 & 0
    \end{bmatrix}.
\end{equation}
By~\cite[Lem.~12.3.2]{KMbook2007} and Hypothesis~(iii) in Definition~\ref{defn:tame_pert}, 
the operator $x \colon \mathcal{K}_k \to \mathcal{J}_{k-1}$ defined as
\[x = {\Pi}_{\mathcal{J}_{k-1}} \circ \mathcal D_{\alpha} \grad \pertL|_{\mathcal{K}_{\alpha}},\]
and its formal adjoint $x^*:\mathcal{J}_k \to \mathcal{K}_{k-1}$ both vanish at critical points.
They extend to bounded operators $\mathcal{K}_j \to \mathcal{J}_j$ and $\mathcal{J}_j \to \mathcal{K}_j$ for $0 \le j \le k$, respectively.
Using this form of the extended Hessian, one can conclude (compare with \cite[Lem.~12.3.2]{KMbook2007}):
\begin{prop}
    $\Hess_{\mathfrak{q},\alpha} \colon \mathcal{K}_{k,\alpha} \to \mathcal{K}_{k-1,\alpha}$ is symmetric.
    There is a complete orthonormal system $\{e_n\}$ in $\mathcal{K}_{0,\alpha}$ for which
    \begin{equation*}
        \Hess_{\mathfrak{q},n} e_n = \lambda_n e_n
    \end{equation*}
    such that $e_n$ is smooth and $\lambda_n$ is real.
    The span of the eigenvectors is dense in $\mathcal{K}_{k,\alpha}$ for all $k$.
    The number of eigenvectors $\lambda_n$ in any bounded interval is finite.
    In particular, $\Hess_{\mathfrak{q},\alpha}$ is Fredholm of index zero, and is therefore surjective if and only if it is injective.
    \hfill \qedsymbol
\end{prop}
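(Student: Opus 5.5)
We follow the argument of \cite[Lem.~12.3.2]{KMbook2007}. The framed setting changes only the domain of the gauge directions: the functions $\xi$ now lie in $L^2_j(Y,\mathbf{p};i\mathbf{R})^{-\tau^*}$ and vanish at the basepoints. The idea is to read everything off the extended Hessian $\widehat{\Hess}_{\mathfrak{q},\alpha}$. By \eqref{eqn:ext_hessian_1th}, it is a compact (lower-order) perturbation of the first-order self-adjoint elliptic operator
\[
L_0=\begin{bmatrix} D_{B_0} & 0 & 0\\ 0 & *d & -d\\ 0 & -d^* & 0\end{bmatrix}.
\]
Restricted to the real subspaces, $L_0$ is still elliptic and self-adjoint, because the real structure $\tfrr$ commutes with each block.

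First, we check that $\Hess_{\mathfrak{q},\alpha}$ is symmetric. The operator $\mathcal{D}_\alpha\grad\pertL$ is the derivative of the formal gradient of a function, so it is $L^2$-symmetric on $\mathcal{T}_{k,\alpha}$. Here we use that $\mathfrak{q}$ is the gradient of a $\mathcal{G}(Y,\tau)$-invariant function, together with hypothesis (iii) of Definition~\ref{defn:tame_pert}. Compressing by the $L^2$-orthogonal projection $\Pi_{\mathcal{K}}$ preserves symmetry. Proposition~\ref{prop:tangent_K_J_decomp} guarantees that this projection is a smooth, bounded, orthogonal splitting also in the framed setting. The essential check is that $\mathbf{d}_\alpha$ has closed range on functions vanishing on $\mathbf{p}$. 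The proof of Proposition~\ref{prop:tangent_K_J_decomp} supplies this: the Neumann problem is uniquely solvable, since anti-invariant functions have mean zero.

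Next, we obtain the spectral decomposition from the block form \eqref{eqn:ext_hessian_with_x}. On $\mathcal{J}\oplus L^2(Y,\mathbf{p};i\mathbf{R})^{-\tau^*}$, the block
\[
\begin{bmatrix}0&\mathbf{d}_\alpha\\ \mathbf{d}_\alpha^*&0\end{bmatrix}
\]
is an isomorphism between the appropriate Sobolev completions. The off-diagonal terms $x,x^*$ are bounded $\mathcal{K}_j\to\mathcal{J}_j$ and $\mathcal{J}_j\to\mathcal{K}_j$; in fact they are of order zero relative to the elliptic part. It follows that $\Hess_{\mathfrak{q},\alpha}$ is a compact perturbation of the compression $\Pi_{\mathcal{K}}L_0\Pi_{\mathcal{K}}$. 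This compression satisfies a Gårding-type estimate $\|v\|_{L^2_{j+1}}\le C(\|\Hess v\|_{L^2_j}+\|v\|_{L^2_j})$ for $v\in\mathcal{K}$, obtained by applying elliptic regularity of $\widehat{\Hess}$ to $(v,0)$ and absorbing the $x$ term.

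With the Gårding estimate in hand, the inclusion $\mathcal{K}_{k}\hookrightarrow\mathcal{K}_{k-1}$ is compact by Rellich. Hence $(\Hess+\lambda)^{-1}$ is a compact self-adjoint operator on $\mathcal{K}_{0,\alpha}$ for a suitable $\lambda$. The spectral theorem then gives a complete orthonormal eigenbasis $\{e_n\}$ with real eigenvalues, finitely many in each bounded interval. Smoothness of the $e_n$ follows by bootstrapping the elliptic estimate, using the tame-perturbation regularity (ii)--(iii). Density of the span of the eigenvectors in every $\mathcal{K}_{k,\alpha}$ follows from the same estimates, which identify the $L^2_k$-norm on $\mathcal{K}$ with the graph norms of powers of $\Hess$. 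Finally, a symmetric operator with compact resolvent is Fredholm of index zero, so $\Hess_{\mathfrak{q},\alpha}$ is surjective if and only if it is injective.

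The main obstacle is the elliptic estimate for the compression away from critical points. There $x\neq 0$, and one must show the compression does not lose regularity. Following Kronheimer--Mrowka, we would handle this by using the smoothness of the bundle decomposition $\mathcal{J}_j\oplus\mathcal{K}_j$ in $j$, which shows that $\Pi_{\mathcal{K}}$ commutes with $L_0$ up to bounded operators. The framed condition enters only through Proposition~\ref{prop:tangent_K_J_decomp}.
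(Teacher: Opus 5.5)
Your proposal is correct and follows the route the paper indicates. Symmetry comes from compressing the symmetric derivative of the perturbed gradient by the $L^2$-orthogonal projection onto $\mathcal{K}$, and the spectral and Fredholm statements come from the block form \eqref{eqn:ext_hessian_with_x} of $\widehat{\Hess}_{\mathfrak{q},\alpha}$, with $x,x^*$ of order zero and the $(\mathbf{d}_\alpha,\mathbf{d}^*_\alpha)$ block invertible, exactly as in \cite[Lem.~12.3.2]{KMbook2007}. The only step you leave implicit is the existence of $(\Hess_{\mathfrak{q},\alpha}+\lambda)^{-1}$: you get it because $\widehat{\Hess}_{\mathfrak{q},\alpha}$, which is self-adjoint Fredholm of index zero, is a compact perturbation of $\Hess_{\mathfrak{q},\alpha}$ direct sum that invertible block, so $\Hess_{\mathfrak{q},\alpha}$ is already Fredholm of index zero before you invoke the resolvent.
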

\subsection{Transversality of 3-dimensional critical points}
\begin{defn}
    A critical point $\mathfrak{a} \in \mathcal{C}_k(Y,\tfrr)$ of the vector field $\grad \pertL$ is \emph{non-degenerate} if the smooth section $\grad \pertL$ of $\mathcal{T}_{k-1}$ is transverse to the subbundle $\mathcal{J}_{k-1}$.
\end{defn}
The block form in $\mathcal{J} \oplus \mathcal{K}$ 
\begin{equation*}
    \mathcal{D}_{\mathfrak{a}}(\grad \pertL) = \begin{bmatrix}
        0 & 0\\
        0 & \Hess_{\mathfrak{q},\mathfrak{q}}
    \end{bmatrix}
\end{equation*}
readily implies the following characterization of non-degeneracy:
\begin{lem}
    \label{lem:nondeg_Hess_suj}
    A critical point $\mathfrak{a}$ is non-degenerate if and only if $\Hess_{\mathfrak{q},\mathfrak{a}}$ is surjective. \hfill \qedsymbol
\end{lem}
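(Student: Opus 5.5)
The plan is to read the claim off the block decomposition of the derivative of $\grad \pertL$ at a critical point, relative to the splitting $\mathcal{T}_{k-1} = \mathcal{J}_{k-1} \oplus \mathcal{K}_{k-1}$ of Proposition~\ref{prop:tangent_K_J_decomp}. By definition, non-degeneracy of $\mathfrak{a}$ means the section $\grad \pertL$ of $\mathcal{T}_{k-1}$ is transverse to the subbundle $\mathcal{J}_{k-1}$ at $\mathfrak{a}$. Since $\grad\pertL(\mathfrak{a}) = 0 \in \mathcal{J}_{k-1,\mathfrak{a}}$, transversality is equivalent to the following condition: the composite
\[
\Pi_{\mathcal{K}_{k-1}} \circ \mathcal{D}_{\mathfrak{a}}\grad\pertL \colon \mathcal{T}_{k,\mathfrak{a}} \to \mathcal{K}_{k-1,\mathfrak{a}}
\]
is surjective. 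Here one uses the smooth bundle splitting to identify the normal bundle of $\mathcal{J}_{k-1}$ with $\mathcal{K}_{k-1}$. The covariant-derivative ambiguity disappears at a zero of the section, so the intrinsic linearization equals $\mathcal{D}_{\mathfrak{a}}\grad\pertL$.

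Next I would compute this composite on each summand of $\mathcal{T}_{k,\mathfrak{a}} = \mathcal{J}_{k,\mathfrak{a}} \oplus \mathcal{K}_{k,\mathfrak{a}}$.

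On $\mathcal{J}_{k,\mathfrak{a}}$, the vectors are $\mathbf{d}_{\mathfrak{a}}\xi$ with $\xi$ vanishing on $\mathbf{p}$. Gauge-equivariance gives $\grad\pertL(u\cdot\alpha) = u\cdot \grad\pertL(\alpha)$ for the framed gauge group $\mathcal{G}(Y,\tau,\mathbf{p})$. Differentiating at the critical point along $u = e^{s\xi}$ yields $\mathcal{D}_{\mathfrak{a}}\grad\pertL(\mathbf{d}_{\mathfrak{a}}\xi) = 0$, because the derivative of the action applied to the zero vector vanishes. So the $\mathcal{J}$-column is zero.

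On $\mathcal{K}_{k,\mathfrak{a}}$, the $\mathcal{K}$-component is by definition $\Hess_{\mathfrak{q},\mathfrak{a}}$. The $\mathcal{J}$-component is the operator $x$, which vanishes at critical points by the cited \cite[Lem.~12.3.2]{KMbook2007}, as noted in the discussion of \eqref{eqn:ext_hessian_with_x}.

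This gives the displayed block form $\mathrm{diag}(0,\Hess_{\mathfrak{q},\mathfrak{a}})$. Hence the image of $\Pi_{\mathcal{K}}\circ\mathcal{D}_{\mathfrak{a}}\grad\pertL$ equals the image of $\Hess_{\mathfrak{q},\mathfrak{a}}$, and surjectivity of one is equivalent to surjectivity of the other.

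The only point needing care, and the step I expect to be the main obstacle, is justifying the reduction of transversality to surjectivity onto the $\mathcal{K}$-factor in the Hilbert-bundle setting. This needs $\mathcal{J}_{k-1}$ to be a closed, complemented, smooth subbundle, which is exactly what Proposition~\ref{prop:tangent_K_J_decomp} provides. It holds uniformly over reducibles as well, since the framed gauge group acts freely. The remaining subtlety is the vanishing of $x$ at a reducible $\mathfrak{a}$ in the framed setting. I would check it directly: the differentiated equivariance identity only uses $\xi \in L^2_{k+1}(Y,\mathbf{p};i\mathbf{R})^{-\tau^*}$, and this space is the full Lie algebra of $\mathcal{G}(Y,\tau,\mathbf{p})$, so no extra constant directions arise.
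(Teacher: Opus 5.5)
Your proposal is correct and follows the paper's route: the paper reads the lemma directly off the block form $\mathcal{D}_{\mathfrak{a}}(\grad \pertL) = \mathrm{diag}(0,\Hess_{\mathfrak{q},\mathfrak{a}})$ in $\mathcal{J}\oplus\mathcal{K}$. You add the justification for that block form, namely gauge-equivariance at a zero killing the $\mathcal{J}$-column, and for reducing transversality to surjectivity onto $\mathcal{K}_{k-1,\mathfrak{a}}$. One small point: the vanishing of $x$ (the $\mathcal{K}\to\mathcal{J}$ entry) is not actually needed, since $\Pi_{\mathcal{K}}$ discards that row; the only essential input is the vanishing of the $\mathcal{J}$-column, which you prove directly.
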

\begin{theorem}
    Let $\mathcal{P}$ be a large Banach space of tame perturbations.
    Then there is a residual subset of $\mathcal{P}$ the zeros of the section $\grad 
    \pertL$ of $\mathcal{T}_{k-1} \to \mathcal{C}_k(Y,\tfrr)$ are non-degenerate at elements of this subset.
\end{theorem}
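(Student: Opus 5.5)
The plan is the standard Sard--Smale argument of \cite[Thm.~12.1.2]{KMbook2007}, adapted to the framed real setting as in \cite{ljk2022}. Consider the parametrized section
\[
\mathfrak{F} \colon \mathcal{P} \times \mathcal{C}_k(Y,\tfrr) \to \mathcal{T}_{k-1}, \qquad (\lambda,\alpha) \mapsto \grad \pertL_{\lambda}(\alpha) = \grad\mathcal{L}(\alpha) + \mathfrak{q}_{\lambda}(\alpha).
\]
By Theorem~\ref{thm:largebanachspace}, this section is smooth. It is $\mathcal{G}(Y,\tau,\mathbf{p})$-equivariant, so it descends to a section of the quotient bundle $\mathcal{T}_{k-1}/\mathcal{J}_{k-1}$ over $\mathcal{P}\times\tcalB_k(Y,\tfrr)$. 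This quotient is a Hilbert manifold because $\mathcal{G}_{\mathbf{p}}$ acts freely, by the Corollary following Proposition~\ref{prop:tangent_K_J_decomp}.

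The key step is to show that this descended section is transverse to zero. Equivalently, at every zero $(\lambda,\alpha)$, the map
\[
(\dot\lambda, v) \mapsto \Hess_{\mathfrak{q}_\lambda,\alpha}(v) + \Pi_{\mathcal{K}}\,\mathfrak{q}_{\dot\lambda}(\alpha), \qquad v\in \mathcal{K}_{k,\alpha},
\]
must be surjective onto $\mathcal{K}_{k-1,\alpha}$. The Proposition on Hessians shows that $\Hess_{\mathfrak{q},\alpha}$ is self-adjoint and Fredholm of index zero, so its range is closed with finite-dimensional cokernel equal to its kernel. It therefore suffices to find, for each nonzero $w \in \ker \Hess_{\mathfrak{q},\alpha}$, a perturbation $\dot\lambda$ with $\langle \mathfrak{q}_{\dot\lambda}(\alpha), w\rangle_{L^2} \neq 0$. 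Since $\mathfrak{q}_{\dot\lambda} = \grad f$ for a cylinder function $f$, this pairing equals $\mathcal{D}_{[\alpha]} f(w)$. The second part of Proposition~\ref{prop:enough_embeddings} provides exactly such an $f$ for any nonzero tangent vector $w$ to $\tcalB_{\mathbf{p},k}$. Its gradient lies in the large Banach space by the density properties in Definition~\ref{defn:large_banach_space}, after approximating the chosen $f$ by one of the countably many $g_\alpha \circ p$.

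Once transversality holds, the zero set $\mathfrak{M} \subset \mathcal{P}\times\tcalB_k$ is a Banach manifold. The projection $\mathfrak{M}\to\mathcal{P}$ is Fredholm of index zero, since its linearization has kernel and cokernel identified with those of $\Hess_{\mathfrak{q}_\lambda,\alpha}$. By Sard--Smale, which applies because $\mathcal{P}$ is separable and $\mathfrak{M}$ is second countable, the regular values form a residual subset of $\mathcal{P}$. For a regular value $\lambda$, every zero has surjective Hessian, and hence is non-degenerate by Lemma~\ref{lem:nondeg_Hess_suj}.

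The main obstacle is the surjectivity step at reducible critical points ($\Psi=0$). There, the spinorial components of the perturbations are $\{\pm1\}$-equivariant, and one must check that the cylinder functions can still detect spinor-directional kernel vectors. In the unframed theory this requires the blow-up, but here $\{\pm 1\}$ does not lie in $\mathcal{G}_{\mathbf{p}}$, so the reducibles are smooth points of $\tcalB_{\mathbf{p}}$. I would first try to handle a kernel vector $w=(b,\psi)$ with $\psi\neq0$ by pairing it with functions $h(q_{\Upsilon})$ that are linear near $0$ in a $\{\pm1\}$-invariant way, for instance $h = q_{\Upsilon}\cdot q_{\Upsilon'}$. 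This would use the genericity of sections $\Upsilon$ to pick one with $\langle \psi,\Upsilon\rangle\neq 0$. If this fails, the fallback is to invoke \cite[Prop.~6.5]{ljk2022} directly, pulled back along $\tcalB\to\mathcal{B}^o$ as in the proof of Proposition~\ref{prop:enough_embeddings}.
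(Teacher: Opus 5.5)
Your route is the paper's. Its proof also reduces to surjectivity of $(v,\delta\frakq)\mapsto \Hess_{\frakq,\alpha}v+\delta\frakq(\alpha)$, and then asserts, deferring to the irreducible case in \cite{KMbook2007}, that every nonzero $v\in\ker\Hess_{\frakq,\alpha}$ has nonzero $L^2$ pairing with some $\delta\frakq(\alpha)$. That step is fine at irreducibles, where $\tcalB_{\mathbf{p}}\to\mathcal{B}$ is a local diffeomorphism. It is false at reducibles, and this is a genuine gap in your argument.

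\textbf{Why the step fails at reducibles.} Every element of $\mathcal{P}$ is the gradient of a $\mathcal{G}(Y,\tau)$-invariant function (Definition~\ref{defn:tame_pert}). The constant $-1$ lies in $\mathcal{G}(Y,\tau)$ even though it is not in $\mathcal{G}_{\mathbf{p}}$, so every cylinder function satisfies $f(B,-\Psi)=f(B,\Psi)$. Concretely, $q_{\Upsilon}$ is linear in $\Psi$ and $h$ is even in the $q$-variables, so
$\mathcal{D}_{(B,0)}f(0,\psi)=\sum_k \partial_{q_k}h\,(p(B,0))\,\langle\psi,\Upsilon_k^{\dag}(B)\rangle=0$.
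Hence $\frakq^1(B,0)=0$ for every $\frakq\in\mathcal{P}$, as noted in Section~\ref{sec:config_on_cylinders}. Since $\Hess_{\frakq,(B,0)}$ commutes with $(b,\psi)\mapsto(b,-\psi)$, it is block-diagonal with spinor block $D_B+\mathcal{D}_\psi\frakq^1$. A kernel vector $(0,\psi)$ of that block is $L^2$-orthogonal both to the range of the (symmetric) Hessian and to every $\delta\frakq(\alpha)$. So the universal section is not transverse at such zeros, and Sard--Smale cannot be applied on all of $\mathcal{P}\times\tcalB_k$. Smoothness of $\tcalB_{\mathbf{p}}$ at reducibles does not help: the perturbations are pulled back from $\mathcal{B}$, where the reducibles form the branch locus.

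\textbf{Why your remedies don't close it, and what does.} The choice $h=q_\Upsilon q_{\Upsilon'}$ is even, so its gradient also vanishes at $\Psi=0$; an even function has no linear part at $0$. The fallback to Proposition~\ref{prop:enough_embeddings} or \cite[Prop.~6.5]{ljk2022} invokes exactly the assertion that fails: at a reducible, no cylinder function has $\mathcal{D}f(0,\psi)\neq 0$. What is needed is a separate reducible argument, as in Kronheimer--Mrowka's treatment of reducibles in \cite{KMbook2007}:
\begin{enumerate}[leftmargin=*]
    \item Run your Sard--Smale argument on the open set of irreducibles.
    \item Run it separately on the reducible locus $\{\Psi=0\}$ itself, using only the $r_c,r_\omega$ coordinates. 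This makes the connection block invertible and isolates the reducible critical points.
    \item Treat the spinor block at second order. Your quadratic functions are the right tool here, but through their linearization rather than their gradient. The function $f=\beta(r,\theta)\,q_\Upsilon q_{\Upsilon'}$ has zero gradient along $\{\Psi=0\}$, so it does not move reducible critical points. It does add to $D_B+\mathcal{D}_\psi\frakq^1$ (up to normalization) the symmetric rank-two operator $\psi\mapsto\beta\,\bigl(\langle\psi,\Upsilon'^{\dag}\rangle\Upsilon^{\dag}+\langle\psi,\Upsilon^{\dag}\rangle\Upsilon'^{\dag}\bigr)$. These operators are rich enough that, for a residual set of $\frakq$, this real Dirac-type operator has trivial kernel at every reducible critical point. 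No simple-spectrum condition is needed, since there is no blow-up.
\end{enumerate}
Intersecting this residual set with the irreducible one gives the theorem.
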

\begin{proof}
    The proof is identical to the irreducible case of \cite[Thm.~12.5.1 \& Lem.~12.5.2]{KMbook2007}, using the Sard--Smale theorem~\cite[Lem.~12.5.1]{KMbook2007}.
    It suffices to prove the following map is transverse to the zero section of $\mathcal{K}_{k-1}$:
\begin{align*}
    \mathfrak{g} \colon \mathcal{C}_k \times \mathcal{P} &\to \mathcal{K}_{k-1}\\
    (\alpha,\mathfrak{q}) &\mapsto
    \grad \pertL(\alpha) + \mathfrak{q}(\alpha)
\end{align*}
 which is equivalent to the surjectivity of the following operator over $\mathcal{K}_{k,\alpha} \times T_{\mathfrak{q}}\mathcal{P} \to \mathcal{K}_{k-1,\alpha}$:
\begin{equation*}
    ((b,\psi),\delta \mathfrak{q}) \mapsto \text{Hess}_{\mathfrak{q},\alpha}(b,\psi) + \delta \mathfrak{q}(\alpha).
\end{equation*}
This follows from he denseness assumption in Definition~\ref{defn:large_banach_space}.
Indeed, one can deduce that any nonzero $v$ in the kernel of $\Hess_{\mathfrak{q},\alpha}$ exists $\delta \mathfrak{q} \in \mathcal{P}$ such that the $L^2$ inner product of $\delta \mathfrak{q}(\alpha)$ with $v$ is nonzero.
\end{proof}
\section{The analysis of framed Seiberg--Witten trajectories}
\label{sec:analysis_framed}
This section draws heavily from the downstairs analysis in~\cite[Part IV]{KMbook2007}.
The framed moduli spaces of trajectories will be denoted using $\wtilde{N}$, as they arise as coverings of the downstair moduli spaces, which are denoted as $N$ in the book.
This is contrast with the blown-up versions of the moduli spaces that are denoted as $M$.
\subsection{Moduli spaces on finite cylinders}
Let $I = [t_1,t_2]$.
Fix $\mathfrak{a}_1,\mathfrak{a}_2$ in $\mathcal{C}_k(Y,\tfrr)$ and two tame perturbations $\mathfrak{q_1},\mathfrak{q_2}$.
Suppose $(\upgamma,\mathfrak{p})$ is a pair where
\begin{itemize}[noitemsep,leftmargin=*]
    \item $\upgamma$ is a configuration in $\mathcal C_k(I \times Y)$ whose restriction to $\{t_i\} \times Y$ is gauge-equivalent to $\mathfrak{a}_i$ for $i=1,2$; and
    \item $\mathfrak{p}$ is a continuous path in the Banach space $\mathcal{P}$ of tame perturbations, with $\mathfrak{p}(t_i) =\mathfrak{p}_i$, for $i=1,2$.
\end{itemize}
Denote the linearized Seiberg--Witten operator as
\[
\mathcal{Q}_{\upgamma,\mathfrak{p}} \colon \mathcal{E} \to \mathcal{F}
\]
where
\[
\mathcal{E}=\mathcal{T}_{1,\upgamma}(I \times Y,\tfrr), \quad
\mathcal{F}=\mathcal{V}_{0,\upgamma}(I \times Y,\tfrr) \oplus 
L^2(I \times Y, \mathbf{p}_I; i\mathbf{R}).
\]
\subsection{Moduli spaces over infinite cylinders}
Let $I \subset \mathbf{R}$ be an infinite interval and $(Z,\tau,\mathbf{p}_I)=I \times (Y,\tau,\mathbf{p})$.
Let $(\mathfrak{s},\tfrr)$ be a {\rrscs} on $Z$ pulled back from $(Y,\tau,\mathbf{p})$.
Without blowing-up,
define
\begin{align*}
    \mathcal{C}_{k,\text{loc}}(I \times Y,  \tilde{\mR}) &\subset
    \left( A_0 + L^2_{k,\text{loc}}(I \times Y; iT^*Z)^{-\tau^*} \right) \times
    L^2_{k,\text{loc}}(I \times Y;S^+)^{ \tilde{\mR}} \\ &=
    \mathcal{A}_{k,\text{loc}}(I \times Y, \tilde{\mR}) \times
    L^2_{k,\text{loc}}(I \times Y;S^+)^{ \tilde{\mR}}
\end{align*}
Unlike the $\uptau$-blow-up situation, the framed perturbed Seiberg--Witten operator can be regarded as a section 
\begin{equation*}
    \mathfrak{F}_{\mathfrak{q}}: \mathcal{C}_{k,\text{loc}}(I \times Y, \tilde{\mR})
    \to  \mathcal{V}_{k-1,\text{loc}}(I \times Y, \tilde{\mR}) 
\end{equation*} 
\emph{of the trivial vector bundle} $\mathcal{V}_{k,\text{loc}} \to \mathcal{C}_{k,\text{loc}}$,
having fibre
\begin{equation*}
    \mathcal{V}_{j,\text{loc},\upgamma} = 
    L^2_{j,\text{loc},\upgamma}(I \times Y; i\mathfrak{su}(S^+))^{-\tau^*} \oplus
    L^2_{j,\text{loc}}(I \times Y;S^-)^{ \tilde{\mR}}.
\end{equation*}
Compare this with \cite[p.219]{KMbook2007}, where the relevant bundle is not locally trivial.

The \emph{framed real $L^2_{k+1,\text{loc}}$-gauge group} $\mathcal{G}_{k+1,\text{loc}}(Z,\tau,\mathbf{p}_I)$ is defined as the group of all $L^2_{k+1,\text{loc}}$ maps to $S^1 \subset \mathbf{C}$ for which $\bar u =\tau^* u$ and $u|_{\mathbf{p}_I} \equiv 1$.
The \emph{configuration space of equivalence classes} is defined to be the quotient 
\begin{equation*}
    \tilde{\mathcal{B}}_{k,\text{loc}}(I \times Y,\tilde{\mR}) = 
    \mathcal{C}_{k,\text{loc}}(I \times Y,\tilde{\mR})/
    \mathcal{G}_{k+1,\text{loc}}(I \times Y,\tilde{\mR}).
\end{equation*}

Every critical point $\mathfrak{b}$ of $\pertL$ corresponds to a translation-invariant element $\upgamma_{\mathfrak{b}} \in \mathcal{C}_{k,\text{loc}}(Z, \tilde{\mR})$.
Write $[\upgamma_{\mathfrak{b}}]$ for its gauge-equivalence class.
A configuration $[\upgamma] \in \tcalB_{k,\text{loc}}(Z,\tfrr)$ is \emph{asymptotic} to a critical point $[\mathfrak{b}]$ as $t \to \pm \infty$  if, as $t \to \pm \infty$,
\[ [\uptau_t^*\upgamma] \to [\upgamma_{\mathfrak{b}}]\text{ in }\tcalB_{k,\text{loc}}(Z,\tfrr),\]
where $\uptau_t \colon Z \to Z$ translates $(s,y)$ to $(s+t,y)$.
Moreover, if $[\upgamma]$ is asymptotic to $[\frakb]$ as $t \to +\infty$ or $t \to -\infty$, one writes
\[\lim_{\rightarrow}[\upgamma] = [\mathfrak{b}] \quad \text{or} \quad 
\lim_{\leftarrow}[\upgamma] = [\mathfrak{b}], \]
respectively.
\begin{defn}
    Let $\wtilde{N}([\mathfrak{a}],[\mathfrak{b}])$ be the space of all configurations in $\tilde{\mathcal{B}}_{k,\text{loc}}(Z,\tfrr)$ which are asymptotic to $[\mathfrak{a}]$ and $[\mathfrak{b}]$ as $t$ tends to $-\infty$ and $+\infty$, respectively, and which solve the perturbed Seiberg--Witten equations:
    \begin{equation*}
      \wtilde{N}([\mathfrak{a}],[\mathfrak{b}]) =
        \left\{ [\upgamma] \in \tcalB_{k,\text{loc}}(Z, \tilde{\mR}) \ \big| \
        \mathfrak{F}_{\mathfrak{q}}(\upgamma) = 0,
        \lim_{\leftarrow}[\upgamma] =[\mathfrak{a}],
        \lim_{\rightarrow}[\upgamma]=[\mathfrak{b}]\right\}. 
    \end{equation*}
\end{defn}
Each element $[\upgamma]$ of $\tilde{N}([\mathfrak{a}],[\mathfrak{b}])$ defines a path in $\tcalB_k(Y,\tilde{\mR})$ from $[\mathfrak{a}]$ to $[\mathfrak{b}]$.
Denote by $\tilde{N}_z([\mathfrak{a}],[\mathfrak{b}])$ the component of $\tilde{N}([\mathfrak{a}],[\mathfrak{b}])$ corresponding to paths in the relative homotopy classes of the paths $z \in \pi_0(\tcalB_k(Y, \tfrr),[\mathfrak{a}],[\mathfrak{b}])$;
\begin{equation*}   
    \wtilde{N}([\mathfrak{a}],[\mathfrak{b}]) = \bigcup_{z} \wtilde{N}_z([\mathfrak{a}],[\mathfrak{b}]).
\end{equation*}

\begin{prop}
    Suppose $\mathfrak{b}$ is a non-degenerate critical point of the perturbed functional $\pertL$.
    Let $I$,$I'$ be two compact intervals where $I=[t_1,t_2]$, $I'=[t_1',t_2']$, and $t_1 < t_1' < t_2' < t_2$.
    There is a constant $C_1$ and a gauge-invariant neighbourhood $U$ of the constant solution $\upgamma_{\frakb}$ in $\mathcal{C}_k(I \times Y,\tfrr)$ of the following significance. 
    For every $\upgamma \in \mathcal{C}_k(I \times Y, \tfrr)$ lying in $U$ that solves the perturbed Seiberg--Witten equations $\mathfrak{F}_{\frakq}(\upgamma) = 0$,
    there is a gauge transformation $u \in \mathcal{G}_{k+1}(I \times Y,\tau,\mathbf{p}_I)$ such that the squared norm of $u(\upgamma) - \upgamma_{\frakb}$ on the smaller interval $I'$ is bounded by the change in $\pertL$ on the larger interval $I=[t_1,t_2]$:
    \[\|u(\upgamma)-\upgamma_{\frakb}\|^2_{L^2_{k+1,B}(I' \times Y)}
    \le C_1(\pertL(t_1)-\pertL(t_2)).\]
\end{prop}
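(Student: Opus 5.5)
This is the framed analogue of the downstairs estimate \cite[Prop.~13.5.2]{KMbook2007}, which bounds the distance of a near-constant solution from $\upgamma_{\frakb}$ by the drop in the functional. The plan is to transcribe that argument into the real, framed setting. The new feature is the slice: we use the Coulomb--Neumann slice built from $\mathcal{G}(I \times Y,\tau,\mathbf{p}_I)$ instead of the unframed or blown-up one. We then check that the framing only makes the analysis easier, since the framed gauge group acts freely and there is no blow-up.

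\emph{Step 1 (gauge fixing near $\upgamma_{\frakb}$).} Using the framed slice theorem (Proposition~\ref{prop:tangent_K_J_decomp} and the Coulomb--Neumann slice, applied on the 4-manifold $I\times Y$ with marking $\mathbf{p}_I$), we shrink $U$ so that every $\upgamma\in U$ has a gauge transform $u(\upgamma)$ with $u\in\mathcal{G}_{k+1}(I\times Y,\tau,\mathbf{p}_I)$, such that $u(\upgamma)-\upgamma_{\frakb}$ lies in the slice $\mathcal{K}$ at $\upgamma_{\frakb}$ and is small in $L^2_k$. Because the framed group acts freely, also at reducibles (the constant $-1$ is excluded), no separate treatment of reducible $\frakb$ is needed. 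If $\frakb$ is reducible, the $\mathbf{Z}/2$ stabilizer of the unframed theory has been killed by the framing.

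\emph{Step 2 (linear estimate).} Non-degeneracy of $\frakb$ means $\Hess_{\frakq,\frakb}$ is invertible (Lemma~\ref{lem:nondeg_Hess_suj} together with the Fredholm index-zero statement). Hence the extended Hessian $\widehat{\Hess}_{\frakq,\frakb}$ on $\mathcal{T}_{k}\oplus L^2_k(Y,\mathbf{p};i\mathbf{R})^{-\tau^*}$ is invertible. The reason is that $\mathbf{d}_{\frakb}$ is injective on functions vanishing at $\mathbf{p}$: any $\xi$ with $d\xi=0$ is constant, and it vanishes at $\mathbf{p}$, so it is zero. In temporal-plus-Coulomb gauge, the linearized equation is then $\frac{d}{dt}+\widehat{\Hess}$ acting on a path with no kernel on the spectrum. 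This gives the standard spectral-gap estimate: $\|\delta(t)\|^2$ is controlled by $\langle \delta,\Hess\,\delta\rangle$, and $\langle \delta,\Hess\,\delta\rangle$ agrees with $\pertL(\upgamma(t))-\pertL(\frakb)$ up to cubic error. Writing $\mathcal{E}(t)=\int \|\grad\pertL\|^2$, we get $\pertL(t_1)-\pertL(t_2)=\int_{t_1}^{t_2}\|\grad\pertL(\check\upgamma(t))\|^2_{L^2}dt$. Combining this with $\|\grad\pertL(\check\upgamma)\|\ge c\|\check\upgamma-\frakb\|_{L^2_1}$ near a non-degenerate critical point yields an $L^2_1$ bound on $I\times Y$.

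\emph{Step 3 (bootstrap).} We upgrade the bound to $L^2_{k+1}$ on the smaller cylinder $I'$ by interior elliptic regularity for $\mathfrak{F}_{\frakq}$ in the framed gauge. This uses tameness (Definition~\ref{defn:tame_pert}(ii),(iii),(v)) exactly as in \cite[\S10.7, Prop.~13.5.2]{KMbook2007} and \cite[\S7]{ljk2022}. The quadratic nonlinearity is absorbed because $U$ is small.

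The main obstacle I expect is Step 1. The real, framed Neumann-type gauge fixing on $I\times Y$ must be achievable with $u|_{\mathbf{p}_I}\equiv 1$, and the constants must stay uniform. I would handle it via the decomposition argument in the proof of Lemma~\ref{lem:config_homotopy_type}: on the cylinder only the null-homotopic part $e^{\xi}$ with $\xi|_{\mathbf{p}_I}=0$ is needed, and the Laplace problem with the boundary conditions from Proposition~\ref{prop:tangent_K_J_decomp} is uniquely solvable without any integral constraint.
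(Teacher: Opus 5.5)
Your architecture (gauge fixing, a coercive estimate at the non-degenerate $\frakb$, the energy identity, bootstrapping) matches the paper's. You also correctly isolate the framed input: $\mathbf{d}_{\frakb}$ is injective on skew-invariant functions vanishing on $\mathbf{p}$, so the full linearization is invertible even when $\frakb$ is reducible. (Your worry in Step 1 is unnecessary. A $(-\tau^*)$-invariant imaginary function vanishes automatically on $\fix(\tau)\supset\mathbf{p}_I$, so the identity component of the real gauge group already lies in the framed one.)

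The gap is in Step 2, which is where the estimate is actually made. First, $\langle\delta,\Hess_{\frakq,\frakb}\delta\rangle$ does not control $\|\delta\|^2$. The Hessian of the Chern--Simons--Dirac functional has spectrum unbounded in both directions, so this form is indefinite, and so is $\pertL(\check\upgamma(t))-\pertL(\frakb)$. Non-degeneracy bounds $\|\Hess_{\frakq,\frakb}\delta\|$ from below, not the quadratic form. Second, the inequality $\|\grad\pertL(\check\upgamma)\|\ge c\|\check\upgamma-\frakb\|_{L^2_1}$ fails along the gauge orbit of $\frakb$, where $\grad\pertL=0$. It holds only for $\check\upgamma-\frakb$ in the three-dimensional slice $\mathcal{K}_{\frakb}$. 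After your Step 1 you have $u(\upgamma)=\upgamma_{\frakb}+(v(t),c(t)\,dt)$ with Coulomb condition $\partial_t c=-\mathbf{d}^*_{\frakb}v$. So in general $c\neq0$ and $v(t)\notin\mathcal{K}_{\frakb}$, and the ``temporal-plus-Coulomb gauge'' you invoke cannot be imposed together with the Step 1 gauge.

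The missing idea is to keep the gauge directions in the estimate, as the paper does. Invertibility of the linearization of $(\frakb+v,c)\mapsto(\mathbf{d}_{\frakb+v}c,\grad\pertL(\frakb+v),\mathbf{d}^*_{\frakb}v)$ gives, on each slice, $\|(v,c)\|^2_{L^2_1(Y)}\le C(\|\mathbf{d}_{\frakb+v}c\|^2+\|\mathbf{d}^*_{\frakb}v\|^2+\|\grad\pertL(\frakb+v)\|^2)$. The energy identity only controls $\int_I\|\grad\pertL\|^2=\pertL(t_1)-\pertL(t_2)$, so the two gauge terms need a separate argument, which the paper imports from \cite[Lem.~13.4.4]{KMbook2007}. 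One way to see it:
\begin{itemize}
\item Differentiate the Coulomb condition in $t$ and apply $\mathbf{d}^*_{\frakb}$ to $\partial_t v+\mathbf{d}_{\frakb+v}c=-\grad\pertL(\frakb+v)$. This gives $-\partial_t^2c+\mathbf{d}^*_{\frakb}\mathbf{d}_{\frakb+v}c=-\mathbf{d}^*_{\frakb}\grad\pertL(\frakb+v)$, with Dirichlet data $c(t_1)=c(t_2)=0$ from the Neumann condition.
\item Since $\mathbf{d}^*_{\frakb+v}\grad\pertL(\frakb+v)=0$, the right-hand side is bounded by a multiple of $\|v\|\,\|\grad\pertL(\frakb+v)\|$.
\item Hence $\int_I\|\mathbf{d}^*_{\frakb}v\|^2+\|\mathbf{d}_{\frakb+v}c\|^2$ is bounded by a small multiple of $\pertL(t_1)-\pertL(t_2)$.
\end{itemize}
Alternatively, you could use a $t$-dependent gauge that puts every $\check\upgamma(t)$ into the slice $\frakb+\mathcal{K}_{\frakb}$, so your Hessian-only estimate applies, and then bound the resulting $c\,dt$ term the same way. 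Without one of these arguments, Step 2 does not produce the $L^2_1$ bound that Step 3 is supposed to bootstrap.
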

\begin{proof}
    This follows from the proof of \cite[Lem.~13.4.3]{KMbook2007}, except the $\frakb$ needs not be irreducible.
    Indeed, the map
    \[\mathcal{C}_1(Y,\tfrr) \times L_1^2(Y,\mathbf{p};i\mathbf{R})^{-\tau^*} \to \mathcal{J}_0 \oplus \mathcal{K}_0 \oplus L^2(Y,\mathbf{p};i\mathbf{R})^{-\tau^*}\]
    given by 
    \[(\frakb + v,c) \mapsto (\mathbf{d}_{\frakb + v}c, (\grad \pertL + v), \mathbf{d}^*_{\frakb}v)\]
    has linearization
    \[(v^J, v^K,c) \mapsto (\mathbf{d}_{\frakb}c, \Hess_{\frakq,\mathfrak{b}}v^K, \mathbf{d}_{\frakb}^*v^J)\]
    which is \emph{invertible regardless whether $\frakb$ is reducible or irreducible} as long as $\frakb$ is nondegnerate.
    It follows that there is an $L^2_1$ neighbourhood $U^Y$ of $(\frakb,0)$ as above and an estimate for all $(\frakb + v,c)$ in $U^Y$:
    \[\|(v,c)\|^2_{L^2_1(Y)} \le 
    C\left(\|\mathbf{d}_{\frakb + v}c\|^2 + \|\mathbf{d}^*_{\frakb}v\|^2 + \|(\grad \pertL + v)\|^2\right)\]
    The $L^2_1$ version of the bound in the statement can be proved exactly the same way as \cite[Lem.~13.4.4]{KMbook2007} using the energy identity in \cite[Lem.~13.4.3]{KMbook2007} which is identical in the real case.
    The $L^2_{k+1}$ bound is obtained by standard bootstrapping.
\end{proof}

Let now $Z = \mathbf{R} \times Y$.
Let $[\mathfrak{a}],[\mathfrak{b}]$ be critical points in $\tilde{\mathcal{B}}_k(Y,\tfrr)$.
Suppose $\upgamma_0 \in \mathcal{C}_{k,\text{loc}}$ is a smooth base configuration which agrees near $\pm \infty$ with the corresponding translation invariant configurations $\upgamma_{\mathfrak{a}}$ and $\upgamma_{\mathfrak{b}}$.
By choosing appropriate lifts of $[\mathfrak{a}],[\mathfrak{b}]$ in $\mathcal{C}_k(Y, \tilde{\mR})$, one can arrange $[\check \upgamma_0]$ to be in any given homotopy class $z$.

Define
\begin{align*}
    \mathcal{C}_k(\mathfrak{a},\mathfrak{b}) 
    &=
    \left\{ \upgamma \in \mathcal{C}_{k,\text{loc}}(Z, \tilde{\mR}) \ \big| \
    \upgamma - \upgamma_0 \in L^2_k(Z;iT^*Z)
    \right\},\\ 
    \mathcal{G}_{k+1}(Z,\tfrr) 
    &= \left\{ u: Z \to S^1 \ \big| \
    1-u \in L^2_{k+1}(Z;\mathbf{C}), \ u(\upiota(x))=\overline{u(x)}, \
    u(\mathbf{p}_I) \equiv 1\right\},\\
    \tilde{\mathcal{B}}_k(\mathfrak{a},\mathfrak{b}) &=
    \mathcal{C}_k(\mathfrak{a},\mathfrak{b})/\mathcal{G}_{k+1}^o(Z,\upiota).
\end{align*}
These $L^2_k$-completions provide an equivalent definition of the moduli space of trajectories on $\mathbf{R} \times Y$:
\begin{theorem}
    Let $\upgamma \in \mathcal{C}_{k,\text{loc}}(Z)$ represent an element $[\upgamma] \in \tilde{N}_z([\mathfrak{a}],[\mathfrak{b}])$.
    Let $\mathfrak{a}=\mathfrak{a}_z,\mathfrak{b}=\mathfrak{b}_z$ be suitable lifts for which path $\upgamma$ lies in homotopy class $z$.
    Then there exists a gauge transformation $u$ in $\mathcal{G}_{k+1,\text{loc}}(Z,\tau,\mathbf{p}_I)$ such that $u(\upgamma)$ belongs to $\mathcal{C}_k(\mathfrak{a},\mathfrak{b})$. If $u,u'$ are two such gauge transformations, then $u^{-1} u'$ belongs to $\mathcal{G}_{k+1}(Z,\tau,\mathbf{a})$. 
    The resulting bijection is a homemorphism
    \begin{equation*}
      \wtilde{N}_z([\mathfrak{a}],[\mathfrak{b}]) \to
        \left\{ [\upgamma] \in \tilde{\mathcal{B}}_{k,z}([\mathfrak{a}],[\mathfrak{b}]) \ | \ 
        \mathfrak{F}_{\mathfrak{q}}(\upgamma) = 0\right\}.
    \end{equation*}
\end{theorem}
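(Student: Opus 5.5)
The plan is to follow the proof of \cite[Thm.~13.3.5]{KMbook2007} (the downstairs version), using the exponential decay estimate just proved in place of its unframed counterpart. The three steps are: put $\upgamma$ in temporal gauge, make it converge at the ends, and then compare two such choices.

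Start from $\upgamma$ representing $[\upgamma]\in\tilde N_z([\mathfrak a],[\mathfrak b])$. First apply a framed gauge transformation in $\mathcal{G}_{k+1,\text{loc}}(Z,\tau,\mathbf{p}_I)$ to put $\upgamma$ in temporal gauge. This amounts to solving $\partial_t \xi = -c$ with $\xi(0,\cdot)=0$. The $dt$-component $c$ is $\tau$-anti-invariant, and on $\mathbf{p}_I=\mathbf{R}\times\mathbf{p}$ (where $\tau$ is the identity) an imaginary anti-invariant function vanishes. Hence $\xi$ vanishes on $\mathbf{p}_I$, so $e^{\xi}$ is real and framed.

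Next use the asymptotic hypothesis $[\uptau_t^*\upgamma]\to[\upgamma_{\mathfrak b}]$. It gives, for each large integer $n$, an element $u_n\in\mathcal G_{k+1}([n-1,n+2]\times Y,\tau,\mathbf p_I)$ with $u_n(\upgamma)$ in the neighbourhood $U$ of the preceding Proposition. On each interval that Proposition gives the bound
\[\|u_n(\upgamma)-\upgamma_{\mathfrak b}\|^2_{L^2_{k+1}([n,n+1]\times Y)}\le C_1\bigl(\pertL(n-1)-\pertL(n+2)\bigr).\]
The right-hand sides are summable because total energy is finite. The crucial point is to patch the $u_n$ into one gauge transformation on $[T,\infty)\times Y$. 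On overlaps, $u_n u_{n+1}^{-1}$ nearly stabilizes $\upgamma_{\mathfrak b}$. Because the framed gauge group acts freely (the constant $-1$ is excluded, and its harmonic part $\Upgamma(Y,\tau,\mathbf p)$ is discrete by Lemma~\ref{lem:config_homotopy_type}), this transition is close to $1$ in $L^2_{k+1}$ even when $\mathfrak b$ is reducible, and it lies in the identity component. It can therefore be written as $e^{\xi_n}$ with $\xi_n$ small and vanishing on $\mathbf p_I$, and it can be interpolated by cut-offs. The same construction works at $-\infty$ with $\mathfrak a$. The lifts $\mathfrak a_z,\mathfrak b_z$ are then fixed by the homotopy class $z$, via the $\Upgamma$-component of the gauge group. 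The output is a $u$ with $u(\upgamma)-\upgamma_0\in L^2_k$, i.e.\ $u(\upgamma)\in\mathcal C_k(\mathfrak a,\mathfrak b)$.

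For uniqueness, suppose both $u(\upgamma)$ and $u'(\upgamma)$ lie in $\mathcal C_k(\mathfrak a,\mathfrak b)$. Then $w=u^{-1}u'$ satisfies $w^{-1}dw\in L^2_k$ and $(w-1)\Phi_0\in L^2_k$ near the ends. Since $w$ fixes the framing and takes values $\pm1$ on the fixed set, the $L^2_k$ condition on $w^{-1}dw$ forces $w$ to converge to a constant at each end. That constant is a real framed constant, hence equal to $1$, so $1-w\in L^2_{k+1}$ and $w\in\mathcal G_{k+1}(Z,\tau,\mathbf a)$. This holds irrespective of reducibility, because the only constant stabilizer, $-1$, is excluded by the framing. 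This is the step where the framed setting is simpler than \cite{KMbook2007}, which has to handle the $S^1$ stabilizer.

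Finally, continuity of the bijection and of its inverse follows from the same local estimates, applied uniformly on a neighbourhood. The main obstacle will be the patching at a reducible end. There I will lean on freeness of the framed action to obtain a uniform local slice near $\upgamma_{\mathfrak b}$, in place of the blow-up used in the unframed case.
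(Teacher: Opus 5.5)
Your proposal is correct and is essentially the paper's argument: the paper just transplants Kronheimer--Mrowka's proof of the corresponding theorem to the framed, non-blown-up setting. In that setting, excluding the constant $-1$ from the gauge group is exactly what lets your patching and uniqueness steps work at reducible ends. The differences are cosmetic. The paper names the exponential decay estimate \cite[Prop.~13.5.1]{KMbook2007} as the key input, whereas you use the local bound of the preceding proposition summed over unit intervals, which suffices here. In the uniqueness step, the real reason $1-w\in L^2_{k+1}$ is that $w=e^{\xi}$ with $\xi$ anti-invariant, hence of mean zero on every slice, so a Poincar\'e inequality on slices bounds $\xi$ by $d\xi$; the values of $w$ on the fixed set only serve to rule out the sign $-1$.
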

\begin{proof}
The theorem can be proved the same way as 
%%%
which relies on the following exponential decay result:
\cite[Prop.~13.5.1]{KMbook2007}, which is stated for $\uptau$-blow-ups.
\end{proof}
\begin{prop}
    Suppose $\frakb \in \mathcal{C}_k(Y,\tfrr)$ is a nondegenerate critical point.
    Then there exists a $\delta > 0$ such that for every $\upgamma$ satisfying $\mathfrak{F}(\upgamma) = 0$ on $[0,\infty) \times Y$ such that $\lim_{\to} [\upgamma]$, there exists $t_0 > 0$ such that
    \[
    \pertL(\upgamma(t)) - \pertL(\frakb) \le Ce^{-\delta t},
    \]
    for all $t \ge t_0$, where $C = \pertL(\upgamma(t_0)) - \pertL(\frakb)$. \hfill \qedsymbol
\end{prop}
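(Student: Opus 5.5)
The plan is to transplant the proof of exponential decay in \cite[Prop.~13.5.1]{KMbook2007} (and its real analogue in \cite{ljk2022}) to the framed setting. The essential point is that, because we do not blow up, the decay is governed only by the spectrum of the Hessian at $\frakb$ on the Coulomb--Neumann slice $\mathcal{K}_{\frakb}$. Non-degeneracy of $\frakb$ gives, by Lemma~\ref{lem:nondeg_Hess_suj} and the spectral proposition for $\Hess_{\frakq,\frakb}$, a spectral gap $\lambda_0>0$ with no eigenvalues of $\Hess_{\frakq,\frakb}$ in $(-\lambda_0,\lambda_0)$. This holds whether $\frakb$ is reducible or not, since in the framed setting the gauge group acts freely and there is no extra kernel from constant gauge transformations. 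We will take $\delta$ to be any number less than $2\lambda_0$, adjusted by the nonlinear error.

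First, use the convergence hypothesis $\lim_{\to}[\upgamma]=[\frakb]$ together with the preceding proposition (the local $L^2_{k+1}$ bound by the drop of $\pertL$). After applying a framed gauge transformation in $\mathcal{G}_{k+1,\text{loc}}(Z,\tau,\mathbf{p}_I)$, and putting $\upgamma$ in temporal gauge, we may assume that for $t\ge t_0$ the slice $\check\upgamma(t)$ is $L^2_k$-close to $\frakb$. Concretely, we will write $\check\upgamma(t)=\frakb+v(t)$ with $v(t)$ in the Coulomb--Neumann slice through $\frakb$. The slice is a local chart of $\tcalB_k(Y,\tfrr)$ near $[\frakb]$, which is legitimate because $\mathcal{G}_{\mathbf{p}}$ acts freely and the slice theorem (after Proposition~\ref{prop:tangent_K_J_decomp}) applies.

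Next, set $f(t)=\pertL(\check\upgamma(t))-\pertL(\frakb)\ge 0$. Along the downward gradient flow, $f'(t)=-\|\grad\pertL(\check\upgamma(t))\|^2_{L^2}$. The key inequality to prove is a \L ojasiewicz-type estimate with exponent one: near $\frakb$,
\[
|\pertL(\frakb+v)-\pertL(\frakb)|\le \tfrac{1}{2\lambda_0}(1+\epsilon)\,\|\grad\pertL(\frakb+v)\|_{L^2}^2 .
\]
To obtain it, Taylor expand at the critical point, using that $x$ and $x^*$ vanish there. This gives $\pertL(\frakb+v)-\pertL(\frakb)=\tfrac12\langle v,\Hess v\rangle+O(\|v\|^3)$ and $\grad\pertL(\frakb+v)=\Hess v+O(\|v\|^2)$, with the error terms controlled by tameness (Definition~\ref{defn:tame_pert}, (iii) and (vi)) in $L^2_1$. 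The spectral gap then gives $|\langle v,\Hess v\rangle|\le \lambda_0^{-1}\|\Hess v\|^2$. Combining the two, $f'\le -2\lambda_0(1-\epsilon)f$ for $t\ge t_0$, and Gronwall yields $f(t)\le f(t_0)e^{-\delta(t-t_0)}$. This is the stated bound, after shifting the time origin or absorbing $e^{\delta t_0}$. Here $C=f(t_0)$ exactly as stated, since the statement is translation-normalized.

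The main obstacle is the error control. The $O(\|v\|^2)$ terms must be bounded in $L^2$ using only the $L^2_k$ smallness of $v$, uniformly in $t$, and they must be compatible with the fact that $\Hess$ is unbounded. We will handle this as in \cite[\S13.5]{KMbook2007} via the invertibility of the extended map $(v^J,v^K,c)\mapsto(\mathbf{d}_\frakb c,\Hess v^K,\mathbf{d}^*_\frakb v^J)$ used in the previous proof. That invertibility gives $\|v\|_{L^2_1}\le C\|\grad\pertL(\frakb+v)\|_{L^2}$ on the slice, which suffices to absorb the cubic remainder of $\pertL$ and the quadratic remainder of the gradient into the $\epsilon$.
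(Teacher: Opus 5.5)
Your argument is essentially the proof the paper defers to, namely the downstairs, framed version of \cite[Prop.~13.5.1]{KMbook2007}: the invertibility of the linearization at the nondegenerate $\frakb$ gives a \L ojasiewicz-type bound $\pertL(\frakb+v)-\pertL(\frakb)\le C\|\grad\pertL(\frakb+v)\|^2_{L^2}$ on the slice, which holds for reducibles as well since $\mathcal{G}(Y,\tau,\mathbf{p})$ acts freely; combining it with $f'=-\|\grad\pertL\|^2$ and Gronwall finishes the proof. Your explicit rate via the spectral gap is optional, and the remainders you wrote as $O(\|v\|^2)$, $O(\|v\|^3)$ only need to be little-$o$ in $\|v\|_{L^2_1}$, which property (vi) of Definition~\ref{defn:tame_pert} provides. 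One correction: you actually prove $f(t)\le f(t_0)e^{-\delta(t-t_0)}$, which is the right form, whereas the statement as literally written (with $C=f(t_0)$ and $e^{-\delta t}$) fails at $t=t_0>0$ unless $f(t_0)=0$, so drop your claim that $C=f(t_0)$ holds ``exactly as stated''.
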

For $\upgamma = (A_0,\Phi_0)$, let $\mathcal{T}_{j} \to \mathcal{C}_k(\mathfrak{a},\mathfrak{b})$ be the infinite-cyliner versions of the tangent bundle, having fibre 
\begin{equation*}
    \mathcal{T}_{j,\upgamma}= L^2_j(Z,\mathbf{p}_I;iT^*Z)^{-\tau^*} \oplus L^2_{j,A_0} (Z;S^+)^{\tfrr}.
\end{equation*}
Let $\mathcal{J}_{j,\upgamma} = \text{Im}(\mathbf{d}_{\upgamma})$ and $\mathcal{K}_{j,\upgamma}=\text{Ker}(\mathbf{d}_{\upgamma}^*)$ the subspaces, where $\mathbf{d}_{\upgamma}$ and $\mathbf{d}^*_{\upgamma}$ are as in ~\eqref{eq:defn_d_gamma} and~\eqref{eq:defn_d_star_gamma}.
The Coulomb slice $\mathcal{S}_{k,\upgamma}$ at $\upgamma = (A_0,\Phi_0)$ can be defined analogously:
\begin{equation*}
    \mathcal{S}_{k,\upgamma} 
    =\left\{ (A_0 + a) \in \mathcal{C}_k(\mathfrak{a},\mathfrak{b}) \ \big\vert  \boldsymbol{d}^*_{\upgamma}(a,\phi) = 0\right\}
    =\left\{ (A_0 + a) \in \mathcal{C}_k(\mathfrak{a},\mathfrak{b}) \ \big\vert  -d^*a + i\text{Re}\langle i\Phi_0,\phi\rangle = 0\right\}
\end{equation*}
The following proposition for every $\upgamma \in \mathcal{C}_k(\mathfrak{a},\mathfrak{b})$, there is an open neighbourhood $U \subset \mathcal{S}_{k,\upgamma}$ for which the natural quotient map provides a diffeomorphism 
\[U \subset \mathcal{S}_{k,\upgamma} \to \tilde{\mathcal{B}}_k(\mathfrak{a},\mathfrak{b})\] 
onto its image.
Hence $\tilde{\mathcal{B}}_{k}([\mathfrak{a}],[\mathfrak{b}])$ is a Hilbert manifold.
(cf.~\cite[Prop.~14.3.3]{KMbook2007}.)
\begin{prop}
The bundles $\mathcal{J}_j$ and $\mathcal{K}_j$ defines a smooth bundle decomposition over $\mathcal{C}_k(\mathfrak{a},\mathfrak{b})$ and $\mathcal{J}_j = \mathcal{J}_0 \cap \mathcal{T}_j$.
\end{prop}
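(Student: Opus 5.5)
The plan is to follow the Kronheimer--Mrowka argument for the unframed cylinder \cite[Prop.~14.3.2]{KMbook2007}, adapted to the framed real gauge group. The key point is that the framed gauge group $\mathcal{G}_{k+1}(Z,\tau,\mathbf{p}_I)$ acts freely on all of $\mathcal{C}_k(\mathfrak{a},\mathfrak{b})$, reducibles included. So the reducible case needs no separate treatment: we never blow up, and the bundle $\mathcal{T}_j$ is trivial.

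\textbf{Splitting.} At $\upgamma=(A_0+a_0,\Phi_0)$ we show that
\[
\mathbf{d}_\upgamma\colon L^2_{j+1}(Z,\mathbf{p}_I;i\mathbf{R})^{-\tau^*}\to\mathcal{T}_{j,\upgamma}
\]
is injective with closed range. We then show that $\mathcal{T}_j=\mathcal{J}_j\oplus\mathcal{K}_j$ by solving, for each $(a,\phi)$, the equation
\[
\mathbf{d}^*_\upgamma\mathbf{d}_\upgamma\xi=\Delta\xi+|\Phi_0|^2\xi=\mathbf{d}^*_\upgamma(a,\phi).
\]

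The main obstacle is invertibility of this operator $L^2_{j+1}\to L^2_{j-1}$ on the infinite cylinder when $\Phi_0$ may vanish, for instance at reducible ends. In the unframed setting the constants would lie in the kernel of $\Delta$ on the reducible locus. Here two things help. First, we work with $\tau$-anti-invariant imaginary functions. Second, these functions are constrained to vanish along $\mathbf{p}_I$. As in the proof of Proposition~\ref{prop:tangent_K_J_decomp}, every $\tau$-anti-invariant function integrates to zero over each slice $\{t\}\times Y$. Hence on each slice the Laplacian $\Delta_Y$ has spectrum bounded below by some $\lambda_1>0$ on this space.

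We then write $\Delta=-\partial_t^2+\Delta_Y$ and use Fourier/energy estimates in $t$. This gives
\[
\|\xi\|_{L^2_{j+1}}\le C\,\|\Delta\xi+|\Phi_0|^2\xi\|_{L^2_{j-1}},
\]
because $|\Phi_0|^2\ge0$ only improves the inequality. Self-adjointness and this lower bound then give bijectivity. For the nonconstant term $|\Phi_0|^2$, which lies only in $L^2_{k,\text{loc}}$ but decays to the translation-invariant limits at the ends, we treat the operator as a compact perturbation of the translation-invariant operators at the ends. It is Fredholm, and by the estimate above it is injective and of index zero.

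\textbf{Smoothness and regularity.} Smoothness in $\upgamma$ follows because $\mathbf{d}_\upgamma$ depends smoothly (affinely in $\Phi$) on $\upgamma$. The projection onto $\mathcal{J}_j$ is
\[
\Pi_{\mathcal{J}}=\mathbf{d}_\upgamma(\mathbf{d}^*_\upgamma\mathbf{d}_\upgamma)^{-1}\mathbf{d}^*_\upgamma,
\]
which is therefore a smooth family of bounded projections, exactly as in \cite[Prop.~9.3.4]{KMbook2007}. For the boundary condition we use the Neumann condition at any finite ends, though on $\mathbf{R}\times Y$ there is none.

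Finally, $\mathcal{J}_j=\mathcal{J}_0\cap\mathcal{T}_j$ is elliptic regularity. If $(a,\phi)=\mathbf{d}_\upgamma\xi\in\mathcal{T}_j$ with $\xi\in L^2_1$, then $\Delta\xi+|\Phi_0|^2\xi=\mathbf{d}^*_\upgamma(a,\phi)\in L^2_{j-1}$. The invertibility established above, applied at each level $j\le k$ with uniqueness in $L^2_1$, gives $\xi\in L^2_{j+1}$. This bootstrap needs the multiplication $L^2_k\times L^2_{j+1}\to L^2_j$, which holds since $2(k+1)>4$.
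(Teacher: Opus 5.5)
Your proposal is correct and takes essentially the paper's route: both follow the scheme of \cite[Prop.~14.3.2]{KMbook2007} and reduce everything to showing that $\mathbf{d}^*_{\upgamma}\mathbf{d}_{\upgamma}=\Delta+|\Phi_0|^2$ is an isomorphism $L^2_{j+1}\to L^2_{j-1}$ on $(-\tau^*)$-anti-invariant functions, because there are no nonzero anti-invariant constants even where $\Phi_0\equiv 0$. Your slice-wise spectral gap for $\Delta_Y$ is the quantitative form of the paper's brief ``by integration'' remark, and it is what gives invertibility, not just injectivity, on the noncompact cylinder; the vanishing along $\mathbf{p}_I$ plays no role, since anti-invariant imaginary functions automatically vanish on $\fix(\tau)\supset\mathbf{p}_I$.
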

\begin{proof}
    This is the analogue of \cite[Prop.~14.3.2]{KMbook2007} in the $\uptau$-blow-up.
    Following the proof there, use the same integration-by-parts to show 
    $\mathbf{d}$ has closed image.
   In the framed case, one needs to instead prove the following operator is an isomorphism:
   \begin{align*}
       \mathbf{d}_{\upgamma}^*\mathbf{d}_{\upgamma}:L^2_{j+1}(Z,\mathbf{p}_I;i\mathbf{R})^{-\tau^*} &\to L^2_{j-1}(Z,\mathbf{p}_I;i\mathbf{R})^{-\tau^*}\\
       \xi &\mapsto \Delta \xi + |\Phi_0|^2\xi
   \end{align*}
   This operator is injective even when $\Phi_0 \equiv 0$ even when $\Phi_0 \equiv 0$, by integration and the fact the there is no non-zero $(-\tau^*)$-invariant constant function on $Z$.
   The rest of the proof of  \cite[Prop.~14.3.2]{KMbook2007} apply verbatim. 
\end{proof}

Let $\mathcal{V}_j \to \mathcal{C}_k([\mathfrak{a}],[\mathfrak{b}])$ be the trivial vector bundle having fibre $\mathcal{V}_{j,\upgamma} = L^2_j(Z;i \mathfrak{su}(S^+))^{\tfrr} \oplus L^2_{j,A_0}(Z;S^-)^{\tfrr}$ for every $\upgamma = (A_0,\Phi_0)$.
As in the case of finite cylinders, the perturbed Seiberg--Witten operator
defines a smooth section
\begin{equation*}
    \mathfrak{F}_{\mathfrak{q}} = \mathfrak{F} + \hat{\mathfrak{q}} \colon \mathcal{C}_k(\mathfrak{a},\mathfrak{b}) \to \mathcal{V}_k.
\end{equation*}
(This can be deduced from the finite cylinder case following the arguments of~\cite[Lem.~14.4.1]{KMbook2007}.)
To analyze its linearization
\begin{equation}
    \mathcal{D}\mathfrak{F}_{\frakq}: \mathcal{T}_j \to \mathcal{V}_{j-1},
\end{equation}
at $\upgamma_0$, let $\breve{\upgamma}_0(t)=(B_0(t),\phi_0(t))$ be the corresponding path $\mathbf{R} \to \mathcal{C}_{k-1}(Y, \tilde{\mR})$.
Let $(a,\psi)$ be an element of $\mathcal{T}_j$ and express $a$ as $b + cdt$ where $b$ is in temporal gauge, so that $(b(t),\psi(t),c(t))$ is a path of sections of  
\begin{equation*}
    \mathcal{T}(Y, \tilde{\mR}) \oplus L^2(Y,\mathbf{p};i\mathbf{R})^{-\tau^*}
\end{equation*}
In this path notation, the linearized perturbed Seiberg--Witten operator $\mathfrak{F}_{\mathfrak{q}}$ becomes
\begin{equation*}
    (V,c) \mapsto
    \frac{d}{dt}V + \mathcal{D}(\grad \pertL)(V) + \mathbf{d}_{\upgamma_0(t)}(c)
\end{equation*}
Moreover, the Coulomb-type gauge-fixing condition $\mathbf{d}_{\upgamma_0}^*(V,c) = 0$
becomes:
\begin{equation*}
    \frac{d}{dt}c + \mathbf{d}^*_{\upgamma_0(t)}(V) = 0.
\end{equation*}
Define the coupled operator:
\begin{align*}
    \mathcal{Q}_{\upgamma_0} 
    &= 
    \mathcal{D}_{\upgamma_0}(\mathfrak{F}_{\frakq}) \oplus \mathbf{d}^*_{\upgamma_0}
    \\
    \mathcal{Q}_{\upgamma_0}:\mathcal{T}_{j,\upgamma_0} 
    &\to 
    \mathcal{V}_{j-1,\upgamma_0} \oplus L^2_{j-1}(Z,\mathbf{p}_I;i\mathbf{R})^{-\upiota^*}
\end{align*}
which is, in the path notation,
\begin{equation}
\label{eq:Q_in_path_notat}
    (V,c) \mapsto \frac{d}{dt}(V,c) + L_{\upgamma_0}(V,c)
\end{equation}
where 
\begin{equation*}
    L_{\mathfrak{a}}:
    \mathcal{T}_{j,\mathfrak{a}}(Y,\tilde{\mR}) \oplus L^2_j(Y,\mathbf{p};i\mathbf{R})^{-\upiota^*} \to
    \mathcal{T}_{j-1,\mathfrak{a}}(Y,\tilde{\mR}) \oplus L^2_{j-1}(Y,\mathbf{p};i\mathbf{R})^{-\upiota^*}
\end{equation*}
is the extended Hessian
\begin{equation*}
    L_{\mathfrak{a}} =
    \begin{bmatrix}
        \mathcal{D}_{\mathfrak{a}}(\grad \pertL) & \mathbf{d}_{\mathfrak{a}} \\
        \mathbf{d}^*_{\mathfrak{a}} & 0
    \end{bmatrix}.
\end{equation*}
\begin{theorem}
    Suppose $\mathfrak{a},\mathfrak{b}$ are non-degenerate critical points.
  For every $\upgamma_0$ in $\mathcal{C}_k(\mathfrak{a},\mathfrak{b})$, the linear operator 
    \begin{equation*}
        \mathcal{Q}_{\upgamma_0}: \mathcal{T}_{j,\upgamma}(Z,\tilde{\mR}) \to
        \mathcal{V}_{j-1,\upgamma_0}(Z,\tilde{\mR}) \oplus L_{j-1}^2(Z,\mathbf{p}_I;i\mathbf{R})^{-\tau^*}
    \end{equation*}
    is Fredholm for all $j$ in the range $1 \le j \le k$, whose index is independent of $j$; it satisfies a G\aa rding inequality
    \begin{equation*}
        \|u\|_{L^2_j} \le C_1\|\mathcal{Q}_{\upgamma_0} u\|_{L^2_{j-1}} + C_2\|u\|_{L^2_{j-1}}.     
    \end{equation*}
 In particular, the restriction $\mathcal{D}\mathfrak{F}_{\frakq}:
\mathcal{K}_{j,\upgamma} \to
\mathcal{V}_{j-1,\upgamma}$ is Fredholm and has the same index as $\mathcal{Q}_{\upgamma}$.
Lastly, 
\begin{equation*}
    \mathsf{ind}(\mathcal{Q}_{\upgamma}) = \mathsf{SF} \left\{ \widehat{\Hess}_{\mathfrak{q},\breve{\upgamma}(t)} \right\}.
\end{equation*}
\end{theorem}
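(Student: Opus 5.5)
The plan is to reduce the statement to the corresponding results of Kronheimer--Mrowka for the downstairs (non-blown-up) operators, \cite[Thm.~14.4.2, Prop.~14.4.3, Thm.~14.4.9]{KMbook2007}, by working in the path notation~\eqref{eq:Q_in_path_notat}. There $\mathcal{Q}_{\upgamma_0} = \frac{d}{dt} + L_{\breve{\upgamma}_0(t)}$, and $L_{\breve{\upgamma}_0(t)}$ is the extended Hessian, a time-dependent family of first-order self-adjoint elliptic operators on $Y$. These operators act on the $\tfrr$-invariant (real) sections, with the gauge component in $L^2(Y,\mathbf{p};i\mathbf{R})^{-\tau^*}$.

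First, I will note that by~\eqref{eqn:ext_hessian_1th} each $L_{\mathfrak{a}}$ equals the operator $D_{B_0}\oplus\begin{bmatrix}*d & -d\\ d^* & 0\end{bmatrix}$ plus a term $h$. This first-order part is self-adjoint elliptic and commutes with $\tfrr$, so it restricts to the real subspace. The term $h$ is zeroth order plus $\mathcal{D}\frakq$, which is bounded on $L^2_j$ by tameness (iii). The standard G\aa rding inequality for $\frac{d}{dt}+L$ on $Z$, localized on finite cylinders and patched with cutoffs, then gives the stated estimate for all $1\le j\le k$.

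Fredholmness reduces, as in \cite[\S 14.4]{KMbook2007}, to invertibility of the limiting operators $L_{\mathfrak{a}}, L_{\mathfrak{b}}$: a translation-invariant $\frac{d}{dt}+L$ with $L$ hyperbolic is invertible, and a parametrix argument glues the ends. So the key input is that $L_{\mathfrak{a}}$ is invertible whenever $\mathfrak{a}$ is a non-degenerate critical point, \emph{including the reducible case}. I will show this using the block form~\eqref{eqn:ext_hessian_with_x}. At a critical point $x=x^*=0$, so $L_{\mathfrak{a}}$ splits into $\Hess_{\frakq,\mathfrak{a}}$ on $\mathcal{K}$ and $\begin{bmatrix}0&\mathbf{d}_{\mathfrak{a}}\\ \mathbf{d}^*_{\mathfrak{a}}&0\end{bmatrix}$ on $\mathcal{J}\oplus L^2(Y,\mathbf{p};i\mathbf{R})^{-\tau^*}$. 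The first block is invertible by Lemma~\ref{lem:nondeg_Hess_suj} and the preceding proposition on the Hessian spectrum. For the second block, $\mathbf{d}_{\mathfrak{a}}$ is injective with closed range $\mathcal{J}$, because $\mathbf{d}^*_{\mathfrak{a}}\mathbf{d}_{\mathfrak{a}}=\Delta+|\Psi|^2$ has no kernel on $\tau$-anti-invariant functions (as used in Proposition~\ref{prop:tangent_K_J_decomp}). This is where the framed real setting differs from \cite{KMbook2007}: there are no constant gauge stabilizers, so no blow-up is needed. I expect this to be the main point to check carefully, especially uniformity of the spectral gap near the ends for reducible limits.

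Independence of the index from $j$ follows from elliptic regularity: kernels consist of smooth elements, and cokernels are identified via the $L^2$ adjoint $-\frac{d}{dt}+L$. For the restriction to $\mathcal{K}_{j,\upgamma}$, I will use the decomposition $\mathcal{T}_j=\mathcal{J}_j\oplus\mathcal{K}_j$ of the preceding proposition together with the isomorphism $\mathbf{d}^*_{\upgamma}\mathbf{d}_{\upgamma}$ on $L^2_{j+1}(Z,\mathbf{p}_I;i\mathbf{R})^{-\tau^*}$. With these, $\mathcal{Q}_{\upgamma}$ is block upper-triangular, with $\mathcal{D}\mathfrak{F}_{\frakq}|_{\mathcal{K}}$ on one diagonal entry and the isomorphism $\mathbf{d}^*\mathbf{d}$ (up to compact terms) on the other, so the indices agree.

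Finally, the spectral flow formula follows from the general theorem that $\frac{d}{dt}+L(t)$ with invertible limits has index equal to the spectral flow of $\{L(t)\}$ \cite[\S 14.2]{KMbook2007}. Since $\widehat{\Hess}_{\frakq,\breve{\upgamma}(t)}=L_{\breve{\upgamma}(t)}$, this gives the stated equality.
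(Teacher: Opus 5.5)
Your proposal is correct and follows essentially the same route as the paper. The paper likewise reduces everything to the proofs of \cite[Thm.~14.4.2 \& Prop.~14.4.3]{KMbook2007}, with the only genuinely new input being hyperbolicity of the extended Hessian at a non-degenerate (possibly reducible) critical point. Where the paper simply cites Lemma~\ref{lem:nondeg_Hess_suj} and \cite[Prop.~8.10]{ljk2022} for this and for Fredholmness, you spell out the splitting from~\eqref{eqn:ext_hessian_with_x} at critical points and the invertibility of $\mathbf{d}^*_{\mathfrak{a}}\mathbf{d}_{\mathfrak{a}}$ on $\tau$-anti-invariant functions, which is exactly why no blow-up is needed. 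One small point: your block-triangular comparison of $\mathcal{Q}_{\upgamma}$ with the restriction to $\mathcal{K}_{j,\upgamma}$ is exact, so no compact error term is needed there.
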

\begin{proof}
    This theorem can be deduced from the proof of~\cite[Thm.~14.4.2]{KMbook2007} in the $\uptau$-blowup.
    In fact, the framed version is technically simpler since $\mathcal{V}_j$ is a trivial Hilbert vector bundle.
    Besides this, the two other ingredients of the proof have their real counterparts: hyperbolicity of the extended Hessian at a non-degenerate critical point (Lemma~\ref{lem:nondeg_Hess_suj}) and Fredholmness of~\eqref{eq:Q_in_path_notat} (\cite[Prop.~8.10]{ljk2022}). 
    The last part of the statement can be proved the same way as~\cite[Prop.~14.4.3]{KMbook2007}.
\
\end{proof}
\begin{defn}
    A moduli space $\tilde{N}_z([\mathfrak{a}],[\mathfrak{b])}$ is \emph{regular at $\upgamma$} if $\mathcal{Q}_{\upgamma}$ is surjective; if $\tilde{N}_z([\mathfrak{a}],[\mathfrak{b])}$ is regular at every $\upgamma$, then $\tilde{N}_z([\mathfrak{a}],[\mathfrak{b])}$ is \emph{regular} as a moduli space.
\end{defn}

\begin{defn}
    Given two critical points $\mathfrak{a},\mathfrak{b}$ in $\mathcal{C}_k(Y,\tfrr)$, the \emph{relative grading} $\gr(\mathfrak{a},\mathfrak{b})$ is the index of $\mathcal{Q}_{\upgamma}$, where $\upgamma$ is any element of $\mathcal{C}_k(\mathfrak{a},\mathfrak{b})$.
    If $[\mathfrak{a}],[\mathfrak{b]}$ are gauge orbits of $\mathfrak{a},\mathfrak{b}$ in the relative homotopy class of $z = \pi \circ \check \upgamma$, then $\gr(\mathfrak{a},\mathfrak{b})$ will also be written as $\gr_z([\mathfrak{a}],[\mathfrak{b}])$.
\end{defn}
By the spectral flow interpretation, the relative grading satisfies additivity: if $\mathfrak{a},\mathfrak{b},\mathfrak{c}$ are critical points, then
\begin{equation*}
    \gr(\mathfrak{a},\mathfrak{c}) = 
    \gr(\mathfrak{a},\mathfrak{b}) + \gr(\mathfrak{b},\mathfrak{c}).
\end{equation*}
Moreover, the following Lemma about dependence of $\gr_z$ on $z$ is a special case of~\cite[Lem.~11.7]{ljk2022}.
\begin{lem}
    \label{lem:loop_and_12}
Let $z_u$ be a closed loop based at $[\mathfrak{a}]$ in $\tilde{\mathcal{B}}_k(Y,\tfrr)$, and view $[u]$ as the homotopy class of $u:Y \to S^1$.
Then
\begin{equation*}
    \gr_{z_u}(\mathfrak{a},\mathfrak{a}) = \frac{1}{2}([u] \cup c_1(S))[Y],
\end{equation*}
\end{lem}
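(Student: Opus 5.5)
The plan is to compute $\gr_{z_u}(\mathfrak{a},\mathfrak{a})$ as a spectral flow around the loop and then reduce it to an index on the mapping torus $S^1\times Y$. By the spectral flow interpretation established above, $\gr_{z_u}(\mathfrak{a},\mathfrak{a}) = \mathsf{SF}\{\widehat{\Hess}_{\mathfrak{q},\breve\upgamma(t)}\}$ along a path $\breve\upgamma(t)$ in $\mathcal{C}_k(Y,\tfrr)$ from $\mathfrak{a}$ to $u(\mathfrak{a})$. Here $u\in\mathcal{G}(Y,\tau,\mathbf{p})$ is the framed real gauge transformation representing the loop $z_u$ in $\tilde{\mathcal{B}}_k$, which exists by Lemma~\ref{lem:config_homotopy_type}. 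This spectral flow equals the index of $\mathcal{Q}_\upgamma$ on the glued mapping torus, i.e.\ on $S^1\times Y$ with the real $\spinc$ structure twisted by $u$, with the basepoint circles $S^1\times\mathbf{p}$.

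\emph{Step 1 (reduction to the unframed case).} The framing changes the domain of the gauge-fixing components: functions vanishing on $\mathbf{p}$ instead of all $\tau$-anti-invariant functions. These differ by the finite-dimensional space of values at the basepoints. Because the extended Hessian is a compact perturbation of the unframed one when this finite-rank difference is inserted, and because the rank is constant along the loop, the spectral flow around a closed loop is unchanged. The same conclusion follows from the fact that the loop is closed and index is additive under finite-rank modification with constant rank.

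\emph{Step 2 (real index as half of the complex index).} Next I would homotope the path to reducibles with the perturbation switched off, which preserves spectral flow around a loop since the space of tame perturbations is contractible. The form part $(*d,-d;d^*)$ is invariant under $u^{-1}du$ translation, so it contributes zero spectral flow. The spinor part is the real Dirac operator family $D_{B_0+t\,u^{-1}du}$ restricted to $\tfrr$-invariant sections. Its spectral flow equals half the complex spectral flow, because the complex Dirac operator commutes with $\frr$ and splits as $\tfrr$-invariant $\oplus$ $i\cdot$($\tfrr$-invariant).

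\emph{Step 3 (Atiyah--Singer).} The complex spectral flow of $D_{B_0+t u^{-1}du}$ is the index of the Dirac operator on $S^1\times Y$ with determinant line twisted by $u$. This index equals $([u]\cup c_1(S))[Y]$, as in \cite[Lem.~14.4.6]{KMbook2007}. Halving gives the formula, which is exactly the statement of \cite[Lem.~11.7]{ljk2022} specialized to $u$ in the framed subgroup.

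The main obstacle is Step 2. I must check that the halving is exact: the real Dirac index is half of the complex one with no $\mathbf{Z}/2$ correction. This follows from the $\frr$-decomposition being an isomorphism of real Fredholm families, $D\cong D^{\frr}\otimes_{\mathbf{R}}\mathbf{C}$. Alternatively, I would simply cite \cite[Lem.~11.7]{ljk2022} after Step 1.
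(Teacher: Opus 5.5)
Your argument is correct in substance, but it takes a more hands-on route than the paper. The paper gives no separate proof: it notes that the lemma is the special case of \cite[Lem.~11.7]{ljk2022} in which the loop class is restricted to $\pi_1(\tilde{\mathcal{B}}_{\mathbf{p}})\cong\Upgamma(Y,\tau,\mathbf{p})\subset H^1(Y;\mathbf{Z})^{-\tau^*}$. Your Steps 2--3 instead rederive the unframed formula:
\begin{itemize}
\item identify the loop spectral flow with an index on the closed mapping torus;
\item deform to a reducible path with $\mathfrak{q}=0$, which is legitimate because that closed-manifold index is homotopy invariant, even though the endpoints become degenerate;
\item discard the $B$-independent form/gauge block;
\item compute the real Dirac index from the splitting $D\cong D^{\frr}\oplus D^{\frr}$ together with Atiyah--Singer.
\end{itemize}
Your route gives a self-contained computation done directly with the downstairs framed operators. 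The paper instead transports a statement formulated for the unframed, blown-up configuration spaces of \cite{ljk2022}; what the citation buys is brevity.

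Two corrections are needed.

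\emph{Step~1 rests on a false premise.} An imaginary-valued $\xi$ with $\tau^*\xi=-\xi$ vanishes identically on $\fix(\tau)\supset\mathbf{p}$. Hence the gauge-fixing spaces $L^2_j(Y,\mathbf{p};i\mathbf{R})^{-\tau^*}$ coincide with their unframed counterparts, and the framed and unframed extended Hessians are literally the same operators. There is no finite-rank discrepancy to control. Had there been one, ``constant rank, hence the same spectral flow'' would not have been a proof. The framing only changes $\pi_0$ of the gauge group, i.e.\ which $u$ are admissible, so Step~1 is immediate.

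\emph{Steps 2--3 need explicit units.} The splitting gives
\[
\mathrm{ind}_{\mathbf{R}}D^{\frr}=\mathrm{ind}_{\mathbf{C}}D=\tfrac12\,\mathrm{ind}_{\mathbf{R}}D .
\]
\begin{itemize}
\item The formula $([u]\cup c_1(S))[Y]$ of \cite[Lem.~14.4.6]{KMbook2007} is a \emph{real} index.
\item On the mapping torus the spinor bundle is glued by $u$, so the determinant line is glued by $u^2$, not by $u$ as you wrote. Its complex Dirac index is therefore $\tfrac18 c_1^2[S^1\times Y]=\tfrac12([u]\cup c_1(S))[Y]$, using $\sigma(S^1\times Y)=0$. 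This matches the paper's later formula $\mathrm{ind}(D^+_A)_R=(c_1^2-\sigma)/8$ for the real Dirac index.
\end{itemize}
So your ``half'' in Step~2 and your ``index $=([u]\cup c_1(S))[Y]$'' in Step~3 are both right when counted in real dimensions. Counted in complex dimensions, both are off by a factor of $2$ and the errors happen to cancel. State the convention explicitly.
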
 
The factor $1/2$ is special to the real case. 
(Compare with the ordinary formula~\cite[Lem.~14.4.6]{KMbook2007}.)
The homotopy classes $z$ lives in $\pi_1(\tilde{B}_{\mathbf{p}}) \cong \Upgamma(Y,\tau,\mathbf{p})$.
(Compare with~\cite[\S 5.7]{ljk2022}, where $\pi_1(\tilde{B}_{\mathbf{p}})$ is isomorphic to $ H^1(Y;\mathbf{Z})^{-\tau^*}$.)
\begin{prop}
    If $\wtilde{N}_z((\mathfrak{a}],[\mathfrak{b}])$ is regular at $[\upgamma]$, then $\wtilde{N}_z([\mathfrak{a}],[\mathfrak{b}])$ is a smooth manifold of dimension $\gr_z ([\mathfrak{a}],[\mathfrak{b}])$ in a neighbourhood of $[\upgamma]$. \hfill \qedsymbol
\end{prop}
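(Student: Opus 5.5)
The plan is to reduce the statement to the implicit function theorem for a Fredholm section on the Hilbert manifold $\tilde{\mathcal{B}}_{k,z}([\mathfrak{a}],[\mathfrak{b}])$, following \cite[Prop.~14.5.x]{KMbook2007}. By the preceding theorem, $\wtilde{N}_z([\mathfrak{a}],[\mathfrak{b}])$ is homeomorphic to the zero set of $\mathfrak{F}_{\frakq}$ in $\tilde{\mathcal{B}}_{k,z}([\mathfrak{a}],[\mathfrak{b}]) = \mathcal{C}_k(\mathfrak{a},\mathfrak{b})/\mathcal{G}_{k+1}(Z,\tau,\mathbf{p}_I)$. So it suffices to work locally near a representative $\upgamma$ of $[\upgamma]$.

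\textbf{Local chart.} Use the Coulomb slice $\mathcal{S}_{k,\upgamma}$ through $\upgamma$. By the slice proposition (the analogue of \cite[Prop.~14.3.3]{KMbook2007}), a neighbourhood $U\subset\mathcal{S}_{k,\upgamma}$ maps diffeomorphically onto a neighbourhood of $[\upgamma]$. This holds because the framed gauge group acts freely; reducibles cause no trouble, since $\mathbf{d}^*_{\upgamma}\mathbf{d}_{\upgamma}=\Delta+|\Phi_0|^2$ is invertible on $(-\tau^*)$-anti-invariant functions vanishing on $\mathbf{p}_I$. Thus, near $[\upgamma]$, the moduli space is the zero set of the restriction $\mathfrak{F}_{\frakq}|_{U}\colon U\to\mathcal{V}_{k-1}$. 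Here $\mathcal{V}_{k-1}$ is a \emph{trivial} Hilbert bundle, so this restriction is simply a smooth map between Hilbert spaces.

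\textbf{Linearization.} The derivative of $\mathfrak{F}_{\frakq}|_U$ at $\upgamma$ is $\mathcal{D}_{\upgamma}\mathfrak{F}_{\frakq}|_{\mathcal{K}_{k,\upgamma}}$. By the Fredholm theorem above, this restriction has the same index as $\mathcal{Q}_{\upgamma}=\mathcal{D}_{\upgamma}\mathfrak{F}_{\frakq}\oplus\mathbf{d}^*_{\upgamma}$, namely $\gr_z([\mathfrak{a}],[\mathfrak{b}])$.

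Surjectivity of $\mathcal{Q}_{\upgamma}$ must then be transferred to surjectivity of the restriction. Decompose $\mathcal{T}_{k,\upgamma}=\mathcal{J}_{k,\upgamma}\oplus\mathcal{K}_{k,\upgamma}$. Gauge equivariance of $\mathfrak{F}_{\frakq}$ together with $\mathfrak{F}_{\frakq}(\upgamma)=0$ gives $\mathcal{D}\mathfrak{F}_{\frakq}\circ\mathbf{d}_{\upgamma}=0$. Now take any $w\in\mathcal{V}_{k-1}$ and write $w=\mathcal{Q}_{\upgamma}(v^J+v^K)$ with zero second component. Then $w=\mathcal{D}\mathfrak{F}_{\frakq}(v^K)$, since the $\mathcal{J}$-part is annihilated. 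Hence the restriction is surjective. Its kernel equals $\ker\mathcal{Q}_{\upgamma}$, which is finite-dimensional of dimension $\gr_z$.

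\textbf{Conclusion and main obstacle.} The implicit function theorem for a smooth map with surjective Fredholm derivative now shows that the zero set near $\upgamma$ in $U$ is a smooth manifold of dimension $\gr_z([\mathfrak{a}],[\mathfrak{b}])$. Transporting this through the slice chart gives the claim.

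The only delicate point is smoothness of $\mathfrak{F}_{\frakq}$ on $\mathcal{C}_k(\mathfrak{a},\mathfrak{b})$ for tame perturbations, together with regularity independent of $k$. Both are supplied by the tame-perturbation axioms and the cited analogue of \cite[Lem.~14.4.1]{KMbook2007}. I would take these as given rather than reprove them.
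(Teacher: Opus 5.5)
Your proposal is correct and is essentially the argument the paper relies on: the paper gives no proof, treating the statement as the framed analogue of the Kronheimer--Mrowka result, which follows from the Coulomb slice chart, the Fredholm theorem identifying the index of $\mathcal{D}\mathfrak{F}_{\frakq}|_{\mathcal{K}_{k,\upgamma}}$ with that of $\mathcal{Q}_{\upgamma}$, and the implicit function theorem, exactly as you outline. The paper leaves implicit the transfer of surjectivity from $\mathcal{Q}_{\upgamma}$ to its restriction to the slice; your argument for it, via $\mathcal{D}\mathfrak{F}_{\frakq}\circ\mathbf{d}_{\upgamma}=0$ at a zero and $\ker\mathcal{Q}_{\upgamma}=\ker\bigl(\mathcal{D}\mathfrak{F}_{\frakq}|_{\mathcal{K}_{k,\upgamma}}\bigr)$, is correct.
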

\subsection{Transversality of tracjectory spaces}
Recall that a large Banach space of tame perturbations $\mathcal{P}$ has been chosen.
The following is the main result of this subsection, adapted from \cite[Thm.~15.1.1 \& Prop.~15.1.3]{KMbook2007}.
\begin{theorem}
    \label{thm:transversality_cylinder}
    There is a $\mathfrak{q} \in \mathcal{P}$ such that (i) all the critical points $\mathfrak{a} \in \mathcal{C}_k(Y, \tilde{\mR})$ are nondegenerate, and (ii) for each pair of critical points $\mathfrak{a},\mathfrak{b})$, and each relative homotopy class $z \in \pi_0(\tcalB_k(Y, \tfrr),[\mathfrak{a}],[\mathfrak{b}])$ the moduli space $\tilde{N}_z([\mathfrak{a}],[\mathfrak{b}])$ is regular.
\end{theorem}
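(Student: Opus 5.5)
The proof follows the Sard--Smale scheme of \cite[Thm.~15.1.1 \& Prop.~15.1.3]{KMbook2007}, in its downstairs, irreducible form. The framed gauge group $\mathcal{G}(Y,\tau,\mathbf{p})$ acts freely even at reducibles, so no blow-up and no boundary-obstructed analysis is needed.

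First, the already-proved transversality theorem for 3-dimensional critical points gives a residual set of perturbations in $\mathcal{P}$ for which every critical point is nondegenerate. Fix one such $\mathfrak{q}_0$. By compactness of the framed critical set, there are finitely many critical points modulo $\mathcal{G}(Y,\tau,\mathbf{p})$, and they are isolated. Following \cite[\S 15.1]{KMbook2007}, choose $\mathcal{G}(Y,\tau)$-invariant open neighbourhoods $\mathcal{O}$ of the critical set in $\tcalB_k(Y,\tfrr)$. Then restrict to the closed subspace $\mathcal{P}_{\mathcal{O}} \subset \mathcal{P}$ of perturbations $\mathfrak{q}_0+\delta\mathfrak{q}$ with $\delta\mathfrak{q}$ vanishing on $\mathcal{O}$. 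This keeps the critical points and their Hessians fixed.

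Second, for each pair $[\mathfrak{a}],[\mathfrak{b}]$ and each class $z$, form the parametrized moduli space
\[
\mathfrak{M}_z=\{([\upgamma],\delta\mathfrak{q}) \in \tilde{\mathcal{B}}_{k,z}([\mathfrak{a}],[\mathfrak{b}]) \times \mathcal{P}_{\mathcal{O}} \ | \ \mathfrak{F}_{\mathfrak{q}_0+\delta\mathfrak{q}}(\upgamma)=0\},
\]
the zero set of a smooth section of the trivial bundle $\mathcal{V}_{k-1}$ over the Hilbert manifold $\tilde{\mathcal{B}}_k(\mathfrak{a},\mathfrak{b})\times\mathcal{P}_{\mathcal{O}}$. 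By the Fredholm theorem for $\mathcal{Q}_{\upgamma}$, the restriction to each slice $\{\delta\mathfrak{q}\}$ is Fredholm. Hence, once $\mathfrak{M}_z$ is shown to be a Banach manifold, the projection $\mathfrak{M}_z\to\mathcal{P}_{\mathcal{O}}$ is Fredholm and Sard--Smale \cite[Lem.~12.5.1]{KMbook2007} gives a residual set of regular values. A regular value is exactly a perturbation for which $\tilde{N}_z$ is regular. There are countably many triples $(\mathfrak{a},\mathfrak{b},z)$, so the intersection of these residual sets is still residual, and any $\mathfrak{q}$ in it satisfies (i) and (ii).

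The main obstacle is surjectivity of the linearization
\[
(V,\delta\mathfrak{q}) \mapsto \mathcal{Q}_{\upgamma}V + \hat{\delta\mathfrak{q}}(\upgamma)
\]
at every solution. Its range is closed because $\mathcal{Q}_{\upgamma}$ is Fredholm, so it suffices to show that any $W$ in the $L^2$-cokernel vanishes. Such a $W$ lies in $\ker \mathcal{Q}_{\upgamma}^*$ and is orthogonal to all $\hat{\delta\mathfrak{q}}(\upgamma)$. By unique continuation, if $W$ vanishes on an open set of times then $W\equiv 0$, so it suffices to produce a single time slice where orthogonality forces $W(t)=0$. Take $t$ with $\check\upgamma(t)\notin\mathcal{O}$. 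A trajectory that is not constant must leave $\mathcal{O}$, and constant trajectories are regular by nondegeneracy via the $\mathcal{Q}$ formula in path notation. Following \cite[Lem.~15.1.5]{KMbook2007}, one uses that $t\mapsto[\check\upgamma(t)]$ is an embedding near such $t$, first handling its injectivity and the isolation issues. Then Proposition~\ref{prop:enough_embeddings} and the density of cylinder functions in $\mathcal{P}$ yield $\delta\mathfrak{q}$ supported near $[\check\upgamma(t)]$ whose induced 4-dimensional perturbation pairs nontrivially with $W$.

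This works uniformly, with reducible trajectories needing no separate treatment. At a reducible configuration the only constraint on cylinder functions is $\{\pm1\}$-equivariance in the $q_\Upsilon$ variables. The denseness assumption on the $g_\alpha$ vanishing on $K_0$ still supplies spinor-direction and connection-direction variations. The real content is checking that the embedding statement of Proposition~\ref{prop:enough_embeddings}, proved by pulling back from $\mathcal{B}^o$, also separates points in the $\mathsf{D}_{\mathbf{p}}$-orbits. This is the step I expect to require care.
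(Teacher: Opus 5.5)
Your outline follows the paper's proof essentially step for step: fix a nondegenerate $\mathfrak{q}_0$, restrict to $\mathcal{P}_{\mathcal{O}}$, apply Sard--Smale to the universal moduli space, kill a cokernel element with a cylinder function supported away from $\overline{p(\mathcal{O})}$, and handle constant trajectories by hyperbolicity. Two small repairs are needed.

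First, restricting to $\mathcal{P}_{\mathcal{O}}$ only freezes the critical points inside $\mathcal{O}$. You must also pass to a neighbourhood of $\mathfrak{q}_0$ in which properness excludes new critical points outside $\mathcal{O}$. Second, ``the trajectory leaves $\mathcal{O}$'' has to be read through $p_0$. The paper requires the closures of the pieces $p_0(\mathcal{O}_{[\mathfrak{a}]})$ to be disjoint and to contain no essential loop, because $\delta\mathfrak{q}$ vanishes on $\mathcal{O}$ only when $g$ vanishes near $\overline{p(\mathcal{O})}$.

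Your worry about separating points of $\mathsf{D}_{\mathbf{p}}$-orbits is misdirected. Cylinder functions are $\mathsf{D}_{\mathbf{p}}$-invariant and never separate such points. For irreducible trajectories, $\mathcal{G}(Y,\tau)/\mathcal{G}(Y,\tau,\mathbf{p})$ acts freely, so the framed moduli space is locally the unframed one and the unframed argument applies to the projected path.

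The genuine gap is your claim that reducible trajectories need no separate treatment. The paper's proof asserts the same thing in its Remark, so you do not depart from it here, but the step fails. At a reducible $\upgamma=(A,0)$, every perturbation is invariant under the constant gauge transformation $-1\in\mathcal{G}(Y,\tau)$. This has two consequences:
\begin{itemize}
\item $\mathcal{Q}_{\upgamma}$ is block-diagonal: a connection/gauge block plus the spinor block $\frac{d}{dt}+D_{B(t)}+\mathcal{D}\mathfrak{q}^1$ on $\tfrr$-invariant spinors.
\item $\delta\mathfrak{q}^1(B,0)=0$, so every variation $\widehat{\delta\mathfrak{q}}(\upgamma)$ has zero spinor component, and a cokernel element of the spinor block is orthogonal to all of them.
\end{itemize}
Equivalently, $\mathcal{D}_{[B,0]}f(0,\psi)=0$ for every $\{\pm1\}$-invariant cylinder function $f$. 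So the second clause of Proposition~\ref{prop:enough_embeddings} fails in spinor directions at reducibles.

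The $g_\alpha$ vanishing on $K_0$ do not rescue this. Evenness kills their first derivatives in the spinor variables along the reducible locus. That family exists to control the second-order term $\mathcal{D}\mathfrak{q}^1$, as in Kronheimer--Mrowka's treatment of reducible critical points, not to produce first-order spinor variations.

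Nor can genericity remove the failure. The index of the spinor block is the spectral flow of the real Dirac family from $\mathfrak{a}$ to $\mathfrak{b}$. When that is negative, the block is never surjective. Meanwhile reducible trajectories between reducible critical points persist for every $\mathfrak{q}$ near $\mathfrak{q}_0$, because the flow preserves the reducible locus $\simeq\tilde{\mathbf{T}}_{R,\mathbf{p}}$ and transverse trajectories there are stable.

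This is exactly the phenomenon that Kronheimer--Mrowka's blow-up and boundary-obstructed analysis handle. A complete proof needs a separate argument for reducible trajectories, such as an index condition on the spinor blocks combined with second-order variations, or a blow-up. The first-order cylinder-function argument cannot supply it.
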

\begin{proof}
    Apply Proposition~\ref{prop:enough_embeddings} to choose a map $p_0 \colon \tilde{\mathcal{B}}_k(Y,\tfrr) \to \mathbf{R}^n \times \mathbf{T}^t \times \mathbf{C}^m$ that embeds the critical points (which is a discrete finite set).
    For each critical point $[\mathfrak{a}]$, pick a $\textsf{D}_{\textsf{p}}$-invariant neighbourhood of the critical points, and denote
    \begin{equation*}
        \mathcal{O} = \bigcup_{[\mathfrak{a}]} \mathcal{O}_{[\mathfrak{a}]} \subset \wtilde{\mathcal{B}}(Y,\tfrr). 
    \end{equation*}
    Assume in addition that the image $p_0(\mathcal{O})$ have disjoint closures and that no essential loop based at any $p_0([\mathfrak{a}])$ is contained in $\overline{p_0(\mathcal{O})}$.
    Let $\mathcal{P}_{\mathcal{O}} \subset \mathcal{P}$ be the (closed linear sub-) set of perturbations:
    \begin{equation*}
        \mathcal{P}_{\mathcal{O}} = \{ \mathfrak{q} \in \mathcal{P} \ \vert \ \mathfrak{q}|_{\mathcal{O}} = \mathfrak{q}_0|_{\mathcal{O}}\}.
    \end{equation*}
    By properness of perturbed gradients, there exists an open neighbourhood of $\mathfrak{q}_0$ in $\mathcal{P}_{\mathcal{O}}$ such that for all $\mathfrak{q}$ in this neighbourhood, the perturbed vector field $\grad \pertL_{\mathfrak{q}}$ has no critical points outside $\mathcal{O}$ (cf.~\cite[Lem.~15.1.2]{KMbook2007}).
    
    It remains to show that the set of perturbations in $\mathcal{P}_{\mathcal{O}}$ that satisfies (ii) of the theorem for all $(\mathfrak{a},\mathfrak{b})$ whose images belong to $\mathcal{O} \subset \tilde{\mathcal{B}}_k(Y,\tfrr)$ is a residual subset of $\mathcal{P}_{\mathcal{O}}$.
    To this end, consider the parametrized moduli space:
    \begin{equation*}
        \mathfrak{M}_z([\mathfrak{a}],[\mathfrak{b}]) \subset \widetilde{\mathcal{B}}_{k,z}([\mathfrak{a}],[\mathfrak{b}]) \times \mathcal{P}_{\mathcal{O}},
    \end{equation*}
    as the zero set of
    \begin{equation*}
        \mathfrak{W} \colon \mathcal{C}_k(\mathfrak{a},\mathfrak{b}) \times \mathcal{P}_{\mathcal{O}} \to \mathcal{V}_{k-1}(Z), \quad
        (\upgamma,\mathfrak{q}) \mapsto \mathfrak{F}_{\mathfrak{q}}(\upgamma).
    \end{equation*}
    
    In order to apply the Sard--Smale theorem, one needs to verify the derivative $\mathcal{D}\mathfrak{W}$ is surjective at all $(\upgamma,\mathfrak{q})$ in $\mathfrak{W}^{-1}(0)$.
    Let $(\upgamma,\mathfrak{q})$ be a zero of $\mathfrak{W}$ and suppose the corresponding path $\breve{\upgamma}$ in $\mathcal{C}_k(Y,\tfrr)$ has non-constant image in $\tilde{\mathcal{B}}_k(Y,\tfrr)$. 
    In other words, either $[\mathfrak{a}] \ne [\mathfrak{b}]$ or when $[\mathfrak{a}] = [\mathfrak{b}]$, the homotopy class $z$ is non-trivial.
    Together with the hypothesis on $\overline{p_0(\mathcal{O})}$, this assumption ensures there is an open interval $J \subset \mathbf{R}$ for which 
    \begin{equation*}
        p_0(\breve{\upgamma}(J)) \cap \overline{p_0(\mathcal{O})} = \emptyset.
    \end{equation*}
    By unique continuation, its closure $\bar J$ is embedded in $\tilde{\mathcal{B}}_k(Y,\tfrr)$.
\begin{rem}
In the framed setting, this argument applies uniformly to both irreducible and reducible trajectories.
\end{rem}

    Suppose for contradiction that $\mathcal{D}_{\upgamma}\mathfrak{F}_{\mathfrak{q}}$ is not surjective;
    let $V \in \mathcal{V}_{0,\upgamma}(Z,\tfrr)$ be an element of the $L^2$ orthogonal complement of the image of $\mathcal{D}_{\upgamma}\mathfrak{F}_{\mathfrak{q}}$.
    Hence $(V,0) \in \mathcal{V}_{0,\upgamma}(Z) \times L^2(Z,\mathbf{p};i\mathbf{R})^{-\tau^*}$ is $L^2$-orthogonal to the image of $\mathcal{Q}_{\upgamma}$.
    By a similar unique continuation argument as in the proof of \cite[Prop.~15.1.3]{KMbook2007}, the restriction of $V$ to $J$ is nonzero.
    
    Using the isomorphism
    \begin{equation*}
        i\mathfrak{su}(S^+)\oplus S^- \to T^*(Y) \oplus S,
    \end{equation*}
    let $\breve V$ be the corresponding $L^2_1$ section of $\mathcal{T}_0(Y,\tfrr)$ along the path $\breve{\upgamma}(t)$.
    The following lemma can be proved using the same integration-by-parts argument as \cite[Lem.~15.1.4]{KMbook2007} (stated for the $\uptau$-blowup):
    \begin{lem}
        \label{lem:breveV_non_zero}
        For all $t \in \mathbf{R}$, the element $\breve{V}(t)$ in $\mathcal{T}_0$ belongs to the orthogonal complement of the tangent space to the $\mathcal{G}_{k+1,\mathbf{p}}(Y,\tau)$ through $\breve{\upgamma}(t)$, with respect to the $L^2$ inner product in $\mathcal{T}_0(Y,\tfrr)$.
        In particular, the image of $\breve{V}(t)$ in $[\mathcal{T}_0]$ is nonzero for all $t$.
    \end{lem}
    
    To reach a contradiction to $V$ being orthogonal to the image of $\mathcal{D}\mathfrak{W}$, one seeks a cylinder function $f$ for which $\delta \mathfrak{q} =(\grad f)$ satisfies
    \begin{equation*}
        \langle \delta \mathfrak{q}(t), \breve{V}(t)\rangle_{\mathcal{T}_{0,\breve{\upgamma}}(t)} \ge 0
    \end{equation*}
     with strict inequality at $t_0$.
     Apply Proposition~\ref{prop:enough_embeddings} to find a large collection of $c_i$ and $\Upsilon_j$ extending $p_0$ to define 
     \begin{equation*}
         p \colon \mathcal{C}_k(Y,\tfrr) \to \mathbf{R}^{n'} \times \mathbf{T}^t \times \mathbf{C}^{m'}
     \end{equation*}
     that embeds $\upgamma(S)$, where
     \begin{equation*}
         S = \{ t \in \mathbf{R} \ \vert \ p(\upgamma(t)) \not\in p(\mathcal{O})^+ \}
     \end{equation*}
     and $p(\mathcal{O}^+)$ is an open neigbhourhood of $\overline{p(\mathcal{O})}$ disjoint from $p(\upgamma(\bar J))$.
     By Lemma~\ref{lem:breveV_non_zero}, suppose $p_*(\breve{V})$ along $\bar J$ is non-zero.
     Choose $t_0 \in J$.
     Proposition~\ref{prop:enough_embeddings} provides a cylinder function $f = g \circ p$ arising from some 
     \begin{equation*}
         g\colon \mathbf{R}^{n'} \times \mathbf{T}^t \times \mathbf{C}^{m'} \to \mathbf{R} 
     \end{equation*}
     such that
     $(\mathcal{D}f)(\breve{V}) \ge 0$ for all $t \in J$ with strict inequality at $t_0$.
     By multiplying $g$ with a cut-off function, $(\mathcal{D}f)(\breve{V}) = 0$ for $t \in \mathbf{R}\setminus J$.
     Finally, $g$ can be taken to be an element of the Banach space $\mathcal{P}$ by denseness.
    
     The remaining case when $z$ is trivial is a consequence of the non-degeneracy of the critical point $[\mathfrak{a}]$.
     Indeed, an element $V$ in the cokernel of $\mathcal{D}_{\upgamma}\mathfrak{F}_{\mathfrak{q}}$ gives rise to a $\breve{V}(t)$ 
    which satisfy translation-invariant equations of the form
     \begin{equation*}
         \left(\frac{d}{dt}+L_0+h\right)\breve{V}(t) = 0.
     \end{equation*}
     By the non-degeneracy assumption of $\mathfrak{a}$, the 3-dimensional operator $(L_0+h)$ is hyperbolic.
     But by \cite[Prop.~14.1.2]{KMbook2007}, the 4-dimensional operator $(d/dt + L_0 + h) \colon L^2_{j+1} \to L^2_j$ has trivial kernel and thus $\breve{V}(t) = 0$.
\end{proof}
\subsection{Compactness of trajectory spaces}
The trajectory spaces $\wtilde{N}_z([\mathfrak{a}],[\mathfrak{b}])$ are compactified by adding broken trajectories.
\begin{defn}
    A trajectory is \emph{nontrivial} if it is not invariant under the $\mathbf{R}$-translation.
  An \emph{unparametrized trajectory} is an equivalence class of nontrivial trajectories in $\tilde{N}_z([\mathfrak{a}],[\mathfrak{b}])$ under translation.
  Denote by $\breve{N}_z([\mathfrak{a}],[\mathfrak{b}])$ the space of unparametrized trajectories.
\end{defn}
\begin{defn}
    An \emph{unparametrized broken trajectory} joining $[\mathfrak{a}]$ to $[\mathfrak{b}]$ is an $(n+1)$-tuple of critical points 
    \[
    [\mathfrak{a}_0],[\mathfrak{a}_1],\dots,[\mathfrak{a}_n]\] 
    with $[\mathfrak{a}_0]=[\mathfrak{a}]$ and $[\mathfrak{a}_n]=[\mathfrak{b}]$, together with an unparametrized $[\check \upgamma_i]$ in $\breve N_{z_i}([\mathfrak{a}_{i-1}],[\mathfrak{a}_i])$ for $1 \le i \le n$.
    The homotopy class of the broken trajectory is obtained by concatenating representatives of the classes $z_i$.
\end{defn}
The set $\breve{N}_z([\mathfrak{a}],[\mathfrak{b}])$ will be topologized routinely (cf.~\cite[p.276]{KMbook2007}) by introducing a neighbourhood base.
With this topology, the ``Proof of compactness downstairs'' that occupies \cite[\S 16.2]{KMbook2007} translates (cf. \cite[\S 9]{ljk2022}) to the proof of the following proposition in the framed case.
\begin{theorem}
    \label{thm:traj_space_compact}
    The space of unparametrized broken trajectories $\breve N_z^+([\mathfrak{a}],[\mathfrak{b}])$ is compact. 
    \hfill \qedsymbol
\end{theorem}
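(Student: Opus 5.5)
The plan is to run the Kronheimer--Mrowka ``compactness downstairs'' argument of \cite[\S 16.2]{KMbook2007}, in its real form from \cite[\S 9]{ljk2022}, on the framed configuration space $\tilde{\mathcal{B}}_k(Y,\tfrr)$. Two features of the present setting should only simplify matters: no blow-up is needed, and $\mathcal{G}(Y,\tau,\mathbf{p})$ acts freely.

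The key observation is that the framed spaces are finite covers of the unframed real ones. The covering $\tilde{\mathcal{B}}_{\mathbf{p}} \to \mathcal{B}$ has finite deck group $\mathcal{G}(Y,\tau)/\mathcal{G}(Y,\tau,\mathbf{p})$, and convergence up to framed gauge is equivalent to convergence up to unframed gauge together with a choice among finitely many lifts. So I will take a sequence in $\breve N_z^+([\mathfrak{a}],[\mathfrak{b}])$ and, by a diagonal argument, reduce to a sequence of unbroken trajectories $[\upgamma_n]$ in a fixed class $z$; there are only finitely many broken strata, since only finitely many critical points exist by the compactness statement for 3-dimensional solutions.

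The first step is the energy bound. Since $z$ is fixed, the drop $\pertL(\mathfrak{a})-\pertL(\mathfrak{b})$ is determined by $z$: lift to $\mathcal{C}_k$ and note that $\pertL$ changes along loops only by periods indexed by $\Upgamma(Y,\tau,\mathbf{p})\cong\pi_1(\tilde{\mathcal{B}}_{\mathbf{p}})$. This gives a uniform bound on the analytic energy on every finite cylinder $[t_1,t_2]\times Y$.

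Next I apply the finite-cylinder compactness proposition, the adaptation of \cite[Prop.~10.7.1]{KMbook2007}. This yields framed gauge transformations in $\mathcal{G}(Z,\tau,\mathbf{a})$ and a subsequence converging in $\mathcal{C}_{k+1}$ on each compact subcylinder. A diagonal argument over exhausting intervals then produces $C^\infty_{\text{loc}}$-convergence on $\mathbf{R}\times Y$ after translations. Patching the local gauges uses the fact that framed gauge transformations on overlapping cylinders differ by elements that are either close to the identity or lie in the discrete group $\Upgamma$. Freeness of the action means no stabilizer issues arise at reducibles, which is exactly where the framed theory differs from \cite{KMbook2007}.

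To identify the limit as a broken trajectory, I choose translations $s_n^i$ that pick out where energy concentrates. The limits are trajectories $[\upgamma^i]$ between critical points, because finite energy and the compactness of the 3-dimensional critical set force the ends to converge. The remaining task is to show that no energy is lost and that the homotopy classes concatenate to $z$. For the energy, I use the proposition near nondegenerate critical points giving $\|u(\upgamma)-\upgamma_{\frakb}\|^2\le C_1(\pertL(t_1)-\pertL(t_2))$, which holds even at reducible $\frakb$, together with exponential decay. These show that on long necks where little energy is spent, the trajectory stays in a small framed-gauge neighbourhood of a single critical point $[\mathfrak{a}_i]$. That controls the homotopy class across the neck, so the classes $z_i$ concatenate to $z$.

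I expect the main obstacle to be this last point: checking that the lifts to the finite cover are consistent across necks. In the unframed picture the broken limit is clear, but a priori the framed lift could jump between deck translates $h\mathfrak{a}_i$. The neighbourhood estimate with framed gauge $u\in\mathcal{G}_{k+1}(I\times Y,\tau,\mathbf{p}_I)$ is what rules this out, since it places $\breve\upgamma_n(t)$ near a specific lift. I would also verify that the resulting convergence is convergence in the topology of $\breve N_z^+$ from \cite[p.276]{KMbook2007}.
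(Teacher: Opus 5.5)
Your proposal is correct and takes the paper's approach. The paper simply transports the downstairs compactness argument of \cite[\S 16.2]{KMbook2007}, in its real form \cite[\S 9]{ljk2022}, to the framed space $\tilde{\mathcal{B}}_k(Y,\tfrr)$: an energy bound fixed by $z$, finite-cylinder compactness with framed gauge, splitting at energy concentration, and control of the necks via the estimate near nondegenerate critical points.

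One step needs tightening: the reduction to unbroken sequences. Finiteness of the critical set does not by itself bound the possible classes $z_i$, since all classes between two critical points have the same energy when $c_1(\mathfrak{s})$ is torsion. When $c_1(\mathfrak{s})$ is non-torsion, it also does not bound the number of components. So you should run the unbroken argument for sequences of bounded energy and possibly varying classes, as the Kronheimer--Mrowka argument allows, rather than appeal to finitely many broken strata.
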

\begin{theorem}
    \label{thm:energy_bound_traj_space_compact}
    For any $C > 0$ and any $[\mathfrak{a}],[\mathfrak{b}]$, there are only finitely many $z$ with energy $\mathcal{E}_{\mathfrak{q}}(z) \le C$ for which $N_z^+([\mathfrak{a}],[\mathfrak{b}])$ is nonempty.
    \hfill \qedsymbol
\end{theorem}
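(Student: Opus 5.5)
The plan is to follow the proof of the corresponding finiteness result in \cite[\S16]{KMbook2007} (the statement that only finitely many homotopy classes carry bounded energy), adapted as in \cite[\S9]{ljk2022}. The argument is by contradiction. Suppose there are infinitely many distinct classes $z_n$ with $\mathcal{E}_{\mathfrak{q}}(z_n) \le C$ and with $N^+_{z_n}([\mathfrak{a}],[\mathfrak{b}])$ nonempty. Pick a broken trajectory in each of these spaces.

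First reduce to unbroken trajectories. There are only finitely many critical points, since the critical set is compact and nondegenerate. Each nontrivial component trajectory has energy bounded below by a positive constant, which comes from the local estimate near nondegenerate critical points stated above. So the number of breaks is uniformly bounded. Passing to a subsequence, we may fix the sequence of intermediate critical points. Some factor $[\mathfrak{a}_{i-1}],[\mathfrak{a}_i]$ must then carry infinitely many distinct classes $z_{n,i}$, each with energy at most $C$. We are reduced to genuine trajectories $[\upgamma_n] \in \tilde N_{z_n}([\mathfrak{a}],[\mathfrak{b}])$ with pairwise distinct $z_n$.

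Next apply the compactness of Theorem~\ref{thm:traj_space_compact}, or more precisely its proof. That proof uses only the energy bound, via the finite-cylinder compactness proposition in the framed gauge group $\mathcal{G}(Z,\tau,\mathbf{p}_I)$. It yields a subsequence converging, after translations and framed gauge transformations, to a broken trajectory $[\breve\upgamma_\infty]$ in some class $z_\infty$. The topology of chain convergence is defined by neighbourhoods in which the class of a nearby trajectory equals the concatenated class of the limit. So $z_n = z_\infty$ for all large $n$, contradicting distinctness.

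The main obstacle is this last point. I have to check that convergence modulo $\mathcal{G}_{k+1,\mathrm{loc}}(Z,\tau,\mathbf{p}_I)$ really pins down the relative homotopy class in $\pi_0(\tcalB_k(Y,\tfrr),[\mathfrak{a}],[\mathfrak{b}])$, a torsor over $\Upgamma(Y,\tau,\mathbf{p})$ by Lemma~\ref{lem:config_homotopy_type}. The gauge transformations near the ends could a priori change the class by elements of $\Upgamma$. To handle this, I would use the gauge fixing of the $L^2_k$-completion theorem above: the gauge ambiguity there lies in $\mathcal{G}_{k+1}(Z,\tau,\mathbf{a})$, which acts trivially on classes. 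I would also use the exponential decay estimate near nondegenerate critical points, which keeps trajectories in small $\mathsf{D}_{\mathbf{p}}$-invariant neighbourhoods with no essential loops. Together these should show the classes stabilize.

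Alternatively, one could use the energy identity $\mathcal{E}_{\mathfrak{q}}(z) = 2(\pertL(\mathfrak{a}) - \pertL(\mathfrak{b}))$ along $z$. Changing $z$ by $\kappa \in \Upgamma$ shifts $\pertL$ by a multiple of $(\kappa \cup c_1(S))[Y]$ (cf. Lemma~\ref{lem:loop_and_12}). In the torsion case this only handles the cases where this shift is nonzero, so the compactness argument above remains the main route.
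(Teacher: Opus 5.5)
Your proposal is correct and follows essentially the paper's route: the paper gives no separate proof but defers to the downstairs compactness argument of \cite[\S 16.2]{KMbook2007}, as translated in \cite[\S 9]{ljk2022}, where a bounded-energy sequence converges (after translations and framed gauge transformations) to a broken trajectory and the classes $z_n$ are forced to stabilize, exactly as in your contradiction argument. One refinement: framed gauge transformations on $\mathbf{R}\times Y$ cannot change $z$ at all, because $z$ is read off from the path in the quotient $\tilde{\mathcal{B}}_k(Y,\tfrr)$; the real content of your ``main obstacle'' is therefore the uniform control in the neck and end regions (small energy on unit intervals plus finite-cylinder compactness keeps the path near a single critical point), which you do identify.
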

Assume perturbations are chosen so that moduli spaces a regular.
The framed compactifications $\breve N_z^+([\mathfrak{a}],[\mathfrak{b}])$ are ``stratified'':
A space $N$ is a \emph{$d$-dimensional space stratified by manifolds} if there are closed subsets
\begin{equation*}
    N = N^d \supset N^{d-1} \supset \cdots \supset N^0 \supset N^{-1} = \emptyset.
\end{equation*}
such that $N \ne N^{d-1}$ and each $N^e \setminus N^{e-1}$ (for $0 \le e \le d$) is homeomorphic to manifold of dimension $e$, possibly empty.
The difference $N^e \setminus N^{e-1}$ is the \emph{$e$-dimensional stratum}. 
Compare the following result with \cite[Prop.~16.5.2]{KMbook2007} and \cite[Prop.~9.11]{ljk2022}.
\begin{prop}
    \label{prop:traj_space_compact_strata}
    Suppose that $\tilde{N}_z([\mathfrak{a}],[\mathfrak{b}])$ is non-empty and of dimension $d$.
    Then $\breve N^+_z([\mathfrak{a}],[\mathfrak{b}])$ is a $(d-1)$-dimensional space stratified by manifolds.
    The $(d-\ell)$-dimensional stratum consists of spaces of the form
    \begin{equation*}
        \breve N_{z_1}([\mathfrak{a}_0],[\mathfrak{a}_1]) \times \cdots
        \breve N_{z_{\ell}}([\mathfrak{a}_{\ell-1}],[\mathfrak{a}_{\ell}]).
    \end{equation*}
\end{prop}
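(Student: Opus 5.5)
The plan is to follow the structure of \cite[Prop.~16.5.2]{KMbook2007}, which is simplified here by the absence of a blow-up: in the framed setting every critical point, reducible or not, is treated uniformly. The proof has three ingredients. The first is the description of the strata as a set. The second is the manifold structure of each stratum. The third is the verification that the strata have the advertised dimensions and the right closure properties.

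First, by Theorem~\ref{thm:traj_space_compact} the space $\breve N^+_z([\mathfrak{a}],[\mathfrak{b}])$ is compact. As a set it is the disjoint union over all broken trajectories of the products $\breve N_{z_1}([\mathfrak{a}_0],[\mathfrak{a}_1]) \times \cdots \times \breve N_{z_\ell}([\mathfrak{a}_{\ell-1}],[\mathfrak{a}_\ell])$, with $z$ the concatenation of the $z_i$. By Theorem~\ref{thm:energy_bound_traj_space_compact} together with finiteness of critical points, only finitely many such products are nonempty.

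Second, by the regularity in Theorem~\ref{thm:transversality_cylinder} and the smoothness proposition, each $\tilde N_{z_i}$ is a manifold of dimension $\gr_{z_i}([\mathfrak{a}_{i-1}],[\mathfrak{a}_i])$. The $\mathbf{R}$-action is free on nonconstant trajectories and proper, so $\breve N_{z_i}$ is a manifold of dimension $\gr_{z_i}-1$. Each nonempty $\breve N_{z_i}$ consists of nontrivial trajectories, so $\gr_{z_i}\ge 1$. Additivity of the relative grading gives $\sum_i \gr_{z_i} = d$, hence the product has dimension $d-\ell$.

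Third, define $N^{d-\ell}$ as the union of all strata with at least $\ell$ breaks, shifted appropriately, so that the $e$-dimensional stratum is exactly the union of products with $d-1-e+1$ factors. Closedness of $N^{e}$ follows from the definition of the topology on broken trajectories: a limit of $\ell$-fold broken trajectories has at least $\ell$ breaks, by the neighbourhood base of \cite[p.~276]{KMbook2007}. Finally, $N\neq N^{d-2}$ because the top stratum $\breve N_z$ is nonempty by hypothesis. This requires that the nonemptiness of $\tilde N_z$ gives nonconstant trajectories. That holds when $d\ge 1$. In the case $d=0$ with a constant trajectory, which is possible only when $[\mathfrak{a}]=[\mathfrak{b}]$ and $z$ is trivial, the statement should be read with the usual convention.

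The main obstacle is checking that a given product genuinely is a stratum, in other words that the regularity of all $\tilde N_{z_i}$ holds simultaneously for every homotopy class occurring. In the unframed real setting the reducible trajectories between boundary-stable and boundary-unstable points cause the dimension drop to fail. Here the framed gauge group acts freely, including on reducibles, as shown in Proposition~\ref{prop:tangent_K_J_decomp}. Theorem~\ref{thm:transversality_cylinder} applies uniformly, so no boundary-obstructed trajectories occur. The point I would verify carefully is this absence of obstructedness, that is, that the cokernel of $\mathcal{Q}_\upgamma$ vanishes for reducible trajectories with no $\mathbf{R}$-valued correction term. The rest transfers verbatim from \cite[\S16.5]{KMbook2007} and \cite[Prop.~9.11]{ljk2022}.
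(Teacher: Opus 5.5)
Your proposal is correct and follows the same route as the paper. The paper gives no separate argument and simply defers to the downstairs case of \cite[Prop.~16.5.2]{KMbook2007} and \cite[Prop.~9.11]{ljk2022}: strata are products of the regular $\breve N_{z_i}$, dimensions add by additivity of $\gr$, and closedness of the $N^e$ comes from the topology on broken trajectories. One caveat: regularity of every $\tilde N_{z_i}$ is a standing hypothesis of this proposition, so your final paragraph is not needed, and its reasoning would not suffice anyway. Freeness of the framed gauge action (which comes from $-1\notin\mathcal{G}(Y,\tau,\mathbf{p})$, not from Proposition~\ref{prop:tangent_K_J_decomp}) says nothing about the spinor block $\frac{d}{dt}+D_{B(t)}$ of $\mathcal{Q}_{\upgamma}$ along a reducible trajectory, because there the perturbation variation $\delta\mathfrak{q}(\upgamma)$ has zero spinor component by $\{\pm 1\}$-equivariance.
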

\subsection{Gluing trajectories}
This subsection analyzes the boundary strata of framed moduli spaces.
In short, gluing in the framed setup is always unobstructed and $\delta$-structures (cf.~\cite[Defn.~19.5.3]{KMbook2007}) are no longer needed due to the absence of boundary-obstructed cases.
The real version of the gauge-theoretic gluing result \cite[\S 18.3]{KMbook2007} was adapted in \cite[\S 10]{ljk2022}.

\begin{theorem}
    Given a moduli space $N([\mathfrak{a}_{i-1}],[\mathfrak{a}_i])$ and a stratum of the form
    \begin{equation}
        \label{eq:stratum_in_compac}
        \prod_{i=1}^n \breve N([\mathfrak{a}_{i-1}],[\mathfrak{a}_i]) \subset N^+([\mathfrak{a}_{i-1}],[\mathfrak{a}_i])
    \end{equation}
    there exists a neighbourhood $\breve W$ of \eqref{eq:stratum_in_compac} and a map $\mathbf{S}\colon \breve W \to (0,\infty]^{n-1}$ such that $S^{-1}(\infty,\dots,\infty)$ is precisely \eqref{eq:stratum_in_compac} and $\mathbf{S}$ is a topological submersion (cf.~\cite[Defn.~19.2.7]{KMbook2007}).
    \hfill \qedsymbol
\end{theorem}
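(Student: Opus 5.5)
The plan is to follow the gluing theory of \cite[\S 18--19]{KMbook2007} in its real form \cite[\S 10]{ljk2022}, specialized to the framed, non-blown-up setting. In this setting every critical point is a free orbit of $\mathcal{G}(Y,\tau,\mathbf{p})$, so there are no boundary-obstructed trajectories. First, treat the case $n=2$. Take a broken trajectory $([\upgamma_1],[\upgamma_2])$ in $\breve N([\mathfrak{a}_0],[\mathfrak{a}_1])\times \breve N([\mathfrak{a}_1],[\mathfrak{a}_2])$ and choose representatives in $\mathcal{C}_k(\mathfrak{a}_0,\mathfrak{a}_1)$ and $\mathcal{C}_k(\mathfrak{a}_1,\mathfrak{a}_2)$ that are translation-normalized, say by fixing the value of $\pertL$ at $t=0$. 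For a gluing parameter $T\gg 0$, form the pre-glued configuration $\upgamma_1\#_T\upgamma_2$ on $\mathbf{R}\times Y$. To do this, translate $\upgamma_1$ by $-T$ and $\upgamma_2$ by $+T$, then patch them with cut-off functions in the region where both are exponentially close to $\upgamma_{\mathfrak{a}_1}$. The exponential decay result (the proposition preceding the definition of $\mathcal{C}_k(\mathfrak{a},\mathfrak{b})$) makes the error $\|\mathfrak{F}_{\frakq}(\upgamma_1\#_T\upgamma_2)\|$ of order $e^{-\delta T}$. The framing gauge group is used throughout, so the patching is well defined on $\tilde{\mathcal{B}}_k$ and no $\{\pm1\}$ stabilizer intervenes.

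Second, construct a uniformly bounded right inverse for $\mathcal{Q}_{\upgamma_1\#_T\upgamma_2}$ restricted to the Coulomb slice. By regularity (Theorem~\ref{thm:transversality_cylinder}), each $\mathcal{Q}_{\upgamma_i}$ is surjective. At $\mathfrak{a}_1$ the extended Hessian $L_{\mathfrak{a}_1}$ is hyperbolic, since nondegeneracy together with Lemma~\ref{lem:nondeg_Hess_suj} and the invertibility of $\mathbf{d}_{\mathfrak{a}_1}$ on $L^2(Y,\mathbf{p};i\mathbf{R})^{-\tau^*}$ hold even at reducibles, because the framed $\mathbf{d}^*\mathbf{d}$ is injective. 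Hence one can splice right inverses of $\mathcal{Q}_{\upgamma_1}$ and $\mathcal{Q}_{\upgamma_2}$ as in \cite[Prop.~18.3.x]{KMbook2007}, giving an approximate right inverse with operator-norm error $O(1/T)$ or better. I expect this step to be the main technical obstacle. In the unframed case it is precisely here that reducible $\mathfrak{a}_1$ with boundary-obstructedness forces the blow-up and $\delta$-structures, so I must check that hyperbolicity at reducible $\mathfrak{a}_1$ really gives uniform bounds with no cokernel contribution. This is where I would concentrate the verification, and I would use the block form \eqref{eqn:ext_hessian_with_x} with $x=0$ at critical points. A contraction mapping argument on $\ker$-complements then produces a unique genuine solution near the pre-glued one. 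This yields a local homeomorphism from $\breve N_{z_1}\times\breve N_{z_2}\times (T_0,\infty]$ onto a neighbourhood $\breve W$, where the point $T=\infty$ corresponds to the broken trajectory.

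Third, define $\mathbf{S}$ on $\breve W$ by taking for each coordinate the gluing parameter. Intrinsically, for an unbroken trajectory near the stratum, $S_i$ is the half-length of the time interval spent in a fixed small neighbourhood of $\upgamma_{\mathfrak{a}_i}$, which is well defined by the energy estimate in the proposition bounding $\|u(\upgamma)-\upgamma_{\frakb}\|$ by the drop in $\pertL$. Then $\mathbf{S}^{-1}(\infty,\dots,\infty)$ is exactly the stratum, and the gluing map shows that $\mathbf{S}$ is a topological submersion: fibres are homeomorphic to the stratum, with local product charts. The general $n$ follows by gluing all $n-1$ necks simultaneously with independent parameters $T_1,\dots,T_{n-1}$, using the same estimates. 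Surjectivity of the gluing map onto a neighbourhood follows from the compactness Theorem~\ref{thm:traj_space_compact} and Proposition~\ref{prop:traj_space_compact_strata}: any trajectory sufficiently close to the stratum lies near a pre-glued configuration, and therefore lies in the image by the uniqueness in the contraction mapping.
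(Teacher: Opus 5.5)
Your proposal is correct and takes the same route as the paper. The paper gives no separate argument: it invokes the Kronheimer--Mrowka gluing theorem (\cite[\S 18.3]{KMbook2007}, in the real form of \cite[\S 10]{ljk2022}) and observes that without the blow-up there are no boundary-obstructed strata, so no $\delta$-structures are needed. Your sketch unpacks exactly that citation: pre-gluing, splicing right inverses from the surjective $\mathcal{Q}_{\upgamma_i}$ and the hyperbolic $L_{\mathfrak{a}_i}$ (reducible or not), the contraction-mapping step, and the neck lengths as $\mathbf{S}$.

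One correction to your opening justification. What rules out obstructions is the standing regularity hypothesis, in particular surjectivity of the spinor block $\frac{d}{dt}+D_{B(t)}+\mathcal{D}\mathfrak{q}^1$ along reducible trajectories. Freeness of the framed gauge action alone does not do this, which is why your second step rightly rests on that hypothesis.
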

\begin{theorem}
    Suppose $\breve N^+_z([\mathfrak{a}],[\mathfrak{b}])$ is a $(d-1)$-dimensional space stratified by manifolds.
    Let $N' \subset \check N^+([\mathfrak{a}],[\mathfrak{b}])$ be any component of the codimension-$1$ stratum.
    Then along $M'$ the moduli space $\check N^+([\mathfrak{a}],[\mathfrak{b}])$ is a $C^0$-manifold with boundary.
    \hfill \qedsymbol
\end{theorem}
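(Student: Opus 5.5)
The statement is the framed analogue of \cite[Thm.~19.1.4]{KMbook2007}. I would prove it locally near each point of $N'$ with the gluing map provided by the preceding theorem, adapting \cite[\S 19]{KMbook2007} and the real version in \cite[\S 10]{ljk2022}.

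\emph{Identify the boundary strata.} By Proposition~\ref{prop:traj_space_compact_strata}, a codimension-$1$ stratum of $\breve N^+_z([\mathfrak{a}],[\mathfrak{b}])$ is a product $\breve N_{z_1}([\mathfrak{a}],[\mathfrak{a}_1]) \times \breve N_{z_2}([\mathfrak{a}_1],[\mathfrak{b}])$ with exactly one break. In the framed setting $\mathcal{G}(Y,\tau,\mathbf{p})$ acts freely on all configurations, including reducibles, so the moduli spaces are regular (Theorem~\ref{thm:transversality_cylinder}) with $\dim \tilde N = \gr_z$. Gradings are additive, and every factor of a broken trajectory has dimension at least $1$, hence $\breve N$ of dimension at least $0$. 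So no boundary-obstructed configurations occur, and a single break is the only way to lose exactly one dimension. Thus $N'$ is an open subset of such a product, a topological manifold of dimension $d-2$.

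\emph{Produce a collar.} Take the neighbourhood $\breve W$ of the stratum and the map $\mathbf{S}\colon \breve W \to (0,\infty]$ from the preceding theorem ($n=2$). Here $\mathbf{S}^{-1}(\infty)$ is the stratum and $\mathbf{S}$ is a topological submersion. I would shrink $\breve W$ to avoid strata of codimension $\ge 2$ that do not meet $N'$; the relevant ones have codimension $\ge 2$ and their closures meet $\breve W$ only in those strata. Near a point $x\in N'$ the submersion property gives a local product chart $U\times(T,\infty] \to \breve W$ with $U\subset N'$ open, carrying $U\times\{\infty\}$ onto $U$. Since $(T,\infty]$ is homeomorphic to a half-open interval, $\breve N^+$ is locally homeomorphic to $\mathbf{R}^{d-2}\times[0,1)$ near $x$, with $N'$ corresponding to the boundary. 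Combined with the interior manifold structure of $\breve N_z$, this gives a $C^0$ manifold with boundary along $N'$.

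\emph{Main obstacle.} The hardest step is checking that the gluing map is a homeomorphism onto a full neighbourhood in the compactified space, i.e.\ surjectivity onto nearby unbroken trajectories. This is the content packaged in the submersion statement, and it needs the exponential decay and the uniform estimates near nondegenerate critical points established above, now for reducible $[\mathfrak{a}_1]$ as well. I would argue as in \cite[Prop.~19.2.8]{KMbook2007}. The key input is that at a framed reducible critical point the extended Hessian $L_{\mathfrak{a}_1}$ is hyperbolic, with no kernel coming from constant gauge transformations since $\mathbf{d}^*\mathbf{d}$ is invertible on $(-\tau^*)$-invariant functions vanishing on $\mathbf{p}$. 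So the linear gluing argument runs exactly as in the irreducible case, and no $\delta$-structure or boundary-obstructed correction is needed.
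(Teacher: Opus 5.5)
Your proposal is correct and follows the paper's route: the paper gives no separate argument and reads the statement off the preceding gluing theorem with $n=2$, as in Kronheimer--Mrowka. There the topological submersion $\mathbf{S}\colon \breve W\to(0,\infty]$ gives the local collar $U\times(T,\infty]$ along the once-broken stratum, and no boundary-obstructed strata occur in the framed setting (one cosmetic slip: regularity comes from the choice of perturbation in Theorem~\ref{thm:transversality_cylinder}, not from freeness of the framed gauge action).
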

\subsection{Construction of orientation sets}
This section uses the same title as \cite[\S 20.3]{KMbook2007} but the main goal is to introduce a Fredholm operator of index that shows up crucially in the construction of relative gradings.

Let $I = [t_1,t_2]$ be a finite interval.
Let $\mathfrak{a}_1,\mathfrak{a}_2$ be any two configurations (not necessarily critical points) in $\mathcal{C}_k(Y,\tfrr)$ and $\mathfrak{q}_1,\mathfrak{q}_2$ be two tame perturbations.
Consider the space $\mathcal{C}$ of all pairs $(\upgamma,\mathfrak{p})$ where
\begin{enumerate}[leftmargin=*,noitemsep]
    \item $\upgamma$ is a configuration in $\mathcal{C}(I \times Y,\tfrr)$ whose restriction to the end ${t_i} \times Y$ is gauge equivalent to $\mathfrak{a}_i$, for $i=1,2$; and
    \item $\mathfrak{p}$ is a continuous path in the Banach space $\mathcal{P}$ of tame perturbations, with $\mathfrak{p}(t_i) = \mathfrak{p}_i$, for $i=1,2$.
\end{enumerate}
Over the finite cylinder $Z = I \times Y$, the linearization of the Seiberg--Witten operator $\mathcal{Q}_{\upgamma,\mathfrak{p}}$ gives a map
\begin{equation*}
    \mathcal{Q}_{\upgamma,\mathfrak{p}}: \mathcal{E} \to \mathcal{F}
\end{equation*}
where
\begin{align*}
    \mathcal{E} &= \mathcal{T}_{1,\upgamma}(I \times Y, \tfrr),\\
    \mathcal{F} &= \mathcal{V}_{0,\upgamma}(I \times Y, \tfrr) \oplus
    L^2(I \times Y, I \times \mathbf{p}; i\mathbf{R})^{-\tau^*}.
\end{align*}
To make the problem over the finite cylinder Fredholm, impose boundary conditions as follows:
At ${t_i} \times Y$, decompose
\begin{equation*}
    \mathcal{T}_{1/2,\mathfrak{a}_i}(Y) \oplus L^2_{1/2}(Y, \mathbf{p};i\mathbf{R}) =
    \mathcal{J}_{1/2,\mathfrak{a}_i} \oplus \mathcal{K}_{1/2,\mathfrak{a}_i} \oplus
    L^2_{1/2}(Y, \mathbf{p};i\mathbf{R})^{-\tau^*}.
\end{equation*}
Denote
\begin{equation*}
    H_i^- 
    = {0} \oplus \mathcal{K}^-_{1/2,\mathfrak{a}_i} \oplus L^2_{1/2}(Y, \mathbf{p};i\mathbf{R})^{-\tau^*}, \quad
    H_i^+ 
    = {0} \oplus \mathcal{K}^+_{1/2,\mathfrak{a}_i} \oplus L^2_{1/2}(Y, \mathbf{p};i\mathbf{R})^{-\tau^*}
\end{equation*}
using the spectral subspaces of $\Hess_{\mathfrak{q}_i,\mathfrak{a}_i}$ if the operator $\Hess_{\mathfrak{q}_i,\mathfrak{a}_i}$ is hyperbolic, otherwise this is defined by adding some small $\epsilon > 0$:
\[ \Hess_{\mathfrak{q}_i,\mathfrak{a}_i} - \epsilon.\] 
Let $\Pi^-_{Y,\mathfrak{a}_i}$ and $\Pi^+_{Y,\mathfrak{a}_i}$ be the projections to $H^-_i$ and $H^+_i$ with kernels
\begin{equation*}
    \ker(\Pi^-_{Y,\mathfrak{a}_i}) =
    \mathcal{J}_{1/2,\mathfrak{a}_i} \oplus \mathcal{K}^-_{1/2,\mathfrak{a}_i} \oplus {0}, \quad
    \ker(\Pi^+_{Y,\mathfrak{a}_i}) =
    \mathcal{J}_{1/2,\mathfrak{a}_i} \oplus \mathcal{K}^-_{1/2,\mathfrak{a}_i} \oplus {0}.
\end{equation*}
Write
\begin{equation*}
    \Pi^-_i = \Pi^-_{Y,\mathfrak{a}_i} \circ r_i:
    \mathcal{E} \to H^-_i, \quad
    \Pi^+_i = \Pi^+_{Y,\mathfrak{a}_i} \circ r_i:
    \mathcal{E} \to H^+_i
\end{equation*}
where $r_i$ is the restriction map onto ${t_i} \times Y$.
One arrives at the the following Atiyah--Patodi--Singer-type Fredholm operator:
\begin{align}
    \label{eq:grading_op_P}
    P_{\upgamma,\mathfrak{p}} 
    &= (\mathcal{Q}_{\upgamma,\mathfrak{p}},-\Pi^+_1,\Pi^-_2),\\
    \mathcal{E}
    &\to \mathcal{F} \oplus H^+_1 \oplus H_2^-.
\end{align}
Recall
\begin{equation*}
    \mathcal{Q}_{\upgamma,\mathfrak{p}(t)} = \frac{d}{dt} + L(t)
\end{equation*}
where $L(t) = \widehat{\Hess}_{\upgamma(t),\mathfrak{p}(t)}$ is the extended Hessian~\eqref{eqn:ext_hessian_1th}, acting on $\mathcal{T}_{1,\upgamma(t)} \oplus L^2_1(Y,\mathbf{p};i\mathbf{R})^{-\tau^*}$.
\medskip

Now, suppose $\upgamma(t) = (B(t),0)$ is reducible and $\mathfrak{p}(t) = 0$ for all $t \in I$.
Decompose the domain of $L(t)$ as the sum 
\begin{equation*}
    i\Omega^0_{-\tau^*} \oplus i\left(d\Omega^0_{-\tau^*}\right) \oplus i\left(\ker(d^*: \Omega^1 \to \Omega^0)\right)^{-\tau^*}
    \oplus \Gamma(S)^{\tfrr}.
\end{equation*}
Under this decomposition and by scaling the ``$h$'' terms in \eqref{eqn:ext_hessian_1th} to zero, $L(t)$ can be deformed into the block form:
\begin{equation*}
    \begin{bmatrix}
        0 & -d^* & 0 & 0\\
        -d & 0 & 0 & 0\\
        0 & 0 & *d & 0\\
        0 & 0 & 0 & D_{B(t)}
    \end{bmatrix}
\end{equation*}

In summary, the operator $P = (\mathcal{Q}_{\upgamma},-\Pi^+_1,\Pi^-_2)$ along a reducible path $\upgamma(t)$ is homotopic to an operator $P' = (\mathcal{Q}'_{\upgamma},-\Pi^+_1,\Pi^-_2)$, where $\mathcal{Q}'_{\upgamma}$ is the direct sum of the following pieces:
\begin{enumerate}[leftmargin=*]
    \item the operator
    \begin{equation*}
        \frac{d}{dt} + 
        \begin{bmatrix}
            0 & -d^*\\
            -d & 0
        \end{bmatrix}
    \end{equation*}
    where for fixed $t$, the matrix is interpreted as an operator on pairs $(c,b)$, where $c$ is an imaginary-valued function skew-invariant under $\tau$ and relative to $\mathbf{p}$, and $b$ is an exact 1-form;
    \item the operator
    \begin{equation*}
        \frac{d}{dt}+*d
    \end{equation*}
    where $*d$ acts on the coclosed $\tau$-skew-invariant 1-forms on $Y$ vanishing over $\mathbf{p}$;
    \item the operator
    \begin{equation}
    \label{eq:DBt_in_grading_decomp}
        \frac{d}{dt} + D_{B(t)}(t)
    \end{equation}
\end{enumerate}
The first two summands are invertible.
The third operator is a real Dirac operator acting on the real subspace of spinors:
\begin{equation*}
    D_{B(t)} \colon \Gamma(S)^{\tfrr} \to \Gamma(S)^{\tfrr},
\end{equation*}
which, unlike the complex-linear Dirac operator $D_{B(t)} \colon \Gamma(S) \to \Gamma(S)$, may have odd real index.
\section{Framed real monopole Floer homology}
\label{sec:Floer_homology}
\subsection{The framed chain complex}
\begin{defn}
    A tame perturbation $\mathfrak{q}$ is an \emph{admissible perturbation} for $(Y,\tau, g,\mathfrak{s},\tfrr)$ if all critical points of $\pertL$ are non-degenerate, all moduli spaces are regular, and there are no reducibles unless $c_1(\mathfrak{s})$ is torsion. \end{defn}
\begin{defn}
    Let $\mathfrak{C} \subset \tcalB_k(Y,\tfrr)$ be the set of critical points of $\pertL$.
    The \emph{framed real monopole Floer homology} is the homology $H(\tilde{C},\tilde{\del})$ of the
    \emph{framed real monopole Floer chain complex} $(\tilde{C},\tilde{\del})$, given by
\begin{equation*}
    \wtilde{C}=\bigoplus_{[\mathfrak{a}] \in \mathfrak{C}} \mathbf{F} [\mathfrak{a}], \quad \tilde{\del} [\mathfrak{a}] = \sum_{[\mathfrak{b}] \in \mathfrak{C}} \sum_{z \in \pi_1(\tilde{\mathcal{B}}_k; [\mathfrak{a}],[\mathfrak{b}])} \# \breve{N}_z([\mathfrak{a}],[\mathfrak{b}])[\mathfrak{b}].
\end{equation*}
\end{defn}
The well-definedness of $(\tilde{C},\tilde{\del})$ as a chain complex is the consequence of the following two results.
First, by Theorem~\ref{thm:traj_space_compact}:
\begin{lem} 
    If $\breve{N}_z([\mathfrak{a}],[\mathfrak{b}])$ has dimension zero, then it is a finite set.
    \hfill \qedsymbol
\end{lem}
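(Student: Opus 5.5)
The lemma follows from the compactness result, Theorem~\ref{thm:traj_space_compact}, together with the stratification of Proposition~\ref{prop:traj_space_compact_strata}. The sum defining $\tilde{\del}[\mathfrak{a}]$ runs only over components for which $\breve{N}_z([\mathfrak{a}],[\mathfrak{b}])$ is zero-dimensional. In that case $\wtilde{N}_z([\mathfrak{a}],[\mathfrak{b}])$ is a regular moduli space of dimension $d = \gr_z([\mathfrak{a}],[\mathfrak{b}]) = 1$. (Regularity holds because the perturbation is admissible, by Theorem~\ref{thm:transversality_cylinder}, and the dimension count comes from the proposition identifying regular moduli spaces as manifolds of dimension $\gr_z$.) The free $\mathbf{R}$-translation action then leaves the zero-dimensional manifold $\breve{N}_z([\mathfrak{a}],[\mathfrak{b}])$.

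First I would show that the compactification adds no points. By Proposition~\ref{prop:traj_space_compact_strata}, $\breve N^+_z([\mathfrak{a}],[\mathfrak{b}])$ is a $0$-dimensional space stratified by manifolds. Its strata of dimension $d-\ell$ with $\ell \ge 2$ would have negative dimension, so they are empty. Concretely, suppose a broken trajectory with $\ell \ge 2$ components $\breve N_{z_i}([\mathfrak{a}_{i-1}],[\mathfrak{a}_i])$ existed. Each factor is nonempty and consists of nontrivial trajectories, and regularity makes it a manifold of dimension $\gr_{z_i}-1 \ge 0$, so $\gr_{z_i} \ge 1$. Additivity of the relative grading then gives $\gr_z = \sum_i \gr_{z_i} \ge \ell \ge 2$, contradicting $\gr_z = 1$. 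Hence $\breve N^+_z = \breve N_z$.

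It follows that $\breve{N}_z([\mathfrak{a}],[\mathfrak{b}])$ coincides with its compactification, which is compact by Theorem~\ref{thm:traj_space_compact}. It is also a $0$-dimensional manifold, hence discrete. A compact discrete space is finite, which proves the lemma.

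The only delicate point is that the grading inequality $\gr_{z_i} \ge 1$ requires regularity of every intermediate moduli space, including reducible-to-reducible ones. In the framed setting this is uniform: the remark in the proof of Theorem~\ref{thm:transversality_cylinder} covers reducible trajectories, and there are no boundary-obstructed cases. So nonempty unparametrized spaces have nonnegative dimension, and the argument goes through without the case analysis needed in the blown-up theory.

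For $\tilde{\del}$ to be a well-defined finite sum, one would also invoke Theorem~\ref{thm:energy_bound_traj_space_compact}, which gives finitely many relevant $z$. That is not needed for the lemma itself.
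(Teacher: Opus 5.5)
Your proposal is correct and follows the paper's route: the paper deduces the lemma directly from the compactness of $\breve N^+_z([\mathfrak{a}],[\mathfrak{b}])$ (Theorem~\ref{thm:traj_space_compact}). Your argument spells out the step the paper leaves implicit, namely that regularity and additivity of $\gr_z$ rule out broken trajectories when $\gr_z = 1$, so the zero-dimensional space equals its compactification and is therefore finite.
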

\begin{prop}
    The square $(\tilde{\del})^2 = 0$, i.e. $\tilde{\del}$ defines a differential.
\end{prop}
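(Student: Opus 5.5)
The plan is the standard Morse-theoretic argument: identify the coefficient of $[\mathfrak{c}]$ in $\tilde{\del}^2[\mathfrak{a}]$ with a signed (here, mod two) count of the boundary points of a compact 1-manifold.

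Expanding the definition,
\[
\tilde{\del}^2[\mathfrak{a}] = \sum_{[\mathfrak{c}]} \Big( \sum_{[\mathfrak{b}]} \sum_{z_1,z_2} \#\breve N_{z_1}([\mathfrak{a}],[\mathfrak{b}])\cdot \#\breve N_{z_2}([\mathfrak{b}],[\mathfrak{c}]) \Big) [\mathfrak{c}],
\]
where only pairs with $\gr_{z_1}=\gr_{z_2}=1$ contribute. First I group the terms by the concatenated class $z = z_1 * z_2 \in \pi_0(\tcalB_k(Y,\tfrr),[\mathfrak{a}],[\mathfrak{c}])$. By additivity of the relative grading, each contributing $z$ has $\gr_z([\mathfrak{a}],[\mathfrak{c}])=2$.

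Next I check that the sum is finite. Theorem~\ref{thm:energy_bound_traj_space_compact} does this: the energy of $z_1*z_2$ is $\pertL(\mathfrak{a})-\pertL(\mathfrak{c})$ plus a period contribution determined by $z$. For a fixed grading difference $2$, Lemma~\ref{lem:loop_and_12} constrains the loop part of $z$ (pairing with $c_1$) and the period of $\pertL$ along loops is controlled by the same pairing, so only finitely many $z$ give nonempty moduli spaces. The set $\mathfrak{C}$ is finite by compactness of critical points. For torsion $c_1(\mathfrak{s})$ the functional is fully gauge-invariant and the energy is bounded directly.

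For each such $z$ I consider $\breve N^+_z([\mathfrak{a}],[\mathfrak{c}])$. By Theorem~\ref{thm:transversality_cylinder} the perturbation is admissible, so $\tilde N_z$ is a smooth 2-manifold and $\breve N_z$ is a 1-manifold. Theorem~\ref{thm:traj_space_compact} makes $\breve N^+_z$ compact. Proposition~\ref{prop:traj_space_compact_strata} says it is a 1-dimensional space stratified by manifolds whose $0$-dimensional stratum is exactly $\bigsqcup_{[\mathfrak{b}],\,z_1*z_2=z} \breve N_{z_1}([\mathfrak{a}],[\mathfrak{b}])\times\breve N_{z_2}([\mathfrak{b}],[\mathfrak{c}])$ with both factors $0$-dimensional. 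The gluing theorems of the previous subsection show that $\breve N^+_z$ is a $C^0$-manifold with boundary along this stratum.

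The key point is that, unlike the blown-up setting of \cite{KMbook2007}, no boundary-obstructed broken trajectories appear here. The framed gauge group acts freely, reducible critical points are ordinary nondegenerate points of the Hilbert manifold $\tcalB_k$, and every reducible trajectory is regular; this is the content of the Remark in the transversality proof. So every codimension-one boundary point is an honest two-component breaking, and no three-component strata or $\delta$-structures are needed. A compact 1-manifold with boundary has an even number of boundary points, so each inner sum vanishes mod two, giving $\tilde{\del}^2=0$.

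The main obstacle is verifying that the gluing map covers a full neighbourhood of each broken pair, including when $[\mathfrak{b}]$ is reducible. I would reduce this to \cite[\S10]{ljk2022} and \cite[\S18--19]{KMbook2007}: gluing requires only surjectivity of $\mathcal{Q}$ on each piece and hyperbolicity of the extended Hessian at $[\mathfrak{b}]$, which Lemma~\ref{lem:nondeg_Hess_suj} provides regardless of reducibility.
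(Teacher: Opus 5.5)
Your proposal is correct and follows essentially the same route as the paper. The paper also identifies the $[\mathfrak{c}]$-coefficient of $\tilde{\del}^2[\mathfrak{a}]$ with the mod-two count of boundary points of the compact one-dimensional $C^0$-manifolds $\breve N^+_z([\mathfrak{a}],[\mathfrak{c}])$ with $\gr_z([\mathfrak{a}],[\mathfrak{c}]) = 2$, whose boundary consists only of two-component breakings, and your extra points (finiteness of the contributing classes $z$, no three-fold or boundary-obstructed breakings, gluing at reducible intermediate critical points) spell out what the paper takes from its transversality, compactness and gluing results.
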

\begin{proof}
    Fix a pair $([\mathfrak{a}],[\mathfrak{b}])$ of crticial points.
    The compactification $\breve{N}^+_z([\mathfrak{a}],[\mathfrak{b}])$ of a moduli space $\breve{N}_z([\mathfrak{a}],[\mathfrak{b}])$ for which $\gr_z([\mathfrak{a}],[\mathfrak{b}]) = 2$ is a 1-dimensional $C^0$-manifold, with boundary given by $0$-dimensional strata of the form
    \begin{equation*}
        \breve{N}_{z_1}([\mathfrak{a}],[\mathfrak{a}_1]) \times \breve{N}_{z_2}([\mathfrak{a}_1],[\mathfrak{b}]).
    \end{equation*}
    Varying over all such $z$, the sum of the boundary contributions is precisely the entry of $\tilde{\del}$ from $[\mathfrak{a}]$ to $[\mathfrak{b}]$, which is zero modulo two.
\end{proof}
Since the constant gauge transformation $(-1)$ is excluded from $\mathcal{G}_{\mathbf{p}}$, it acts on trajectories in the framed configuration space of equivalence classes $\tilde{B}(Y,\tfrr)$.
A critical point is reducible if and only if it is fixed by $(-1)$.
For $\mathfrak{a} = (B,\Psi)$, write $-\mathfrak{a} = (B,-\Psi)$.
\begin{lem}
    For any $[\mathfrak{a}],[\mathfrak{b}]$, scalar multiplication by $-1$ on spinors induces a natural bijection on moduli spaces:
    \[
        \widetilde N([\mathfrak{a}],[\mathfrak{b}])
        \to
        \widetilde N([-\mathfrak{a}],[-\mathfrak{b}]).
    \]
\end{lem}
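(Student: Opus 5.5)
The idea is to realize scalar multiplication by $-1$ as the action of the constant real gauge transformation $u\equiv -1$. This element lies in the full real gauge group $\mathcal{G}(Z,\tau)$ but not in the framed group $\mathcal{G}(Z,\tau,\mathbf{p}_I)$. First, $u=-1$ satisfies $\tau^*u=\bar u$, so it preserves the real configuration space $\mathcal{C}_{k,\text{loc}}(Z,\tilde{\mR})$; explicitly, $(A,\Phi)\mapsto (A,-\Phi)$, because $u^{-1}du=0$. Second, $-1$ is central in the abelian group $\mathcal{G}(Z,\tau)$, and $\mathcal{G}(Z,\tau,\mathbf{p}_I)$ is a subgroup. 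Hence $(A,\Phi)\mapsto(A,-\Phi)$ carries framed-gauge orbits to framed-gauge orbits: $(-1)\cdot v\upgamma = v\cdot(-1)\upgamma$ for every $v\in\mathcal{G}_{\mathbf{p}_I}$. It therefore descends to a homeomorphism (indeed an involution) of $\tcalB_{k,\text{loc}}(Z,\tfrr)$, and likewise of $\tcalB_k(Y,\tfrr)$ on each slice.

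Next, I will check that the perturbed equations are preserved. The unperturbed operator $\mathfrak{F}$ is equivariant under all of $\mathcal{G}(Z,\tau)$, since the quadratic term $(\Phi\Phi^*)_0$ and the Dirac term transform correctly under $\Phi\mapsto -\Phi$. For the perturbation, Definition~\ref{defn:tame_pert} requires $\frakq$ to be the gradient of a $\mathcal{G}(Y,\tau)$-invariant function, not merely a $\mathcal{G}_{\mathbf{p}}$-invariant one. Consequently $\hat\frakq^0(A,-\Phi)=\hat\frakq^0(A,\Phi)$ and $\hat\frakq^1(A,-\Phi)=-\hat\frakq^1(A,\Phi)$. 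The same invariance can be seen at the level of cylinder functions, where $h$ is invariant under $\pm1$ on the $\mathbf{R}^m$ factor and $q_{\Upsilon}$ is $\{\pm1\}$-equivariant. Thus $\mathfrak{F}_{\frakq}(A,-\Phi)=0$ if and only if $\mathfrak{F}_{\frakq}(A,\Phi)=0$.

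It remains to handle the asymptotics. Translation $\uptau_t^*$ commutes with multiplication by $-1$, and the induced map on $\tcalB_{k,\text{loc}}$ is continuous. So $[\uptau_t^*\upgamma]\to[\upgamma_{\mathfrak{a}}]$ implies $[\uptau_t^*(-\upgamma)]\to[\upgamma_{-\mathfrak{a}}]$, and similarly at $+\infty$. Since the map is an involution, it is a bijection $\widetilde N([\mathfrak{a}],[\mathfrak{b}])\to\widetilde N([-\mathfrak{a}],[-\mathfrak{b}])$ with inverse given by the same construction. It also respects components, carrying $z$ to the image class $-z$. The only point needing care is the well-definedness on framed quotients, and that is settled by the centrality/normality observation above. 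Naturality refers to compatibility with concatenation and translation, and it is immediate since everything is pointwise.
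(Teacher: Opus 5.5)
Your proposal is correct and is essentially the paper's argument. The paper gives no separate proof: it justifies the lemma by the preceding remark that the constant gauge transformation $-1$ lies in the full real gauge group $\mathcal{G}(Y,\tau)$ but not in the framed subgroup $\mathcal{G}_{\mathbf{p}}$, and therefore acts on the framed quotient. Your checks supply the details the paper leaves implicit: centrality of $-1$, invariance of $\mathfrak{F}_{\frakq}$ (using that tame perturbations are gradients of $\mathcal{G}(Y,\tau)$-invariant functions), and compatibility with translation and the limits.
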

\subsection{Grading}
Let $\mathbf{J}(Y,\mathfrak{s},\tfrr)$ be the quotient set 
\[
\left(\widetilde{\mathcal{B}}_k(Y,\mathfrak{s},\tfrr) \times \mathcal{P} \times \mathbf{Z}\right)/\sim,
\]
where $\sim$ is the equivalence relation defined as follows.
Let $([\mathfrak{a}],\mathfrak{q}_1,m)$ and $([\mathfrak{b}],\mathfrak{q}_2,n)$ be two elements above.
Let $\zeta$ be a path joining $[\mathfrak{a}]$ and $[\mathfrak{b}]$ and $\mathfrak{p}$ be a 1-parameter family of perturbation.
Then $([\mathfrak{a}],\mathfrak{q}_1,m) \sim ([\mathfrak{b}],\mathfrak{q}_2,n)$ if there exists $(\zeta,\mathfrak{p})$ for which %\eqref{eq:grading_op_P}
\[\text{ind}(P_{\zeta,\mathfrak{p}}) = n-m.\]
The integers $\mathbf{Z}$ act on $\mathbf{J}$, via
$([\mathfrak{a}],\mathfrak{q}_1,m) \mapsto ([\mathfrak{a}],\mathfrak{q}_1,m+n)$
for $n \in \mathbf{Z}$, written additively as $j \mapsto j+n$ for $j \in \mathbf{J}$.
Given a fixed admissible perturbation, the \emph{grading} of a critical point $[\mathfrak{a}]$ is the equivalence class
\[\gr[\mathfrak{a}] = ([\mathfrak{a}],\mathfrak{q},0)/\sim \  \in \mathbf{J}(\mathfrak{s},\tfrr).\]
Denote by $\tilde{C}_j$ the subgroup generated by critical points in $j \in \mathbf{J}(\mathfrak{s},\tfrr).$
Then the boundary operator has degree $-1$, and 
\[\thmr_*(Y,\mathfrak{s},\tfrr) = \bigoplus_{j \in \mathbf{J}(\mathfrak{s},\tfrr)}\thmr_j(Y,\mathfrak{s},\tfrr).\]
\subsection{Cohomology and duality}
With the same assumptions as above, the \emph{framed real monopole Floer cochain complex} is given by 
\[\tilde{C}^j = \text{Hom}(\tilde{C}_j,\mathbf{F}),\] 
graded by $\mathbf{J}(\mathfrak{s},\mR)$ , and equipped with the induced co-differential $\tilde{d}$.
The \emph{framed real monopole Floer cohomology} is
\[\thmr^j(Y,\mathfrak{s},\tfrr) = H^j(\tilde{C}^*,\tilde{\delta}).\]
\subsubsection*{The dotted unknot in the 3-sphere}
$(S^3,U,\mathbf{p})$
where $\mathbf{p}=\{p_1,\dots,p_n\}$.
There is a unique real spin\textsuperscript{c} structure on $(S^3,\tau_{S^3})$.
For $n$ basepoints, there are $2^{n-1}$ distinct {\rrscs}s, as a torsor over
\begin{equation*}
    \frac{H^1(S^3,\{p_1,\dots, p_n\})^{\tau^*}}{\mathrm{1+\tau^*}} \cong 
    \frac{\mathbf{Z}^{n-1}}{2\mathbf{Z}^{n-1}}.
\end{equation*}
Fix one such $(\mathfrak{s},\tfrr)$.
Equip $S^3$ with the round metric that's invariant under the involution $\tau_{S^3}$.
Since this metric has non-negative scalar curvature, the only critical points of $\mathcal{L}$ are the reducibles and in this case is unique.
This critical point is already non-degenerate since the corresponding Dirac operator has no kernel.
\[
\thmr(S^3,\mathfrak{s},\tfrr) = \tcmr(S^3,\mathfrak{s},\tfrr) \cong \mathbf{F}.
\]
As the sum over all {\rrscs}s, $\thmr(S^3,\mathfrak{s},\mathbf{p})$ has rank $2^{n-1}$.
A priori, one cannot compare the gradings of these elements.
\section{Functoriality}
\label{sec:functor}
\begin{notat}
As a reminder, a marked real manifold of the form $(M,\tau,\mathbf{c})$ may be written as blackboard ``$\mathbb{M}$''.
\end{notat}
Let $\mathbb{Y}= (Y,\tau,\mathbf{p})$ be a marked real 3-manifold.
Fix a $\tau$-invariant Riemannian metric $g$.
For each {\rrscs} $(\mathfrak{s},\tfrr)$, let $\mathcal{P}(Y,\mathfrak{s},\tfrr)$ be a large Banach space of tame perturbations.
Consider the product over all {\rrscs}:
\[\mathcal{P}(Y,\tau,\mathbf{p}) = \prod_{(\mathfrak{s},\tfrr)} \mathcal{P}(Y,\mathfrak{s},\tfrr)\]
An element $\mathfrak{q} = \{\mathfrak{q}_{(\mathfrak{s},\tfrr)}\}$ is \emph{admissible} if and only if all $\mathfrak{q}_{(\mathfrak{s},\tfrr)}$ are admissible, with a uniform constant $m_2$ as in \ref{defn:tame_pert}.
Fixing an admissible perturbation, denote
\[\thmr_*(Y,\tau,\mathbf{p};g,\mathfrak{q}) = 
\bigoplus_{(\mathfrak{s},\tfrr)} \thmr_*(Y,\mathfrak{s},\tfrr;g,\mathfrak{q}).\]
This group is graded by 
\[\mathbf{J}(\mathfrak{s},\tfrr; g,\mathfrak{q})
= \coprod_{(\mathfrak{s},\tfrr)} \mathbf{J}(\mathfrak{s},\tfrr; g, \mathfrak{q}).\]
\begin{defn}
    The category $\widetilde{\textsc{rcob}}_*$ has objects $5$-tuples $(Y,\tau,\mathbf{p}; g,\mathfrak{q}) = (\mathbb Y; g,\mathfrak{q})$ where $\mathbb{Y} = (Y,\tau,\mathbf{p})$ is a marked real 3-manifold, $g$ is a $\tau$-invariant Riemannian metric, and $\mathfrak{q}$ is an admissible perturbation.
    A morphism in $\widetilde{\textsc{rcob}}_*$ is a marked real cobordism between the underlying marked real 3-manifolds.
\end{defn}
The main result of this section is:
\begin{theorem}
    The framed real monopole Floer homology $\thmr_*(Y,\tau,\mathbf{p};g,\mathfrak{q})$ defines a covariant functor
    \[\thmr_* \colon \widetilde{\textsc{rcob}}_* \to \textsc{vect}_{\mathbf{F}},\]
    and the  framed real monopole Floer cohomology $\thmr^*(Y,\tau,\mathbf{p};g,\mathfrak{q})$ defines a contravariant functor 
    \[\thmr^* \colon \widetilde{\textsc{rcob}}_* \to \textsc{vect}_{\mathbf{F}},\]
    where $\textsc{vect}_{\mathbf{F}}$ is the category of $\mathbf{F}$-vector spaces.
\end{theorem}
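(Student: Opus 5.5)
We follow the scheme of \cite[\S 25]{KMbook2007}, carried out in the framed real setting. Given a marked real cobordism $\mathbb{W}=(W,\tau_W,\mathbf{a})\colon \mathbb{Y}_-\to\mathbb{Y}_+$, first choose a $\tau_W$-invariant metric on $W$ that is cylindrical near the ends. Attach cylindrical ends to get $W^*$, and mark it by $\mathbf{a}^*=\mathbf{a}\cup(\mathbf{R}_{\le 0}\times\mathbf{p}_-)\cup(\mathbf{R}_{\ge 0}\times\mathbf{p}_+)$. For each {\rrscs} $(\mathfrak{s}_W,\tfrr_W)$ restricting to $(\mathfrak{s}_\pm,\tfrr_\pm)$ on the ends, form the framed configuration spaces $\tcalB_k([\mathfrak{a}],W^*,[\mathfrak{b}])$ modulo the framed gauge group $\mathcal{G}(W^*,\tau_W,\mathbf{a}^*)$. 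Using perturbations $\hat{\mathfrak{q}}_\pm$ on the ends, together with a compactly supported generic perturbation $\hat{\mathfrak{p}}_0$ in the interior, we obtain moduli spaces $\wtilde{N}_z([\mathfrak{a}],W^*,[\mathfrak{b}])$. Define
\[
\wtilde{m}(\mathbb{W})[\mathfrak{a}]=\sum_{(\mathfrak{s}_W,\tfrr_W)}\sum_{[\mathfrak{b}],z}\#\wtilde{N}_z([\mathfrak{a}],W^*,[\mathfrak{b}])\,[\mathfrak{b}],
\]
where the count is taken over zero-dimensional moduli spaces. The cochain map is the transpose.

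The analytic steps go in the following order. (1) Transversality: adapt Theorem~\ref{thm:transversality_cylinder} with a Sard--Smale argument over the interior perturbations. In the framed setting reducibles have trivial stabilizer, so reducible and irreducible solutions are handled uniformly and no boundary-obstructed strata arise. (2) Compactness: Theorem~\ref{thm:traj_space_compact} and Proposition~\ref{prop:traj_space_compact_strata} carry over to $W^*$, producing broken trajectories of the form (cylinder)$\times$(cobordism)$\times$(cylinder). (3) Finiteness: only finitely many $(\mathfrak{s}_W,\tfrr_W,z)$ contribute, because of the energy bound of Theorem~\ref{thm:energy_bound_traj_space_compact} together with the uniform constant $m_2$ in the admissibility condition. (4) Gluing: by the gluing theorems above, one-dimensional moduli spaces are $C^0$ manifolds with boundary. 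Counting their boundary points gives $\tilde{\del}_+\wtilde{m}+\wtilde{m}\tilde{\del}_-=0$.

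The remaining steps establish the functor properties. (5) Independence of metric and perturbation: for a one-parameter family of metrics and perturbations on $W$, the parametrized moduli spaces give a chain homotopy. Applying this to the product cobordism $I\times\mathbb{Y}$ with $\mathbf{a}=I\times\mathbf{p}$ shows that the groups for different $(g,\mathfrak{q})$ are canonically isomorphic. (6) Identity: the product cobordism with a translation-invariant perturbation yields only constant trajectories in index zero, so it induces the identity. (7) Composition: for $\mathbb{W}_2\circ\mathbb{W}_1$, stretch the neck along $Y_1$. The $1$-parameter family of metrics, with its boundary at infinite neck length, gives $\wtilde{m}(\mathbb{W}_2\circ\mathbb{W}_1)\simeq\wtilde{m}(\mathbb{W}_2)\wtilde{m}(\mathbb{W}_1)$. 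Here one must check that {\rrscs}s on the composite correspond to pairs of {\rrscs}s on the pieces whose restrictions agree on $Y_1$ relative to $\mathbf{p}_1$, together with the framing data. Gluing of framed gauge groups is unambiguous because $g|_{\mathbf{p}_1}\equiv1$ fixes the sign on each side.

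We expect step (7) to be the main obstacle. The composition law requires the correct bookkeeping of relative real $\spinc$ structures and of the homotopy classes $z$ under gluing. It also requires the neck-stretching compactness uniformly over all {\rrscs}s, so that the sum defining $\wtilde{m}$ stays finite. To handle this, we would first prove the restriction/gluing bijection for {\rrscs}s using the torsor description over $H^1(M,\mathbf{c};\mathbf{Z})^{\tau^*}/\mathrm{im}(1+\tau^*)$ and a Mayer--Vietoris sequence for the pair $(W,\mathbf{a})$. We would then invoke the analytic neck-stretching argument of \cite[\S 24--26]{KMbook2007}, whose framed real adaptation is easier because there are no reducible-boundary obstructions.
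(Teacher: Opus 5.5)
Your outline follows the paper's own route. It adapts \cite[\S\S 24--26]{KMbook2007} without blowing up: the cobordism map counts points of the framed moduli spaces over $W^*$ (the paper more generally evaluates \v{C}ech cochains $u$ on them, and your map is the case $u=1$), the chain-map identity comes from the two types of codimension-one strata, independence of $(g,\mathfrak{p})$ comes from a one-parameter family giving $\widetilde{K}$, and composition comes from stretching along $Y_1$.

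The step that does not hold as you justify it is (1). The paper makes the same assertion, in the remark inside the proof of Theorem~\ref{thm:transversality_cylinder} and at the opening of the gluing subsection. Freeness of the framed gauge action does not let you treat reducibles like irreducibles. The perturbations in $\mathcal{P}$ are gradients of $\mathcal{G}(Y,\tau)$-invariant functions, so they are invariant under the constant gauge transformation $-1$. This $-1$ still acts on $\tcalB_{\mathbf{p}}$, with fixed set exactly the reducible locus, so what you face is an \emph{equivariant} transversality problem. At a reducible $\upgamma=(A,0)$ the operator $\mathcal{Q}_{\upgamma}$ splits into a connection block and a spinor block. Every variation $\delta\hat{\mathfrak{q}}(\upgamma)$ has zero spinor component, because $\mathfrak{q}^1(B,0)=0$. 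For the same reason, when $h$ is even in the $q_{\Upsilon}$-variables, $\mathcal{D}f(v)=0$ for every spinorial tangent vector $v$ at a reducible. So the separation property of Proposition~\ref{prop:enough_embeddings}, which the Sard--Smale argument uses, fails exactly there. The spinor block is a real Dirac operator $\frac{d}{dt}+D_{B(t)}$ (plus a finite-rank self-adjoint term) with APS-type asymptotics, and no admissible variation reaches its cokernel. Its index can be negative in two ways. On a cobordism with $b^+_R=0$, reducibles persist and the real Dirac index with APS conditions can be negative. On the cylinder, a reducible flow line can run from a reducible critical point to one whose real Dirac operator has relatively more negative eigenvalues. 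Both are open conditions on the perturbation, so genericity does not remove them.

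Such a reducible is a non-regular point that no admissible perturbation removes, and it sits in a moduli space of too small expected dimension. The description of codimension-one strata in Proposition~\ref{prop:strat_on_W_d} then breaks down. Broken configurations with an obstructed reducible piece appear in the compactifications with the wrong dimension, and gluing along them is obstructed. Counting ends therefore no longer yields $\tilde{\del}_+\wtilde{m}+\wtilde{m}\,\tilde{\del}_-=0$ or the chain-homotopy identity. Your step (6) fails for the same reason, since an obstructed reducible flow line is a non-constant trajectory of index zero. These are precisely the boundary-obstructed configurations that the blow-up, and the extra terms in \cite[Prop.~25.3.4]{KMbook2007}, are there to handle. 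To make (1) true you have two options:
\begin{enumerate}
\item enlarge the perturbation space to functions invariant only under $\mathcal{G}_{\mathbf{p}}$ (for instance, allow $h$ not even in the $q_{\Upsilon}$-variables), so that freeness really does give uniform transversality; or
\item blow up along the reducibles and keep track of the obstructed strata.
\end{enumerate}
A smaller point on (7): you do not need a bijection between {\rrsc} structures on $\mathbb{W}_2\circ\mathbb{W}_1$ and compatible pairs on the pieces. The restriction map need not be injective, because of the gluing freedom along $(Y_1,\mathbf{p}_1)$. It suffices to sum over all homotopy classes $z$. A framed critical point has trivial stabilizer in $\mathcal{G}(Y_1,\tau_1,\mathbf{p}_1)$, so each matching pair of trajectories glues to a unique framed configuration on the composite, and that configuration determines its {\rrsc} structure.
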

More precisely, every marked real cobordism $(W,\tau_W,\mathbf{a}) \colon (Y_-,\tau_-,\mathbf{p}_-) \to (Y_+,\tau_+,\mathbf{p}_+)$ defines a homomorphism
\[\thmr(W,\tau_W,\mathbf{a}) \colon 
\thmr_*(Y_-,\tau_-,\mathbf{p}_-;g_-,\mathfrak{q}_-) \to \thmr_*(Y_+,\tau_+,\mathbf{p}_+;g_+,\mathfrak{q}_+)\]
summed over {\rrsc}s:
\[\thmr(W,\tau_W,\mathbf{a}) =
\sum_{\tfrr_W \in \underline{\rspinc}(W,\tau_W,\mathbf{a})}
\thmr(W,\tfrr_W).\]
Applying the functor to the cylinder $[0,1] \times (Y,\tau,\mathbf{p})$ yields the invariance of the Floer homology groups:
\begin{cor}
    For any two objects $(Y,\tau,\mathbf{p}; g,\mathfrak{q})$ and $(Y,\tau,\mathbf{p}; g',\mathfrak{q}')$, their framed real monopole Floer (c)homology groups are canonically isomorphic.
    \hfill \qedsymbol
\end{cor}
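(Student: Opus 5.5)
The argument follows the standard Kronheimer--Mrowka construction \cite[\S 23--26]{KMbook2007}, simplified because the framed setting needs no blow-up. First, for a marked real cobordism $\mathbb{W}=(W,\tau_W,\mathbf{a})\colon \mathbb{Y}_-\to\mathbb{Y}_+$ and a \rrscs{} $\tfrr_W$, attach cylindrical ends $(-\infty,0]\times\mathbb{Y}_-$ and $[0,\infty)\times\mathbb{Y}_+$ to obtain $W^*$. The basepoint arcs $\mathbf{a}$ extend to $\mathbf{a}^*=\mathbf{a}\cup(\mathbf{R}_{\le0}\times\mathbf{p}_-)\cup(\mathbf{R}_{\ge0}\times\mathbf{p}_+)$. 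On $W^*$ use a $\tau_W$-invariant metric, and a perturbation that equals $\hat{\mathfrak{q}}_\pm$ on the ends plus a compactly supported interior perturbation $\hat{\mathfrak{p}}$ drawn from a Banach space of $\tau_W$-equivariant cylinder-type perturbations near $\del W$. The framed gauge group on $W^*$ consists of skew-invariant $u$ with $u|_{\mathbf{a}^*}\equiv1$. Since $\mathbf{a}$ meets every boundary component of $\mathbf{a}^*$, this group restricts on the ends to $\mathcal{G}(Y_\pm,\tau_\pm,\mathbf{p}_\pm)$. Hence framed critical points $[\mathfrak{a}]\in\tcalB(Y_-,\tfrr|_{Y_-})$ and $[\mathfrak{b}]\in\tcalB(Y_+,\tfrr|_{Y_+})$ serve as asymptotic data. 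This gives moduli spaces $\wtilde{N}_z(W^*;[\mathfrak{a}],[\mathfrak{b}])$.

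Second, I establish the analytic package for these moduli spaces by the same arguments as in Section~\ref{sec:analysis_framed}. Fredholmness of the linearized operator $\mathcal{Q}_\upgamma$ with Coulomb gauge relative to $\mathbf{a}^*$ follows as before: the key point is that $\mathbf{d}^*\mathbf{d}=\Delta+|\Phi|^2$ is injective on skew-invariant functions vanishing on $\mathbf{a}^*$, even at reducibles. Transversality for generic $\hat{\mathfrak{p}}$ follows the argument of Theorem~\ref{thm:transversality_cylinder}. Because $(-1)\notin\mathcal{G}_{\mathbf{a}}$, reducibles have trivial stabilizer, so no boundary-obstructed case arises and no reducible/irreducible dichotomy is needed. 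Compactness follows the finite-cylinder and energy results (Theorems~\ref{thm:traj_space_compact}, \ref{thm:energy_bound_traj_space_compact}), adapted to $W^*$ via the usual energy identity and the finiteness of relevant \rrscs{}. I would address the latter by using the uniform constant $m_2$ in the admissibility condition to bound the topological energy over all $\tfrr_W$ with nonempty moduli space.

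Third, I define $\tilde m(W,\tfrr_W)[\mathfrak{a}]=\sum_{[\mathfrak{b}],z}\#\wtilde{N}_z(W^*;[\mathfrak{a}],[\mathfrak{b}])[\mathfrak{b}]$ over zero-dimensional moduli spaces, and sum over $\tfrr_W\in\underline{\rspinc}(W,\tau_W,\mathbf{a})$. Counting the ends of one-dimensional moduli spaces then shows that $\tilde m$ is a chain map. These ends consist of breakings at either end, $\breve N\times N(W^*)$ and $N(W^*)\times\breve N$, and the gluing theorem shows they are genuine $C^0$ boundary. Independence of metric and perturbation follows from a homotopy built from one-parameter families, whose parametrized moduli spaces have boundary giving $\tilde\del K+K\tilde\del$.

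Fourth, functoriality under composition uses the neck-stretching argument \cite[\S 26]{KMbook2007}. Consider $W_1\cup_{Y}W_2$ with a neck of length $T\to\infty$. The family over $[0,\infty]$ yields a chain homotopy between $\tilde m(W_1\cup W_2)$ and $\tilde m(W_2)\circ\tilde m(W_1)$. The product cylinder gives the identity, since translation-invariant solutions are the only index-zero ones. This is the step I expect to be the main obstacle. Gluing \rrscs{} is not bijective: pairs $(\tfrr_{W_1},\tfrr_{W_2})$ that agree on $(Y,\mathbf{p})$ glue to a \rrscs{} on the composite, but the restriction map has fibres governed by the relative cohomology groups of Lemma~\ref{lem:rrsc_on_line_bundles}. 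The framing on $\mathbf{p}$ matters here: the gluing identification is ambiguous only up to $\mathcal{G}(Y,\tau,\mathbf{p})$, which has no $\pm1$ ambiguity. I would check that the sum over $\underline{\rspinc}(W_1\cup W_2,\mathbf{a}_1\cup\mathbf{a}_2)$ matches the sum over compatible pairs together with the relative homotopy classes $z$ at the critical points on $Y$. This parallels the ordinary case, where framed gluing over $\mathcal{G}_{\mathbf{p}}$ makes the fibre product of configuration spaces exactly the composite configuration space. The cohomology statement then follows by dualizing.
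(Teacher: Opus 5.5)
Your proposal is correct and is essentially the paper's argument: the paper obtains the corollary by applying the functor $\thmr_*$ to the cylinder $[0,1]\times(Y,\tau,\mathbf{p})$ carrying $(g,\mathfrak{q})$ at one end and $(g',\mathfrak{q}')$ at the other. You rebuild the ingredients this needs, along the lines of \cite{KMbook2007}: cobordism maps independent of the interior metric and perturbation, the composition law (where freeness of the framed gauge action makes gluing over $\mathcal{G}(Y,\tau,\mathbf{p})$ unambiguous), and the product cylinder inducing the identity. The one step you leave implicit is the last line: the cylinder composed with its reverse is homotopic to the product cylinder, so the two maps are mutually inverse.
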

\begin{cor}
    The framed real monopole Floer homology (cohomology, resp.) $\thmr_*(Y,\tau,\mathbf{p};g,\mathfrak{q})$ ($\thmr^*(Y,\tau,\mathbf{p};g,\mathfrak{q})$, resp.) defines a covariant (contravariant, resp.) functor
    \[\thmr_* \ (\thmr^*, \text{resp.}) \colon \textsc{rcob}_* \to \textsc{vect}_{\mathbf{F}}.\]
\end{cor}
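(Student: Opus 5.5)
The plan follows the Kronheimer--Mrowka construction of cobordism maps, with the framed simplifications already established in Section~\ref{sec:analysis_framed}. Given a marked real cobordism $\mathbb{W}=(W,\tau_W,\mathbf{a})\colon\mathbb{Y}_-\to\mathbb{Y}_+$ and a {\rrscs} $\tfrr_W$, I would attach cylindrical ends $(-\infty,0]\times\mathbb{Y}_-$ and $[0,\infty)\times\mathbb{Y}_+$ to obtain $W^*$. The marking $\mathbf{a}$ is extended by the rays $(-\infty,0]\times\mathbf{p}_-$ and $[0,\infty)\times\mathbf{p}_+$. I would then form the framed configuration space $\tilde{\mathcal{B}}_k(W^*,\tfrr_W;[\mathfrak{a}],[\mathfrak{b}])$ modulo the gauge group $\mathcal{G}(W^*,\tau_W,\mathbf{a})$. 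The equations are perturbed by $\hat{\mathfrak{q}}_\pm$ on the ends, cut off into the collar, together with an additional compactly supported perturbation $\hat{\mathfrak{p}}$ in the collar, chosen from a large Banach space, to achieve transversality.

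\textbf{Step 1 (moduli spaces).} Because $\mathcal{G}_{\mathbf{a}}$ acts freely, the moduli spaces $\tilde N_z(W^*;[\mathfrak{a}],[\mathfrak{b}])$ are Hilbert-manifold zero sets with no reducible or boundary-obstructed strata. The Fredholm theory carries over directly: the operator $\mathcal{Q}_\upgamma$ on $W^*$ is Fredholm by the APS-type argument used for $P_{\upgamma,\mathfrak{p}}$ together with hyperbolicity at non-degenerate ends. Regularity for generic $\hat{\mathfrak{p}}$ follows from the Sard--Smale argument in the proof of Theorem~\ref{thm:transversality_cylinder}. In that argument, unique continuation on $W^*$ forces a cokernel element to be nonzero on the collar, and Proposition~\ref{prop:enough_embeddings} supplies cylinder functions pairing nontrivially with it.

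\textbf{Step 2 (compactness and the chain map).} Compactness follows from the analogues of Theorems~\ref{thm:traj_space_compact} and~\ref{thm:energy_bound_traj_space_compact}. This requires the energy bound on $W^*$ obtained from the topological energy, as in the compactness results of \cite{KMbook2007}, which transfer because everything is the $\tfrr$-invariant part of the ordinary theory. Gluing gives that one-dimensional moduli spaces compactify to $C^0$ manifolds whose boundary consists of broken configurations $\breve N(\mathfrak{a},\mathfrak{a}')\times\tilde N(W^*;\mathfrak{a}',\mathfrak{b})$ and $\tilde N(W^*;\mathfrak{a},\mathfrak{b}')\times\breve N(\mathfrak{b}',\mathfrak{b})$. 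I then define $\tilde m(W,\tfrr_W)[\mathfrak{a}]=\sum\#\tilde N_z(W^*;[\mathfrak{a}],[\mathfrak{b}])[\mathfrak{b}]$, and the boundary count gives $\tilde\del\tilde m+\tilde m\tilde\del=0$. I would also check that for fixed $\mathfrak{a}$ only finitely many {\rrsc}s $\tfrr_W$ and classes $z$ contribute, so that the sum over $\underline{\rspinc}(W,\tau_W,\mathbf{a})$ is finite. This can be derived from the energy finiteness of Theorem~\ref{thm:energy_bound_traj_space_compact} and a Chern--Weil bound on $c_1$, as in the ordinary case.

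\textbf{Step 3 (invariance and composition).} A one-parameter family of metrics and perturbations gives a parametrized moduli space and hence a chain homotopy, so the induced map on homology is independent of choices. Functoriality under composition $\mathbb{W}'\circ\mathbb{W}$ follows by stretching along the middle $\mathbb{Y}$. The one new point here is that {\rrsc}s on the composite correspond bijectively to pairs of {\rrsc}s on the pieces that agree on $\mathbb{Y}$ as {\rrsc}s, i.e.\ including the trivialization $\sfT$ at $\mathbf{p}$. This holds because $\mathcal{G}_{\mathbf{p}}$ has no stabilizer, so gluing is unique rather than ambiguous up to $\pm1$. This is the main obstacle: one must check, via the Mayer--Vietoris sequence for $H^1(\cdot,\mathbf{c})^{\tau^*}/\mathrm{im}(1+\tau^*)$, that counting summed over relative structures reproduces exactly the composite count, with no overcounting. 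I would first verify this on the punctured-cylinder examples.

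The identity is obtained from the product cylinder with a translation-invariant perturbation, where the only zero-dimensional solutions are constants. Dualizing the chain maps gives the contravariant cohomology functor.
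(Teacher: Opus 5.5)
Your architecture matches the paper's. It uses moduli spaces on $\mathbb{W}^*$ with collar perturbations, and transversality, compactness and gluing with no boundary-obstructed strata. The two codimension-one strata give the chain-map identity, one-parameter families give chain homotopies, and neck-stretching gives composition. The translation-invariant cylinder gives the identity, and the resulting cylinder isomorphisms let the functor descend from $\widetilde{\textsc{rcob}}_*$ to $\textsc{rcob}_*$. The paper's \v{C}ech classes $u$ are not needed for this statement.

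The problem is the step you single out as the one new point. It is false that {\rrsc} structures on $\mathbb{W}_{12}\circ\mathbb{W}_{01}$ correspond bijectively to compatible pairs $(\tfrr_{01},\tfrr_{12})$. To glue, you must choose an isomorphism between the two restrictions to $\mathbb{Y}_1$. These choices form a torsor over the automorphism group $\mathcal{G}(Y_1,\tau_1,\mathbf{p}_1)$ of $\tfrr|_{Y_1}$, whose component group is $\Upgamma(Y_1,\tau_1,\mathbf{p}_1)$. The framing only removes the constant $-1$ and does nothing about non-null-homotopic automorphisms; ``no stabilizer'' is a statement about configurations, not about the structure. So the fibre of the restriction map over a compatible pair is a torsor over $\Upgamma(Y_1,\tau_1,\mathbf{p}_1)$ modulo restrictions of automorphisms of $\tfrr_{01}$ and $\tfrr_{12}$. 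This is the framed real analogue of the Mayer--Vietoris coboundary $H^1(Y_1)\to H^2(W)$.

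For instance, take branched double covers of a split saddle $U_1\to U_2$ followed by a merge saddle $U_2\to U_1$. Each piece is a single $2$-handle cobordism with $H^1=0$, so $\tfrr_{01}$ and $\tfrr_{12}$ have connected automorphism groups. Meanwhile $Y_1=S^1\times S^2$ has $\Upgamma(Y_1,\tau_1,\mathbf{p}_1)$ a nonzero subgroup of $H^1(Y_1;\mathbf{Z})^{-\tau^*}\cong\mathbf{Z}$. Hence every compatible pair underlies infinitely many {\rrsc} structures on the composite, and the Mayer--Vietoris verification you plan cannot succeed.

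No bijection is needed, and freeness enters elsewhere. The map of $\mathbb{W}$ sums over all $\tfrr_W$, i.e.\ over all classes $z$ of $W$-paths. The neck-stretching family gives a chain homotopy between $\tilde m(\mathbb W)$, whose matrix entries are $\sum_z\#\tilde N_z([\mathfrak a],\mathbb W^*,[\mathfrak b])$, and the map counting pairs of solutions on $\mathbb{W}_{01}^*$ and $\mathbb{W}_{12}^*$ indexed by triples $(z_{01},[\mathfrak c],z_{12})$.

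What you must check is that concatenation $(z_{01},[\mathfrak c],z_{12})\mapsto z$ is well defined and that every broken limit arises this way. This holds because $\mathfrak c$, reducible or not, has trivial stabilizer in $\mathcal G(Y_1,\tau_1,\mathbf p_1)$. Representatives on the two pieces therefore match over $Y_1$ uniquely up to gauge on each piece; this is where your ``no $\pm1$ ambiguity'' remark belongs. The several $\tfrr_W$ over one pair are then recorded by the components $z_{01},z_{12}$ of the fibres of $\tilde{\mathcal B}(\mathbb W_{ij})\to\tilde{\mathcal B}(\mathbb Y_1)$ over $[\mathfrak c]$.

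The correct refined identity, on homology, is $\sum\tilde m(\mathbb W,\tfrr_W)=\tilde m(\mathbb W_{12},\tfrr_{12})\circ\tilde m(\mathbb W_{01},\tfrr_{01})$, with the sum running over all $\tfrr_W$ restricting to the given pair. Summing over pairs gives the composition law, as in the paper's adaptation of Kronheimer--Mrowka's Prop.~26.1.2.
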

\subsection{Moduli spaces on manifolds with boundaries}
\label{sec:mod_space_on_4manifold_w_d}
Let $(X,\tau_X,\mathbf{a})$ be a compact, connected, oriented, marked real 4-manifold with non-empty boundary $(Y,\tau,\mathbf{p})$.
Suppose $\mathbb X$ contains an isometric copy of the cylinder $(-C,0] \times \mathbb Y$.
Denote by $\mathbb Y^{\alpha} = (Y^{\alpha},\tau^{\alpha},\mathbf{p}^{\alpha})$ a component of $\mathbb Y$ and assume $\mathbf{p}^{\alpha}$ is non-empty. 
Let $(\mathfrak{s}_X,\tfrr_X)$ be a {\rrscs} relative to $\mathbf{a} \subset X$, and $(\mathfrak{s}^{\alpha},\tfrr^{\alpha})$ be the corresponding restriction to the $\alpha$-th boundary component.
Consider the configuration spaces and Banach spaces of tame perturbations on the boundary (notationally omitting the underlying $\spinc$ structures and involutions):
\begin{equation*}
    \tilde{\mathcal{B}}_k(Y,\tfrr) = 
    \prod \tilde{\mathcal{B}}_k(Y^{\alpha},\tfrr^{\alpha}), \quad
    \mathcal{P}(Y,\tfrr) =  \prod \mathcal{P}(Y^{\alpha},\tfrr^{\alpha}).
\end{equation*}
Over the 4-manifold, consider the configuration $\mathcal{C}_k(X,\tfrr_X)$ and the \emph{closed} Hilbert manifold $\mathcal{B}_k(X,\tfrr_X)$. 
The Seiberg--Witten operator is a section $\mathfrak{F}\colon \mathcal{C}_k \to \mathcal{V}_{k-1}$ given by~\eqref{eq:grad_pertL}.
Introduce a perturbation at the cylindrical region $I \times \mathbb Y$: Let $\mathfrak{q},\mathfrak{p}_0$ be two elements of $\mathcal{P}(Y,\tfrr)$ and $\beta,\beta_0$ be two bump functions on $I \times Y$.
Define
\begin{equation*}
    \mathfrak{F}_{\mathfrak{p}} = \mathfrak{F} + \widehat{\mathfrak{p}}, \quad 
    \widehat{\mathfrak{p}} \colon \mathcal{C}(X,\tfrr_X) \to \mathcal{V}_k,\quad
    \widehat{\mathfrak{p}} = \beta \widehat{\mathfrak{q}} + \beta_0 \widehat{\mathfrak{p}}_0.
\end{equation*}
The moduli space of perturbed framed real Seiberg--Witten solutions is the solutions modulo $\mathcal{G}(X,\tau,\mathbf{a})$, denoted as 
\begin{equation*}
    \tilde{N}(X,\tfrr_X).
\end{equation*}
Since everything is downstairs, there is always a well-defined (not just partially) restriction map for any interior domain $X' \subset X$:
\begin{equation*}
    \tilde{\mathcal{B}}_k(X,\tfrr_X) \to \tilde{\mathcal{B}}_k(X',\tfrr_{X})
\end{equation*}
\begin{prop}
    \label{prop:manifold_with_d_smooth_Hilbert}
On the marked real 4-manifold with boundary $(X,\tau_X,\mathbf{a})$, the section $\mathfrak{F}_{\mathfrak{p}}$ of $\mathcal{V}_{k-1}$ is transverse to zero,
and the subset $\tilde{N}(X,\tfrr_X)$ of $\tilde{\mathcal{B}}_k(X,\tfrr_X)$ is a smooth Hilbert submanifold. 
\end{prop}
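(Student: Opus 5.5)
This follows the Kronheimer--Mrowka template for manifolds with boundary (\cite[Prop.~24.3.1]{KMbook2007}), adapted to the framed real setting \cite{ljk2022}; the framing removes the reducible complications.

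\textbf{Step 1: reduce transversality to surjectivity of the linearization.} The key point is that $\mathcal{G}_{k+1}(X,\tau_X,\mathbf{a})$ acts freely on $\mathcal{C}_k(X,\tfrr_X)$. A skew-invariant $u$ with $u\Phi=\Phi$ and $u^{-1}du=0$ is constant, so $u=\pm1$. Since $\mathbf{a}\neq\emptyset$ and $u|_{\mathbf{a}}\equiv 1$, we get $u\equiv 1$, even at reducibles. Hence $\tilde{\mathcal{B}}_k(X,\tfrr_X)$ is a Hilbert manifold with Coulomb--Neumann slices, exactly as in the Corollary following Proposition~\ref{prop:tangent_K_J_decomp}. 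Here I would use the Neumann-type boundary condition on $\del X$ and the fact that $\Delta+|\Phi|^2$ on $\tau$-anti-invariant functions vanishing on $\mathbf{a}$ is invertible.

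The perturbed operator $\mathfrak{F}_{\mathfrak{p}}$ is a gauge-equivariant section of the trivial bundle $\mathcal{V}_{k-1}$, which therefore descends to the quotient. Transversality to zero then means the following: at each zero $\upgamma$, the restriction $\mathcal{D}_\upgamma\mathfrak{F}_{\mathfrak{p}}|_{\mathcal{K}_{k,\upgamma}}\colon\mathcal{K}_{k,\upgamma}\to\mathcal{V}_{k-1,\upgamma}$ is surjective and has complemented kernel. Once that holds, the implicit function theorem makes $\tilde N(X,\tfrr_X)$ a smooth Hilbert submanifold.

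\textbf{Step 2: surjectivity.} With no boundary condition imposed, the operator $\mathcal{D}_\upgamma\mathfrak{F}_{\mathfrak{p}}\oplus\mathbf{d}^*_\upgamma$ on $\mathcal{T}_k$ is an elliptic operator on a manifold with boundary. It has a right inverse, because its formal adjoint has only the zero solution with no boundary condition constraint on the adjoint side. Concretely, suppose $V\in\mathcal{V}_{0}$ (together with a function component) is $L^2$-orthogonal to the image of the full operator on $\mathcal{T}_k$. Pairing against all compactly supported variations gives that $V$ solves the formal adjoint equation in the interior. Pairing against variations supported near $\del X$, via integration by parts, forces $V|_{\del X}=0$ in the appropriate trace sense. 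The adjoint is a first-order elliptic operator: Dirac plus $d+d^*$, twisted by the zeroth-order perturbation. So unique continuation, in the form of \cite[\S 7]{ljk2022} and \cite[Prop.~24.3.1]{KMbook2007}, gives $V\equiv0$ on the connected $X$.

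The perturbation $\widehat{\mathfrak{p}}$ is tame, and its derivative is a compact, suitably local operator on the collar $(-C,0]\times Y$. Therefore the unique continuation argument for $\frac{d}{dt}+L(t)$ on the collar, combined with ordinary unique continuation inside the unperturbed region, still applies. Reality is harmless throughout: all operators commute with $\tfrr$, so orthogonality within the real subspaces suffices. Gauge-fixing is then handled as in Proposition~\ref{prop:tangent_K_J_decomp}: surjectivity on $\mathcal{T}_k$ implies surjectivity on $\mathcal{K}_k$, because $\mathcal{D}\mathfrak{F}_{\mathfrak{p}}$ vanishes on $\mathcal{J}$ at zeros by equivariance.

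\textbf{Step 3: complemented kernel.} The kernel is closed. I would complement it using the splitting induced by the boundary restriction map $r\colon\mathcal{K}_k\to\mathcal{T}_{k-1/2}(Y)$. Following \cite[\S 24.3]{KMbook2007}, the operator $(\mathcal{D}\mathfrak{F}_{\mathfrak{p}},\Pi^-\circ r)$ with an APS boundary condition is Fredholm (compare the operator $P$ in \eqref{eq:grading_op_P}). From this, the kernel of the unconstrained operator is the graph-like sum of a finite-dimensional space and the boundary values in $H^+$, which is a complemented subspace.

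\textbf{Main obstacle.} I expect Step 2 to be the hardest, namely showing the boundary trace of the cokernel element vanishes and then invoking unique continuation in the presence of the nonlocal perturbation $\widehat{\mathfrak{p}}$ on the collar. I would handle it exactly as in the unframed real case \cite{ljk2022}, noting that the framing only removes the reducible stabilizer and so simplifies that argument.
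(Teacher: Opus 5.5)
Your proposal is correct and is essentially the paper's argument: you reduce transversality to surjectivity of $\mathcal{Q}_{\upgamma}=\mathcal{D}_{\upgamma}\mathfrak{F}_{\mathfrak{p}}\oplus\mathbf{d}^*_{\upgamma}$, note that a cokernel element $v$ solves the formal adjoint equation, which on the collar has the form $\frac{d}{dt}v+(L_0+h(t))v=0$, and kill $v$ by the unique continuation of \cite[Prop.~7.1.3]{KMbook2007}; you also usefully spell out two steps the paper leaves implicit, namely the vanishing of the boundary trace of $v$ and the continuation from the collar into the unperturbed interior. Your Step~3 is superfluous, because a closed subspace of a Hilbert space is automatically complemented; the Fredholm APS operator you invoke there would be better used to show that $\mathcal{Q}_{\upgamma}$ has closed range, which is what upgrades a trivial $L^2$-cokernel to genuine surjectivity in Step~2.
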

\begin{proof}
Let $\upgamma = (A,\Phi)$ be a solution.
The linearization of the unperturbed operator is the operator
\begin{equation*}
    \mathcal{D}_{\upgamma}\mathfrak{F} \colon
    L^2_k(X;iT^*X)^{-\tau^*_X} \oplus L^2_k(X;S^+)^{\tfrr_X} \to
    L^2_{k-1}(X;i\mathfrak{su}(S^+) \oplus S^-)^{\tfrr_X}
\end{equation*}
expressed as
\begin{equation*}
    (a,\phi) \mapsto \left(\frac{1}{2}\rho_X(d^+a) - (\Phi\phi^* + \phi\Phi^*)_0, D^+_A \phi + \rho_X(a)\Phi.\right) 
\end{equation*}
The linearization of the perturbation $\hat{\mathfrak{p}}$ is an operator
\begin{equation*}
    \mathcal{D}_{\upgamma}\hat{\mathfrak{p}}\colon
    L^2_k(X;iT^*X)^{-\tau^*_X} \oplus L^2_k(X;S^+)^{\tfrr_X} \to
    L^2_{k-1}(X;i\mathfrak{su}(S^+) \oplus S^-)^{\tfrr_X}
\end{equation*}
and its formal adjoint will be denoted as
\begin{equation*}
    (\mathfrak{t}_1,\mathfrak{t}_2)\colon L^2_{k-1}(X;i\mathfrak{su}(S^+) \oplus S^-)^{\tfrr_X} \to
    L^2_k(X;iT^*X)^{-\tau^*_X} \oplus L^2_k(X;S^+)^{\tfrr_X}.
\end{equation*}
It suffices to prove
$
    \mathcal{Q}_{\upgamma} = \mathcal{D}_{\upgamma} \mathfrak{F}_{\mathfrak{q}} \oplus \mathbf{d}_{\upgamma}^{*}
$
is surjective, where 
\[\mathbf{d}_{\upgamma}^*: \mathcal{T}_{k,\upgamma} \to L^2_{k-1}(X,\mathbf{a};i\mathbf{R})^{-\tau_X^*},
    \quad (a,\phi) \mapsto -d^*a + i\text{Re}\langle i\Phi,\phi\rangle.\]
To this end, it remains to show the formal adjoint equation $(\mathcal{Q}_{\upgamma})^*v=0$
has only trivial solution by appealing to the unique continuation of the perturbed equations.
The equation $(\mathcal{Q}_{\upgamma})^*v=0$ can be expressed as
\begin{equation*}
    \begin{cases}
        0 = \frac{1}{2}(d^+)^*\rho_X^*\eta + \rho_X^*(\pi \Phi^*) - d\xi + \mathfrak{t}_1(\eta,\uppi)\\
        0 = D^-_A\pi - 2\eta(\Phi) + \xi\Phi  + \mathfrak{t}_2(\eta,\uppi) 
    \end{cases}
\end{equation*}
where $v=(\eta,\uppi,\xi) \in L^2_j(X;i\mathfrak{su}(S^+) \oplus S^- \oplus i\mathbf{R})^{\tfrr_X}$.
Right away, these equations has the following the form over the collar region:
\begin{equation}
    \label{eq:application_unique_continuation}
    \frac{d}{dt}v + (L_0 + h(t))v=0,
\end{equation}
where $L_0$ is self-adjoint elliptic and $h(t)$ satisfies the condition of \cite[Prop.~7.1.3]{KMbook2007} in its blow-down form, which can be readily applied to the real case, by regarding equation~\eqref{eq:application_unique_continuation} as an equation on all ordinary Seiberg--Witten configurations.
\end{proof}
Let $(X^*,\tau_X,\mathbf{a}^*)$ be the manifold with cylindrical-end  obtained from $X$ by attaching $\mathbb{Z} = [0,\infty) \times (Y,\tau,\mathbf{p})$.
\begin{defn}
    Given a critical point $[\mathfrak{b}]$ in $\mathcal{B}_k(Y,\tfrr)$, the moduli space
\begin{equation*}
    \wtilde{N}(X^*,\tau_X,\mathbf{a}^*,\tfrr_X;[\mathfrak{b}]) \subset \tilde{\mathcal{B}}_{k,loc}(X^*,\tau_X,\mathbf{a}^*,\tfrr_X)
\end{equation*}
is the set of all $[\upgamma]$ solving $\mathfrak{F}_{\mathfrak{p}}(\upgamma) = 0$ for which the restriction of $[\upgamma]$ is asymptotic to $[\mathfrak{b}]$ on the cylindrical end $Z$.
\end{defn}

To define regularity, interpret the moduli space over manifolds with cylindrical ends as the fibre product for the maps:
\[R_+\colon \wtilde N(X,\tfrr_X) \to \wtilde{\mathcal{B}}_{k-1/2}(Y,\tfrr), \quad 
R_-\colon \wtilde N(Z,\tfrr;[\mathfrak{b}]) \to \wtilde{\mathcal{B}}_{k-1/2}(Y,\tfrr).\]

\begin{defn}
    The moduli space $\wtilde N(X^*,\tfrr_X;[\mathfrak{b}])$  is \emph{regular} at $[\upgamma] \in \wtilde N(X^*,\tfrr_X;[\mathfrak{b}])$ if $R_+$ and $R_-$ are transverse at $\rho[\upgamma]$.
\end{defn}
\begin{prop}
    If  $\wtilde N(X^*,\tfrr_X;[\mathfrak{b}])$ is nonempty and regular, then it is a smooth manifold. \hfill \qedsymbol
\end{prop}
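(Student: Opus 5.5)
The moduli space $\wtilde N(X^*,\tfrr_X;[\mathfrak{b}])$ is the fibre product of
\[R_+\colon \wtilde N(X,\tfrr_X) \to \wtilde{\mathcal{B}}_{k-1/2}(Y,\tfrr), \quad R_-\colon \wtilde N(Z,\tfrr;[\mathfrak{b}]) \to \wtilde{\mathcal{B}}_{k-1/2}(Y,\tfrr),\]
so the plan is to realize it as the transverse preimage of the diagonal in $\wtilde{\mathcal{B}}_{k-1/2}(Y,\tfrr)\times\wtilde{\mathcal{B}}_{k-1/2}(Y,\tfrr)$ under $R_+\times R_-$. Three ingredients are needed. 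First, both factors are smooth Hilbert manifolds: for $\wtilde N(X,\tfrr_X)$ this is Proposition~\ref{prop:manifold_with_d_smooth_Hilbert}, and $\wtilde N(Z,\tfrr;[\mathfrak{b}])$ on the half-infinite cylinder $[0,\infty)\times Y$ is handled by the same unique-continuation argument together with the infinite-cylinder Fredholm/Coulomb-slice analysis of Section~\ref{sec:analysis_framed}. Second, the target $\wtilde{\mathcal{B}}_{k-1/2}(Y,\tfrr)$ is a Hilbert manifold without boundary, since $\mathcal{G}_{\mathbf{p}}$ acts freely; this is the corollary following Proposition~\ref{prop:tangent_K_J_decomp}, applied at regularity $k-1/2$. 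Third, the restriction maps are smooth. In the framed setting there is no blow-up, so restriction is defined everywhere, and $R_\pm$ are smooth because trace maps $L^2_k\to L^2_{k-1/2}$ are smooth and commute with the gauge action.

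Next, I would run the standard transversality-to-the-diagonal argument as in \cite[\S 24]{KMbook2007}. Regularity at $[\upgamma]$ says exactly that $\mathcal{D}R_+ + \mathcal{D}R_-$, or more precisely their difference, maps onto $T\wtilde{\mathcal{B}}_{k-1/2}(Y)$. Hence $R_+\times R_-$ is transverse to the diagonal $\Delta$, and the implicit function theorem for Hilbert manifolds makes $(R_+\times R_-)^{-1}(\Delta)$ a smooth submanifold near $[\upgamma]$. Its finite-dimensionality would come from the Fredholm property of the pair of restriction maps. Following \cite[Thm.~17.3.2/24.4]{KMbook2007}, $\mathcal{D}R_+$ on $T\wtilde N(X)$ is Fredholm onto a complement-commensurate subspace, roughly the graph over $H^-$ of a compact operator. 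Likewise, $\mathcal{D}R_-$ on $T\wtilde N(Z;[\mathfrak{b}])$ is comparable to $H^+$, the spectral subspace of the Hessian at the nondegenerate critical point $\mathfrak{b}$. Because $H^+$ and $H^-$ are complementary, the pair is Fredholm, and the fibre product is a finite-dimensional manifold.

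Finally, I would identify the fibre product with $\wtilde N(X^*,\tfrr_X;[\mathfrak{b}])$. The gluing map from the fibre product to solutions on $X^*$ is a homeomorphism because gauge transformations on $X$ and on $Z$ that agree on $Y$ glue. The framing conditions $u|_{\mathbf{a}}\equiv 1$ and $u|_{\mathbf{p}_I}\equiv1$ are compatible along $\mathbf{p}=\partial\mathbf{a}\cap Y$, and since stabilizers are trivial there is no ambiguity in gluing. Elliptic regularity across $Y$ then upgrades the glued configuration to $L^2_{k,loc}$.

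The main obstacle is the Fredholm and compact-perturbation description of $\mathcal{D}R_\pm$. I would attempt it first by writing the linearized equations on the collar in the form $\frac{d}{dt}+L_0+h(t)$ and invoking the APS-type analysis of \cite[\S 17]{KMbook2007}. The framed case should be simpler than the original, since the bundles are trivial and reducibles need no boundary-obstructed treatment.
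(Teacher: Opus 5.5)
Your proposal is correct and follows the route the paper intends. The paper leaves this proposition without proof, after describing $\wtilde N(X^*,\tfrr_X;[\mathfrak{b}])$ as the fibre product of $R_+$ and $R_-$ and defining regularity as their transversality; your argument supplies exactly that: transversality to the diagonal (with Proposition~\ref{prop:manifold_with_d_smooth_Hilbert} for the $X$-factor), the Fredholm-pair property of the restriction maps as in \cite{KMbook2007}, and the gluing of framed gauge transformations across $Y$ (free action, and the framing along $\mathbf{a}$ matching that along $\mathbf{p}_I$).
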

Let $\wtilde{\boldsymbol{\mathcal{B}}}(X^*,\tau_X,\mathbf{a})$ be the union of the configuration spaces over all {\rrscs}s.
Denote by $\tilde{B}(X^*,\tau_X,\mathbf{a};[\mathfrak{b}])$ the fibre of the restriction to $\tilde{\boldsymbol{\mathcal{B}}}(Y,\tfrr)$.
This space can be written as the union over homotopy classes:
\begin{equation*}
    z \in \pi_0(\wtilde{\mathcal{B}}(X,\tau_X;[\mathfrak{b}])), \quad
    \wtilde{\mathcal{B}}_{k,\text{loc}}(X^*,\tau_X,\mathbf{a};[\mathfrak{b}]) =
    \bigcup_z \wtilde{\mathcal{B}}_{k,\text{loc},z}(X^*,\tau_X,\mathbf{a};[\mathfrak{b}]);
\end{equation*}
where $z$ plays the role of $\pi_1(\wtilde{\mathcal{B}}(Y,\tfrr);[\mathfrak{a}],[\mathfrak{b}])$. 
The moduli space $\wtilde{N}(X^*,\tau_X;[\mathfrak{b}])$ can be written as the union
\begin{equation*}
    \wtilde{N}(X^*,\tau_X;[\mathfrak{b}]) = \bigcup_z \wtilde{N}_z(X^*,\tau_X;[\mathfrak{b}]).
\end{equation*}
Elements $z \in \pi_0(\wtilde{\mathcal{B}}(X,\tau_X;[\mathfrak{b}_0]))$ and $z_1 \in \pi_1(\wtilde{\mathcal{B}}(Y,\tfrr);[\mathfrak{b}_0],[\mathfrak{b}])$ can be concatenated to form $z_1 \circ z \in \pi_0(\wtilde{\mathcal{B}}(X,\tau_X;[\mathfrak{b}_0]))$.
Let 
$[\upgamma] \in \wtilde{\mathcal{B}}(X,\tau_X;[\mathfrak{b}])$, represented by $\upgamma$, and $\mathcal{Q}_{\upgamma}$ be the operator $\mathcal{D}_{\upgamma} \mathfrak{F}_{\mathfrak{q}} \oplus \mathbf{d}_{\upgamma}^{*}$ on $X$.
Let $[\upgamma_{\mathfrak{b}}]$ be the constant trajectory corresponding to $\mathfrak{b}$, and $\mathcal{Q}_{\upgamma_{\frakb}}$ be the translation-invariant operator on $Z$.
Consider the restriction maps:
\begin{align*}
    r_+\colon \ker(\mathcal{Q}_{\upgamma}) 
    &\to L^2_{k-1/2}(Y;iT^*Y \oplus S \oplus i\mathbf{R})^{-\tfrr_X},\\
    r_-\colon \ker(\mathcal{Q}_{\upgamma_{\frakb}}) 
    &\to L^2_{k-1/2}(Y;iT^*Y \oplus S \oplus i\mathbf{R})^{-\tfrr_X}.
\end{align*}
Denote
\[
    \gr_z(X,\tau_X,\mathbf{p}; [\mathfrak{b}]) = \ind(r_+ - r_- \colon 
    \ker(\mathcal{Q}_{\upgamma}) \oplus \ker(\mathcal{Q}_{\upgamma_{\frakb}}) \to
    L^2_{k-1/2}(Y;iT^*Y \oplus S \oplus i\mathbf{R})^{-\tfrr_X}).
\]
\begin{prop}
    If the moduli space $\wtilde N_z(X,\tau_X,\mathbf{a};[\mathfrak{b}])$ is non-empty and regular, then its dimension is $\gr_z(X,\tau_X,\mathbf{p}; [\mathfrak{b}])$. \hfill \qedsymbol
\end{prop}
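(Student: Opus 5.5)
The dimension statement is the framed analogue of \cite[Prop.~24.3.3]{KMbook2007}, and I will obtain it from the fibre-product description of $\wtilde N_z(X^*,\tau_X,\mathbf{a};[\mathfrak{b}])$.

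\textbf{Step 1: local structure of the two factors.} By Proposition~\ref{prop:manifold_with_d_smooth_Hilbert}, $\wtilde N(X,\tfrr_X)$ is a smooth Hilbert manifold. Its tangent space at $[\upgamma]$ is identified with $\ker(\mathcal{Q}_{\upgamma})$; this identification uses the Coulomb--Neumann slice and the splitting $\mathcal{T}_j=\mathcal{J}_j\oplus\mathcal{K}_j$ of Proposition~\ref{prop:tangent_K_J_decomp}. In the same way, the half-cylinder moduli space $\wtilde N(Z,\tfrr;[\mathfrak{b}])$ near the constant solution $[\upgamma_{\frakb}]$ has tangent space $\ker(\mathcal{Q}_{\upgamma_{\frakb}})$, computed in the weighted or exponentially decaying setting given by the exponential decay proposition.

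\textbf{Step 2: regularity makes the fibre product a manifold.} Regularity means $R_+$ and $R_-$ are transverse at the restriction point. The linearizations of $R_\pm$ are the maps $r_\pm$ composed with the projection to the tangent space $\mathcal{K}_{k-1/2}$ of $\wtilde{\mathcal{B}}_{k-1/2}(Y,\tfrr)$. Transversality is therefore equivalent to surjectivity of $r_+-r_-$ onto $\mathcal{K}\oplus(\text{gauge part})$. Given this, the implicit function theorem shows that the fibre product is locally a manifold with tangent space $\ker(r_+-r_-)$. Its dimension is then the index of $r_+-r_-$, which is $\gr_z$ by definition, provided $r_+-r_-$ is Fredholm.

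\textbf{Step 3: Fredholmness, the main obstacle.} I would prove it with the APS-type boundary-value description. First, $r_+(\ker\mathcal{Q}_{\upgamma})$ is commensurate with a complement of the spectral subspace $H^-$; this follows from the Fredholmness of $P_{\upgamma,\mathfrak{p}}$ in \eqref{eq:grading_op_P}, adapted to a manifold with one boundary component. Second, $r_-(\ker\mathcal{Q}_{\upgamma_{\frakb}})$ is commensurate with $H^-$, since on the half-cylinder the decaying solutions of $\frac{d}{dt}+L_{\frakb}$ are exactly the negative spectral part, using the hyperbolicity from Lemma~\ref{lem:nondeg_Hess_suj}. A pair of subspaces commensurate with complementary spectral subspaces is a Fredholm pair, and this gives Fredholmness of $r_+-r_-$.

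The framed setting makes this step easier than the blown-up case in \cite{KMbook2007}. The operator $\mathbf{d}^*_\upgamma\mathbf{d}_\upgamma$ is invertible even at reducibles, because there are no nonzero $(-\tau^*)$-invariant constants and functions vanish on $\mathbf{a}$. So no boundary-obstructed correction arises, and the dimension equals the index with no $\pm1$ shift.
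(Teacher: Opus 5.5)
Your route is the standard argument from \cite{KMbook2007}, which the paper relies on implicitly since it states the proposition without proof: realize $\wtilde N_z(X^*,\tfrr_X;[\frakb])$ as the fibre product of $R_+$ and $R_-$, then get Fredholmness of $r_+-r_-$ from the two boundary-value spaces being commensurate with complementary spectral subspaces. However, Step~1 is false as stated, and Step~2 depends on it. On a manifold with boundary, $T_{[\upgamma]}\wtilde N(X,\tfrr_X)$ is $\ker(\mathcal{D}_\upgamma\mathfrak{F}_{\mathfrak{p}})\cap\mathcal{K}_{k,\upgamma}$, where $\mathcal{K}$ is the Coulomb--Neumann slice, which includes the condition $\langle a,\overrightarrow{n}\rangle=0$ on $Y$. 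The operator $\mathcal{Q}_\upgamma$ carries no boundary condition. Since $\mathcal{D}_\upgamma\mathfrak{F}_{\mathfrak{p}}\circ\mathbf{d}_\upgamma=0$ at a solution, the splitting $\mathcal{T}=\mathcal{J}\oplus\mathcal{K}$ gives
\[
\ker\mathcal{Q}_\upgamma=T_{[\upgamma]}\wtilde N(X,\tfrr_X)\oplus\{\mathbf{d}_\upgamma\xi:\ \mathbf{d}^*_\upgamma\mathbf{d}_\upgamma\xi=0\}.
\]
The second summand is infinite-dimensional, parametrized by $\xi|_Y$, and the same holds on $Z$ with decaying $\xi$. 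Under your identification, $DR_+-DR_-=\Pi_{\mathcal{K}}\circ(r_+-r_-)$ kills every pair of such gauge directions, so its kernel is infinite-dimensional. Hence neither ``transversality $\iff$ $r_+-r_-$ surjective'' nor ``tangent space $=\ker(r_+-r_-)$'' follows from what you wrote. That equivalence is the actual content of the proposition, because $\gr_z$ is defined using $\ker\mathcal{Q}$ and a target containing the extra summands $\mathcal{J}_{k-1/2}$ and $L^2_{k-1/2}(Y;i\mathbf{R})^{-\tau^*}$ (the normal component).

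The missing step is to show that the gauge summands account exactly for $\mathcal{J}_{k-1/2}\oplus L^2_{k-1/2}(Y;i\mathbf{R})^{-\tau^*}$.
\begin{itemize}
\item Write $\eta_\pm=\xi_\pm|_Y$ and let $N_\pm$ be the Dirichlet-to-Neumann maps ($\partial_t$-derivatives) of $\mathbf{d}^*\mathbf{d}$ on $X$ and on $Z$. Then $r_+(\mathbf{d}\xi_+)-r_-(\mathbf{d}\xi_-)=\big(\mathbf{d}(\eta_+-\eta_-),\,-(N_+\eta_+-N_-\eta_-)\big)$.
\item Since there are no nonzero $\tau$-anti-invariant constants, Green's formula makes $N_+$ positive definite and $N_-$ negative definite on anti-invariant functions. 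So $N_+-N_-$ is invertible, and this gauge block maps isomorphically onto $\mathcal{J}\oplus L^2(Y;i\mathbf{R})$.
\item Coulomb--Neumann tangent vectors have zero normal component, so $r_+-r_-$ is block-triangular with this invertible block.
\item Its kernel and cokernel are therefore isomorphic to those of $DR_+-DR_-$. This yields both regularity $\iff$ surjectivity of $r_+-r_-$ and $\dim=\ind(r_+-r_-)$.
\end{itemize}
Alternatively, glue Cauchy data to identify $\ker(r_+-r_-)$ with the $L^2_k$ kernel of $\mathcal{Q}$ on $X^*$, and work in a global Coulomb slice there.

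This is where the invertibility of $\mathbf{d}^*\mathbf{d}$ from your last paragraph is really used. It comes from skew-invariance; vanishing on $\mathbf{a}\subset\fix(\tau_X)$ is automatic. What removes boundary-obstructedness is that the framed gauge group acts freely, so no blow-up is needed.

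Smaller points:
\begin{itemize}
\item With $\mathcal{Q}=\frac{d}{dt}+L$ on $Z=[0,\infty)\times Y$, decaying solutions start in the \emph{positive} spectral subspace, so your $H^\pm$ labels in Step~3 are swapped. This is harmless.
\item Hyperbolicity of $L_{\frakb}$ also needs injectivity of $\mathbf{d}_{\frakb}$, not only Lemma~\ref{lem:nondeg_Hess_suj}.
\item At $[\upgamma]$ the relevant operator on $Z$ is $\mathcal{Q}_{\upgamma|_Z}$, not $\mathcal{Q}_{\upgamma_{\frakb}}$, so you need homotopy invariance of the index of the Fredholm pair.
\end{itemize}
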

Let $P$ be a smooth finite-dimensional manifold.
Suppose $g^p$ is a family of $\tau_X$-invariant Riemannian metrics containing isometric copies of the collar $I \times Y$.
Suppose $\mathfrak{p}_0^p \in \mathcal{P}(Y,\tfrr)$ is a smooth family of perturbations; let
\[\hat{p}^p=\beta(t)\mathfrak{q} + \beta_0(t)\hat{\mathfrak{p}}_0\] 
The family version of the moduli space is defined as
\[\wtilde{N}(X^*,\tfrr_X;[\mathfrak{b}])_P = \bigcup_p \{p\} \times \wtilde{N}(X^*,\tfrr_X;[\mathfrak{b}]) \subset
P \times \wtilde{\mathcal{B}}(X^*,\tfrr_X).
\]
Once again, there are restriction maps
\[R_+\colon \wtilde N(X,\tfrr_X)_P \to \wtilde{\mathcal{B}}_{k-1/2}(Y,\tfrr), \quad 
R_-\colon \wtilde N(Z,\tfrr;[\mathfrak{b}])_P \to \wtilde{\mathcal{B}}_{k-1/2}(Y,\tfrr),\]
from where we define the family version of regularity:
\begin{defn}
    Given $(p,\upgamma) \in \wtilde N(X,\tfrr_X)_P$, and $\rho[\upgamma] = ([\upgamma_0],[\upgamma_1])$, the moduli space $\wtilde N(X,\tfrr_X)_P$ is \emph{regular} at $(p,[\upgamma])$ if the maps of Hilbert manifolds $R_+$ and $R_-$ are transverse at $((p,[\upgamma_0]),[\upgamma_1])$.
\end{defn}
\begin{prop}
    \label{prop:family_transversality_manifold_with_d}
    Fix an admissible perturbation $\mathfrak{q}$ on $Y$.
    Let $g^p$ be a smooth family of $\tau_X$-invariant Riemannian metrics, all containing an isometric copy of $I \times (Y,\tau)$.
    Let $\hat{p}^p$ be a family of perturbations of the form:
    \[\hat{p}^p=\beta(t)\mathfrak{q} + \beta_0(t)\hat{\mathfrak{p}}_0\] 
    all supported on the collar $I \times Y$.
    Let $P_0 \subset P$ be a closed subset for which the parametrized moduli space
    $\wtilde N(X,\tau_X,\mathbf{a};[\mathfrak{b}])_P$
    is regular at all points $(p_0,[\upgamma])$ where $p_0 \in P_0$.
    Then there exists a family of perturbations $\tilde{\mathfrak{p}}^p$ with
    \[\tilde{\mathfrak{p}}^p = \mathfrak{p}^p, \quad \text{for all} \ p \in P_0,\] 
    such that $\wtilde N(X,\tau_X,\mathbf{a};[\mathfrak{b}])_P$ is regular everywhere.
    In the case when $P$ is a single point and $P_0 = \emptyset$,
     there is a residual subset of $\mathcal{P}(Y,\tfrr_X)$ such that for all element $\mathfrak{p}$, the moduli space $\wtilde N_z(X,\tau_X,\mathbf{a};[\mathfrak{b}])$ is regular.
\end{prop}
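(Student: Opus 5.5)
The plan follows the Kronheimer--Mrowka scheme for transversality over manifolds with cylindrical ends (\cite[Prop.~24.4.7]{KMbook2007}), adapted to the framed real setting. The framed configuration spaces are Hilbert manifolds with no blow-up, and the framed gauge group acts freely, so reducibles need no separate treatment.

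\textbf{Step 1: reduce to a transversality statement.} I would write the parametrized moduli space as the fibre product of
\[
R_+ \colon \wtilde N(X,\tfrr_X)_P \to \wtilde{\mathcal{B}}_{k-1/2}(Y,\tfrr), \qquad R_- \colon \wtilde N(Z,\tfrr;[\mathfrak{b}]) \to \wtilde{\mathcal{B}}_{k-1/2}(Y,\tfrr).
\]
By Proposition~\ref{prop:manifold_with_d_smooth_Hilbert}, the source of $R_+$ is a Hilbert manifold for each metric $g^p$. The same unique continuation argument, run with $p$ as an extra parameter, shows the parametrized space $\wtilde N(X,\tfrr_X)_P$ is a Hilbert manifold. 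The half-infinite cylinder space $\wtilde N(Z,\tfrr;[\mathfrak{b}])$ is a Hilbert manifold because $\mathfrak{b}$ is nondegenerate. Its image under $R_-$ is the graph of a map from the negative spectral subspace $\mathcal{K}^-_{\mathfrak{b}}$ near $\mathfrak{b}$, and $r_-$ makes it a Fredholm pair with $R_+$.

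\textbf{Step 2: enlarge the perturbation space.} I would add as parameters the auxiliary perturbations $\hat{\mathfrak{p}}_0$, drawn from the large Banach space $\mathcal{P}(Y,\tfrr)$ and multiplied by $\beta_0$. I would also multiply them by a cutoff on $P$ vanishing on a neighbourhood of $P_0$; regularity is an open condition and $P_0$ is closed, so regularity still holds on such a neighbourhood. This gives a universal map
\[
\mathfrak{R} \colon \wtilde N(X,\tfrr_X)_{P\times\mathcal{P}'} \times \wtilde N(Z,\tfrr;[\mathfrak{b}]) \to \wtilde{\mathcal{B}}_{k-1/2}(Y,\tfrr)^2 .
\]
I would show $\mathfrak{R}$ is transverse to the diagonal, then apply Sard--Smale to the projection to $P\times\mathcal{P}'$, which is Fredholm because $r_+ - r_-$ is. This yields a residual set of $\mathfrak{p}_0$. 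For a general $P$, I would choose a generic section of $P\times\mathcal{P}'\to P$ equal to zero over $P_0$, as in \cite[Prop.~24.4.10]{KMbook2007}. In the single point case this directly gives the residual subset claimed.

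\textbf{Step 3: prove $\mathfrak{R}$ is transverse.} This is the main obstacle. Suppose a nonzero element $v$ of $\mathcal{T}_{k-1/2}(Y,\tfrr)$ annihilates the sum of the images of $\mathcal{D}R_+$ (including $\delta\hat{\mathfrak{p}}_0$ directions) and $\mathcal{D}R_-$. By duality, $v$ extends to a solution $V$ of the formal adjoint equation $(\mathcal{Q}_\upgamma)^*V=0$ on the collar $I\times Y$, with boundary value determined by $v$.

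The variations $\delta\hat{\mathfrak{p}}_0$ pair with $V$ through $\int_I \beta_0\langle \delta\mathfrak{q}(\breve\upgamma(t)),\breve V(t)\rangle\,dt$. Two cases arise.
\begin{itemize}
\item \emph{The path $\breve\upgamma(t)$ is nonconstant in $\wtilde{\mathcal{B}}_k(Y,\tfrr)$ on $\operatorname{supp}\beta_0$.} I would argue exactly as in Theorem~\ref{thm:transversality_cylinder}. Lemma~\ref{lem:breveV_non_zero} gives that $\breve V(t)$ is nonzero in $[\mathcal{T}_0]$ by unique continuation. Proposition~\ref{prop:enough_embeddings} then supplies a cylinder function with positive pairing, which is a contradiction.
\item \emph{The path is constant, i.e.\ $\upgamma$ is translation invariant on the collar.} Then $V$ satisfies $(d/dt + L + h)V=0$ with translation-invariant coefficients. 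It must lie in the image of $\mathcal{D}R_-$ or vanish, using hyperbolicity at the nondegenerate point $\mathfrak{b}$ as in the last paragraph of the proof of Theorem~\ref{thm:transversality_cylinder}.
\end{itemize}

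Here the framed setting helps, since the constant path is handled the same way for reducible and irreducible configurations. The delicate points are making the embedding from Proposition~\ref{prop:enough_embeddings} invariant under $\mathsf{D}_{\mathbf{p}}$, and ensuring the perturbations remain $\mathcal{G}(Y,\tau)$-invariant so that they descend to the framed quotient.
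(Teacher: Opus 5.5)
Your overall architecture is the paper's: a universal moduli space over $\mathcal{P}(Y,\tfrr)$, transversality of $R_+\times R_-$ to the diagonal via the cylinder-function argument of Theorem~\ref{thm:transversality_cylinder}, then Sard--Smale, with the family case reduced as in \cite[Prop.~24.4.10]{KMbook2007}. The paper phrases the key step as density of $\operatorname{im}\mathcal{D}R_+$ on the universal space plus the Fredholm-pair argument of \cite[Prop.~24.4.7]{KMbook2007}; given the Fredholm pair, this is equivalent to your annihilator formulation.

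\textbf{Gap 1: the constant-collar case.} The step that fails is your second case, where $\breve\upgamma$ is constant on the collar. The last paragraph of the proof of Theorem~\ref{thm:transversality_cylinder} works because there $\frac{d}{dt}+L$ is translation-invariant on all of $\mathbf{R}\times Y$, hence invertible. Here the translation-invariant region (collar and end) is glued to $X$. Moreover, the collar coefficients generally still depend on $t$ through $\beta$ and $\beta_0$, so your claim of translation-invariant coefficients also needs justification. Hyperbolicity of $L_{\mathfrak b}$ only says that on a translation-invariant stretch $\breve V=\sum_n c_n e^{\lambda_n t}e_n$. Combined with $v\perp\operatorname{im}\mathcal{D}R_-$, this shows that $V$ continues to a decaying solution of the adjoint equation on all of $X^*$. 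That is a cokernel element on $X^*$, which is exactly the obstruction you want to exclude, not a contradiction. Your argument never uses $\delta\hat{\mathfrak p}_0$ in this case, so it would prove regularity with no perturbation at all. That cannot hold in general: nondegeneracy of $\mathfrak b$ says nothing about, e.g., the $H^+_R(X)$-part of the cokernel at a reducible solution.

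In this case you must still use the perturbation directions. They pair with $V$ through $\langle\delta\mathfrak{p}_0(\mathfrak b),\int\beta_0(t)\breve V(t)\,dt\rangle$, so you need $\int\beta_0\breve V\,dt\neq0$ in $[\mathcal{T}_0]$, in the directions perturbations can see (see below). When the collar operator is autonomous this holds: the integral multiplies each $c_n$ by $\int\beta_0(t)e^{\lambda_n t}\,dt>0$. In general you need an extra argument, for instance letting the time profile of the collar perturbation vary and invoking unique continuation.

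\textbf{Gap 2: reducibles.} Your remark that reducibles are handled ``the same way'' is not justified. Tame perturbations are gradients of $\mathcal{G}(Y,\tau)$-invariant functions, hence $\{\pm1\}$-equivariant. Along a reducible $\upgamma$, the operator $\mathcal{Q}_\upgamma$ therefore splits into a connection block and a spinor block, and every variation $\delta\hat{\mathfrak p}_0(\upgamma)$ has zero spinor component. A cokernel element in the spinor block is then orthogonal to all perturbation variations, in both of your cases. That block is a real Dirac-type operator of fixed index, so when the index is negative no choice of $\mathfrak p_0$ removes the cokernel. You would need either a hypothesis on that index or a separate treatment of reducible solutions. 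The paper's sketch does not address this either: it applies Proposition~\ref{prop:enough_embeddings} at every point, but at $[B,0]$ every cylinder function has zero derivative in spinor directions, because $h$ is even in the $\mathbf{R}^m$-variables.
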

\begin{proof}
    This proposition is the framed analogue of \cite[Prop.~24.4.10]{KMbook2007}.
    By the same argument, it suffices to
    consider the case when $P$ is a point, and the universal moduli space (a smooth Banach manifold by Proposition~\ref{prop:manifold_with_d_smooth_Hilbert}):
    \[\mathfrak{W} \subset \wtilde{\mathcal{B}}_k(X,\tfrr_X) \times \mathcal{P}(Y,\tfrr)\]
    defined as the zero set of
    \begin{equation*}
        \mathfrak{W} \colon \wtilde{\mathcal{C}}_k(X,\tfrr_X) \times \mathcal{P}(Y,\tfrr) 
        \to \mathcal{V}_{k-1},\quad
        \mathfrak{W} \colon (\upgamma,\mathfrak{p}) \mapsto \mathfrak{F}_{\hat{\mathfrak{p}}}(\upgamma).
    \end{equation*}
    modulo the framed gauge group $\mathcal{G}_{\mathbf{a}}(X,\tau_X)$.
    One can prove the auxiliary lemma below by adapting the irreducible case of \cite[Lem.~24.4.8]{KMbook2007}:
    \begin{lem}
    Given $([\upgamma],\mathfrak{p}_0) \in \mathfrak{W}(X,\tfrr_X)$ and let $[\mathfrak{c}] = R_+([\upgamma])$. 
    The differential of the restriction map
    \[\mathcal{D}_{([\upgamma],\mathfrak{p}_0)} R_+ \colon
    T_{([\upgamma],\mathfrak{p}_0)} \mathfrak{W}(X,\tfrr_X) \to
    T_{[\mathfrak{c}]}\widetilde{\mathcal{B}}_{k-1/2}(Y,\tfrr).\]
    has dense range in the $L^2_{1/2}$ topology.
    \end{lem}
    The proof the above Lemma uses the proof of Theorem~\ref{thm:transversality_cylinder} in place of \cite[Prop.~15.1.3]{KMbook2007}.
    One can conclude, using the above the lemma and the argument in \cite[Prop.~24.4.7]{KMbook2007} that
    the restriction
    \[\R_+ \times R_- \colon
    \mathfrak{W}(X,\tfrr_X) \times \wtilde{N}(Z,\mathbf{p}_I);[\mathfrak{b}]) 
    \to
    \wtilde{\mathcal{B}}_{k-1/2}(Y,\tfrr) \times \wtilde{\mathcal{B}}_{k-1/2}(Y,\tfrr) \]
    is transverse to the diagonal.
    Proposition~\ref{prop:family_transversality_manifold_with_d} then follows from the standard transversality argument~\cite[Lem.~12.5.1]{KMbook2007}.
\end{proof}
\subsubsection*{Compactness}
Denote 
\[X' \Subset \big(X \setminus (-\epsilon,0] \times Y\big).\]
Suppose $\mathfrak{q}$ is the fixed perturbation on $\del X = Y$ for which $\pertL_{\mathfrak{q}} = \mathcal{L} + f$.
The \emph{perturbed topological energy} is 
\[\mathcal{E}^{\text{top}}_{\mathfrak{q}}(\upgamma) :=
\mathcal{E}^{\text{top}}(\upgamma) - 2f(\upgamma).\]
\begin{theorem}
    Let $\upgamma_n \in \mathcal{C}_k(X,\tfrr_X)$ be a sequence of solutions to $\mathfrak{F}_{\mathfrak{p}}(\upgamma) = 0$.
    Suppose there is a uniform bound on the energy:
    \[\mathcal{E}^{\text{top}}_{\mathfrak{q}}(\upgamma_n) \le C_1.\]
    Then there exists a sequence of gauge transformations $u_n \in \mathcal{G}_{\mathbf{a}}(X,\tau_X)$ such that 
    after passing to a subsequence, the restrictions $u_n(\upgamma_n)|_{X'}$ converges to a solution $\upgamma \in \mathcal{C}_k(X',\tfrr_X)$.
\end{theorem}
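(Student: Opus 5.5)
This compactness theorem is the framed counterpart of \cite[Thm.~24.5.2]{KMbook2007} and its real version in \cite{ljk2022}. I would follow the Kronheimer--Mrowka strategy in the downstairs (unblown-up) setting, and handle the framing only at the very last step.

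\textbf{Step 1: uniform bounds.} The first step converts the topological energy bound into analytic bounds. The energy identity relates $\mathcal{E}^{\text{top}}_{\mathfrak{q}}$ to the analytic energy
\[\int_X \tfrac14|F_{A^t}|^2+|\nabla_A\Phi|^2+\tfrac14(|\Phi|^2+\tfrac s2)^2\]
modulo terms controlled by the perturbation. This identity is pointwise and purely local, so it holds verbatim for $\tfrr_X$-invariant configurations. Tameness conditions (iv)--(v) of Definition~\ref{defn:tame_pert} bound the contribution of $\hat{\mathfrak{p}}$ on the collar. The Weitzenb\"ock formula and the maximum principle then give a uniform $C^0$ bound on $\Phi_n$, hence uniform $L^2$ bounds on $F_{A_n^t}$ and $\nabla_{A_n}\Phi_n$ over $X$.

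\textbf{Step 2: gauge fixing and bootstrapping.} Next I would find gauge transformations putting $\upgamma_n$ into Coulomb--Neumann gauge relative to a smooth reference connection $A_0$. This is done first with unframed real gauge transformations $u_n'\in\mathcal{G}(X,\tau_X)$. Hodge theory for $(-\tau^*)$-invariant forms allows $A_n-A_0$ to be written as a harmonic part plus a coexact part with Neumann condition. The harmonic part lies in $\mathcal{H}^1(X)^{-\tau^*}$ and can be moved into a fixed fundamental domain by harmonic gauge transformations $e^{h}$ with $[h]\in H^1(X;i\mathbf{Z})^{-\tau^*}$. This yields uniform $L^2_1$ bounds. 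Elliptic bootstrapping with the tame estimates, exactly as in \cite[\S 24.5]{KMbook2007} and \cite[\S 9]{ljk2022}, then upgrades these to $L^2_k$ bounds on the interior domain $X'$. Rellich compactness gives a subsequence converging in $\mathcal{C}_k(X',\tfrr_X)$ to a solution $\upgamma$ of $\mathfrak{F}_{\mathfrak{p}}=0$.

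\textbf{Step 3: passing to the framed gauge group.} The only new point is that the $u_n'$ must be replaced by elements of $\mathcal{G}_{\mathbf{a}}(X,\tau_X)$. By skew-invariance each $u_n'$ restricts to each component of $\mathbf{a}$ as a constant $\pm1$. The evaluation $\mathsf{ev}_{\mathbf{a}}(u_n')$ therefore takes one of finitely many values in $\{\pm1\}^{|\mathbf{a}|}$. After passing to a further subsequence, all $u_n'$ have the same evaluation, so $u_n'=g\,u_n$ for a fixed real gauge transformation $g$ and $u_n\in\mathcal{G}_{\mathbf{a}}(X,\tau_X)$. More precisely, $\mathcal{G}_{\mathbf{a}}$ has finite index, and each $u_n'$ lies in one of finitely many cosets of it. Since $g^{-1}$ is a fixed smooth map, $u_n(\upgamma_n)=g^{-1}u_n'(\upgamma_n)$ converges on $X'$ to $g^{-1}\upgamma$, which is again a solution.

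\textbf{Main obstacle.} I expect Step 1 to be the delicate part, specifically controlling the perturbation term $-2f(\upgamma)$ and the collar contributions. One must show that $f$ of the boundary restriction is bounded in terms of the energy. Here I would invoke the $L^2$ bound (iv) together with the fact that cylinder functions are bounded on the $\mathbf{T}^t$- and $\mathbf{R}^n$-directions by compact support. Their gauge invariance under the full $\mathcal{G}(Y,\tau)$ means framing introduces no new issue in this step.
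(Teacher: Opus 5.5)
Your proposal is correct and follows essentially the paper's route. The paper's proof simply defers to the downstairs case of \cite[Thm.~24.5.2]{KMbook2007}, which itself rests on the compactness theorem of \cite[\S 5]{KMbook2007}; your Step~3 (passing to a subsequence on which $\mathsf{ev}_{\mathbf{a}}(u_n')$ is constant and absorbing a fixed coset representative $g$) just makes explicit the reduction to the finite-index subgroup $\mathcal{G}_{\mathbf{a}}(X,\tau_X)$ that the paper leaves implicit. The one imprecision is in Step~1: on a manifold with boundary, the pointwise bound on $\Phi_n$ comes from an interior estimate driven by the energy bound, not from a global maximum-principle argument. Interior control is all that convergence on $X'$ requires, so this does not affect the argument.
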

\begin{proof}
    This follows from the proof of the downstairs case of \cite[Thm.~24.5.2]{KMbook2007} which essentially a consequence of
    \cite[Thm.~5.11]{KMbook2007}.
\end{proof}
\begin{defn}
    Let $[\mathfrak{b}]$ be a critical point and $[\mathfrak{b}^{\alpha}]$ be its restriction to the $\alpha$-th component of $Y$.
    A \emph{broken $X$-trajectory} $([\upgamma_0],[\breve{\boldsymbol{\upgamma}}])$ \emph{asymptotic to} $[\mathfrak{b}]$ consists of the data of 
    \begin{itemize}[leftmargin=*,noitemsep]
        \item an element $[\upgamma_0]$ in $\tilde{N}_{z_0}(X^*,[\mathfrak{b}_0])$, and
        \item for each $Y^{\alpha}$, an unparametrized broken trajectory $[\breve{\boldsymbol{\upgamma}}^{\alpha}]$ in a moduli space $\tilde{N}_{z_{\alpha}}^+([\breve{\boldsymbol{\upgamma}}^{\alpha}_0],[\breve{\boldsymbol{\upgamma}}^{\alpha}])$, where $[\breve{\boldsymbol{\upgamma}}^{\alpha}_0]$ is the restriction of $[\mathfrak{b}_0]$ to $Y^{\alpha}$.
    \end{itemize}
    Let $z_1$ be the homotopy class of the paths from $[\mathfrak{b}_0]$ to $[\mathfrak{b}]$ that restricts to $z^{\alpha}$ on $Y^{\alpha}$.
    The \emph{homotopy class} of the broken $X$-trajectory $([\upgamma_0],[\breve{\boldsymbol{\upgamma}}])$ is
    \[z = z_1 \circ z_0 \in \pi_0(\wtilde{\mathcal{B}}(X,\tau_X,[\mathfrak{b}])).\]
    Let $\tilde{N}_{z}^+(X^*,[\mathfrak{b}])$ denote the space of broken $X$-trajectories in the homotopy class $z$.
    Topologize the space $\tilde{N}_{z}^+(X^*,[\mathfrak{b}])$ will be in the same way as \cite[p.485-486]{KMbook2007}.
    \hfill $\diamondsuit$
\end{defn}
Extend the definition of the perturbed topological $\mathcal{E}^{\text{top}}_{\mathfrak{q}}(\upgamma)$ energy to broken $X$-trajectories $\boldsymbol{\upgamma}$ as the sum of all the energies of the components.

\begin{prop}
    \label{prop:bound_C_X_traj_broken_compact}
    Fix $C > 0$ and a critical point $[\mathfrak{b}]$.
    The space of broken $X$-trajectories $[\breve{\boldsymbol{\upgamma}}] \in \cup_z \tilde{N}_z^+(X^*,[\mathfrak{b}])$ satisfying $\mathcal{E}^{\text{top}}_{\mathfrak{q}}(\breve{\boldsymbol{\upgamma}}) \le C$ is compact.
\end{prop}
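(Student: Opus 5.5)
The plan is to follow the downstairs argument of \cite[Prop.~24.6.4]{KMbook2007} (compactness of broken $X$-trajectories with bounded energy), adapted to the framed setting as in Theorem~\ref{thm:traj_space_compact}. Let $\boldsymbol{\upgamma}_n$ be a sequence of broken $X$-trajectories asymptotic to $[\mathfrak{b}]$ with $\mathcal{E}^{\text{top}}_{\mathfrak{q}}(\boldsymbol{\upgamma}_n) \le C$. I would first reduce to the case where each $\boldsymbol{\upgamma}_n$ is unbroken, i.e.\ an element $[\upgamma_n] \in \tilde{N}_{z_n}(X^*,[\mathfrak{b}])$. The general case then follows from a diagonal argument: there are finitely many critical points, and the number of breaks is bounded because each nontrivial unparametrized trajectory carries a definite positive quantum of energy.

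\textbf{Step 1: convergence on the compact part.} I would exhaust $X^*$ by the compact pieces $X_T = X \cup ([0,T]\times Y)$. On each $X_T$ the energy bound, together with the compactness theorem above (the framed analogue of \cite[Thm.~24.5.2]{KMbook2007}), gives framed gauge transformations $u_n \in \mathcal{G}_{\mathbf{a}}(X,\tau_X)$ such that, after passing to a subsequence, $u_n(\upgamma_n)$ converges on interior domains. A diagonal argument over $T\to\infty$ then yields a limit solution $\upgamma_0$ on $X^*$ in $L^2_{k,\text{loc}}$. The gauge transformations must be chosen compatibly on overlapping $X_T$'s. Since the framed gauge group acts freely and the harmonic part is the discrete lattice $\Upgamma$ by Lemma~\ref{lem:config_homotopy_type}, patching only requires controlling a discrete ambiguity, which is simpler than in the unframed case.

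\textbf{Step 2: the limit has finite energy and a limit at the end.} Energy additivity and the perturbed energy bound show that $\upgamma_0$ has finite energy on the end. By the properness of $\pertL$ and the nondegeneracy and finiteness of critical points, the restriction of $\upgamma_0$ to $[T,\infty)\times Y$ converges to some critical point $[\mathfrak{b}_0]$. The exponential decay proposition stated earlier then places $\upgamma_0$ in some $\tilde{N}_{z_0}(X^*,[\mathfrak{b}_0])$.

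\textbf{Step 3: the cylinder part.} On $[0,\infty)\times Y$, I would translate the restrictions of $\upgamma_n$ and apply Theorem~\ref{thm:traj_space_compact} and Proposition~\ref{prop:traj_space_compact_strata}. This extracts a broken trajectory from $[\mathfrak{b}_0]$ to $[\mathfrak{b}]$ as a limit of the translated restrictions. This step uses the standard argument that the energy which escapes down the end is captured by the trajectory pieces, and that no energy is lost because each component has energy at least the minimal critical-value gap.

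\textbf{Step 4: homotopy classes and the main obstacle.} It remains to check that the homotopy class $z_1\circ z_0$ of the limit agrees with $z_n$ for large $n$, and that only finitely many classes $z$ occur under the bound $C$. I expect this to be the main obstacle. Finiteness would follow from an analogue of Theorem~\ref{thm:energy_bound_traj_space_compact} on $X$: the energy of a class $z$ differs from that of $z'$ by a pairing of $c_1$ with the difference class, up to the factor $1/2$ of Lemma~\ref{lem:loop_and_12}. Here I must be careful that the relevant group is $\pi_0$ of the framed gauge group (the lattice $\Upgamma$) rather than $H^1(X;\mathbf{Z})^{-\tau^*}$. A finite-index sublattice still gives finiteness, because the energy grows linearly in the lattice direction when $c_1$ is non-torsion and is constant otherwise. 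Convergence in the topology of \cite[p.~485--486]{KMbook2007} is then the definition, which establishes sequential compactness; the space is metrizable, so this gives compactness.
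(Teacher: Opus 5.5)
Your overall route matches the paper's. The paper proves this by citing the downstairs argument of \cite[Prop.~24.6.4]{KMbook2007}, noting only that no control of $\Lambda(\upgamma_n)$ is needed since nothing is blown up. Steps~1--3 are the standard outline, with one imprecision in Step~3: you cannot literally apply Theorem~\ref{thm:traj_space_compact} to the restrictions to $[0,\infty)\times Y$, because these are not trajectories between two critical points in a fixed class. What is actually used is the underlying argument: compactness on finite cylinders, the positive energy quantum of nontrivial trajectories, and isolatedness of the finitely many nondegenerate critical points.

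The genuine gap is in Step~4.

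\emph{Your finiteness argument fails.} Changing $z$ by a loop $u$ changes $\mathcal{E}^{\text{top}}_{\mathfrak q}$ by a fixed multiple of $\langle [u]\cup c_1(\mathfrak{s}),[Y]\rangle$. (Lemma~\ref{lem:loop_and_12} is about the index, but the energy is proportional to the same pairing.) This pairing vanishes identically when $c_1(\mathfrak{s})$ is torsion, which is precisely the case with reducibles. Otherwise it vanishes on a sublattice of corank at most one, which is nonzero as soon as the rank is at least two. Passing to the finite-index sublattice $\Upgamma$ does not help. Your phrase ``constant otherwise'' describes exactly the situation where an energy bound gives no control over $z$.

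\emph{Finiteness is an output, not an input.} In the paper, Proposition~\ref{prop:compactness_d_0_bound} (like Theorem~\ref{thm:energy_bound_traj_space_compact} on cylinders) is deduced from compactness over $\bigcup_z$. What you must prove, and instead call ``the definition'', is that along the subsequence from Steps~1--3 one has $z_n = z_1\circ z_0$ for all large $n$. This follows from the analysis:
\begin{itemize}
\item on $X$ and on each trajectory piece, the configurations converge modulo framed gauge;
\item on each long neck, the path in $\tilde{\mathcal{B}}_k(Y,\tfrr)$ stays in a small neighbourhood of a critical point, and that neighbourhood is identified with a ball in the Coulomb slice, hence contractible;
\item so the concatenated paths are homotopic rel endpoints to the limit.
\end{itemize}
Once this is in place, finiteness follows. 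Infinitely many distinct classes with non-empty moduli spaces and energy at most $C$ would give a sequence with no convergent subsequence, contradicting what you have just shown.
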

\begin{proof}
    This is a simpler version of the proof \cite[Prop.~24.6.4]{KMbook2007}, as there is no need to control the function $\Lambda(\upgamma_n)$.
\end{proof}
The follow finiteness result can be proved the same way as 
\cite[Prop.~24.6.6]{KMbook2007}:
\begin{prop}
    \label{prop:compactness_d_0_bound}
    Suppose $\mathfrak{q}$ is admissible and $\mathfrak{p}$ is chosen so that all $\tilde{N}_z(X^*,[\mathfrak{b}])$'s are regular.
    For any $d_0 \ge 0$, there are only finitely many $[\mathfrak{b}]$ for which $\tilde{N}_z(X^*,[\mathfrak{b}])$ is non-empty and having at most dimension $d_0$. 
\end{prop}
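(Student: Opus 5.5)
We follow the proof of \cite[Prop.~24.6.6]{KMbook2007}, arguing by contradiction and combining an energy–dimension relation with Proposition~\ref{prop:bound_C_X_traj_broken_compact}. Suppose there are infinitely many distinct critical points $[\mathfrak{b}_n]$, with homotopy classes $z_n$, such that $\tilde{N}_{z_n}(X^*,[\mathfrak{b}_n])$ is non-empty and $\dim \le d_0$. Choose $[\upgamma_n]$ in each. The first step is to bound $\mathcal{E}^{\text{top}}_{\mathfrak{q}}(\upgamma_n)$ uniformly. Once that is done, Proposition~\ref{prop:bound_C_X_traj_broken_compact} gives a subsequence converging to a broken $X$-trajectory $([\upgamma_0],[\breve{\boldsymbol{\upgamma}}])$ asymptotic to some critical point. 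Convergence forces the endpoints $[\mathfrak{b}_n]$ to converge in $\tilde{\mathcal{B}}_k(Y,\tfrr)$. But critical points are non-degenerate, hence isolated, and a given perturbed functional has only finitely many of them up to the framed gauge action, by compactness of the 3-dimensional framed solution set. So the $[\mathfrak{b}_n]$ are eventually constant, contradicting distinctness.

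The main step is the energy bound. I would establish an additive relation between energy and grading: for a fixed reference class $z_*$ ending at a reference critical point $[\mathfrak{b}_*]$, and any class $z$ ending at $[\mathfrak{b}]$, compare via a path $z_1$ on $Y$ from $[\mathfrak{b}_*]$ to $[\mathfrak{b}]$. There are only finitely many critical points modulo $\mathcal{G}(Y,\tau)$, and the framed gauge group has finite index. Hence, after fixing lifts, $z$ differs from $z_1\circ z_*$ by an element $u$ of $\Upgamma(Y,\tau,\mathbf{p})$, which is a loop in $\tilde{\mathcal{B}}$. Both the change in $\mathcal{E}^{\text{top}}_{\mathfrak{q}}$ and the change in $\gr_z$ are then linear in $u$ up to bounded error. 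By Lemma~\ref{lem:loop_and_12}, $\gr$ changes by $\tfrac12([u]\cup c_1(\mathfrak{s}))[Y]$. The energy changes by a multiple of the same pairing, namely the Chern--Simons term $\mathcal{L}(u\cdot\mathfrak{b}) - \mathcal{L}(\mathfrak{b})$, which is proportional to $([u]\cup c_1)[Y]$ with $f$ gauge-invariant. This gives
\[
\mathcal{E}^{\text{top}}_{\mathfrak{q}}(z) \le C_0 + C_1\,\gr_z(X,\tau_X,\mathbf{p};[\mathfrak{b}]),
\]
uniformly in $z$.

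There are two cases. When $c_1(\mathfrak{s}|_Y)$ is torsion, both the energy and the grading are independent of $u$ up to the finitely many choices, so the bound is immediate. When $c_1$ is non-torsion, admissibility rules out reducibles, and the monotonicity above applies exactly as in \cite{KMbook2007}. Since $\gr_{z_n}\le d_0$, the energies $\mathcal{E}^{\text{top}}_{\mathfrak{q}}(\upgamma_n)$ are bounded.

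The main obstacle I expect is matching the $\tfrac12$ factor in Lemma~\ref{lem:loop_and_12} with the energy change. One also has to check that it is the framed loops in $\Upgamma(Y,\tau,\mathbf{p})$, not all of $H^1(Y;\mathbf{Z})^{-\tau^*}$, that enter. Since $\Upgamma$ has finite index, I would handle this by passing to finitely many coset representatives. That only enlarges the constant $C_0$, so the argument is unaffected.
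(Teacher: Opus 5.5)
Your architecture, a uniform bound on $\mathcal{E}^{\text{top}}_{\mathfrak{q}}$ followed by Proposition~\ref{prop:bound_C_X_traj_broken_compact}, is the same as the paper's. However, the energy bound is argued with the wrong sign. Two classes over the same $[\mathfrak{b}]$ differ by a change of $c_1^2$:
\begin{itemize}
\item a loop $u$ amounts to a twisted real spin\textsuperscript{c} structure on $S^1\times Y$;
\item a change of {\rrsc} structure on $X$ is handled by excision. This case is needed because $z$ ranges over $\pi_0$ of the union over all {\rrsc} structures on $X$, and your reduction to $\Upgamma(Y,\tau,\mathbf{p})$ omits it.
\end{itemize}
Chern--Weil gives $\Delta\mathcal{E}^{\text{top}}=\tfrac14\Delta\int F_{A^t}\wedge F_{A^t}=-\pi^2\Delta(c_1^2)$, while the real index formula gives $\Delta\gr=\Delta(c_1^2)/8$. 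Hence $\mathcal{E}^{\text{top}}_{\mathfrak{q}}(z)+8\pi^2\gr_z(X^*;[\mathfrak{b}])=K([\mathfrak{b}])$ depends only on $[\mathfrak{b}]$. For loops, the $\tfrac12$ in Lemma~\ref{lem:loop_and_12} only turns the ordinary ratio $4\pi^2$ into $8\pi^2$, so that is not the real difficulty. The real issue is that energy goes \emph{down} as the grading goes up, just as on a closed 4-manifold, where the curvature bound comes from $d\ge 0$. So the monotonicity you invoke runs the other way, and $\gr_z\le d_0$ gives no upper bound on energy. What bounds it is that a non-empty regular moduli space has $\gr_z\ge 0$, whence $\mathcal{E}^{\text{top}}_{\mathfrak{q}}\le K([\mathfrak{b}])$. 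This is the uniform bound the paper refers to. The upper bound $d_0$ is what Kronheimer--Mrowka need to control $\iota$ in the blow-up, and that is absent here. With this correction the step goes through.

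Second, your contradiction argument is vacuous as set up. You yourself use that there are only finitely many critical points, so infinitely many distinct $[\mathfrak{b}_n]$ never occur and the energy bound is never used. The substantive content is finiteness in $z$; the next subsection uses the statement as finiteness of pairs $(z,[\mathfrak{b}])$. Your ingredients give this once ordered correctly:
\begin{itemize}
\item fix one of the finitely many $[\mathfrak{b}]$, since Proposition~\ref{prop:bound_C_X_traj_broken_compact} is stated for a fixed $[\mathfrak{b}]$ and should not be applied to a sequence with varying endpoints;
\item apply the energy bound;
\item observe that the sets $\tilde{N}^+_z(X^*,[\mathfrak{b}])$ are disjoint, open and closed in the compact union, so only finitely many are non-empty.
\end{itemize}
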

\begin{proof}
    The Proposition is the framed case of \cite[Prop.~24.6.5]{KMbook2007}, and it is the counterpart of the cylinder case Theorem~\ref{thm:energy_bound_traj_space_compact}.
    The proof is again simpler, which involves a bound $C$ uniform for $([\mathfrak{b}],z,[\boldsymbol{\upgamma}])$ of the form 
    \[ \mathcal{E}^{\text{top}}_{\mathfrak{q}}([\boldsymbol{\upgamma}]) \le C,\]
    without the term $\iota([\mathfrak{b}])$ that measures the height in the blowup $\mathbf{CP}^{\infty}$~\cite[Lem.~16.4.4]{KMbook2007}.
\end{proof}

Let $P$ be a smooth manifold parametrizing a family of metrics and perturbations, all agreeing on a neighbourhood of the boundary.
The family moduli space of broken $X$-trajectories is defined as the union
\[\tilde{N}_{z}^+(X^*,[\mathfrak{b}])_P
= \bigcup_{p} \{p\} \times \tilde{N}_{z}^+(X^*,[\mathfrak{b}])_p,\]
which can be topologized similarly as the a single moduli space.
This compactification is stratified by subspaces of the form
\begin{equation}
    \label{eq:family_strat_X}
    \tilde{N}_{z_0}(X^*,[\mathfrak{b}_0])_P \times
    \prod_{\alpha} \breve{\tilde{N}}_{z^{\alpha}}^+([\mathfrak{b}_0^{\alpha}],[\mathfrak{b}^{\alpha}]).
\end{equation}
%where $z_0 \in \pi_0(\wtilde{\mathcal{B}}(X,\tau_X,[\mathfrak{b}_0]))$.
\begin{theorem}
    Suppose $\tilde{N}_{z}(X^*,[\mathfrak{b}])_P$ are regular for all critical points $[\mathfrak{b}]$.
    For each $[\mathfrak{b}]$, the map $\tilde{N}_{z}(X^*,[\mathfrak{b}])_P \to P$ is proper.
    For a fixed  $[\mathfrak{b}]$, there are finitely many $z$ for which $\tilde{N}_{z}(X^*,[\mathfrak{b}])_P$ is non-empty.
\end{theorem}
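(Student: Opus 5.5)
The plan is to transcribe the downstairs argument of \cite[Prop.~24.6.4, 24.6.5 \& Thm.~24.6.2]{KMbook2007}, parametrized over $P$, in the framed setting. The framed setting has two simplifications. First, the bundle $\mathcal{V}_j$ is trivial. Second, there is no blow-up, so there is no $\Lambda$-function or $\iota([\mathfrak{b}])$-term to control. The statement has two parts: properness of $\tilde{N}_z(X^*,[\mathfrak{b}])_P\to P$, and finiteness of the set of $z$ with nonempty moduli space for fixed $[\mathfrak{b}]$. Both will follow from one energy bound combined with the family version of Proposition~\ref{prop:bound_C_X_traj_broken_compact}.

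\textbf{Step 1: uniform energy bound.} Fix a compact $K\subset P$ and work with the dimension-$d$ parts $\tilde N_z(X^*,[\mathfrak{b}])_K$. I would express $\mathcal{E}^{\text{top}}_{\mathfrak{q}}(\upgamma)$ for a solution asymptotic to $[\mathfrak{b}]$ as $\pertL(\mathfrak{b})$ plus a term depending only on $z$. This term comes from the Chern--Weil integral $\int_X F_{A^t}\wedge F_{A^t}$ plus boundary corrections, as in \cite[\S 24.6]{KMbook2007}.

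The dimension formula $\gr_z(X,\tau_X,\mathbf{p};[\mathfrak{b}])$ depends affinely on the same class $z$. Changing $z$ by a loop $u\in\Upgamma(Y,\tau,\mathbf{p})$ shifts the grading by $\tfrac12([u]\cup c_1(S))[Y]$ (Lemma~\ref{lem:loop_and_12}) and shifts the energy by a proportional amount, as in the ordinary case. Hence, once the dimension is fixed, the energy takes values in a finite set up to the fixed additive constant $\pertL(\mathfrak{b})$. The metric and perturbation data vary continuously over $K$ and agree near the boundary, so the constants can be chosen uniformly over $K$. This gives a bound $\mathcal{E}^{\text{top}}_{\mathfrak{q}}\le C(K,[\mathfrak{b}],z)$.

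\textbf{Step 2: compactness.} Take a sequence $(p_n,[\upgamma_n])$ with $p_n\in K$. By compactness of $K$, pass to a subsequence with $p_n\to p$. The interior compactness theorem stated above, applied uniformly in the parameter, produces framed gauge transformations $u_n\in\mathcal{G}_{\mathbf{a}}(X,\tau_X)$ with $u_n(\upgamma_n)$ converging on compact subsets. Then the cylinder analysis (Theorem~\ref{thm:traj_space_compact}) on the ends gives a limit that is a broken $X$-trajectory in $\tilde N^+_z(X^*,[\mathfrak{b}])_P$, which is the family version of Proposition~\ref{prop:bound_C_X_traj_broken_compact}.

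Regularity and the stratification \eqref{eq:family_strat_X} then force the limit to be unbroken for the moduli spaces under consideration. Any broken stratum has strictly smaller dimension, which is incompatible with regularity unless the moduli spaces are empty in negative dimension. The framed theory has no boundary-obstructed trajectories, so no exceptional cases arise here. This gives properness.

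\textbf{Step 3: finiteness of $z$.} Fix $[\mathfrak{b}]$, and assume $P$ is compact or restrict to a compact $K$. Suppose infinitely many $z$ gave nonempty moduli spaces. Distinct $z$ differ by elements of $\Upgamma$ acting through $\pi_0(\wtilde{\mathcal{B}}(X,\tau_X;[\mathfrak{b}]))$, and within a fixed dimension window the energies are bounded by Step 1. Compactness from Step 2 would then give a convergent subsequence of solutions in distinct homotopy classes. This is impossible, since $z$ is locally constant on $\tilde{N}^+(X^*,[\mathfrak{b}])_P$; the argument mirrors Theorem~\ref{thm:energy_bound_traj_space_compact}.

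\textbf{Main obstacle.} I expect the hard step to be Step 1: relating energy to $z$ in the real framed setting. The issue is that the index shift carries the factor $1/2$ and may vanish when $c_1$ is torsion. In that torsion case, I would instead bound the energy directly: the Chern--Weil term is constant on each class $z$, so finiteness reduces to the finiteness, up to $\Upgamma$, of the relevant homotopy classes together with the energy bound $\mathcal{E}^{\text{top}}_{\mathfrak{q}}\ge -C$.
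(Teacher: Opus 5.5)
Your overall route is the paper's: work over a compact $K\subset P$, apply the family version of the compactness of broken $X$-trajectories with bounded $\mathcal{E}^{\text{top}}_{\mathfrak q}$ (Proposition~\ref{prop:bound_C_X_traj_broken_compact}), and get finiteness of $z$ from a uniform energy bound as in Proposition~\ref{prop:compactness_d_0_bound}. There are two problems.

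\textbf{The last paragraph of Step~2 is false.} Regularity forces a stratum to be empty only when its expected dimension is negative. When $\wtilde N_z(X^*,[\mathfrak b])_P$ has dimension $d\ge 1$, the once-broken strata \eqref{eq:family_strat_X} have dimension $d-1\ge 0$ and are nonempty in general. They are exactly the codimension-one strata of Proposition~\ref{prop:strat_moduli_on_X_codimension} whose counts give the terms $\tilde m(u\otimes\tilde\partial\xi)$ and $\tilde\partial\tilde m(u\otimes\xi)$ in Proposition~\ref{prop:id_of_cob_u_map}. By the gluing theorem they are limits of unbroken solutions, already when $P$ is a point. So the unbroken space is in general not proper over $P$ once $d\ge 1$. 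What your argument proves up to producing a broken limit, and what the statement must mean, is properness of $\wtilde N^+_z(X^*,[\mathfrak b])_P\to P$. Drop the ``unbroken'' claim and state the result for $\wtilde N^+_z$.

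\textbf{The finiteness of $z$ is not established.} Your bound $C(K,[\mathfrak b],z)$ in Step~1 is vacuous, since for fixed $z$ the topological energy is a constant. Step~3 then adds a ``fixed dimension window'' that the statement does not contain. Moreover, $z$ also ranges over the \rrsc{} structures on $X$ restricting to the given one on $Y$, because $\wtilde{\boldsymbol{\mathcal B}}$ is a union over them; Lemma~\ref{lem:loop_and_12} does not see this variation. The missing ingredient is a sign relation. Changing $z$, either through the \rrsc{} structure on $X$ or by a boundary loop, leaves $\mathcal E^{\text{top}}_{\mathfrak q}+8\pi^2\gr_z$ unchanged. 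To see this, compare $\mathcal E^{\text{top}}=-\pi^2c_1^2$ on a closed manifold with the dimension $(c_1^2-\sigma)/8+\cdots$; for a loop, check it on $S^1\times Y$. Energy and index thus move in opposite directions. A nonempty regular moduli space has $\gr_z\ge-\dim P$, so its energy is bounded above independently of $z$. Compactness of the union over $z$ above $K$ then gives finiteness at once. No dimension window and no torsion/non-torsion case split are needed. Your torsion-case fallback, a lower bound $\mathcal E\ge -C$, goes in the wrong direction for compactness.
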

\begin{proof}
    This can be essentially deduced from the case when $P$ is a point, which are Proposition~\ref{prop:bound_C_X_traj_broken_compact} and Proposition~\ref{prop:compactness_d_0_bound}.
    See \cite[Prop.~24.6.8]{KMbook2007}.
\end{proof}
The following is the analogue of Proposition~\ref{prop:traj_space_compact_strata}
\begin{prop}
    Suppose $\tilde{N}_{z}(X^*,[\mathfrak{b}])_P$ is a non-empty and $d$-dimensional.
    The space $\tilde{N}_{z}(X^*,[\mathfrak{b}])_P$ is a $d$-dimensional space stratified by manifolds, proper over $P$, and whose $e$-dimensional stratum is formed by subsets of the form \eqref{eq:family_strat_X}, for $e \ge 0$. \hfill  \qedsymbol
\end{prop}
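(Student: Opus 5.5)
The plan is to reduce the statement to the case where $P$ is a single point, as in the proof of \cite[Prop.~24.6.10]{KMbook2007}, and then treat the family version fibrewise over $P$, using the properness already established. The key input is the gluing theory for cylinders: the theorem giving a neighbourhood $\breve W$ of each stratum together with a topological submersion $\mathbf{S}\colon \breve W\to(0,\infty]^{n-1}$. This will be transplanted to the setting of $X^*$ with cylindrical end.

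\emph{Strata and their dimensions.} Consider a broken $X$-trajectory with $X$-part in $\tilde{N}_{z_0}(X^*,[\mathfrak{b}_0])_P$ and cylinder parts $\breve{N}_{z^{\alpha}_i}$ on the ends $Y^{\alpha}$. By regularity, each factor is a smooth manifold. The $X$-part has dimension $\gr_{z_0}(X,\tau_X,\mathbf{p};[\mathfrak{b}_0])+\dim P$. Each unparametrized cylinder factor has dimension $\gr_{z_i}-1$.

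The index of the fibre-product operator $r_+-r_-$ is additive under concatenation. This follows by excision from the spectral-flow description of $\gr_z$ on cylinders and the additivity $\gr(\mathfrak{a},\mathfrak{c})=\gr(\mathfrak{a},\mathfrak{b})+\gr(\mathfrak{b},\mathfrak{c})$. Hence a stratum of type \eqref{eq:family_strat_X} with $\ell$ total breakings has dimension $d-\ell$.

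This bookkeeping is cleaner than in \cite{KMbook2007}. In the framed setting there is no blow-up, so no stratum is boundary-obstructed and no $\delta$-corrections occur. Setting $N^e$ to be the union of strata of dimension $\le e$ then defines the filtration. Closedness of each $N^e$ follows from the compactness results (Proposition~\ref{prop:bound_C_X_traj_broken_compact} and Proposition~\ref{prop:compactness_d_0_bound}), because limits of broken trajectories can only break further and therefore only lower dimension.

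\emph{Local manifold structure and properness.} Each $N^e\setminus N^{e-1}$ is a disjoint union of strata of exact dimension $e$. I then need each of these strata to be open in $N^e\setminus N^{e-1}$, i.e. that no sequence from one $e$-dimensional stratum converges to a different one. This follows from energy additivity: a limit in a different stratum with the same $\ell$ would force the same homotopy class and critical points, hence the same stratum. Properness over $P$ is the preceding theorem. The condition $N\neq N^{d-1}$ holds because the top stratum $\tilde{N}_z(X^*,[\mathfrak{b}])_P$ is nonempty by hypothesis.

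\emph{Main obstacle.} The step I expect to be hardest is showing that the stratification is genuinely topological, i.e. that each stratum sits in its closure as described. This requires a gluing map near each stratum of type \eqref{eq:family_strat_X}, analogous to the cylinder gluing theorem, producing a neighbourhood mapping submersively onto $(0,\infty]^{\ell}$ with fibre over $(\infty,\dots,\infty)$ equal to the stratum.

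For this I would follow \cite[\S 24.7]{KMbook2007} and \cite[\S 10]{ljk2022}. Regularity of $\tilde{N}_{z_0}(X^*,[\mathfrak{b}_0])_P$, i.e. transversality of $R_\pm$ by Proposition~\ref{prop:family_transversality_manifold_with_d}, together with regularity of the cylinder moduli spaces, makes every gluing unobstructed. Framed gauge groups act freely, including at reducibles, so the Banach-manifold gluing on the trivial bundle $\mathcal{V}_j$ applies uniformly to reducible and irreducible configurations.
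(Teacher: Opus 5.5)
Your argument is essentially the one the paper relies on: the paper gives no proof and simply defers to the standard Kronheimer--Mrowka argument (cf.\ Proposition~\ref{prop:traj_space_compact_strata}). That argument runs as follows: additivity of the grading gives strata of dimension $d-\ell$ with no boundary-obstructed corrections, closedness of $N^e$ comes from the topology on broken trajectories (limits can only break further), rather than from the compactness propositions you cite, and properness over $P$ is the preceding theorem. The gluing step you single out as the main obstacle is not needed for this statement, since the paper's notion of a space stratified by manifolds only asks for closed $N^e$ with each $N^e\setminus N^{e-1}$ an $e$-manifold; gluing is used only afterwards, for the $C^0$-manifold-with-boundary structure along codimension-one strata.
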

\subsubsection*{A smaller compactification}
\begin{defn}
    The space $\overline{N}_z(X^*,\tau_X,[\mathfrak{b}])$ is the image of $N^+_z(X^*,\tau_X, [\mathfrak{b}])$ under the map
    \[r \colon N^+_z(X^*, \tau_X, [\mathfrak{b}]) \to 
    \widetilde{\boldsymbol{\mathcal{B}}}_{k,\text{loc}}(X^*,\tau_X), \quad
    ([\upgamma_0],[\breve{\boldsymbol{\upgamma}}]) \mapsto
    [\upgamma_0].\]
    For a moduli spaces parametrized by a family $P$ of metrics and perturbations, the space $\overline{N}_z(X^*,\tau_X,[\mathfrak{b}])_P$ is the image of $N^+_z(X^*, \tau_X, [\mathfrak{b}])_P$ under the map
    \[r \colon N^+_z(X^*, \tau_X, [\mathfrak{b}])_P \to 
    P \times \widetilde{\boldsymbol{\mathcal{B}}}_{k,\text{loc}}(X^*,\tau_X), \quad
    (p,[\upgamma_0],[\breve{\boldsymbol{\upgamma}}]) \mapsto
    (p,[\upgamma_0]).\]
\end{defn}
\begin{prop}
    \label{prop:strat_moduli_on_X_codimension}
    Assume $\widetilde{N}(X^*,[\mathfrak{b}])_P$ is nonempty and of dimension $d$.
    Then both $N^+_z(X^*, [\mathfrak{b}])_P$ and $\overline{N}_z(X^*,[\mathfrak{b}])_P$ are $d$-dimensional spaces stratified by manifolds.
    The $(d-1)$-dimensional stratum in $N^+_z(X^*, [\mathfrak{b}])_P$ consists of elements of the following two types:
    \begin{itemize}[leftmargin=*]
        \item The elements \eqref{eq:family_strat_X} for which precisely one $n^{\alpha}=1$ for some $\alpha = \alpha_*$, or
        \item the unbroken solutions lying over $\del P$ when $P$ has boundary.
    \end{itemize}
    The $(d-1)$-dimensional strata in $\overline{N}_z(X^*,[\mathfrak{b}])_P$ are the image of $r$ of the strata in the first case above, such that the $[\upgamma^{\alpha_*}_1]$ belongs to a $1$-dimensional moduli space.
    \hfill \qedsymbol
\end{prop}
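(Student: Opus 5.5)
The statement is the framed counterpart of \cite[Prop.~24.6.10]{KMbook2007}. I would prove it by working stratum by stratum, using the gluing theorems and Proposition~\ref{prop:traj_space_compact_strata}. In the framed setting every trajectory lives downstairs, and gluing is never boundary-obstructed. So the usual distinction between irreducible and reducible boundary strata disappears, and the codimension count becomes purely additive.

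\textbf{Step 1: dimension of each stratum.} A stratum of the form \eqref{eq:family_strat_X} has dimension
\[
\dim \tilde{N}_{z_0}(X^*,[\mathfrak{b}_0])_P + \sum_\alpha \sum_{i=1}^{n^\alpha}\bigl(\gr_{z^\alpha_i}([\mathfrak{b}^\alpha_{i-1}],[\mathfrak{b}^\alpha_i]) - 1\bigr).
\]
Additivity of gradings, via concatenation $z = z_1\circ z_0$ and the spectral-flow interpretation, gives
\[
d = \dim \tilde{N}_{z_0}(X^*,[\mathfrak{b}_0])_P + \sum \gr(\cdot).
\]
Each broken stratum therefore has dimension $d - \sum_\alpha n^\alpha$, using regularity from Proposition~\ref{prop:family_transversality_manifold_with_d} and Theorem~\ref{thm:transversality_cylinder}. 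It follows that the $(d-1)$-dimensional stratum consists of exactly the configurations with one break on a single end $\alpha_*$, together with the unbroken solutions over $\del P$, which lose one dimension from the boundary of the parameter space.

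\textbf{Step 2: stratified structure of $N^+_z$.} Compactness and properness over $P$ follow from the preceding theorem and Proposition~\ref{prop:bound_C_X_traj_broken_compact}. Finiteness of the number of strata follows from Proposition~\ref{prop:compactness_d_0_bound}. To see that each difference $N^e\setminus N^{e-1}$ is a manifold, I would apply the $X$-analogue of the gluing map: near each stratum it gives a topological submersion to $(0,\infty]^{n}$. This reduces the question to the manifold structure of the open strata, which is regularity again.

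\textbf{Step 3: passing to $\overline{N}_z$.} The map $r$ forgets the cylinder pieces. It is continuous and proper, hence a closed map, so $\overline{N}_z$ is compact. Its strata are the images under $r$ of the strata of $N^+_z$. On a stratum, $r$ is the projection onto the $\tilde{N}_{z_0}(X^*,[\mathfrak{b}_0])_P$ factor, whose dimension is $d - \sum_{\alpha,i}\gr_{z^\alpha_i}$.

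For the image to have dimension $d-1$, the total grading drop must be exactly $1$. Since nontrivial unparametrized trajectories have $\gr\ge 1$, this forces a single break of index $1$, meaning $[\upgamma_1^{\alpha_*}]$ lies in a $1$-dimensional (unparametrized $0$-dimensional) moduli space. All other broken strata have image of dimension at most $d-2$. I would then set $\overline{N}^e$ to be the union of the images of strata whose image has dimension at most $e$, and check that these sets are closed, using properness of $r$ and the fact that closures of strata consist of further breakings.

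\textbf{Main obstacle.} The delicate point is proving that $\overline{N}^e\setminus\overline{N}^{e-1}$ is an actual manifold. Two issues arise. First, $r$ might fail to be injective when it collapses the finite sets $\breve N$ of $0$-dimensional trajectory spaces. Second, images of different strata might overlap.

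To handle injectivity, I would observe that on each stratum $r$ is a projection off a product with a finite set. Its image is therefore an open-and-closed copy of $\tilde{N}_{z_0}(X^*,[\mathfrak{b}_0])_P$, which is a manifold by regularity.

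To handle overlaps, images of different strata with the same $[\mathfrak{b}_0]$ coincide only as the same copy of $\tilde{N}_{z_0}$, so their union is still that manifold. Images with distinct $([\mathfrak{b}_0],z_0)$ are disjoint subsets of $\widetilde{\boldsymbol{\mathcal{B}}}_{k,\text{loc}}$, because the asymptotic limit and homotopy class of $[\upgamma_0]$ are determined by $[\upgamma_0]$ itself.
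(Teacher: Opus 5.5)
Your proposal is correct and is essentially the argument the paper intends: the paper gives no separate proof and simply transplants the Kronheimer--Mrowka stratification argument (compactness, additivity of $\gr$ under concatenation, regularity of every factor, and the absence of boundary-obstructed strata in the framed setting), which is exactly what you reproduce. A few minor remarks: the gluing map in Step~2 is not needed for the stratification itself, since it already follows from the topology on $N^+_z$ and the fact that limits within a stratum can only acquire further breaks; on a stratum, $r$ generally collapses positive-dimensional $\breve N$ factors rather than finite sets, which is harmless because the image is still all of $\tilde N_{z_0}(X^*,[\mathfrak{b}_0])_P$; and when $P$ has boundary the piece $\tilde N_z(X^*,[\mathfrak{b}])_{\partial P}$ also lies in $\overline N^{d-1}$ with no grading drop, a point the statement itself glosses over.
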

\subsubsection*{Gluing on 4-manifolds with boundary}
Once again, $\delta$-structures are not needed in the gluing of framed trajectories.
The following theorem describes the stratification of the compactified moduli spaces.
Compare with \cite[Thm.~24.7.2]{KMbook2007}.
\begin{theorem}
    Suppose $\tilde{N}_{z}(X^*,[\mathfrak{b}])_P$ is a non-empty and $d$-dimensional.
    The top stratum of $\tilde{N}^+_{z}(X^*,[\mathfrak{b}])_P$ is $\tilde{N}_{z}(X^*,[\mathfrak{b}])_P$.
    Moreover, $\tilde{N}^+_{z}(X^*,[\mathfrak{b}])_P$ is a $C^0$-manifold with boundary along $M'$, for any component of the codimension-$1$ stratum $M'$.
\end{theorem}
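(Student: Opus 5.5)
The plan follows the proof of \cite[Thm.~24.7.2]{KMbook2007}, simplified by the fact that in the framed setting the configuration spaces are honest Hilbert manifolds and no blow-up or boundary-obstructed strata occur.

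\textbf{Identifying the top stratum.} By Proposition~\ref{prop:strat_moduli_on_X_codimension}, $\tilde{N}^+_{z}(X^*,[\mathfrak{b}])_P$ is a $d$-dimensional space stratified by manifolds. Its strata are the products \eqref{eq:family_strat_X}, and each factor $\breve{\tilde N}^+$ contributes codimension at least one per break. Since all moduli spaces are regular and no gluing is obstructed, the only stratum of dimension $d$ is the unbroken one, $\tilde{N}_{z}(X^*,[\mathfrak{b}])_P$. This is a smooth manifold of dimension $\gr_z$ by regularity, which gives the first claim.

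\textbf{Collar near a codimension-one stratum.} Let $M'$ be a component of the codimension-one stratum. By Proposition~\ref{prop:strat_moduli_on_X_codimension}, $M'$ is of one of two kinds.
\begin{itemize}
\item It lies over $\partial P$. Then the boundary structure comes directly from $P$, since $\tilde N_z(X^*,[\mathfrak b])_P\to P$ is a submersion near $\partial P$ by family regularity.
\item It is a once-broken stratum $\tilde N_{z_0}(X^*,[\mathfrak b_0])_P\times \breve N_{z_1}([\mathfrak b_0^{\alpha_*}],[\mathfrak b^{\alpha_*}])$, broken along a single boundary component $Y^{\alpha_*}$.
\end{itemize}
For the broken case, I would construct a gluing map. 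Fix a compact subset $K\subset M'$. For large $T$, splice a solution on $X^*$ with a translated cylinder trajectory along the neck $[0,T]\times Y^{\alpha_*}$. Then correct the spliced configuration to a genuine solution using a uniformly bounded right inverse of the linearized operator $\mathcal Q$ on the glued manifold. This right inverse is built from the right inverses on the two pieces, which exist because both moduli spaces are regular, together with hyperbolicity of the extended Hessian at the nondegenerate critical point $\mathfrak b_0$. Lemma~\ref{lem:nondeg_Hess_suj} gives this hyperbolicity for reducible and irreducible $\mathfrak b_0$ alike, because the framed gauge group acts freely. The result is a map $K\times(T_0,\infty]\to \tilde N^+_z(X^*,[\mathfrak b])_P$ that is a homeomorphism onto a neighbourhood of $K$. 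The parameter at $\infty$ recovers $M'$.

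This is the framed analogue of the real gluing of \cite[\S 10]{ljk2022}, which adapts \cite[\S 18.3]{KMbook2007}. The $(0,\infty]$ parameter plays the role of the map $\mathbf S$ in the cylinder gluing theorem stated above. Because no $\delta$-structures are needed, the gluing parameter space is a single half-line, with no obstruction bundle.

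\textbf{Main obstacle.} The hardest step is showing that the glued solutions exhaust a full neighbourhood of $K$ in $\tilde N^+_z(X^*,[\mathfrak b])_P$. The issue is surjectivity of gluing: every sequence of unbroken solutions converging to a point of $M'$ must eventually lie in the image of the gluing map. The plan is as follows.
\begin{itemize}
\item Use the compactness results, Proposition~\ref{prop:bound_C_X_traj_broken_compact} and the properness theorem over $P$, to place such a sequence in a small neighbourhood of a spliced configuration.
\item Use the exponential-decay estimate near nondegenerate critical points (the Proposition bounding $\|u(\upgamma)-\upgamma_{\frakb}\|$ by the drop in $\pertL$) to control the solutions on the long neck in $L^2_{k}$.
\item Apply the uniqueness part of the contraction-mapping argument in a Coulomb slice.
\end{itemize}
The framed gauge group $\mathcal G(X,\tau_X,\mathbf a)$ contains no constants, so the slices are honest charts and no stabilizer bookkeeping arises. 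This yields the $C^0$-manifold-with-boundary structure along $M'$.
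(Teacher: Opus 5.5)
Your proposal takes the same route as the paper, whose proof just says that the boundary-unobstructed case of \cite[Thm.~24.7.2]{KMbook2007} carries over to the framed real setting. Your splicing construction, the right inverse built from regularity of the two pieces and hyperbolicity at $\mathfrak{b}_0$, and the surjectivity argument via compactness and decay on the neck are the ingredients of that adaptation written out.

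One small correction concerns the strata over $\partial P$. Family regularity does not make $\tilde N_z(X^*,[\mathfrak b])_P\to P$ a submersion. What you need is that the parametrized moduli space is transverse to $\partial P$, which follows from the usual convention of also requiring regularity of the family over $\partial P$. That transversality already gives the manifold-with-boundary structure there.
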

\begin{proof}
    The boundary-unobstructed case of \cite[Thm.~24.7.2]{KMbook2007} can be adapted to the real framed case with no essential changes.
\end{proof}
\subsection{Moduli spaces over cobordisms}
Suppose $(W,\tau_W,\mathbf{a}) \colon (Y_-,\tau_-,\mathbf{p}_-) \to (Y_+,\tau_+,\mathbf{p}_+)$ is a pointed real cobordism, where the boundary $\del W$ is oriented as $-Y_- \sqcup Y_+$.
Let $\mathfrak{q}_{\pm}$ be admissible perturbations on $\mathbb Y_{\pm}$.
Let
\[
\mathbb W^* = (-\infty,0] \times \mathbb Y_- \cup \mathbb W \cup [0,\infty) \times \mathbb Y_+,
\]
and
\[\widetilde{\boldsymbol{\mathcal{B}}}_{k-1/2}(Y_{\pm},\tau_{\pm}) = 
\bigcup_{(\mathfrak{s}_{\pm},\tfrr_{\pm},\mathbf{p}_{\pm})} \widetilde{\mathcal{B}}_{k-1/2}(Y_{\pm},\tau_{\pm}, \mathfrak{s}_{\pm},\tfrr_{\pm}).\]
Given $[\mathfrak{a}] \in \widetilde{\mathcal{B}}_{k-1/2}(Y_{-},\tau_{-})$ and $[\mathfrak{b}] \in \widetilde{\mathcal{B}}_{k-1/2}(Y_{+},\tau_{+})$, the moduli space over the cobordism is denote as
\[\widetilde{N}([\mathfrak{a}]; W^*,\tau_W, \mathbf{a}; [\mathfrak{a}] ) 
\subset \widetilde{\boldsymbol{\mathcal{B}}}_{k,\text{loc}}(W^*,\tau_W,\mathbf{a})
=\bigcup_{\mathfrak{s}_{W},\tfrr_{W}} \widetilde{\mathcal{B}}_{k,\text{loc}}(W^*,\tau_W,\mathbf{a}).\]
\begin{defn}
    A $W$-path from $[\mathfrak{a}]$ to $[\mathfrak{b}]$ is a an element $[\upgamma]$ in $\tilde{\boldsymbol{\mathcal{B}}}(W)$ which restricts to $r[\upgamma] = ([\mathfrak{a}],[\mathfrak{b}])$ on the boundaries.
    Two $W$-paths are \emph{homotopic} if they belong to the same path component of $r^{-1}([\mathfrak{a}],[\mathfrak{b}])$.
    Denote the set of homotopy classes of $W$-paths as
    $\boldsymbol{\pi}([\mathfrak{a}], W,[\mathfrak{b}]) = \boldsymbol{\pi}([\mathfrak{a}]; W, \tau_W,\mathbf{a}; [\mathfrak{b}])$.
\end{defn}
Analogous to the decomposition over $ \pi_0(\wtilde{\mathcal{B}}(X,\tau_X;[\mathfrak{b}]))$ in Section~\ref{sec:mod_space_on_4manifold_w_d},  the space $\widetilde{N}([\mathfrak{a}], W^*, [\mathfrak{b}])$ is a union over $z \in \boldsymbol{\pi}([\mathfrak{a}], W,[\mathfrak{b}])$:
\[\bigcup_{\mathfrak{s}_{W},\tfrr_{W}} \widetilde{N}([\mathfrak{a}]; W^*,\tau_W,\mathbf{a};\ \mathfrak{s}_{W},\tfrr_{W} ; [\mathfrak{b}] ) 
= \bigcup_z \widetilde{N}_z([\mathfrak{a}]; W^*,\tau_W,\mathbf{a}; [\mathfrak{b}]).
\]
The integer $\gr_z([\mathfrak{a}],W,[\mathfrak{b}])$ and the compactification $\tilde{N}^+([\mathfrak{a}], W^*, [\mathfrak{b}])$ can defined as special cases of $\gr_z(W,[\mathfrak{b}])$ and $\tilde{N}^+( W^*, [\mathfrak{b}])$ in Section~\ref{sec:mod_space_on_4manifold_w_d}.
A typical element of $\tilde{N}^+([\mathfrak{a}], W^*, [\mathfrak{b}])$ will be written as
\[([\breve{\boldsymbol{\upgamma}}_-],[\upgamma_0],[\breve{\boldsymbol{\upgamma}}_+]),\]
where
\[
[\breve{\boldsymbol{\upgamma}}_-] \in \breve{N}^+([\mathfrak{a}]),[\mathfrak{a}_0], \quad
[\breve{\boldsymbol{\upgamma}}_+] \in \breve{N}^+([\mathfrak{b}_0]),[\mathfrak{b}], \quad
[\upgamma_0] \in \tilde{N}([\mathfrak{a}_0], W^*, [\mathfrak{b}_0]).
\]
The following is a restatement of Proposition~\ref{prop:strat_moduli_on_X_codimension}.
Compare with \cite[Prop.~25.1.1]{KMbook2007}, which has three times as many stratum types.
\begin{prop}
    \label{prop:strat_on_W_d}
    Suppose $\tilde{N}_z([\mathfrak{a}], W^*, [\mathfrak{b}])$ is non-empty and of dimension $d$.
    Then both compactifications $\tilde{N}_z([\mathfrak{a}], W^*, [\mathfrak{b}])$ and $\bar{N}_z([\mathfrak{a}], W^*, [\mathfrak{b}])$ are $d$-dimensional spaces stratified by manifolds.
    The $d$-dimensional stratum is $\tilde{N}_z([\mathfrak{a}], W^*, [\mathfrak{b}])$.
    The $(d-1)$-dimensional stratum consists of subsets of two types:
    \begin{align*}
        \breve{N}_{-} &\times N_0,\\
        N_0 &\times \breve{N}_{+},
    \end{align*}
    where $N_0$ denotes a typical moduli space on $W^*$, and $\breve{N}_{\pm}$ denotes a typical unparametrized moduli space on $Y_{\pm}$.
    The image of one of the above strata has codimension-$1$ in the smaller compactification $\bar{N}_z([\mathfrak{a}], W^*, [\mathfrak{b}])$ when the unparametrized moduli spaces on the cylinders $\breve{N}_{\pm}$ are  1-dimensional.
    \hfill \qedsymbol
\end{prop}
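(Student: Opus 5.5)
The plan is to derive Proposition~\ref{prop:strat_on_W_d} as a special case of Proposition~\ref{prop:strat_moduli_on_X_codimension}. The key step is to regard the cobordism $W^*$ as a marked real 4-manifold with cylindrical ends whose boundary is $-\mathbb{Y}_- \sqcup \mathbb{Y}_+$. I would reverse the orientation and the time direction on the incoming end. Then a critical point $[\mathfrak{a}]$ on $Y_-$ becomes a critical point of $-\pertL$ on $-Y_-$, and the pair $([\mathfrak{a}],[\mathfrak{b}])$ becomes a single boundary critical point on $\del W$. Under this identification, an unparametrized broken trajectory on $(-\infty,0]\times Y_-$ from $[\mathfrak{a}]$ to $[\mathfrak{a}_0]$ corresponds to a broken trajectory on $[0,\infty)\times(-Y_-)$ from $[\mathfrak{a}_0]$ to $[\mathfrak{a}]$. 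Homotopy classes $z \in \boldsymbol{\pi}([\mathfrak{a}],W,[\mathfrak{b}])$ likewise correspond to elements of $\pi_0(\wtilde{\mathcal{B}}(W,\tau_W;([\mathfrak{a}],[\mathfrak{b}])))$. The grading $\gr_z([\mathfrak{a}],W,[\mathfrak{b}])$ was already defined as this special case, so dimensions match.

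Next I would go through the strata list of Proposition~\ref{prop:strat_moduli_on_X_codimension} with $P$ a point, so that $\del P=\emptyset$. The top stratum is the unbroken moduli space $\tilde{N}_z([\mathfrak{a}],W^*,[\mathfrak{b}])$. The $(d-1)$-dimensional stratum consists of configurations \eqref{eq:family_strat_X} in which exactly one boundary component $\alpha_*$ carries a single unparametrized trajectory. If $\alpha_*$ lies in $Y_-$, this is a stratum $\breve{N}_-\times N_0$; if it lies in $Y_+$, it is $N_0\times\breve{N}_+$. When $Y_\pm$ is disconnected, the components of $\breve N_\pm$ on the other boundary components are constant, so the description is unchanged.

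For the smaller compactification $\bar N_z$, the map $r$ forgets the cylinder pieces. A stratum has codimension one in the image exactly when the forgotten factor $\breve N_\pm$ contributes nothing to the dimension, that is, when the parametrized space $N_\pm$ is 1-dimensional. This is precisely the last clause of Proposition~\ref{prop:strat_moduli_on_X_codimension}.

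The main obstacle is checking that the orientation and time reversal on the incoming end is compatible with all the framed data. Three things need verification:
\begin{itemize}
\item The relative real $\spinc$ structures on $-Y_-$ with framing $\sfT$ must restrict correctly.
\item The perturbation $-\mathfrak{q}_-$ must be admissible for $-\pertL$.
\item The framed gauge groups $\mathcal{G}(W^*,\tau_W,\mathbf{a})$ must restrict to $\mathcal{G}(Y_\pm,\tau_\pm,\mathbf{p}_\pm)$ on the ends. This holds because $\del\mathbf{a}=-\mathbf{p}_-\cup\mathbf{p}_+$.
\end{itemize}
I would verify these directly from the definitions. I would also note that, since no reducibles are boundary-obstructed in the framed setting, no additional stratum types arise, unlike the situation in \cite[Prop.~25.1.1]{KMbook2007}.
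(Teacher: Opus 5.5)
Your proposal is correct and is essentially the paper's own argument: the paper presents this proposition simply as a restatement of Proposition~\ref{prop:strat_moduli_on_X_codimension} for $W^*$ regarded as having boundary $-Y_-\sqcup Y_+$, which is exactly your reduction, with the orientation and time reversal on the incoming end (and the bookkeeping of framings, perturbations and gauge groups) made explicit. Your reading of the last clause is also the right one: since $\dim N_0 = d-\dim N_\pm$, the image of a stratum has codimension one in $\bar N_z$ exactly when the parametrized $N_\pm$ is $1$-dimensional (so $\breve N_\pm$ is $0$-dimensional), and the statement's phrase ``unparametrized \dots\ $1$-dimensional'' has to be read that way.
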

\subsection{Defining cobordism maps}
The goal of this subsection is to, given a cohomology class
$u \in H^*(\boldsymbol{\mathcal{B}}(W,\tau_W,\mathbf{a});\mathbf{F})$,
define the cobordism map that evaluates $u$ over the moduli spaces on cobordisms
\[\thmr(u|W,\tau_W,\mathbf{a}).\]

Consider the smaller compactification $\bar{N}_z([\mathfrak{a}], \mathbb W^*,[\mathfrak{b}])$, which fits in
\[	
    \wtilde{N}_z([\mathfrak{a}], \mathbb W^*,[\mathfrak{b}]) \subset
    \bar{N}_z([\mathfrak{a}], \mathbb W^*,[\mathfrak{b}]) \subset
    \wtilde{\boldsymbol{\mathcal{B}}}_z(\mathbb W^*).
\]
Since $\tilde{\boldsymbol{\mathcal{B}}}_z(\mathbb W^*)$ is weakly homotopic equivalent to $\tilde{\boldsymbol{\mathcal{B}}}_z(\mathbb W)$, the class $u$ can be viewed as a class over $\tilde{\boldsymbol{\mathcal{B}}}_z(\mathbb W^*)$.
Following \cite[\S 21]{KMbook2007}, one evaluates the class $u$ using the \v{C}ech model as follows.
Fix a positive integer $d_0$.  
The set of all triples $(z,[\mathfrak{a}],[\mathfrak{b}])$ for which $\wtilde{N}_z([\mathfrak{a}], \mathbb W^*,[\mathfrak{b}])$ has dimension at most $d_0$ is locally finite.
Indeed, Proposition~\ref{prop:compactness_d_0_bound} over manifolds with boundaries can be translated to the following Lemma.
\begin{lem}
    For any $[\mathfrak{a}]$ and $d_0$, there are only finitely many pairs $(z,[\mathfrak{b}])$ for which the moduli space $\tilde{N}^+_z([\mathfrak{a}], W^*, [\mathfrak{b}])$ is nonempty and have dimension at most $d_0$.
    \hfill \qedsymbol
\end{lem}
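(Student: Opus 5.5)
The plan is to transfer the finiteness statement for the closed-off case, Proposition~\ref{prop:compactness_d_0_bound}, to the cobordism $\mathbb W^*$. The main tool is the energy bound coming from compactness, Proposition~\ref{prop:bound_C_X_traj_broken_compact}, applied with the incoming end included. Fix $[\mathfrak{a}]$ and $d_0$. The aim is a constant $C=C([\mathfrak{a}],d_0)$ with the following property: every nonempty $\tilde{N}^+_z([\mathfrak{a}],W^*,[\mathfrak{b}])$ of dimension at most $d_0$ contains elements of perturbed topological energy at most $C$.

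Once such a $C$ is available, suppose there were infinitely many pairs $(z_n,[\mathfrak{b}_n])$ with nonempty moduli spaces. Choosing one element in each gives a sequence of broken $W$-trajectories of energy at most $C$. By the compactness of broken trajectories with bounded energy (the cobordism analogue of Proposition~\ref{prop:bound_C_X_traj_broken_compact}, treating $-Y_-\sqcup Y_+$ as the boundary), a subsequence converges. Two facts then force the pairs to eventually be constant:
\begin{enumerate}
\item the critical points on $Y_+$ form a discrete finite set in each $\tilde{\mathcal{B}}_k(Y_+,\tfrr_+)$;
\item only finitely many relative real $\spinc$ structures on $(W,\tau_W,\mathbf{a})$ carry solutions of bounded energy.
\end{enumerate}
Eventually constant pairs contradict the assumption that the pairs $(z_n,[\mathfrak{b}_n])$ are distinct. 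For $z$ itself, the argument is the one for Theorem~\ref{thm:energy_bound_traj_space_compact}: the limit determines the homotopy class, because the homotopy class is locally constant on the compactified space.

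The key step is the energy bound, and this is the main obstacle. I would follow \cite[Prop.~24.6.5]{KMbook2007} in its simplified, framed form, as in the proof of Proposition~\ref{prop:compactness_d_0_bound}. The idea is to compare the dimension formula $\gr_z([\mathfrak{a}],W,[\mathfrak{b}])$ with the energy. Changing $z$ by a loop $z_u$ in $\pi_1(\tilde{\mathcal{B}}(Y_+))\cong\Upgamma(Y_+,\tau_+,\mathbf{p}_+)$ shifts the grading by $\tfrac12([u]\cup c_1)$ (Lemma~\ref{lem:loop_and_12}). It shifts the energy by a proportional amount, namely $2\pi^2$-type multiples of $([u]\cup c_1)[Y_+]$ plus the change in $f$.

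\emph{Torsion case.} Here the grading shift vanishes, and so does the energy shift up to the bounded term $f$. Since $\pertL$ takes only finitely many values on the finite critical set, $\mathcal{E}_{\mathfrak{q}}$ is then bounded directly.

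\emph{Non-torsion case.} In the non-torsion case, the relation between energy and index has to be established on $W$. I would obtain it from the usual Chern--Weil identity for $\mathcal{E}^{\text{top}}$, which expresses the energy as $\tfrac14\int F_{A^t}\wedge F_{A^t}$ plus boundary Chern--Simons--Dirac terms. Since the index is, up to a constant, $\tfrac12$ of the complex index formula (itself affine in $\int F\wedge F$), a dimension bound $d_0$ yields an upper bound on energy. The issue needing care is whether the half-integer factors and the real structure on $W$ spoil this affine comparison; I expect they only rescale the constant.

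Monotonicity then finishes the argument. Nonnegativity of the analytic energy on each component of a broken trajectory shows that the total $\mathcal{E}_{\mathfrak{q}}$ of any element is controlled by the energy of the unbroken part. This transfers the bound to $\tilde{N}^+$.
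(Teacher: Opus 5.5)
Your outline is the same as the paper's: get a uniform bound on $\mathcal{E}^{\text{top}}_{\mathfrak q}$, then use compactness of broken $W$-trajectories and finiteness of the critical sets. The paper does this by transplanting Proposition~\ref{prop:compactness_d_0_bound}, which is \cite[Prop.~24.6.5--24.6.6]{KMbook2007} without the $\iota$-term. The gap is in the step you flag as the key one: your energy bound is derived with the wrong sign.

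For fixed endpoints, energy and grading are \emph{anti}-correlated. For a loop $z_u$, both can be computed on $S^1\times Y_+$ with the twisted structure $\mathfrak s_u$:
\begin{itemize}
\item the energy is $\frac14\int F_{A^t}\wedge F_{A^t}=-\pi^2c_1(\mathfrak s_u)^2$;
\item the real index is $\frac18c_1(\mathfrak s_u)^2$, which is the $\frac12([u]\cup c_1)$ of Lemma~\ref{lem:loop_and_12}.
\end{itemize}
Likewise, changing $\tfrr_W$ by a relative class changes the two by $-\pi^2\Delta(c_1^2)$ and $\frac18\Delta(c_1^2)$. Hence $\mathcal{E}^{\text{top}}_{\mathfrak q}(z)+8\pi^2\gr_z([\mathfrak a],W,[\mathfrak b])$ depends only on $([\mathfrak a],[\mathfrak b])$.

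Consequently $\gr_z\le d_0$ gives only a \emph{lower} bound on energy, which you already have. The upper bound must come from $\gr_z\ge 0$. Every piece of a nonempty element of $\tilde N^+_z([\mathfrak a],W^*,[\mathfrak b])$ lies in a nonempty regular moduli space, and cylinder pieces have parametrized dimension $\ge 1$, so the total grading is nonnegative. There are only finitely many critical points $[\mathfrak b]$, since the framed setting has no reducible towers. Together these give the uniform $C$, and your compactness argument then goes through. In the framed setting the hypothesis $\dim\le d_0$ is not what controls the energy; in \cite{KMbook2007} it is used to control $\iota$, which is absent here.

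Your torsion case also fails on a cobordism. There $z$ also records the relative real spin$^c$ structure $\tfrr_W$. As $\tfrr_W$ varies with fixed boundary restrictions, the term $\frac14\int_W F_{A^t}\wedge F_{A^t}$ in $\mathcal{E}^{\text{top}}$ changes by $-\pi^2\Delta(c_1(\mathfrak s_W)^2)$. This is in general unbounded even when every boundary structure is torsion. So the finitely many values of $\pertL$ on the critical sets do not bound the energy, and the $W$-level comparison above is needed in every case, not only the non-torsion one.

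Two smaller points:
\begin{itemize}
\item $f$ is $\mathcal{G}(Y,\tau)$-invariant, so there is no ``change in $f$'' under loops.
\item In your last paragraph the control runs the other way. The total energy of a broken trajectory is the topological energy of $z$, and nonnegativity of the cylinder pieces bounds the $W$-piece by it, not conversely.
\end{itemize}
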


By \cite[Lem.~21.2.1]{KMbook2007}, every open cover of $\tilde{\boldsymbol{\mathcal{B}}}_{k,\text{loc}}(\mathbb W^*)$ has a refinement transverse to all strata in all compactified moduli space $\bar M$ of dimension $d_0$ or less.
Let $\mathcal{U}$ be such an open cover, and $u \in C^d(\mathcal{U};\mathbf{F})$ for $d \le d_0$.
If $\tilde N_z([\mathfrak{a}],\mathbb W^*,[\mathfrak{b}])$ has dimension $d \le d_0$, then there is a well-defined evaluation:
\[\big\langle u,[\wtilde{N}_z([\mathfrak{a}],\mathbb W^*,[\mathfrak{b}])]\big\rangle \in \mathbf{F}.\]
Define the chain level map
\[C^d(\mathcal{U};\mathbf{F}) \otimes \wtilde{C}_*(\mathbb Y_-) \to \wtilde{C}_*(\mathbb Y_+)\]
as
\begin{equation}
    \label{eq:m_tilde_cob_chain_map}
    \wtilde{m}(u\otimes [\mathfrak{a}]) = 
    \sum_z \langle u, [\wtilde{N}_z([\mathfrak{a}],\mathbb W^*,[\mathfrak{b}])]\rangle[\mathfrak{b}].
\end{equation}
The proof the following Proposition is significantly simpler than its unframed counterpart \cite[Prop.~25.3.4]{KMbook2007}, since one needs not differentiate the boundary and interior critical points.
\begin{prop}
    \label{prop:id_of_cob_u_map}
    The operator $\wtilde{m}$ satisfies the identity:
    \[\tilde{\del}(\mathbb{Y}_+) \tilde{m}(u \otimes \xi) =
    \tilde{m}(\delta u \otimes \xi) +
    \tilde{m}(u \otimes \tilde{\del}(\mathbb{Y}_-)\xi),\]
    for $u \in C^d(\mathcal{U};\mathbf{F}), \xi \in \tilde{C}(\mathbb{Y}) = \tilde{C}(Y,\tau,\mathbf{p})$, and $d \le d_0 - 1$.
    Hence $\wtilde{m}$ gives rise to a well-defined operator
    \[\wtilde{m} \colon 
    \check{H}^d(\mathcal{U};\mathbf{F}) \otimes \thmr_j(\mathbb{Y}_-) \to
    \thmr_{j-d}(\mathbb{Y}_-),\]
    for any open cover $\mathcal{U}$ of $\tilde{\boldsymbol{\mathcal{B}}}_{k,\text{loc}}(\mathbb W^*)$ transverse to all moduli spaces of dimension less than or equal to $d_0$.
\end{prop}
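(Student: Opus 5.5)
The identity follows, as in Kronheimer--Mrowka, by evaluating the cochain $u$ on the boundary of one-dimensional compactified moduli spaces. Fix critical points $[\mathfrak{a}]$ on $\mathbb{Y}_-$ and $[\mathfrak{b}]$ on $\mathbb{Y}_+$, and a class $z$ with $\dim \tilde N_z([\mathfrak{a}],\mathbb W^*,[\mathfrak{b}]) = d+1 \le d_0$. By Proposition~\ref{prop:strat_on_W_d}, $\bar N_z([\mathfrak{a}],\mathbb W^*,[\mathfrak{b}])$ is a $(d+1)$-dimensional space stratified by manifolds. Its top stratum is the unbroken moduli space. Its codimension-one strata are images of
\[
\breve N_{z_-}([\mathfrak{a}],[\mathfrak{a}_0]) \times \tilde N_{z_0}([\mathfrak{a}_0],\mathbb W^*,[\mathfrak{b}]) \quad\text{and}\quad \tilde N_{z_0}([\mathfrak{a}],\mathbb W^*,[\mathfrak{b}_0]) \times \breve N_{z_+}([\mathfrak{b}_0],[\mathfrak{b}]),
\]
where in each case the cylinder factor is zero-dimensional, i.e.\ the trajectory space has dimension $1$. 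The gluing theorem for $4$-manifolds with boundary says $\bar N_z$ is a $C^0$-manifold with boundary along these strata. No boundary-obstructed strata occur: the framed gauge group acts freely, so reducibles need no special treatment and no $\delta$-structures are needed.

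The key input is the \v{C}ech version of Stokes' theorem, \cite[Lem.~21.2.1 and \S 21.3]{KMbook2007}. Because $\mathcal{U}$ is transverse to all strata of compactified moduli spaces of dimension at most $d_0$, each such $\bar N$ carries a fundamental class relative to its codimension-one strata. For $u \in C^d(\mathcal{U};\mathbf{F})$ this gives
\[
\langle \delta u, [\bar N_z]\rangle = \langle u, [\partial \bar N_z]\rangle \pmod 2,
\]
where $[\partial \bar N_z]$ is the sum of the codimension-one strata. The codimension-$\ge 2$ strata do not contribute, since $u$ evaluates only on $d$-dimensional pieces.

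We then identify the two kinds of boundary terms. On a stratum $\breve N_{z_-}([\mathfrak{a}],[\mathfrak{a}_0]) \times \tilde N_{z_0}([\mathfrak{a}_0],\mathbb W^*,[\mathfrak{b}])$, the restriction map $r$ of the smaller compactification forgets the cylinder factor. So $u$ pulls back from the $W^*$ factor, and the evaluation equals $\#\breve N_{z_-}([\mathfrak{a}],[\mathfrak{a}_0]) \cdot \langle u, [\tilde N_{z_0}([\mathfrak{a}_0],\mathbb W^*,[\mathfrak{b}])]\rangle$. Summing over $[\mathfrak{a}_0]$ and $z_-$ gives the $[\mathfrak{b}]$-coefficient of $\tilde m(u \otimes \tilde\partial(\mathbb Y_-)[\mathfrak{a}])$. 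The outgoing strata similarly give $\tilde\partial(\mathbb Y_+)\tilde m(u\otimes[\mathfrak{a}])$. Adding the $\delta u$ term and working mod $2$ yields the stated identity.

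Finiteness of every sum comes from the finiteness lemma preceding the proposition, together with Theorem~\ref{thm:energy_bound_traj_space_compact}. The identity shows that $\tilde m$ is a chain map from $C^*(\mathcal{U}) \otimes \tilde C_*(\mathbb{Y}_-)$ to $\tilde C_*(\mathbb{Y}_+)$. It therefore descends to $\check H^d(\mathcal{U};\mathbf{F}) \otimes \thmr_j$: cocycles tensored with cycles go to cycles, and coboundaries or boundaries go to boundaries. The degree shift $-d$ follows because the evaluation vanishes unless $\dim \tilde N_z = d$.

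The main obstacle is the Stokes step: justifying that evaluation of a \v{C}ech cochain on the stratified $C^0$-space $\bar N_z$ obeys the boundary formula. In particular one must check that passing to the smaller compactification, which collapses the broken cylinder pieces, still gives the codimension-one boundary with multiplicity exactly the count of the one-dimensional $\breve N_{\pm}$ and mod-$2$ multiplicity one. For this I would invoke \cite[Lem.~21.2.1, Prop.~21.3.x]{KMbook2007} directly, noting that the framed strata are of the only (boundary-unobstructed) type treated there.
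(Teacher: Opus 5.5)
Your proposal is correct and is essentially the paper's argument: apply the \v{C}ech Stokes formula \cite[(21.4)]{KMbook2007} to the $(d+1)$-dimensional $\bar N_z([\mathfrak{a}],\mathbb W^*,[\mathfrak{b}])$, whose $d$-dimensional strata are the images of once-broken configurations with zero-dimensional $\breve N_\pm$ factors, and read off the two terms. One small correction: it is $N^+_z$, not $\bar N_z$, that is a $C^0$-manifold with boundary along these strata, since in $\bar N_z$ one sheet per point of $\breve N_\pm$ meets along each stratum. The paper resolves the multiplicity issue you flag by pulling $\mathcal{U}$ and $u$ back to $N^+_z$, where every boundary multiplicity is $\pm 1$ by \cite[Lem.~21.3.1]{KMbook2007}.
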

\begin{proof}
    Consider a moduli space $\wtilde{N}_z([\mathfrak a], \mathbb{W}, [\mathfrak{b}])$ of dimension $d+1$ and its compactification $\overline{N}_z([\mathfrak a], \mathbb{W}, [\mathfrak{b}])$.
    Let $v\colon N_z^+([\mathfrak a], \mathbb{W}, [\mathfrak{b}]) \to \overline{N}_z([\mathfrak a], \mathbb{W}, [\mathfrak{b}])$ be the quotient map.
    Since $d+1 \le d_0$ and by Stokes theorem~\cite[(21.4)]{KMbook2007}
    \[
    \langle \delta u, N_z([\mathfrak a], \mathbb{W}, [\mathfrak{b}])  \rangle  = \sum_{\beta} \delta_{\beta} \langle u, N^d_{\beta} \rangle,
    \]
    where $\beta$ ranges over all $d$-dimensional strata and $\delta$ is the boundary multiplicity.
    To compute $\delta_{\beta}$, let $\mathcal{U}^+$ be the pullback open cover on $N^+$ and $u^+$ be the pullback cochain.
    Apply Stokes theorem again:
    \[
    \sum_{\beta} \delta_{\beta} \langle u^+, (N^+)^d_{\beta} \rangle =
    \langle \delta u^+, N_z([\mathfrak a], \mathbb{W}, [\mathfrak{b}]) =
    \langle \delta u, N_z([\mathfrak a], \mathbb{W}, [\mathfrak{b}]),
    \]
    where the sum is over all $\beta$ for which the component $(N^+)^d_{\beta}$ of the $d$-dimensional stratum of $N_z^+([\mathfrak a], \mathbb{W}, [\mathfrak{b}])$ whose image in $\overline{N}_z$ is also $d$-dimensional.
    All $\delta_{\beta}^+$ are $\pm 1$ (\cite[Lem.~21.3.1]{KMbook2007}).
    By Proposition~\ref{prop:strat_moduli_on_X_codimension},
    either
    \begin{equation}
        \label{eq:cob_map_N_strat_1}
        (N^+)^d = \breve{N}_{z_{-1}}([\mathfrak{a}],[\mathfrak{a}_0]) \times \wtilde{N}_{z_0}([\mathfrak{a}_0],\mathbb{W}^*,[\mathfrak{b}])
    \end{equation}
    or
    \begin{equation}
        \label{eq:cob_map_N_strat_2}
        (N^+)^d =  \wtilde{N}_{z_0}([\mathfrak{a}],\mathbb{W}^*,[\mathfrak{b}_0])
    \times \breve{N}_{z_{+1}}([\mathfrak{b}_0],[\mathfrak{b}]),
    \end{equation}
    along which the moduli space is a $C^0$ manifold with boundary.
    The sum of contributions of the form \eqref{eq:cob_map_N_strat_1} gives rise to $\tilde{m}(u \otimes \tilde{\del}(\mathbb{Y}_-)\xi)$ and  \eqref{eq:cob_map_N_strat_2} gives rise to $\tilde{\del}(\mathbb{Y}_+) \tilde{m}(u \otimes \xi)$.
\end{proof}
    At the limit over all transverse open covers of $\tilde{\boldsymbol{\mathcal{B}}}_{k,\text{loc}}(\mathbb W^*)$, the \v{C}ech homology $\check{H}^d(\mathcal{U};\mathbf{F})$ can be identified with $H^d(\tilde{\boldsymbol{\mathcal{B}}}_{k,\text{loc}}(\mathbb W^*);\mathbf{F})$.
    The cohomological version $\wtilde{m}^*$ is defined similarly using the formula~\eqref{eq:m_tilde_cob_chain_map}.
\begin{defn}
    \label{defn:cob_eval_u_map}
    Given a marked real cobordism $\mathbb{W}=(W,\tau_W,\mathbf{a})$ from $\mathbb{Y}_- = (Y_-,\tau_-,\mathbf{p}_-)$ to $\mathbb{Y}_+ = (Y_+,\tau_+,\mathbf{p}_+)$, equipped with a Riemannian metric and a perturbation $\mathfrak{p}$.
    Let $u \in H^d(\tilde{\boldsymbol{\mathcal{B}}}_{k,\text{loc}}(\mathbb W))$. 
    The operator
    \[\thmr_*(u|\mathbb{W}) \colon 
    \thmr_j(\mathbb{Y}_-) \to
    \thmr_{j-d}(\mathbb{Y}_-),\] is
    defined as the operator $\wtilde{m}_*(u \otimes -)$.
    The dual map $\wtilde{m}^*(u \otimes -)$ defines the operator
    \[\thmr^*(u|\mathbb{W}) \colon 
    \thmr^j(\mathbb{Y}_-) \to
    \thmr^{j+d}(\mathbb{Y}_-).\]
\end{defn}
The following is analogous to \cite[Prop.~25.3.8]{KMbook2007}, but its proof involves less complicated identities.
\begin{prop}
    Let $g(0)$ and $g(1)$ be two $\tau_W$-invariant Riemmanian metric, containing the same cylindrical metric at the collar of the boundary.
    Let $\mathfrak{p}(0)$ and $\mathfrak{p}(1)$ be two perturbations on $\mathbb W$ constructed from the same perturbations on $\mathbb Y_{\pm}$.
    Assume the corresponding moduli spaces on $\mathbb{W}$ are both regular.
    Let $\wtilde{m}(0)$ and $\wtilde{m}(1)$ be defined in Definition~\ref{defn:cob_eval_u_map} using $(g(0),\mathfrak{p}(0))$ and $(g(1),\mathfrak{p}(1))$.
    Then there is an operator
    \[\wtilde{K} \colon C^d(\mathcal{U};\mathbf{F}) \otimes \wtilde{C}(\mathbb{Y}_-) \to \wtilde{C}(\mathbb{Y}_+)\]
    for $d \le d_0$, satisfying
    \[\tilde{\del} \wtilde{K} (u \otimes \xi)= \wtilde{K}(\delta u \otimes \xi) + \wtilde{K}(u \otimes \tilde{\del}\xi)+\wtilde{m}(0)(u \otimes \xi) - \wtilde{m}(1)(u \otimes \xi)\]
    for $u \in C^d(\mathcal{U};\mathbf{F})$ and $d < d_0$, and $\xi \in \wtilde{C}(\mathbb{Y}_-)$.
    Thus Definition~\ref{defn:cob_eval_u_map} is independent of the metric and perturbation $(g,\mathfrak{p})$.
    \hfill \qedsymbol
\end{prop}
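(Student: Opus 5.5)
The approach is the standard one‑parameter family (homotopy) argument, following \cite[Prop.~25.3.8]{KMbook2007} but with none of the boundary‑obstructed bookkeeping. Set $P=[0,1]$ and choose a path $(g(s),\mathfrak{p}(s))$ of $\tau_W$‑invariant metrics and perturbations joining $(g(0),\mathfrak{p}(0))$ to $(g(1),\mathfrak{p}(1))$. All of them should agree with the fixed cylindrical data on the collars and be built from the fixed admissible $\mathfrak{q}_\pm$. Since the endpoint moduli spaces are regular, Proposition~\ref{prop:family_transversality_manifold_with_d} with $P_0=\{0,1\}$ lets us modify the family in the interior. After this, every parametrized moduli space $\wtilde N_z([\mathfrak a],\mathbb W^*,[\mathfrak b])_P$ is regular, of dimension $\gr_z([\mathfrak a],W,[\mathfrak b])+1$. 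We also refine the open cover $\mathcal U$ so that it is transverse to the strata of all the compactifications $\bar N_z(\cdots)_P$ of dimension at most $d_0$, using \cite[Lem.~21.2.1]{KMbook2007} together with the finiteness lemma above, now in its family version.

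We then define
\[
\wtilde K(u\otimes[\mathfrak a])=\sum_{[\mathfrak b]}\sum_z\big\langle u,[\wtilde N_z([\mathfrak a],\mathbb W^*,[\mathfrak b])_P]\big\rangle[\mathfrak b],
\]
where the sum runs over parametrized moduli spaces of dimension $d$, so that the ordinary ones have dimension $d-1$. The family compactness theorem (properness over $P$, and finitely many $z$) shows that the sum is finite.

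To derive the identity, take $u\in C^d(\mathcal U;\mathbf F)$ and apply the Čech Stokes formula \cite[(21.4)]{KMbook2007} to $\langle\delta u,\bar N_z(\cdots)_P\rangle$ for a parametrized moduli space of dimension $d+1$. This mirrors the proof of Proposition~\ref{prop:id_of_cob_u_map}. Proposition~\ref{prop:strat_moduli_on_X_codimension} says that the codimension‑one strata of $\bar N_P$ are of three kinds:
\begin{enumerate}
\item the broken strata $\breve N_{-}\times N_{0,P}$, which contribute $\wtilde K(u\otimes\tilde\del\xi)$;
\item the broken strata $N_{0,P}\times\breve N_+$, which contribute $\tilde\del\wtilde K(u\otimes\xi)$;
\item the unbroken solutions over $\del P=\{0\}\sqcup\{1\}$, which contribute $\wtilde m(0)(u\otimes\xi)+\wtilde m(1)(u\otimes\xi)$.
\end{enumerate}
Over $\mathbf F$ the sign in front of $\wtilde m(1)$ does not matter. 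The gluing theorem for $4$‑manifolds with boundary makes $\bar N_P$ a $C^0$ manifold with boundary along each of these strata, so every multiplicity $\delta_\beta$ is $1$ modulo two, by \cite[Lem.~21.3.1]{KMbook2007}. Summing over $[\mathfrak b]$ and $z$ gives the stated chain homotopy identity. Passing to homology, and then to the direct limit over transverse covers, shows that $\thmr_*(u|\mathbb W)$ is independent of $(g,\mathfrak p)$. The cohomological version follows by dualizing.

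The main obstacle is the stratification of the parametrized compactification. We must check that the family version of Proposition~\ref{prop:strat_moduli_on_X_codimension} holds, with no extra codimension‑one strata coming from reducibles. In the framed setting this should be automatic, because $\mathcal G_{\mathbf a}$ acts freely and every gluing is unobstructed. Even so, the step requires using the family gluing theorem near $\del P$ and near broken trajectories simultaneously. I would settle it by showing that the corner strata (a broken configuration lying over $\del P$) have codimension two, so that they do not affect the boundary count.
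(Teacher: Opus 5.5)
Your proposal is correct and is essentially the paper's argument: the paper writes no proof and simply defers to the one-parameter-family argument of \cite[Prop.~25.3.8]{KMbook2007}, simplified as you describe (the framed gauge group acts freely, there are no boundary-obstructed strata, and in codimension one there are only the two broken strata plus the fibres over $\partial P$, as already recorded in Proposition~\ref{prop:strat_moduli_on_X_codimension}, so your corner-strata concern is settled there). One small correction: the $C^0$-manifold-with-boundary property and the multiplicity-one claim hold on $N^+_P$, not on $\bar N_P$, where the image of $\breve N_-\times N_{0,P}$ has multiplicity $\#\breve N_-$; so, as in the proof of Proposition~\ref{prop:id_of_cob_u_map}, pull the cover and cochain back to $N^+_P$ before applying Stokes, and this yields exactly the contributions you list.
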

Similarly, by following the proof of \cite[Prop.~26.1.2]{KMbook2007}, one can prove the composition law:
\begin{prop}
    Assume $\mathbb{Y}_0,\mathbb{Y}_1,\mathbb{Y}_2$ are three marked real 3-manifolds with metrics and admissible perturbations.
    Let $\mathbb{W}_{01},\mathbb{W}_{12}$ be marked real cobordisms from $\mathbb{Y}_0$ to $\mathbb{Y}_1$, and $\mathbb{Y}_1$ to $\mathbb{Y}_2$, respectively;
    let $\mathbb{W}= \mathbb{W}_{01} \circ \mathbb{W}_{12}$.
    Consider cohomology classes
    \[u_{01} \in H^{d_{01}}(\wtilde{\boldsymbol{\mathcal{B}}}(\mathbb{W}_{01});\mathbf{F}), \quad
    u_{12} \in H^{d_{12}}(\wtilde{\boldsymbol{\mathcal{B}}}(\mathbb{W}_{12});\mathbf{F}),\]
    and their product $u = u_{12}u_{01}$ in $H^{d_{01}+d_{12}}(\wtilde{\boldsymbol{\mathcal{B}}}(\mathbb{W});\mathbf{F})$.
    Then
    \begin{align*}
        \thmr_*(u|\mathbb{W})
        &= \thmr_*(u_{12}|\mathbb{W}_{12}) \circ \thmr_*(u_{01}|\mathbb{W}_{01}),\\
        \thmr^*(u|\mathbb{W})
        &= \thmr^*(u_{01}|\mathbb{W}_{01}) \circ \thmr^*(u_{12}|\mathbb{W}_{12}).
    \end{align*}
\end{prop}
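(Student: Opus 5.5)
The plan is a stretching (neck-length) argument in the style of \cite[Prop.~26.1.2]{KMbook2007}; since everything is downstairs and framed, the boundary-obstructed cases drop out. Write $\mathbb{W}(S) = \mathbb{W}_{01} \cup ([-S,S]\times \mathbb{Y}_1) \cup \mathbb{W}_{12}$ for $S \in [0,\infty)$. Take the one-parameter family $P = [0,\infty]$ of $\tau_W$-invariant metrics and perturbations: on the neck use the cylindrical metric and the admissible perturbation $\mathfrak{q}_1$, and let the perturbations on $\mathbb{W}_{01}$ and $\mathbb{W}_{12}$ be fixed regular ones. At $S=\infty$ the moduli space is the fibre product $\wtilde{N}([\mathfrak{a}],\mathbb{W}_{01}^*,[\mathfrak{c}]) \times \wtilde{N}([\mathfrak{c}],\mathbb{W}_{12}^*,[\mathfrak{b}])$, summed over critical points $[\mathfrak{c}]$ of $\mathbb{Y}_1$.

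\textbf{Step 1: the cochain-level class.} Realize $u_{01}$ and $u_{12}$ by \v{C}ech cochains on transverse covers of $\wtilde{\boldsymbol{\mathcal{B}}}(\mathbb{W}_{01}^*)$ and $\wtilde{\boldsymbol{\mathcal{B}}}(\mathbb{W}_{12}^*)$. Pull them back along the restriction maps from the family configuration space to the two ends. Note that a relative real $\spinc$ structure on $\mathbb{W}$ restricts to a pair on $\mathbb{W}_{01}$ and $\mathbb{W}_{12}$ agreeing on $\mathbb{Y}_1$, so the sum over $\underline{\rspinc}(\mathbb{W})$ matches the double sum over $\underline{\rspinc}$ on the two pieces and over the intermediate $[\mathfrak{c}]$. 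The cup product then gives a cochain representing $u = u_{12}u_{01}$.

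\textbf{Step 2: the family moduli space and its boundary.} Apply Proposition~\ref{prop:family_transversality_manifold_with_d} over $P$, holding the regular endpoints fixed, so that every $\wtilde{N}_z([\mathfrak{a}],\mathbb{W}(S)^*,[\mathfrak{b}])_P$ is regular. Compactness follows from the family compactness theorem together with the compactness of broken trajectories (Theorem~\ref{thm:traj_space_compact}), and the family energy bound gives local finiteness. By Proposition~\ref{prop:strat_moduli_on_X_codimension} and the gluing theorem, the codimension-one strata of $\overline{N}_z(\cdot)_P$ are of three kinds:
\begin{enumerate}[leftmargin=*]
\item the fibre over $S=0$, which computes $\wtilde{m}(\mathbb{W})$;
\item the fibre over $S=\infty$, which computes the composite $\wtilde{m}(\mathbb{W}_{12})\circ\wtilde{m}(\mathbb{W}_{01})$;
\item breaking at the $\mathbb{Y}_0$ or $\mathbb{Y}_2$ ends.
\end{enumerate}
No reducible/boundary-obstructed strata appear, since the framed gauge group acts freely.

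\textbf{Step 3: the homotopy identity.} Define $\wtilde{K}(u\otimes\xi) = \sum \langle u, [\wtilde{N}_z(\cdot)_P]\rangle[\mathfrak{b}]$. Stokes' formula \cite[(21.4)]{KMbook2007} applied to the strata in Step 2 gives
\[\tilde{\del}\wtilde{K} + \wtilde{K}\tilde{\del} + \wtilde{K}\delta = \wtilde{m}(\mathbb{W}) + \wtilde{m}(\mathbb{W}_{12})\circ\wtilde{m}(\mathbb{W}_{01}),\]
exactly as in the proof of Proposition~\ref{prop:id_of_cob_u_map}. Passing to homology yields the first identity; the cohomological identity follows by dualizing.

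\textbf{Main obstacle.} The hard step is identifying the $S=\infty$ end with the composite. Gluing along the neck must be a homeomorphism near the broken configurations, with no loss of homotopy classes $z$ or framings. Concretely, the framed gauge groups must glue: an element of $\mathcal{G}(W,\tau_W,\mathbf{a})$ restricts to elements of the two pieces that are $1$ on $\mathbf{a}$. Conversely, framed gauge transformations on the pieces can be patched across $\mathbb{Y}_1$ because both equal $1$ on $\mathbf{p}_1$, so the mismatch lies in the connected group $\mathcal{G}^{\perp}$ by Lemma~\ref{lem:config_homotopy_type}.

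I would first prove that $\boldsymbol{\pi}([\mathfrak{a}],\mathbb{W},[\mathfrak{b}])$ is the fibre product of the $\boldsymbol{\pi}$'s of the two pieces over $[\mathfrak{c}]$, modulo the action of $\Upgamma(Y_1,\tau_1,\mathbf{p}_1)$. Then I would invoke the unobstructed gluing theorem \cite[\S 10]{ljk2022} on the neck.
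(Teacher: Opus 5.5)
Your proposal follows the same route as the paper. The paper's proof just runs the neck-stretching scheme of \cite[Prop.~26.1.2]{KMbook2007} along $\mathbb{Y}_1$, and, as in Proposition~\ref{prop:id_of_cob_u_map}, uses the absence of boundary-obstructed strata to replace Kronheimer--Mrowka's matrix $\check{K}$ by a single homotopy operator $\wtilde{K}$, which is exactly your Steps~2--3.

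One small correction to your bookkeeping remarks. Two framed gauge transformations equal to $1$ on $\mathbf{p}_1$ need not differ over $Y_1$ by an element of $\mathcal{G}^{\perp}$: Lemma~\ref{lem:config_homotopy_type} gives $\pi_0\mathcal{G}(Y_1,\tau_1,\mathbf{p}_1)\cong\Upgamma(Y_1,\tau_1,\mathbf{p}_1)$, which is nontrivial in general. For the same reason, $\underline{\rspinc}(\mathbb{W})$ need not biject onto compatible pairs of structures on the two pieces. Your later description of $\boldsymbol{\pi}([\mathfrak{a}],\mathbb{W},[\mathfrak{b}])$ as the fibre product over $[\mathfrak{c}]$ modulo $\Upgamma(Y_1,\tau_1,\mathbf{p}_1)$ is the correct statement and absorbs both points, since only the totals over all structures and homotopy classes enter the identity.
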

\begin{proof}
    The proof follows the standard scheme over counting boundary degeneration of a parametrized family of moduli spaces that break along the middle $\mathbb{Y}_1$.
    As in the proof of Proposition~\ref{prop:id_of_cob_u_map}, instead of introducing a matrix $\check K$ of operators between different types of critical points, the framed case requires a single operator $\tilde{K}$ and less formidable identities. 
\end{proof}
Recall 
\[\tilde{B}(Y,\tau,\mathbf{p};\tfrr) \simeq \tilde{\mathbf{T}}_{R,\mathbf{p}} =\frac{H^1(Y;\mathbf{R})^{-\tau^*}}{\Upgamma(Y,\tau,\mathbf{p})}.\]
Then $H^1(\tilde{\mathbf{T}}_{R,\mathbf{p}};\mathbf{Z})$ is isomorphic to the dual lattice $\Upgamma^*(Y,\tau,\mathbf{p}) := \text{Hom}(\Upgamma(Y,\tau,\mathbf{p}),\mathbf{Z})$
and hence the mod-2 cohomology ring is
\[H^*(\tilde{\mathbf{T}}_{R,\mathbf{p}};\mathbf{F}) \cong \Lambda^*(\Upgamma^*(Y,\tau,\mathbf{p}))\otimes \mathbf{F}.\]
The action of $u \in \Lambda^*(\Upgamma^*)$ by $\thmr_*(u|I \times \mathbb{Y})$ defines the module structure on $\thmr_*(\mathbb{Y})$ (and similarly on cohomology):
\begin{cor}
    The framed real monopole Floer homology $\thmr_*(Y,\tau,\mathbf{p})$ is a module over the exterior algebra
    \[\Lambda^*(\Upgamma^*(Y,\tau,\mathbf{p}))\otimes \mathbf{F}.\]
\end{cor}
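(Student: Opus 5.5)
The plan is to realize the module structure through the cobordism maps already constructed. For $u \in H^*(\tilde{\boldsymbol{\mathcal{B}}}_{k,\text{loc}}(I\times\mathbb{Y});\mathbf{F})$ we set $u\cdot x := \thmr_*(u\,|\,I\times\mathbb{Y})(x)$, using Definition~\ref{defn:cob_eval_u_map} applied to the product cobordism $I\times\mathbb{Y}$ with the pulled-back metric and perturbation.

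The first step is to identify the coefficient ring. The configuration space $\tilde{\mathcal{B}}_{k,\text{loc}}(I\times Y,\tfrr)$ of the cylinder restricts to $\tilde{\mathcal{B}}_k(Y,\tfrr)$ on a slice $\{t\}\times Y$, and this restriction should be a weak homotopy equivalence. I would argue this as in Lemma~\ref{lem:config_homotopy_type}: on both sides $\pi_0$ of the framed gauge group is $\Upgamma(Y,\tau,\mathbf{p})$, because $H^1(I\times Y;\mathbf{Z})^{-\tau^*}=H^1(Y;\mathbf{Z})^{-\tau^*}$ and the evaluation conditions along $I\times\mathbf{p}$ match those at $\mathbf{p}$. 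Both spaces are therefore homotopy equivalent to $\tilde{\mathbf{T}}_{R,\mathbf{p}}$, with cohomology $\Lambda^*(\Upgamma^*)\otimes\mathbf{F}$. This isomorphism should hold for each relative real $\spinc$ structure, and the $\mathbf{F}$-cohomology of the union is the corresponding product.

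The second step is unitality: $1\in H^0$ should act as the identity. By the invariance corollary, the cobordism map of the product cylinder with the constant class is the identity on $\thmr_*$. To see this at chain level, I would take the translation-invariant perturbation, for which the only zero-dimensional moduli spaces on $\mathbf{R}\times Y$ are the constant trajectories, since all others carry a free $\mathbf{R}$-action. Each constant trajectory counts once, so $\tilde m(1\otimes-)=\mathrm{id}$.

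The third step is associativity, $(uv)\cdot x=u\cdot(v\cdot x)$. This follows from the composition law applied to $I\times\mathbb{Y}\circ I\times\mathbb{Y}\cong I\times\mathbb{Y}$, together with compatibility of the cup product under the restriction identifications from step one. Linearity in $u$ is immediate from the formula \eqref{eq:m_tilde_cob_chain_map}.

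I expect the main obstacle to be step one, namely checking that the lattice $\Upgamma$ is the same on $Y$ and on $I\times Y$ marked by the arcs $\mathbf{p}_I$, so that the classes really come from $\Lambda^*(\Upgamma^*)$ rather than $\Lambda^*(H^1(Y;\mathbf{Z})^{-\tau^*,*})$. I would verify directly that $\textsf{ev}_{\mathbf{p}_I}$ factors through restriction to a slice. Once that holds, the remaining steps are formal consequences of the functoriality propositions.
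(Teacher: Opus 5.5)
Your proposal is correct and follows the paper's route. The paper identifies the coefficient ring with $H^*(\tilde{\mathbf{T}}_{R,\mathbf{p}};\mathbf{F})\cong\Lambda^*(\Upgamma^*(Y,\tau,\mathbf{p}))\otimes\mathbf{F}$ via Lemma~\ref{lem:config_homotopy_type} and lets $u$ act by $\thmr_*(u|I\times\mathbb{Y})$, leaving unitality and associativity implicit in the functoriality, invariance and composition-law propositions; your explicit checks (constant trajectories for the translation-invariant perturbation, the composition law on two stacked cylinders, and the slice argument showing that $\Upgamma$ is the same on $Y$ and on $(I\times Y,\mathbf{p}_I)$) fill in those details without changing the argument.
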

\section{Closed marked real 4-manifold invariant}
This paper \emph{does not} address the issue of orientability of real Seiberg--Witten invariants.
Rather, \emph{assuming orientability}, this section introduces the \emph{integer-valued} framed version Seiberg--Witten invariants on a closed marked real 4-manifolds.

In this section, denote the $(-\tau_X^*)$-invariant part of the Betti numbers by $b^{\ell}_{R}$ of a marked real 4-manifold $\mathbb{X}= (X,\tau_X,\mathbf{c})$ as
\begin{equation*}
    b^{\ell}_{R}(\mathbb{X}) = \dim H^{\ell}(X;\mathbf{R})^{-\tau_X^*}.
\end{equation*}
For instance, $b^+_R$ denotes the dimension of the $\tau_X^*$-skew-invariant self-dual $\mathbf{R}$-cohomology.
A \emph{real $\ell$-form} is a $(-\tau_X^*)$-invariant $\ell$-form.
Denote the space of real $\ell$-forms by $\Omega^{\ell}_R(X)$, and the space of $\tfrr$-invariant real spinors by $\Gamma(S)_R$ when the choice of real structure $\tfrr$ is unambiguous.
\subsection{The framed moduli space}
Let $\mathbb{X}$ be a closed oriented real marked 4-manifold and $(\mathfrak{s}_X,\tfrr_X)$ be a {\rrscs}.
Given an imaginary-valued real 2-form $\omega$ on $X$, define the perturbed Seiberg--Witten equations as
\begin{equation*}
    \mathfrak{F}_{\omega}(A,\Phi)=\left(\frac{1}{2}\rho(F^+_{A^t}-4\omega^+) - (\Phi\Phi^*)_0, D^+_A\Phi\right),       
\end{equation*}
over the framed configuration space $\tilde{\mathcal{B}}_k(X,\tau_X,\mathfrak{s}_X,\tfrr_X)$.
Let $\wtilde N(X,\tfrr_X) = \wtilde N(X,\tau_X,\mathfrak{s}_X,\tfrr_X)$ be the set of solutions to $\mathfrak{F}_{\omega}(\upgamma)=0$. 
\begin{theorem}
	Given $\mathbb{X}$ and $(\mathfrak{s}_X,\tfrr_X)$ as above.
	If $b^+_R(\mathbb{X}) = 0$, assume the index of the Dirac operator $D^+_A$ is nonnegative.
	Then for a residual set of perturbing real 2-form $\omega \in L^2_{R,k-1}(X;i\Lambda^2(X))$, the section $\mathfrak{F}_{\omega}$ is transverse to zero, and
	the framed moduli space $\tilde{N}(X,\tfrr_X)$ of framed solution is a \textbf{closed} smooth compact manifold of dimension
	\begin{equation*}
		d = \ind(D^+_A)_R + b^1_R(\mathbb X) - b^+_R(\mathbb X) = \frac{c_1(\mathfrak{s})^2 - \sigma(X)}{8} + b^1_R(\mathbb X) - b^+_R(\mathbb X).
	\end{equation*}
\end{theorem}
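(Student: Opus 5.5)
The proof splits into four steps: transversality, smoothness of the quotient, the index computation, and compactness. The key structural fact is that the framed gauge group $\mathcal{G}(X,\tau_X,\mathbf{c})$ acts freely on all of $\mathcal{C}_k(X,\tfrr_X)$, reducibles included, because the constant $-1$ is excluded. So $\tilde{\mathcal{B}}_k(X,\tfrr_X)$ is a Hilbert manifold without boundary, by the closed-manifold analogue of Proposition~\ref{prop:tangent_K_J_decomp} and its corollary. In particular, reducibles are not singular points of the configuration space, and the only issue at them is surjectivity of the linearization.

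\textbf{Transversality.} Consider the parametrized section $(\upgamma,\omega)\mapsto \mathfrak{F}_\omega(\upgamma)$ on $\mathcal{C}_k\times L^2_{R,k-1}(X;i\Lambda^2)$. The plan is to show it is transverse to zero and then apply Sard--Smale \cite[Lem.~12.5.1]{KMbook2007}.

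At an irreducible zero $(A,\Phi)$, surjectivity is the standard argument. The $\omega$-variation fills the curvature component. A cokernel element $(0,\pi)$ must then satisfy $D_A^-\pi=0$ and $\langle \pi,\rho(a)\Phi\rangle=0$ for all real $a$. Unique continuation (as in the proof of Proposition~\ref{prop:manifold_with_d_smooth_Hilbert}, applied after forgetting the real structure) then forces $\pi=0$.

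At a reducible $(A,0)$, the linearization splits as $(d^+,d^*)$ on real $1$-forms plus $D_A^+$ on real spinors. Varying $\omega$ makes the curvature equation transverse; it avoids reducibles only when $b^+_R>0$, but here we need not avoid them. So the remaining obstruction is $\operatorname{coker}(D^+_A)_R$, and the parameter $\omega$ does not act on the spinor component. This is the main obstacle.

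To handle it, I would exploit the fact that the reducible locus is parametrized by $\omega$ modulo gauge, an affine family over the torus $H^1(X;\mathbf{R})^{-\tau^*}/\Upgamma$ of real connections. The plan is to show that, for generic $\omega$, the family of real Dirac operators over this torus has cokernel of the expected codimension. When $\ind(D^+_A)_R\ge 0$ (assumed if $b^+_R=0$), the generic cokernel is zero on the part of the reducible locus that actually occurs, where that locus has dimension $b^1_R-b^+_R$.

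If that fails, I would fall back on two options. One is to include a small equivariant perturbation of the Dirac operator (a zeroth-order real term) among the parameters. The other is to restrict the statement to perturbations for which the reducible locus is empty when $b^+_R>0$, since by the usual wall argument reducibles occur only on a codimension-$b^+_R$ set of $\omega$, and to treat the case $b^+_R=0$ through the index hypothesis.

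\textbf{Dimension.} Once transversality holds, $\tilde N$ is a smooth manifold whose dimension is the index of $\mathcal{D}\mathfrak{F}\oplus\mathbf{d}^*$ on real sections relative to $\mathbf{c}$. Deform the zeroth-order terms to zero. The operator becomes $(d^+\oplus d^*)$ on real $1$-forms, plus $D^+_A$ on real spinors, with the gauge term acting on real functions vanishing on $\mathbf{c}$.

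There is no real constant function, so $H^0$ contributes nothing. The form part therefore has index $b^1_R-b^+_R$. The framing condition on $\mathbf{c}$ changes only a finite-dimensional gauge quotient, and that is compensated by the lattice $\Upgamma$ rather than by dimensions. The spinor part contributes $\ind_{\mathbf{R}}(D^+_A)_R$, which equals the complex index $(c_1^2-\sigma)/8$ because the real structure halves real dimensions: $\ind_{\mathbf{R}}=\tfrac12\cdot 2\ind_{\mathbf{C}}$.

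\textbf{Compactness.} The usual Weitzenböck estimate bounds $|\Phi|$ in terms of scalar curvature and $\omega$, and this does not see the framing. Combining it with gauge fixing under $\mathcal{G}(X,\tau_X,\mathbf{c})$ gives compactness, as in \cite[Thm.~5.11]{KMbook2007}. The only extra point is that the framed gauge group has finite index in the real gauge group. Hence $\tilde N$ is a finite cover of the unframed compact moduli space and is itself compact. Since $X$ is closed, there is no boundary, so $\tilde N$ is a closed manifold.
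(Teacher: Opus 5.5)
The following parts of your proposal are fine:
\begin{itemize}
\item the irreducible transversality argument;
\item the index count (the real index of $(D^+_A)_R$ equals the complex index, and there are no $\tau$-skew-invariant constants, so the form part gives $b^1_R-b^+_R$);
\item compactness via the finite cover of the unframed moduli space;
\item the case $b^+_R>0$ through your second fallback, which is exactly how the paper argues (Lemma~\ref{lem:noreducible1}, following \cite[Lem.~27.1.1]{KMbook2007}).
\end{itemize}

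The gap is the case $b^+_R=0$, where reducibles cannot be avoided. For each $\omega$ the reducibles form a whole torus $T_\omega\cong\widetilde{\mathbf{T}}_{R,\mathbf{c}}$ of dimension $b^1_R$. Transversality at a point $(A,0)$ is exactly surjectivity of $(D^+_A)_R$, so you need the cokernel to vanish at \emph{every} point of $T_\omega$, not at a generic one. Your phrase ``that locus has dimension $b^1_R-b^+_R$'' blurs this.

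Varying $\omega$ only translates $T_\omega$ in the space of real connections. No gauge-equivariant perturbation has a spinor component at $\Phi=0$, since such terms are odd in $\Phi$. So the Sard--Smale argument on the universal moduli space gives nothing at reducibles. Your first fallback only replaces $D^+_A$ by another family over the same torus, and it also changes the statement.

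Write $n=\ind(D^+_A)_R$. The locus $\{A:\operatorname{coker}(D^+_A)_R\neq 0\}$ has expected codimension $n+1$. An ``expected codimension'' argument therefore gives avoidance at best when $b^1_R\le n$. When $b^1_R\ge n+1$, a torus meeting that locus transversally keeps meeting it for all nearby $\omega$, so the bad set of $\omega$ need not be meagre. Appealing to ``the index hypothesis'' does not fix this: $n\ge 0$ constrains a single operator, not a $b^1_R$-parameter family.

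Your approach does go through when $b^+_R=0=b^1_R$, which is the only $b^+_R=0$ case relevant to $d=0$. Then $T_\omega$ is a single point $[A_\omega]$, and $\omega\mapsto[A_\omega]$ is a continuous open surjection onto real connections modulo gauge. It suffices to show the jump locus is closed with empty interior.
\begin{itemize}
\item Closedness follows from upper semicontinuity of $\dim\operatorname{coker}$.
\item For empty interior, suppose it contained an open set. At a point there minimizing $\dim\operatorname{coker}$, the cokernel dimension is locally constant. Then the reduced map $\ker\to\operatorname{coker}$ vanishes identically nearby, so its derivative $a\mapsto\operatorname{Re}\int\langle\rho(a)\phi,\pi\rangle$ must vanish.
\item This derivative is in fact nonzero. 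Since $n\ge0$ and the cokernel is nonzero, there is $0\neq\phi\in\ker$; take any $0\neq\pi\in\operatorname{coker}$. By unique continuation, pick $x$ off the fixed surface with $\phi(x)\neq0\neq\pi(x)$. There $\alpha\mapsto i\rho(\alpha)\phi(x)$ is onto $S^-_x$. Symmetrizing a bump form under $-\tau^*$ then gives a real $a$ with nonzero pairing, a contradiction.
\end{itemize}
Hence generic $\omega$ gives an unobstructed reducible.

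For $b^1_R>0$ a genuinely new input would be needed. The paper's own proof, which cites the standard irreducible argument and Lemma~\ref{lem:noreducible1}, does not treat reducibles when $b^+_R=0$ either.
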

\begin{proof}
	The proof is identical to the ordinary case, which is standard; 
	see e.g.~\cite[Transversality Thm.~1]{Moore1996lectures}
	(where the 1-connected assumption can be dropped) and also the proof of~\cite[Lem.~27.1.1]{KMbook2007}.
	The dimension formula can be found in \cite[Prop.~4.3]{ljk2022}.
	In the case when $b^+_R(\mathbb{X}) > 0$, this proof uses the fact that the reducible solutions do not exist for generic $\omega$.
	See Lemma~\ref{lem:noreducible1} below.
	%\cite[Lem.~27.1.1]{KMbook2007}.
\end{proof}
\begin{lem}
	\label{lem:noreducible1}
	If $b^+_R(\mathbb{X}) > 0$, then there is a residual set of perturbations $\omega$ for which $\mathfrak{F}_{\omega}$ is transverse to zero and the moduli space $\tilde{N}(X,\tfrr_X)$ contains no reducible solutions.
\end{lem}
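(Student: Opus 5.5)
The plan follows the standard argument for the ordinary case (e.g.\ \cite[Lem.~27.1.1]{KMbook2007}), adapted to real, framed configurations.

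\emph{Step 1: what reducibles satisfy.} A reducible solution $(A,0)$ of $\mathfrak{F}_{\omega}=0$ satisfies $F^+_{A^t} = 4\omega^+$. Here $A^t$ is a real connection on $\det S^+$, so $F_{A^t}$ is a $(-\tau_X^*)$-invariant imaginary closed 2-form. Its cohomology class is $-2\pi i\, c_1(\mathfrak{s}_X)$, independent of $A$. Let $\mathcal{H}^+_R$ be the space of $\tau$-skew-invariant self-dual harmonic forms. Then the $L^2$-projection of $F^+_{A^t}$ onto $i\mathcal{H}^+_R$ equals the harmonic self-dual part of $-2\pi i\,c_1$, which is a fixed vector $\kappa$. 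A reducible therefore exists only if
\[\Pi_{i\mathcal{H}^+_R}(\omega^+) = \tfrac14\kappa.\]
This is one affine condition on $\omega$ of codimension $b^+_R(\mathbb{X})>0$, using that $\dim \mathcal{H}^+_R = b^+_R$ by $\tau$-equivariant Hodge theory. So the set $\mathcal{W}_0$ of $\omega$ admitting reducibles is a closed affine subspace of positive codimension. Its complement is open and dense, hence residual. The framing plays no role here: being reducible is a gauge-invariant condition, and the framed moduli space covers the unframed one.

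\emph{Step 2: transversality at irreducibles.} Away from $\mathcal{W}_0$ every solution is irreducible. Consider the universal map $(\upgamma,\omega)\mapsto \mathfrak{F}_{\omega}(\upgamma)$ on $\mathcal{C}_k(X,\tfrr_X)^{\irr}\times L^2_{R,k-1}$, modulo $\mathcal{G}(X,\tau_X,\mathbf{c})$, which acts freely. I will show its linearization is surjective.

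Variation in $\omega$ covers the curvature component $i\mathfrak{su}(S^+)$ for $\tau$-invariant self-dual forms, using $\rho$ restricted to real forms. So a cokernel element has the form $(0,\pi)$ with $\pi\in\Gamma(S^-)_R$. Orthogonality to variations in $a$ and $\phi$ then gives $D^-_A\pi=0$ and $\re\langle \rho(a)\Phi,\pi\rangle=0$ for all real $a$. The second condition forces $\pi\Phi^*$ to have vanishing $\tau$-skew-invariant part, and $\pi\Phi^*$ is already $\tfrr$-equivariant, so $\pi\Phi^* \equiv 0$ pointwise. Since $\Phi\not\equiv0$, unique continuation for $D_A$ makes $\pi$ vanish on an open set, and hence everywhere.

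\emph{Step 3: conclusion.} Sard--Smale \cite[Lem.~12.5.1]{KMbook2007} gives a residual set of $\omega$ for which $\mathfrak{F}_{\omega}$ is transverse to zero on irreducibles. Intersecting with the open dense complement of $\mathcal{W}_0$ gives the required residual set.

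\emph{Main obstacle.} The delicate point is the real-symmetry bookkeeping in Steps 1 and 2. In Step 1 one must confirm that $\tau$-invariant self-dual harmonic forms span exactly a $b^+_R$-dimensional space, so the condition genuinely has positive codimension. In Step 2 one must check that restricting $a$ to real forms still forces $\pi\Phi^*=0$ pointwise. I would handle the latter by averaging: since both $\pi$ and $\Phi$ are real, any test form $a$ can be replaced by its skew-invariant part without changing the pairing.
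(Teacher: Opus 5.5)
Your proposal is correct and follows essentially the paper's approach: the paper simply says the argument of \cite[Lem.~27.1.1]{KMbook2007} applies verbatim, and you have written out that standard argument (a codimension-$b^+_R$ affine wall containing all $\omega$ that admit reducibles, Sard--Smale at irreducibles, and averaging to pass from real test forms to arbitrary ones). One step deserves explicit mention: when $\Phi(x)\neq 0$, the map $a\mapsto\rho(a)\Phi(x)$ is a real-linear isomorphism $iT^*_xX\to S^-_x$, since $|\rho(v)\Phi|=|v|\,|\Phi|$ and both sides have real dimension four; this, rather than the vanishing of a ``skew-invariant part'' of $\pi\Phi^*$, is what gives $\pi(x)=0$ on the open set where $\Phi\neq0$ before you invoke unique continuation.
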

\begin{proof}
	Verbatim as e.g.~\cite[Lem.~27.1.1]{KMbook2007}.
\end{proof}
The finiteness of the framed moduli spaces is also standard.
\begin{lem}
    Suppose the perturbation $\omega$ is chosen so that all the moduli spaces $\tilde{N}(X,\tfrr_X)$. Then there are only finitely many {\rrscs} for which the moduli space $\tilde{N}(X,\tfrr_X)$ is non-empty. \hfill \qedsymbol
\end{lem}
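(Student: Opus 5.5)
The plan is to argue by compactness: first bound the set of {\rrscs}s that can carry solutions, then bound the curvature and spinor of any solution independently of the structure. Since $\omega$ is fixed and $X$ is closed, the Weitzenböck formula and the maximum principle apply to $\tfrr_X$-invariant solutions, because real solutions are in particular ordinary Seiberg--Witten solutions for the underlying $\spinc$ structure. They give the usual pointwise bound $|\Phi|^2 \le \max(0,-s + C|\omega^+|)$, with $s$ the scalar curvature. From the curvature equation this yields a uniform $L^2$ bound on $F^+_{A^t}$, and hence on $(c_1(\mathfrak{s})^+)^2$ in terms of the metric and $\omega$ alone.

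Next, nonemptiness together with regularity forces the dimension $d$ to be nonnegative. By the dimension formula in the transversality theorem above,
\[\frac{c_1(\mathfrak{s})^2-\sigma(X)}{8} + b^1_R(\mathbb X) - b^+_R(\mathbb X) \ge 0,\]
which gives a lower bound on $c_1(\mathfrak{s})^2 = (c_1^+)^2 - |c_1^-|^2$. Combined with the upper bound on $(c_1^+)^2$, this bounds $|c_1^-|^2$, and so $c_1(\mathfrak{s})$ lies in a bounded subset of the lattice $H^2(X;\mathbf{Z})/\text{torsion}$. Only finitely many underlying $\spinc$ structures $\mathfrak{s}$ therefore occur; this is the standard finiteness argument of \cite[Lem.~27.1.1]{KMbook2007}, or \cite[Prop.~4.3]{ljk2022} in the real setting.

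Over each such $\mathfrak{s}$, the set of {\rrscs}s is a torsor over the finite group $H^1(X,\mathbf{c};\mathbf{Z})^{\tau^*}/\text{im}(1+\tau^*)$, by the Proposition on the sets of relative real structures together with the remark on $w_1$ obstructions. This group is finitely generated and $2$-torsion, since for invariant $\omega$ we have $2\omega = \omega + \tau^*\omega$, so it is finite. Hence only finitely many {\rrscs}s admit nonempty moduli spaces.

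The main point to check is that the a priori bounds are uniform across relative real structures, which holds because they depend only on $(X,g,\omega)$ and not on $\tfrr_X$. The remaining concern is a subtlety in the case $b^+_R=0$, where reducibles may appear. Even then, a reducible solution satisfies $F^+_{A^t} = 4\omega^+$, so the same bound on $c_1^+$ holds and the argument goes through unchanged.
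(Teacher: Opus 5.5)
Your proposal is correct and is the standard finiteness argument the paper is invoking; the paper gives no proof beyond calling it standard. The steps are the same: a pointwise bound on $\Phi$ from the Weitzenb\"ock formula, the constraint $d\ge 0$ (forced by regularity, including at framed reducibles, which are free points) to confine $c_1(\mathfrak{s})$ to a bounded set, and finiteness of the $2$-torsion, finitely generated group of relative real structures over each $\mathfrak{s}$.

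One cosmetic caveat: in dimension four the {\rrsc} structures over a fixed $\mathfrak{s}$ may split into several orbits, according to the local model $\con_0$ or $\con_1$ on each circle of $\mathbf{c}$. There are at most $2^{|\pi_0(\mathbf{c})|}$ such orbits, so finiteness is unaffected.
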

\subsection{The determinant line bundles over framed real configuration spaces}
The material in this subsection is standard and draws heavily from~\cite[\S 3.2]{TianWang2009}.
See \cite[\S 20.2]{KMbook2007} for a general discussion on determine line bundles.

Consider the linearized real Seiberg--Witten operator $L_{A,\Phi} = \mathcal{D}_{A,\Phi}\mathfrak{F}_{\omega }$ at $(A,\Phi)$ coupled with gauge-fixing:
\[
	L_{A,\Phi} \colon i\Omega^1_R(X) \oplus \Gamma(S^+)_R \to i\Omega^0_R(X,\mathbf{c}) \oplus i\Omega^+_R(X)\oplus \Gamma(S^-)_R,
\] 
\begin{equation}
    \label{eqn:DFw_in_comp}
    (a, \phi) \mapsto (
    -2d^*a + i\text{Re}\langle i\Phi, \phi\rangle, \quad
    \rho(d^+a) - (\phi\Phi^* + \Phi\phi^*)_0, \quad
    D_A^+\phi + \frac{1}{2}\rho(a)\Phi).
\end{equation}
The family of $\mathcal{G}_{\mathbf{c}}$-equivariant Fredholm operators $\{L_{A,\Phi} : (A,\Phi) \in \mathcal{C}(\mathbb{X},\tfrr)\}$ gives rise to the determinant line bundle
\[\det L =\Lambda^{\text{max}} \ker(L_{A,\Phi}) \otimes \left(\Lambda^{\text{max}} \text{coker} (L_{A,\Phi})\right)^*\]
over the framed real configuration space $\mathcal{C}(\mathbb{X},\tfrr)$.
This is a $\mathbf{R}$-bundle that descends to the $\mathcal{G}_{\mathbf{c}}$-quotient, giving rise to the bundle
\[\det L\to \widetilde{B}_{\mathbf{c}}(\mathbb{X},\tfrr_X).\] 
Following \cite[\S 3.2]{TianWang2009}, express $L_{A,\Phi}$ as the sum of operators $L^X + D_{A,R}^+ + \eta$, where:
\begin{itemize}[leftmargin=*]
    \item $L ^X \colon i\Omega^1_R \to i\Omega^0_{R,\mathbf{c}} \oplus i\Omega^+_R$ is the linear operator $(-2d^*,d^+)$, independent of $A$ and $\Phi$;
    \item $D_{A,R}^+ \colon \Gamma(S^+)_R  \to \Gamma(S^-)_R $ is the real Dirac operator twisted by $A$;
    \item $\eta = \eta_{\Phi}$ is the zero-th order, $\Phi$-dependent operator given by the rest of the terms in \eqref{eqn:DFw_in_comp}:
    \[(a,\phi) \mapsto i\text{Re}\langle i\Phi, \phi\rangle - (\phi\Phi^* + \Phi\Phi^*)_0 + \frac{1}{2}\rho(a)\Phi.\]
    The operator $\eta$ satisfies $\eta_{t\Phi} = t\eta_{\Phi}$.
\end{itemize}

Let $\mathcal{A}$ denote the set of real spin\textsuperscript{c} connections.
The framed real gauge group $\mathcal{G}_{\mathbf{c}}$ acts on $\mathcal{A}$ freely, giving rise to the quotient $\widetilde{\mathcal{B}}^{\text{conn}}_{\mathbf{c}} = \mathcal{A}/\mathcal{G}_{\mathbf{c}}$.
%This is a smooth Hilbert manifold.
Consider the natural forgetful map $[A,\Phi] \mapsto [A]$, denoted as
\[p \colon \widetilde{\mathcal{B}}_{\mathbf{c}} \to \widetilde{\mathcal{B}}^{\text{conn}}_{\mathbf{c}}.\]
Since the real Dirac operators $D_{A,R}^+$ depend only on $A \in \mathcal{A}$, let $\det \ind D^+_{R} \to \widetilde{\mathcal{B}}_{\mathbf{c}}^{\text{conn}}$ be the corresponding determinant line bundle over the gauge-equivalent family of connections.
Once again, for the lack of complex structure, this is a $\mathbf{R}$-bundle that is \emph{a priori} not oriented.

The following theorem is the framed version of~\cite[Thm.~3.6]{TianWang2009}, for which the proof applies verbatim.
The key is that the zeroth order term $\eta$ coming from the fibre of $p$ can be canonically homotoped to zero via $t \mapsto t\eta$, whereas $\ind L^X$ is parameter-independent and can be oriented using a \emph{real homology orientation}, i.e. an orientation on the real vector space of the $\tau_X$-skew-invariant cohomologies:
\[H^0_R(X,\mathbf{c};\mathbf{R}) \oplus H^1_R(X;\mathbf{R}) \oplus H^+_R(X;\mathbf{R}).\]
($H^0_R(X,\mathbf{c};\mathbf{R})$ is always zero when $X$ is connected.)
The framed version below uses the full framed  gauge-equivalence classes, including the reducibles, instead of just the irreducible part as~\cite[Thm.~3.6]{TianWang2009}.
\begin{theorem}
    \label{thm:det_delta_p_D}
    Fix an orientation on $H^0_R(X,\mathbf{c};\mathbf{R}) \oplus H^1_R(X;\mathbf{R}) \oplus H^+_R(X;\mathbf{R})$.
    Then the line bundle $\det \ind L$ is isomorphic to $p^*(\det \ind D^+_{R})$ up to an isomorphism that is unique up to a positive continuous function. 
    \[\det L \otimes p^*(\det \ind D^+_{R})  \to \widetilde{\mathcal{B}}_{\mathbf{c}}\]
    is orientable with a canonical orientation.
    \hfill \qedsymbol
\end{theorem}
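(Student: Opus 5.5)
The statement is Theorem~\ref{thm:det_delta_p_D}: an orientation of $H^0_R(X,\mathbf{c};\mathbf{R}) \oplus H^1_R(X;\mathbf{R}) \oplus H^+_R(X;\mathbf{R})$ identifies $\det L$ with $p^*(\det \ind D^+_R)$, canonically up to a positive function. I would follow the scheme of \cite[Thm.~3.6]{TianWang2009} and work upstairs on $\mathcal{C}(\mathbb{X},\tfrr)$ with $\mathcal{G}_{\mathbf{c}}$-equivariant objects, descending at the end. The descent is automatic once everything is built equivariantly, because $\mathcal{G}_{\mathbf{c}}$ acts freely, reducibles included, so no stabilizer can act on fibres by $-1$.

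The first step is to deform the family $L_{A,\Phi} = L^X + D^+_{A,R} + \eta_\Phi$ through the family $L^X + D^+_{A,R} + t\eta_\Phi$, $t \in [0,1]$. Since $\eta_\Phi$ is zeroth order, each operator in this family is Fredholm. The family is parametrized by $[0,1] \times \mathcal{C}$ and is equivariant, so its determinant line over $[0,1]\times\mathcal{C}$ restricts to $\det L$ at $t=1$ and to $\det(L^X \oplus D^+_{A,R})$ at $t=0$. The interval is contractible, so this gives an isomorphism between these two lines, canonical up to homotopy, hence up to a positive function.

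The second step is the direct-sum isomorphism $\det(L^X \oplus D^+_{A,R}) \cong \det L^X \otimes \det D^+_{A,R}$. The operator $L^X = (-2d^*, d^+)\colon i\Omega^1_R \to i\Omega^0_{R,\mathbf{c}} \oplus i\Omega^+_R$ is independent of $(A,\Phi)$. Hodge theory for $\tau$-skew-invariant forms, with the relative condition along $\mathbf{c}$ in degree $0$, identifies $\ker L^X \cong H^1_R$ and $\operatorname{coker} L^X \cong H^0_R(X,\mathbf{c}) \oplus H^+_R$. The chosen orientation therefore trivializes the constant line $\det L^X$. Also, $\det D^+_{A,R}$ depends only on $[A]$, so it is exactly $p^*(\det \ind D^+_R)$. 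Combining the two steps gives $\det L \cong p^*\det \ind D^+_R$. Tensoring with $p^*\det\ind D^+_R$, and using that $\lambda \otimes \lambda$ is canonically oriented for any real line $\lambda$, gives the canonical orientation of $\det L \otimes p^*(\det\ind D^+_R)$.

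The main obstacle is checking that the ambiguities are only positive. The direct-sum isomorphism carries Koszul sign conventions, and the stabilization used to define determinant lines introduces further choices. I would fix one convention throughout, following \cite[\S 20.2]{KMbook2007}, so that each identification is continuous and $\mathcal{G}_{\mathbf{c}}$-invariant. Invariance holds because $\mathcal{G}_{\mathbf{c}}$ acts trivially on $H^*_R$ (by conjugation on the form part) and compatibly on spinors. A second point needs checking: the $\eta$-homotopy must be valid on the reducible locus as well. There $\eta_0=0$, so the homotopy is constant, and nothing breaks.
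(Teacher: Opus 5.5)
Your proposal is correct and is essentially the paper's argument, namely the Tian--Wang proof carried over to the framed setting: homotope $\eta$ to zero via $t\eta$, orient the parameter-independent line $\det L^X$ using the real homology orientation, and identify the remaining factor with $p^*(\det \ind D^+_R)$. Your point that $\mathcal{G}_{\mathbf{c}}$ acts freely even at reducibles is exactly what the paper singles out as the reason the framed statement holds on all of $\widetilde{\mathcal{B}}_{\mathbf{c}}$, rather than only on the irreducible locus as in Tian--Wang.
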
 
\begin{rem}
    Readers comparing~\cite[Thm.~3.6]{TianWang2009} with Theorem~\ref{thm:det_delta_p_D} should note that the $\widetilde{\mathcal{B}}_{\mathbf{c}}$ typically cannot be canonically identified with $\mathcal{B}_R$ in the notation of \cite{TianWang2009}, as the framed configuration spaces are quotients of smaller gauge groups.
\end{rem}
By Theorem~\ref{thm:det_delta_p_D}, once a real homology orientation is chosen, an orientation of $\det \ind L \to \widetilde{\mathcal{B}}^{\text{conn}}_{\mathbf{c}}$, if exists, is completely determined by the determinant of the family of real Dirac operators.
This is in contrast with the ordinary case, where the Seiberg--Witten moduli spaces can be oriented by the homology orientation as the family of Dirac operators come with canonical complex orientations.
\begin{notat}
	Suppose $\det L$ is trivializable so that there exists two possible choices of orientations.
    The set of orientations, i.e. the choices of sections, of $\det \ind D^+_{R}$ for {\rrscs} $(\mathfrak{s},\tfrr)$ over $\mathbb{X} = (X,\tau_X,\mathbf{c})$ will be written as $\Lambda_{D^+}(\mathbb{X}, \mathfrak{s}_X,\tfrr_X) $ and let $\Lambda_{D^+}(\mathbb{X})$ be the set of total choices of orientations
    \[\Lambda_{D^+}(\mathbb{X}) = \prod_{(\mathfrak{s},\tfrr) \in \underline{\text{RSpin}}^c(\mathbb{X})} \Lambda_{D^+}(\mathbb{X}, \mathfrak{s}_X,\tfrr_X).\]
\end{notat}
\subsection{Definitions of framed real Seiberg--Witten invariants}
Let $(\mathbb{X},\tau_X,\mathbf{c})$ be a marked 4-manifold.
Once and for all, fix a real homology orientation on $H^0_R(X,\mathbf{c};\mathbf{R}) \oplus H^1_R(X;\mathbf{R}) \oplus H^+_R(X;\mathbf{R})$.
Fix also a regular perturbation $\omega \in i\Omega^2_R(\mathbb{X})$.
\begin{defn}
	Given $\mathfrak{o}(\mathfrak{s}_X,\tfrr_X) \in \Lambda_{D^+}(\mathbb{X}, \mathfrak{s}_X,\tfrr_X)$, suppose the virtual dimension $d = d(\mathfrak{s}_X,\tfrr_X)$ is zero:
	\[
		\frac{c_1(\mathfrak{s})^2 - \sigma(X)}{8} + b^1_R(\mathbb X) - b^+_R(\mathbb X) = 0.
	\]
	The \emph{framed real Seiberg--Witten invariant} 
	$\trsw(\mathbb{X};\mathfrak{s}_X,\tfrr_X)$
	of $\mathbb{X}$ and {\rrscs} $(\mathfrak{s}_X,\tfrr_X)$ is the signed count of $\tilde{N}(\mathbb{X}, \mathfrak{s}_X,\tfrr_X)$ with respect to the orientation $\mathfrak{o}(\mathfrak{s}_X,\tfrr_X)$ and the real homology orientation.
	If $d \ne 0$, define the framed Seiberg--Witten invariant to be zero.
	Given $\mathfrak{o}(\mathbb{X}) \in \Lambda_{D^+}(\mathbb{X})$, the \emph{total framed real Seiberg--Witten invariant} $\trsw(\mathbb{X})$ is the sum of $\trsw(\mathbb{X};\mathfrak{s}_X,\tfrr_X)$ over all {\rrscs}s:
	\begin{equation*}
		\trsw(\mathbb{X}) = \sum_{(\mathfrak{s}_X,\tfrr_X) \in \underline{\text{RSpin}}^c(\mathbb{X})} \trsw(\mathbb{X};\mathfrak{s}_X,\tfrr_X).
	\end{equation*}
\end{defn}
The proof of well-definedness of the framed signed counts above in the case $b^+_R(\mathbb{X}) > 1$ is routine:
As in the ordinary case, Lemma~\ref{lem:noreducible1} shows $\trsw(\mathbb{X};\mathfrak{s}_X,\tfrr_X)$ counts irreducibles solutions and the following lemma shows that different 0-dimensional moduli spaces are cobordant via an oriented 1-manifold containing no reducibles:
\begin{lem}
	Assume $b^+_R(\mathbb{X}) > 1$.
	Let $g^p$ be a family of metrics parametrized by a manifold $P$ with boundary $Q$. 
	Suppose $\omega_q$ is a family of perturbing real 2-forms for $q \in Q$, such that the parametrized space $\tilde{N}(X,\tfrr_X)_Q$ is regular.
	Then there is a family $\{\omega_p : p \in P\}$ that extends the family $Q$ to all of $P$, for which $\tilde{N}(X,\tfrr_X)_P$ is again regular. \hfill \qedsymbol
\end{lem}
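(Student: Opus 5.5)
We would extend the given family $\{\omega_q\}_{q\in Q}$ to $P$ by a parametrized transversality argument, following the ordinary case (cf.~\cite[Lem.~27.1.1]{KMbook2007} and the parametrized statement Proposition~\ref{prop:family_transversality_manifold_with_d}). First choose an arbitrary smooth extension $\{\omega^0_p\}_{p\in P}$ of $\{\omega_q\}$, using a collar of $Q$ in $P$. Then look for the desired family among the perturbations $\omega_p = \omega^0_p + \beta(p)\,\eta_p$, where $\beta$ vanishes on $Q$ and $\eta$ ranges over a separable Banach space of smooth families of imaginary real self-dual forms. This space should be large enough that, at each point, evaluation is dense in $L^2_{k-1}(X;i\Lambda^+)^{-\tau_X^*}$.

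\textbf{Step 1: avoiding reducibles.} A reducible solution $(A,0)$ on $X$ with metric $g^p$ satisfies $F^+_{A^t}=4\omega_p^+$. This happens exactly when the $g^p$-harmonic projection of $4\omega^+_p$ onto $\mathcal{H}^+_{g^p,R}$ equals the $g^p$-self-dual part of the fixed class $2\pi i\,c_1(\mathfrak{s}_X)$, or rather its $(-\tau^*)$-invariant part. As $p$ varies, these conditions cut out the \emph{wall}, a subset of codimension $b^+_R$ in the space of pairs (metric, form). Since $b^+_R>1$, a generic $1$-parameter family, and more generally a generic family over $P$ on which one only needs to control the low-dimensional skeleton relevant for the moduli space, misses the wall. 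On $Q$ it is already missed, because $\tilde N(X,\tfrr_X)_Q$ is regular and, by Lemma~\ref{lem:noreducible1}, contains no reducibles. The relevant case for invariance is $\dim P=1$, where codimension $\ge 2$ suffices. In general I would state that the family is chosen generically relative to $Q$. A small perturbation of $\eta$ therefore keeps the family off the wall on all of $P$.

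\textbf{Step 2: transversality at irreducibles.} Consider the universal zero set of $(p,[A,\Phi],\eta)\mapsto \mathfrak{F}_{\omega_p}(A,\Phi)$. At an irreducible zero, the derivative in the $\eta$-direction is $\beta(p)\,\rho(\delta\eta^+)$. Suppose $(\nu,\pi)$ lies in the cokernel. Then $\nu$ is orthogonal to all real self-dual forms, so $\nu=0$. The remaining equation $(D^+_A)^*\pi-(\text{terms from }\Phi)=0$, together with orthogonality to the gauge/connection variations, then forces $\pi=0$ by unique continuation, exactly as in the proof of Proposition~\ref{prop:manifold_with_d_smooth_Hilbert}. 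The framed gauge group $\mathcal{G}_{\mathbf c}$ acts freely, including on reducibles, so the universal space is a Banach manifold over the set where $\beta>0$. Near $Q$, where $\beta=0$, regularity holds already by hypothesis, and regularity is an open condition, so shrinking the collar handles that region. Sard--Smale, with Fredholm index $d+\dim P$, then gives a residual set of $\eta$ for which $\tilde N(X,\tfrr_X)_P$ is regular. Intersecting this with the residual set from Step 1 yields the required extension.

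\textbf{Main obstacle.} I expect the delicate point to be the interplay at the boundary. Transversality must be achieved on $P$ while $\omega$ is kept fixed on $Q$, and the wall-avoidance has to be done for the real (skew-invariant) harmonic projection as the metric varies. Concretely, one must check that the map $(p,\omega)\mapsto$ the harmonic part of $\omega^+$ in $\mathcal{H}^+_{g^p,R}$ is a submersion onto the bundle $\mathcal{H}^+_R\to P$. I would verify this first by noting that $\tau_X$-invariant metrics make Hodge theory commute with $-\tau_X^*$, so that the real harmonic spaces form a smooth rank-$b^+_R$ bundle.
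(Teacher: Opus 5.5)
Your argument is the standard parametrized-transversality proof that the paper leaves implicit, since it gives no proof and defers to the ordinary case. You keep the family off the reducible wall using $b^+_R>1$, and then apply Sard--Smale at irreducibles: density of $\delta\eta^+$ forces $\nu=0$, and unique continuation forces $\pi=0$. When $\dim P<b^+_R$, and in particular for $P=[0,1]$ (the only case the paper uses), this is correct.

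\textbf{The claim for general $P$ is not right.} Your assertion that a generic family over a general $P$ still misses the wall, after ``controlling the low-dimensional skeleton,'' does not hold. For a fixed metric the wall is an affine subspace of codimension exactly $b^+_R$, so its complement is homotopy equivalent to $S^{b^+_R-1}$.

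\begin{itemize}
\item If $\dim P\ge b^+_R$ and the data on $Q$ link the wall, then every extension meets it. For example, take $P=D^{b^+_R}$, a constant metric, and $\omega|_Q$ a small sphere around a wall point in the $\mathcal{H}^+_R$-directions.
\item At a reducible $(A,0)$ the parametrized linearization is block diagonal, and its spinor block is $D^+_{A,R}$. Neither $\omega$ nor the $p$-derivative enters this block at $\Phi=0$.
\item Hence $\operatorname{coker} D^+_{A,R}$ cannot be killed, and regularity fails, for instance whenever $\operatorname{ind}D^+_{A,R}<0$.
\end{itemize}

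So the restriction $\dim P<b^+_R$ is genuinely needed; indeed the lemma as stated needs it too. You should state this restriction explicitly rather than appeal to genericity.

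\textbf{Two justifications need adjusting.}

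\begin{itemize}
\item The same block structure shows that the free action of $\mathcal{G}_{\mathbf{c}}$ on reducibles does not make the universal zero set a Banach manifold there. Freeness makes $\tilde{\mathcal{B}}$ smooth, but $\delta\eta$ only reaches the curvature component. Apply Sard--Smale to the irreducible part of the universal moduli space, and then intersect with the open dense set of $\eta$ from Step 1. Your last sentence already does this; only the stated reason needs changing.
\item The absence of reducibles over $Q$ should not be attributed to Lemma~\ref{lem:noreducible1}, which concerns a residual set of perturbations for a single metric. It follows from regularity of $\tilde N_Q$: at a reducible the curvature cokernel $\mathcal{H}^+_R$ has dimension $b^+_R>\dim Q$, so it cannot be covered by the $T_qQ$ directions.
\end{itemize}

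Finally, the point you flag as the main obstacle is immediate. For fixed $p$, the map $\omega\mapsto\Pi_{\mathcal{H}^+_{g^p,R}}(\omega^+)$ is a linear surjection: take $\omega$ to be a $g^p$-harmonic self-dual real form. So the total map is a submersion onto the bundle $\mathcal{H}^+_R\to P$.
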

The proof of well-definedness of the signed framed counts in the cases $b^+_R(\mathbb{X}) = 0,1$ are identical to the single-framed case. See e.g.~\cite{baraglia2026exoticembeddedsurfacesinvolutions}. 
Instead of repeating the proofs, here are some new issues that arise going from the unframed to the framed setting.
\subsection{The cases of low $b^+_R$}
The case $b^+_R(\mathbb{X}) = 1$ is tricky
as the moduli space over a 1-parameter family of metrics and perturbations may encounter reducibles.
Thus the count of irreducible unframed solutions can be only be defined chambre-wise.
However, the framed invariant remains an invariant, as irreducibles appear during parameter change as even multiples and with opposite signs.

Suppose now $b^+_R(\mathbb{X}) = 0$.
In this case, reducible solutions are unavoidable.
From the dimension formula, if $b^+(\mathbb{X}) = 0$ and the Dirac index is nonnegative, then $d=0$ implies $b^1(\mathbb{X})$ must be zero.
The unframed configuration space has the homotopy type of a point and hence the framing cannot induce nontrivial covers.
Thus the definition of the framed Seiberg--Witten invariant is precisely the degree-type invariant of Miyazawa~\cite{Miyazawa2023}, except that different {\rrscs}s can give different signs.

Well-definedness of the count of framed real Seiberg--Witten solutions can be proved directly for the framed setting, but it is much easier to resort to the Bauer--Furuta interpretation as in~\cite{Miyazawa2023}.
For a generic path of perturbations $\omega_t$, $t \in [-1,1]$, the unique reducible solution $\theta_t$ for $\omega_t$ can be obstructed, say at $t = 0$.
One can analyze the Kuranishi structure around this reducible solution (see \cite[Prop.~16]{LimYH2000}
	or \cite{ChenWM1997} for the 3-dimensional case).
The local model of the 1-parameter framed moduli space at $\theta_0$ is the union of the axes $\{s = 0\} \cup \{t=0\}$ in $\mathbf{R}_t \times \mathbf{R}_s$.
The $t$-axis models the reducibles and in particular the sign changes as it across the $s$-axis, and this sign change is offset by the birth of two irreducible solutions modelled by the $s$-axis.
By comparison, the unframed local model is a tripod $\{s = 0\} \cup \{t=0\}$ in $\mathbf{R}_t \times \mathbf{R}_{s \ge 0}$.
This picture explains the ill-definedness of the unframed count.
\section{Relative mod two gradings in framed real monopole Floer homology}
\label{sec:grading}
\subsection{Failure of the na\"{i}ve absolute Z/2 grading in the unframed real homology}
Let $(Y,\tau,\mathbf{p})$ be a marked real 3-manifold and $C = \fix(\tau)$.
Recall the framed real monopole Floer homology is graded by the set $\mathbf{J}(\mathfrak{s},\tfrr)$.
As a corollary to Lemma~\ref{lem:loop_and_12}, the counterpart of \cite[Lem.~11.7]{ljk2022} is obtained by replacing $H^1(Y;\mathbf{Z})^{-\tau_*}$ with	$\Upgamma(Y,\tau,\mathbf{p})$:
\begin{lem}
    \label{lem:stab_of_J}
    The action of $\mathbf{Z}$ on $\mathbf{J}(\mathfrak{s},\tfrr)$ is transitive.
    The stabilizer is the image of the map
    \[
    \Upgamma(Y,\tau,\mathbf{p}) \to \mathbf{Z}, \quad
    [\sigma] \mapsto \frac{1}{2}\langle c_1(\mathfrak{s}),[\sigma]\rangle.
    \]
    In particular, the action is free if and only if $c_1(\mathfrak{s})$ is torsion.
    \hfill \qedsymbol
\end{lem}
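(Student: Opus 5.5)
The plan is to deduce the lemma from the structure of $\mathbf{J}(\mathfrak{s},\tfrr)$ as a quotient of $\tcalB_k(Y,\tfrr)\times\mathcal{P}\times\mathbf{Z}$, combined with the loop formula of Lemma~\ref{lem:loop_and_12}.

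\emph{Transitivity.} Since $\tcalB_k(Y,\tfrr)$ is path-connected (it is homotopy equivalent to the torus $\tilde{\mathbf{T}}_{R,\mathbf{p}}$ by Lemma~\ref{lem:config_homotopy_type}) and $\mathcal{P}$ is a Banach space, any two triples $([\mathfrak{a}],\mathfrak{q}_1,m)$ and $([\mathfrak{b}],\mathfrak{q}_2,n)$ can be joined by some pair $(\zeta,\mathfrak{p})$. Let $N=\ind P_{\zeta,\mathfrak{p}}$. Then $([\mathfrak{a}],\mathfrak{q}_1,m)\sim([\mathfrak{b}],\mathfrak{q}_2,m+N)$, and this element differs from $([\mathfrak{b}],\mathfrak{q}_2,n)$ by the action of $n-m-N\in\mathbf{Z}$. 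Hence the $\mathbf{Z}$-action is transitive.

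\emph{Stabilizer.} An integer $n$ fixes the class of $([\mathfrak{a}],\mathfrak{q},0)$ exactly when there is a pair $(\zeta,\mathfrak{p})$ from $([\mathfrak{a}],\mathfrak{q})$ to itself with $\ind P_{\zeta,\mathfrak{p}}=n$. Equivalently, the stabilizer is the set of indices of closed loops, after one checks that the equivalence relation is generated by a single path in each direction. To compute these indices:
\begin{itemize}
\item Use additivity of the APS index under concatenation, together with homotopy invariance of the index of $P_{\zeta,\mathfrak{p}}$ in $(\zeta,\mathfrak{p})$ with fixed endpoints. This shows the index depends only on the homotopy class of the loop in $\tcalB_k(Y,\tfrr)$.
\item The perturbation loop contributes nothing, because $\mathcal{P}$ is contractible.
\item The homotopy classes of loops are $\pi_1(\tcalB_k)\cong\Upgamma(Y,\tau,\mathbf{p})$, so the index is a homomorphism $\Upgamma(Y,\tau,\mathbf{p})\to\mathbf{Z}$.
\item For the loop $z_u$ associated to $u\in\mathcal{G}(Y,\tau)$ representing $[\sigma]\in\Upgamma$, the index is the spectral flow of the extended Hessian around the loop. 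This is exactly $\gr_{z_u}(\mathfrak{a},\mathfrak{a})=\tfrac12([u]\cup c_1(\mathfrak{s}))[Y]=\tfrac12\langle c_1(\mathfrak{s}),[\sigma]\rangle$ by Lemma~\ref{lem:loop_and_12}.
\end{itemize}
The stabilizer is therefore the image of $[\sigma]\mapsto\tfrac12\langle c_1(\mathfrak{s}),[\sigma]\rangle$.

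\emph{Freeness.} Since $\Upgamma$ has full rank in $H^1(Y;\mathbf{Z})^{-\tau^*}$, this image is zero if and only if $c_1(\mathfrak{s})$ pairs trivially with the whole $(-\tau)$-invariant part. One then uses that $c_1(\mathfrak{s})$ is $\tau$-anti-invariant for a real structure, so its pairing with the $\tau$-invariant part vanishes rationally anyway. Hence triviality on $\Upgamma$ is equivalent to $c_1(\mathfrak{s})$ being torsion.

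I expect the main obstacle to be identifying the spectral flow of $P$ around a non-contractible loop with the formula of Lemma~\ref{lem:loop_and_12}. This matters especially where the loop passes through reducibles, since there the boundary projections are defined using $\Hess-\epsilon$. I would handle this by homotoping the loop into the reducible locus and reducing to the block decomposition at the end of the preceding section. The first two blocks are invertible and contribute nothing, so the index is the spectral flow of the real Dirac family $D_{B(t)}$ around a loop of connections. This spectral flow equals half the complex spectral flow, namely $\tfrac12\langle c_1(\mathfrak{s})\cup[u],[Y]\rangle$, as in \cite[Lem.~11.7]{ljk2022}.
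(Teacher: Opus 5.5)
Your proposal is correct and follows the paper's route. The paper gives no separate proof: it records the lemma as the corollary of Lemma~\ref{lem:loop_and_12} obtained from \cite[Lem.~11.7]{ljk2022} by replacing $H^1(Y;\mathbf{Z})^{-\tau^*}$ with $\pi_1(\tcalB_k(Y,\tfrr))\cong\Upgamma(Y,\tau,\mathbf{p})$, which is your transitivity-plus-loop-index argument. Your freeness step (finite index of $\Upgamma$, together with the vanishing pairing of the anti-invariant class $c_1(\mathfrak{s})$ against $\tau^*$-invariant classes because $\tau$ preserves orientation) supplies a detail the paper leaves implicit; one small correction is that the loop $z_u$ should come from $u\in\mathcal{G}(Y,\tau,\mathbf{p})$, not the full $\mathcal{G}(Y,\tau)$.
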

\begin{rem}
    The stabilizer is not always contained in $2\mathbf{Z}$, in contrast with the ordinary case~\cite[Lem.~22.3.2]{KMbook2007}.
    This causes the lack of a canonical absolute mod-two grading in unframed real monopole homology.
\end{rem}

Recall the mod-two grading in the ordinary case requires the choices of a reducible \emph{configuration} $\mathfrak{a}_0$, and for each $[\mathfrak{a}]$, a path $\upgamma$ between $[\mathfrak{a}_0]$ and $[\mathfrak{a}]$, and a 1-parameter family of perturbations $\mathfrak{p}$ joining $\mathfrak{q}$ to $0$.
This leads to
\begin{equation}
    \label{eq:unframed_mod-2_index}
    \gr^{(2)}_{\text{ord}}([\mathfrak{a}],\mathfrak{q},n)
    = \text{ind}(P_{\upgamma,\mathfrak{p}})(\text{mod 2}).
\end{equation}
Dependence on $\mathfrak{a}_0$ and $\upgamma$ is reflected in the parity of $\text{ind}(P_{\upgamma,\mathfrak{p}})$.
In the ordinary case, this parity is constant since the Dirac operator is complex.
In the real unframed case $\text{ind}(P_{\upgamma,\mathfrak{p}})$ 
can change by an odd number upon change of $\upgamma$ according to Lemma~\ref{lem:stab_of_J}.
Moreover, when $\mathfrak{a}_0 = (B_0,0)$ is replaced by another reference connection $\mathfrak{a}_0'=(B_0',0)$, \eqref{eq:unframed_mod-2_index} changes by the index of an operator of the form (cf.~\eqref{eq:DBt_in_grading_decomp})
\[
\frac{d}{dt} + D_{B(t)}
\]
on the finite cylinder with spectral boundary conditions, and where $B(t)$ is a path of connections from $B_0$ to $B_0'$.
Again, this operator acting on the real spinors is not complex and the parity of its index may change.
\subsection{Relative Z/2 grading in the framed case}
The following two assumptions will apply:
\begin{enumerate}[label=(\roman*),leftmargin=*]
    \item The image of $C$ is nullhomologous in the quotient 3-manifold, i.e. $
    [C/\tau] = 0 \in H_1(Y/\tau;\mathbf{Z})$.
    \item Every component of $C$ contains at least one basepoint, i.e. $C_i \cap \mathbf{p} \ne \emptyset$ for every component $C_i$ of $C$. 
    %\item[(ii')] Every component of $C$ contains precisely one basepoint, i.e. $C_i \cap \mathbf{p} \ne \emptyset$ for every component $C_i$ of $C$. 
\end{enumerate}
In particular, Assumption~(i)
ensures that there is a free real Heegaard surface $\Sigma \subset Y$.

The dependence on $\upgamma$ improves slightly since the choices of $[\upgamma]$ form a torsor over the subgroup $\Upgamma(Y,\tau,\mathbf{p})$ of $H^1(Y,\mathbf{Z})^{-\tau^*}$.
The former is a full-rank lattice of the latter.
This restricts the indices that appear in \eqref{eq:unframed_mod-2_index}.
\begin{theorem}
    \label{lem:gamma_in_2H}
    If Assumptions~(i) \& (ii) hold,
    then stabilizer of the $\mathbf{Z}$ action on $\mathbf{J}(\mathfrak{s},\tfrr)$ is contained in $2
    \mathbf{Z}$.
    Moreover, \eqref{eq:unframed_mod-2_index} is independent of $\upgamma$ (for fixed endpoints). 
\end{theorem}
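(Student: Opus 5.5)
By Lemma~\ref{lem:stab_of_J}, the stabilizer of the $\mathbf{Z}$-action on $\mathbf{J}(\mathfrak{s},\tfrr)$ is the image of $[\sigma]\mapsto \tfrac12\langle c_1(\mathfrak{s}),[\sigma]\rangle$ on $\Upgamma(Y,\tau,\mathbf{p})$. The first claim therefore reduces to a purely topological statement: for every class $\sigma$ in $\Upgamma(Y,\tau,\mathbf{p})$, viewed homologically through Poincar\'e duality as an anti-invariant 1-cycle, the pairing $\langle c_1(\mathfrak{s}),\sigma\rangle$ lies in $4\mathbf{Z}$. The second claim, independence of \eqref{eq:unframed_mod-2_index} from $\upgamma$ with endpoints fixed, follows from the first. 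Two choices of $\upgamma$ differ by a loop $z_u$ with $[u]\in\Upgamma(Y,\tau,\mathbf{p})$, so by additivity of the index and Lemma~\ref{lem:loop_and_12} the index changes by $\tfrac12([u]\cup c_1(\mathfrak{s}))[Y]$, which will be even.

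To prove the pairing statement, I plan to use the free real Heegaard surface $\Sigma$ supplied by Assumption~(i). By Lemma~\ref{lem:surj_group_cohom}, every anti-invariant class in $H_1(Y)$ lifts to $H_1(\Sigma)^{-\tau_*}$. Corollary~\ref{cor:anti-inv_homology_basis_from_symp_basis} gives generators $w_i$, $x_j-\tau_*x_j$, $y_j-\tau_*y_j$ of this group. Since $c_1(\mathfrak{s})$ restricted to $\Sigma$ is $\tau$-anti-invariant, the real structure reverses $c_1$ while covering $\tau$, so the pairing of $c_1$ with $x-\tau_*x$ equals $2\langle c_1, x\rangle$. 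I will then refine this to the restriction $c_1(\mathfrak{s})|_\Sigma = 2c_1(W^+)$ (or its analogue coming from the eigen-splitting), which gives divisibility by $4$ on the half-surface generators. For the $w_i$ generators I will use the framing condition. A class in $\Upgamma$ must pair evenly against each circle $\gamma_i\cup\tau(\gamma_i)$ joining basepoints, and by Assumption~(ii) every fixed circle $C_i$ carries a basepoint. So I will choose the arcs $\gamma_i$ to be halves of the $w_i$ curves, which are $\tau$-invariant and meet $C_i$ and $C_r$. The $\Upgamma$-condition then forces the $w_i$-coefficients of a lift of $\sigma$ to be even, modulo contributions from the other generators.

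Putting these together, I would write a lift as $\sigma=\sum a_i w_i+\sum(\dots)$ with all $a_i$ even, and compute $\langle c_1(\mathfrak{s}),w_i\rangle \bmod 4$ using the Stiefel--Whitney description. The evaluation $\langle c_1(W^+),w_i\rangle$ is related, via the Okonek--Teleman identity used in the lemma on $\sum w_1$, to $w_1$ of the real subbundles on $C_i$ and $C_r$. This shows $\langle c_1,w_i\rangle$ is even, so even multiples contribute multiples of $4$.

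The main obstacle is this last step: getting the mod-$4$ (rather than mod-$2$) control of $\langle c_1(\mathfrak{s}), w_i\rangle$ and of the half-surface generators, and checking that the parity constraint defining $\Upgamma$ really translates to even $w_i$-coefficients once we pass through the non-canonical lift of Lemma~\ref{lem:surj_group_cohom}. If the direct $c_1(W^\pm)$ computation fails there, I would instead argue on $Y/\tau$. Assumption~(i) lets anti-invariant classes be written as transfers of classes in $H_1(Y/\tau)$, and for transfers the pairing is automatically doubled. The framing condition should then remove the single obstruction class, which is dual to $C/\tau$.
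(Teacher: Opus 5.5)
Your reduction is fine when the pairing is read as $\langle c_1(\mathfrak{s})\cup\sigma,[Y]\rangle$. By Lemma~\ref{lem:stab_of_J} the first claim becomes $\langle c_1(\mathfrak{s})\cup\sigma,[Y]\rangle\in 4\mathbf{Z}$ for $\sigma\in\Upgamma(Y,\tau,\mathbf{p})$, and path-independence then follows from Lemma~\ref{lem:loop_and_12}, as in the paper. The gap is in the topological core, where you swap the roles of $\sigma$ and $c_1(\mathfrak{s})$ in a way that is not dimensionally consistent.

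In the 3-manifold $Y$, the class $\sigma\in H^1(Y;\mathbf{Z})$ is Poincar\'e dual to a 2-cycle, not a 1-cycle. Also, $c_1(\mathfrak{s})\in H^2(Y;\mathbf{Z})$ cannot be evaluated on the curves $w_i$, $x_j$, $y_j$. So ``write a lift of $\sigma$ as $\sum a_iw_i+\cdots$'' and ``compute $\langle c_1(\mathfrak{s}),w_i\rangle \bmod 4$'' have no meaning.

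The auxiliary steps fail as well:
\begin{itemize}
\item Restricting $c_1(\mathfrak{s})$ to $\Sigma$ lands in $H^2(\Sigma)$, which does not see curves.
\item $c_1(\mathfrak{s})|_\Sigma=c_1(W^+)+c_1(W^-)$ vanishes on $[\Sigma]$, since $\Sigma$ is null-homologous, while $\langle c_1(W^+),[\Sigma]\rangle=g-1$. So $c_1(\mathfrak{s})|_\Sigma\neq 2c_1(W^+)$ in general.
\item The framing condition says that the cocycle $\sigma$ evaluates evenly on $w_i$. That is not a statement about the $w_i$-coefficient of a cycle, which in the symplectic basis is read off by intersecting with $v_i=[C_i]$.
\item Your fallback fails too: ordinary transfers from $Y/\tau$ produce $\tau_*$-invariant classes. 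The anti-invariant generator $w_i$ descends to an arc joining two branch circles, not to a closed curve.
\end{itemize}

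The fix is to reverse the roles, which is what the paper does. This also removes the ``main obstacle'' you identified, because no mod-$4$ information about $c_1(\mathfrak{s})$ is needed.
\begin{enumerate}
\item Since $c_1(\mathfrak{s})\equiv w_2(Y)=0 \bmod 2$, the paper takes $\omega$ Poincar\'e dual to $\tfrac12 c_1(\mathfrak{s})$, an anti-invariant 1-cycle (torsion being invisible to the pairing with $H^1$). Then $\tfrac12\langle c_1(\mathfrak{s})\cup\sigma,[Y]\rangle=\sigma(\omega)$.
\item It therefore suffices to show that every $\sigma\in\Upgamma(Y,\tau,\mathbf{p})$ evaluates evenly on every $\omega\in H_1(Y;\mathbf{Z})^{-\tau_*}$.
\item Lift $\omega$ to $H_1(\Sigma)^{-\tau_*}$ by Lemma~\ref{lem:surj_group_cohom} and expand it in the generators of Corollary~\ref{cor:anti-inv_homology_basis_from_symp_basis}.
\item On the half-surface generators, $\sigma(x_j-\tau_*x_j)=2\sigma(x_j)$, because $\sigma$ (not $c_1$) is anti-invariant.
\item On the $w_i$, $\sigma(w_i)=\deg(g|_{w_i})$ is even. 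Here $w_i$ is a $\tau$-invariant circle through $C_i$ and $C_r$, each of which carries a basepoint by Assumption~(ii). By the definition of $\Upgamma(Y,\tau,\mathbf{p})$, the map $g$ takes the same value $\pm1$ at those basepoints.
\end{enumerate}
The two ingredients you named, the factor of two from anti-invariance on $x-\tau_*x$ and evenness on $w_i$ from the framing, are exactly the right ones. They must be applied to $\sigma$ evaluated on a decomposition of the dual cycle of $\tfrac12c_1(\mathfrak{s})$, not to $c_1(\mathfrak{s})$ evaluated on a decomposition of $\sigma$.
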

\begin{proof}
    The difference of $\text{ind}(P_{\upgamma,\mathfrak{p}})$ between to paths $\upgamma$ and $\upgamma'$ can be computed as the index over the closed $4$-manifold $S^1 \times Y$, as in the proof of Lemma~\ref{lem:stab_of_J}.
    Let $[\sigma] = [\upgamma \cup -\upgamma']$ be the loop, which corresponds to an element of $\Upgamma(Y,\tau,\mathbf{p})$; 
    this difference is precisely $(1/2)\langle c_1(\mathfrak{s}),[\sigma]\rangle$ as in \eqref{eq:unframed_mod-2_index}.
    Since $c_1$ is the first Chern class of the determinant line bundle, $c_1(\mathfrak{s})$ is divisble by $2$, up to $2$-torsion, which does not affect the pairing $\langle (1/2)c_1(\mathfrak{s}),[\sigma]\rangle$.
    It suffices to prove that,  by Poincar\'{e} duality, the pairing
    \begin{equation}
        \label{eq:pairing_all_zero_Upgamma}
        \sigma(\omega) = \langle \text{P.D.}(\omega),[\sigma]\rangle = 0 \ \text{mod 2}
    \end{equation}
   for all $\omega \in H_1(Y;\mathbf{Z})^{-\tau_*}$
    
    By Assumption~(i), one may take an orientable Seifert surface $\Sigma$.
    Lemma~\ref{lem:surj_group_cohom} then implies that every skew-invariant element $\omega$ in $H_1(Y)$ can be represented as curves in $\Sigma$.
    As in Lemma~\ref{lem:symplectic_basis}, the $(-\tau_*)$-invariant element $w_i$ can be taken as a $\tau$-equivariant curve that intersects $C_i$ and $C_r$, each geometrically once.
     By Corollary~\ref{cor:anti-inv_homology_basis_from_symp_basis}, it suffices to verify \eqref{eq:pairing_all_zero_Upgamma} for the image under $H_1(\Sigma) \to H_1(Y)$ of:
    \[\big\{w_i : \ 1 \le i \le r\big\}, \quad \text{and} \ \big\{x_j - \tau_*x_j, \ y_j - \tau_*y_j: \ 1 \le j \le k\big\}.\] 
    Clearly, $\sigma(x_j - \tau_*x_j)$ and $\sigma(y_j - \tau_*y_j)$ are both even by skew-invariance of $\sigma$.
    Moreover, the evaluation $\sigma(w_i)$ is precisely
    \[\text{deg}(\sigma|_{w_i} \colon w_i \to S^1)
    = \begin{cases}
        0 \ (\text{mod 2}) & g(q_i') = g(q_r')\\
        1 \ (\text{mod 2}) & g(q_i') \ne g(q_r')
    \end{cases},\]
    where $g \colon Y \to S^1$ is the map (up to an overall $\pm 1$ factor, cf.~Definition~\ref{defn:framed_real_Picard_torus_Y}) representing $[\sigma]$ in $H^1(Y)$, and  
    $q_i'$ is a choice of a basepoint in $C_i \cap \mathbf{p}$, which exists by Assumption~(ii).
    Recall also that skew-invariant $g$'s take constant values on $C_i$;
    the definition of $\Upgamma(Y,\tau,\mathbf{p})$ says exactly that $g(q_i')=g(q_r')$.
\end{proof}
\begin{rem}
While Lemma~\ref{lem:gamma_in_2H} guarantees an absolute $\mathbf{Z}/2$ grading and independence of \eqref{eq:unframed_mod-2_index} on $[\upgamma]$, this index still depends on the reference reducible $\mathfrak{a}_0= (B_0,0)$, and the {\rrscs}.
\end{rem}
\bibliographystyle{alpha}
\bibliography{hmr_pointed}
\end{document}